\documentclass[12pt,reqno]{ip-journal}

\usepackage{amsthm, amsmath, amsfonts, amscd, amssymb}
\usepackage{stmaryrd,mathrsfs}

\usepackage[breaklinks,colorlinks,citecolor=teal,linkcolor=teal,urlcolor=teal,pagebackref,hyperindex]{hyperref}
\usepackage{color}
\usepackage{tikz}
\usetikzlibrary{arrows} \usetikzlibrary{patterns}
\usepackage[all]{xy}
\usepackage{tikz-cd}
\usepackage{appendix}

\usepackage[top=2.5cm,bottom=2.5cm,left=2.6cm,right=2.6cm]{geometry}

\newtheorem{theorem}{Theorem}[section]
\newtheorem{lemma}[theorem]{Lemma}
\newtheorem{prop}[theorem]{Proposition}
\newtheorem{coro}[theorem]{Corollary}

\newtheorem{defn}[theorem]{Definition}

\theoremstyle{definition}
\newtheorem{example}[theorem]{Example}

\newtheorem{remark}[theorem]{Remark}

\numberwithin{equation}{section}

\def \fk{\mathfrak}
\def \msf{\mathsf}

\def \mc{\mathcal}

\def \inv{^{-1}}
\def \0{\infty}

\def \qq{\quad}
\def \p{\partial}

\def \rto{\rightarrow}

\def \rrto{\rightrightarrows}
\def \Hom{\mathrm{Hom}}

\def \co{\colon\thinspace}

\def \and{\qq\mathrm{and}\qq}

\def \CF{{\mc F}}
\def \CG{{\mc G}}
\def \CH{{\mc H}}

\def \CK{{\mc K}}

\def \CV{{\mc V}}
\def \CX{{\mc X}}
\def \CY{{\mc Y}}
\def \CZ{{\mc Z}}

\def \SB{{\msf B}}
\def \SBAut{{\msf{BAut}}}

\def \SE{{\msf E}}
\def \SF{{\msf F}}
\def \SG{{\msf G}}
\def \SH{{\msf H}}
\def \SK{{\msf K}}
\def \SN{\mathsf N}
\def \SP{{\msf P}}
\def \SQ{{\msf Q}}
\def \ST{{\msf \Psi}}

\def \SX{{\msf X}}
\def \SY{{\msf Y}}
\def \Y{\mathsf{Y}}
\def \SZ{{\msf Z}}

\def \f{{\msf f}}
\def \g{{\msf g}}
\def \h{{\msf h}}

\def \s{{\msf s}}

\def \u{{\msf u}}

\def \d{{\msf d}}

\def \SP{{\msf \Psi}}
\def \SAd{{\msf{Ad}}}

\def \SFib{{\msf{Fib}}}
\def \sk{\mathrm{sk}}
\def \SHom{{\mathrm{SHom}}}
\def \Kan{\mathrm{Kan}}

\def \Aut{\mathrm{Aut}}

\def \Ad{{\mathrm{Ad}}}
\def \uni{{\mathrm{uni}}}
\def \Vect{{\mathrm{Vect}}}

\def \Acyc{{\mathrm{Acyc}}}
\def \GL{\mathrm{GL}}

\def \Ad{\mathrm{Ad}}
\def \Acyc{\mathrm{Acyc}}

\def \SAut{\mathsf{Aut}}
\def \Diff{\mathbf{Diff}}

\def \bfe{\mathbf e}
\def \SHom{\mathsf{Hom}}

\def \SQ{\mathsf Q}
\def \SP{\mathsf P}
\def \SAd{\mathsf {Ad}}
\allowdisplaybreaks

\begin{document}

\title[Automorphisms of Lie groupoids and symplectic reduction on orbifolds]{Automorphisms of Lie groupoids and symplectic reduction on orbifolds}

\author{Bohui Chen}
\address{Department of Mathematics and Yangtz center of Mathematics, Sichuan University, Chengdu 610065, China}
\email{chenbohui@scu.edu.cn}

\author{Cheng-Yong Du}
\address{School of Mathematical Science, Sichuan Normal University, Chengdu 610066, China}
\email{cyd9966@hotmail.com}

\author{Fengyu Jiang}
\address{School of Mathematics, Sichuan University, Chengdu 610065, China}
\email{jfyscu@foxmail.com}
\thanks{Corresponding author: Fengyu Jiang, Email: \href{mailto:jfyscu@foxmail.com}{jfyscu@foxmail.com}.}

\date{}
\subjclass{53D20, 58H05, 18N50}
\keywords{Lie groupoid, automorphism, Kan fibration, Hamiltonian 2-group action, symplectic reduction.}

\begin{abstract}
In this paper, the 2-group $\SBAut(\CX)$ of  automorphisms of a Lie groupoid $\CX$ is constructed. Considering the 2-group $\SG$ action on $\CX$, we explain the equivalence between 2-group homomorphisms from $\SG$ to $\SBAut(\CX)$ with 
Kan fibrations over $\SG$ with fiber $\CX$. This justifies the notion of Kan fibration for 2-group actions on Lie groupoids. As an application,  we formulate Hamiltonian actions of \'etale Lie 2-groups on orbifolds in terms of Kan fibrations and study the symplectic reductions. We show that, in general, the reduction is in fact a symplectic Lie 2-groupoid, and  under certain isotropic free condition, the reduction is still an orbifold. Also the slice theorem 
of a group $G$ action on Lie groupoids is proved. 
\end{abstract}

\maketitle

\tableofcontents

\section{Introduction}

A Lie groupoid is a fundamental concept in geometry and mathematical physics, closely related to differential and algebraic stacks.  As a special case, an orbifold is equivalent to a proper, \'etale Lie groupoid.  Lie $n$-groupoids, for $0\leqslant n\leqslant\infty$, generalise manifolds and Lie groupoids: a manifold is a Lie $0$-groupoid and a Lie groupoid is a Lie $1$-groupoid.  Studying symmetries---and hence actions---on Lie $n$-groupoids is a natural and fundamental problem.  At present, Kan fibrations of Lie $n$-groupoids provide the standard device for defining actions.

This paper and its sequel aim to understand the symmetries of Lie $n$-groupoids and, consequently, actions on them.  One of the main goals is to clarify how the point of view of Kan fibrations fits into this framework.

Let us recall the definition of a group action on smooth manifolds.  Let $G$ be a group and $X$ be a smooth manifold.  Write $\Diff(X)$ \footnote{For convenience, the product $gh$ is defined to be the composition of diffeomorphisms $h\circ g$.} for the group of diffeomorphisms of $X$. A $G$-action on $X$ is a group homomorphism
\[%
\rho\co G\rto \Diff(X).
\]%
Given such a $\rho$, one may associate to it a smooth map \footnote{The set $G$ is treated as a 0-dimensional manifold with discrete topology.}
\begin{equation}\label{e-Phi}
\Phi\co X\times G\rto X
\end{equation}
such that $\Phi(\cdot,g)$
is $\rho(g)$. Hence $\Phi$ satisfies
\begin{equation}\label{e-Phi-property}
\Phi(x,e)=x    \and   \Phi(\Phi(x,g),h)=\Phi(x,gh).
\end{equation}
The $\Phi$ is the geometric realization of the action $\rho$. Conversely, such a map $\Phi$ determines  a group homomorphism $\rho\co G\to\Diff(X)$. Let $\mathbf H_G(X)$ denote the set of homomorphisms from $G$ to $\Diff(X)$ and
$\mathbf K_G(X)$ denote the set of maps $\Phi$ as above satisfying \eqref{e-Phi-property}. Then $\mathbf H_G(X)$ and $\mathbf K_G(X)$
are in bijective correspondence.
If $G$ is a Lie group.
One says $\rho$ is a smooth action if the corresponding map $\Phi$ in \eqref{e-Phi} is a smooth map.

We  follow this line to understand  actions on Lie $n$-groupoids. In this paper we focus on the case $n=1$, i.e., actions on Lie ($1$-)groupoids. Let $\mc X$ be a Lie groupoid. The study of actions on $\mc X$ consists of the following steps. %

{\bf Step 1: Automorphisms.}
The first issue is to understand the automorphisms of $\CX$. Let $\mathbf{Aut}(\CX)$ denote the group of automorphisms of $\CX$. In \cite{Chen-Du-WangR2019-grp-mor}, it was shown that$\mathbf{Aut}(\CX)$ admits a groupoid structure, i.e., it can be associated with a groupoid $\SAut(\CX)$ such that, at the coarse set level, $|\SAut(\CX)|=\mathbf{Aut}(\CX)$ (cf. Proposition \ref{P-aut-X}). The group structure of $|\SAut(\CX)|$ is reflected on a group-like structure carried by $\SAut(\CX)$. On the other hand, the groupoid $\SAut(\CX)$ with such group-like data can be integrated into a 2-group
$\SB\SAut(\CX)$ (cf. Theorem \ref{T-gpd-gp}).

We remark that it is widely expected that the automorphisms of a Lie groupoid (or, equivalently, a differential stack) form a 2-group, but such a 2-group depends on the choices of automorphisms used. For example, the automorphisms considered in \cite{Li-2015-higher-groupoid} are Hilsum--Skandalis (HS) type. In this paper, an automorphism of $\CX$ is a diagram  \[
\CX\xleftarrow{\phi} \SZ\xrightarrow{\psi} \CX
\]
with hypercovers (cf. Definition \ref{D-morphisms}) $\phi$ and $\psi$.

{\bf Step 2: 2-group actions.}
As $\SB\SAut(\CX)$ is a 2-group, it is natural to consider  2-group actions on $\CX$ via  2-group homomorphisms. Let $\SG$ be a 2-group, it is suggested to define a $\SG$-action on $\CX$ via a 2-group homomorphism or a (weak) morphism
\[%
\rho\co \SG\rto \SB\SAut(\CX)\qq \mbox{or}\qq
\rho\co \SG\rightharpoonup \SB\SAut(\CX).
\]%
Let $\CH_\SG(\CX)$ denote the set of (homo)morphisms from $\SG$ to $\SB\SAut(\CX)$. In \S\ref{Sec-K-vs-H}, we show that we can associate each $\rho\in \CH_\SG(\CX)$ with a Kan fibration
\begin{equation}\label{e-Kan}
\Phi\co \CX\rightarrowtail\msf \SK\twoheadrightarrow \SG.
\end{equation}
The main ingredient is to construct a universal Kan fibration $\SK_\uni$ over $\SB\SAut(\CX)$. The key point is that the Kan fibration plays the role of the map \eqref{e-Phi} (cf. Example \ref{E-smoothaction-manifold}).

\begin{remark} A different approach of considering actions on Lie groupoids is given 
in \cite{Li-2015-higher-groupoid}, where the author defined the
(categorified) action of a (categorified) 2-group $\mc G$ on a Lie groupoid $\CX$  via a morphism in the form of \eqref{e-Phi}.  In  \cite{Li-2015-higher-groupoid}, all morphisms considered are {\em restricted} to be of HS type. Then the author showed that such an action is equivalent to a Kan fibration
over $\SB\mc G$ with fiber $\CX$.
\end{remark}

Let $\CK_\SG(\CX)$ denote the set of Kan fibrations of the form \eqref{e-Kan}. We construct two maps (cf. \S\ref{Sec-2gp-aut})
\[
\Lambda\co \CK_\SG(\CX)\rto \CH_\SG(\CX)\and
\Pi\co \CH_\SG(\CX)\rto\CK_\SG(\CX).
\]
We introduce equivalence relations on $\mc H_\SG(\CX)$ and $\CK_\SG(\CX)$ and denote the set of equivalence classes by $\mathbf H_\SG(\CX)$ and $\mathbf K_\SG(\CX)$ respectively (cf. Definition \ref{D-equiv-H} and Definition \ref{D-equiv-K}). The $\Lambda$ and $\Pi$ then induce maps $\mathbf{\Lambda}$ and $\mathbf{\Pi}$ that give a bijection between $\mathbf H_\SG(\CX)$ and $\mathbf K_\SG(\CX)$ (cf. Theorem \ref{T-HK-equivalence}).

Finally, with these understanding, we are ready to define smooth Lie 2-group actions on Lie groupoids. The smoothness of
a 2-group action given by a 2-group homomorphism $\rho$ from $\SG$ to $\SB\SAut(\CX)$ is defined via the associated Kan fibration. This leads to Definition \ref{D-Kan-smoothaction}.

\begin{remark}
We remark that the arguments in this paper work for $\SG$ to be a Lie 2-groupoid.

One may expect the strategy used here to extend to the case where $\SX$ is a Lie $n$-groupoid with $n>1$. In fact, none of the following questions are trivial:
\begin{enumerate}
\item One may expect $\SB\SAut(\SX)$ to be a $(n+1)$-group. This  suggests that an $(n+1)$-group $\SG$-action on $\SX$ is given by a morphism $\rho$ from $\SG$ to $\SB\SAut(\SX)$, yielding the set $\CH_\SG(\SX)$.
\item Let $\CK_\SG(\SX)$ consist of the Kan fibrations over $\SG$ with fiber $\SX$. It seems not so hard to associate a Kan fibration over $\SG$ with a $(n+1)$-group morphism $\rho\co \SG\rto \SB\SAut(\SX)$, which gives a map $\Lambda\co \CK_\SG(\SX)\rto \CH_\SG(\SX)$.
\item  Constructing the map $\Pi\co \CH_\SG(\SX)\rto \CK_\SG(\SX)$ is the most challenging part:  one must
build a ``universal'' Kan fibration over $\SB\SAut(\SX)$.
\end{enumerate}
All these issues will be addressed in future work.
\end{remark}

In \S\ref{Sec-gp-actionon1} and \S\ref{Sec-2gp-orbifold} we consider two special cases of actions on Lie groupoids:
\begin{itemize}
\item[(I)] 1-group $\SB G$-action on Lie groupoids; in particular we prove the slice theorem in this case;
\item[(II)] 2-group $\SG$-action on orbifolds; we develop the $\SG$-equivariant differential geometry on orbifolds; in particular, we give infinitesimal actions of \'etale Lie 2-groups.
\end{itemize}

In \S\ref{Sec-symplectic-reduction}, we consider an application to symplectic reduction on orbifolds. Consider a symplectic manifold $(X,\omega,G,\mu)$ with a Hamiltonian $G$-action. If $0\in\fk g^*$ is a regular value of the moment map $\mu$, the symplectic reduction $X\sslash G$ may be a symplectic orbifold, and if the action on 0-level set is free, the reduction is still a manifold. We may interpret this as that the symplectic reduction on Lie 0-groupoid $X$ with respect to Hamiltonian 1-group $\SB G$ may be a Lie 1-groupoid; moreover, if the $\SB G$-action on 0-level set is free, the reduction is still a Lie 0-groupoid. Hence, symplectic reduction is {\em not} closed in the category of Lie 0-groupoids.

One may ask: to what extent is the symplectic category  closed under symplectic reduction?  For example,
{\em is the symplectic reduction of an orbifold still an orbifold?}
One of the goal of this paper is to give a complete answer to this question. The answer Theorem \ref{T-orbifold-reduction} has three parts:
\begin{enumerate}
\item If the 2-group $\SG$ is a 1-group $\SB G$, the regular symplectic reduction is still an orbifold; that is, the symplectic reduction of orbifolds with respect to Hamiltonian {1-group} actions remains an orbifold. In this sense, the symplectic reduction with respect to 1-groups is closed in the category of orbifolds.
\item The symplectic reduction of an orbifold with respect to a Hamiltonian \'etale 2-group action may be a Lie 2-groupoid; hence, symplectic reduction is not closed in the category of orbifolds.
\item If the 2-group $\SG$ action on the level set is 1-free (cf. Definition \ref{D-1-free-action}), the symplectic reduction is still an orbifold.
\end{enumerate}

We remark that most of arguments in this paper work for \'etale $\mc X$. A certain special case of the symplectic reduction for symplectic \'etale Lie groupoids was already studied in \cite{Hoffman-Sjamaar2021-IMRN}. Readers are refered to Remark \ref{R-comparison} for the comparison of our work with the work in \cite{Hoffman-Sjamaar2021-IMRN}.

\subsection*{\bf Organization of this paper} We first recall basic concepts of higher Lie groupoids with a focus on the Lie
2-groupoids in \S \ref{Sec-LG}. Then in \S \ref{Sec-aut-X} we study the structure on automorphisms of a Lie groupoid $\CX$ and construct the 2-group $\SB\Aut(\CX)$, and discuss some issues of 2-groups in \S \ref{Sec-unit} and \S \ref{Sec-transformation}. With these preparation, we define and study 2-group actions on Lie groupoids in \S \ref{Sec-2p-action}. Finally, as an application, we study Hamiltonian actions of \'etale Lie 2-groups on symplectic orbifolds and symplectic reductions in \S \ref{Sec-symplectic-reduction}.

\subsection*{Acknowledgements}
This work was supported by the National Natural Science Foundation of China (No. 12071322), the National Key R \& D Program of China (No. 2020YFA0714000).

\section{Lie $n$-groupoids}\label{Sec-LG}

In this section, we review  basic concepts of higher Lie groupoids with a focus on the Lie 2-groupoids. A Lie $n$-groupoid is a simplicial manifold satisfying certain Kan conditions. One may refer to \cite{Zhu2009-IMRN-Lie-n-grpds}, \cite{Gelfand-Manin2003} for further details. See also \cite{Gelfand-Manin2003,Goerss-Jardine2009, Trentinaglia-Zhu2011-Compositio}.

\subsection{Simplicial manifolds}\label{Sec-SM}

For any integer $n\geqslant 0$, denote the set $\{0,1,\ldots, n\}$ by $[n]$.

Throughout this paper, all maps $\zeta\co[m]\rto [n]$ are non-decreasing. We denote an injective map by $\zeta\co [m]\rightarrowtail [n]$
and a surjective map by $\zeta\co [m]\twoheadrightarrow [n]$.

Let $\mc C(m,n)$ denote the set of all maps from $[m]$ to $[n]$, and $\mc C_\infty$ be the union of $\mc C(m,n)$ for all $m$ and $n$.
Let $\mc C^{\triangleleft}(m,n)$ and $\mc C^{\triangleright} (m,n)$ be subsets of injective and surjective maps in $\mc C(m,n)$ respectively and, $\mc C^\triangleleft_\infty$ and $\mc C^\triangleright_\infty$ be corresponding subsets of $\mc C_\infty$.

\begin{defn}
[Simplicial set/manifold]\label{D-simplicial-set}
A {\bf simplicial set} (resp. {\bf simplicial manifold}) $\SX$ is made up of sets (resp. manifolds) $\{X_n\}_{n\geqslant 0}$ and a collection of (resp. smooth) structure maps
\[
\mc S:=\{d_\zeta\co X_n\rto X_m\mid \forall\,\, \zeta\co[m]\rto [n] \mbox{ in } \mc C_\infty\}
\]
satisfying the coherent condition
$d_{\zeta_1\circ \zeta_2}=d_{\zeta_2}\circ d_{\zeta_1}.$
\end{defn}

Set
\begin{equation}\label{e-S-half}
\mc S^\triangleleft:=\{d_\zeta\in \mc S\mid \zeta\in \mc C^\triangleleft_\infty\}  \and
\mc S^\triangleright:=\{d_\zeta\in \mc S\mid \zeta\in \mc C^\triangleright_\infty\}.
\end{equation}

\begin{remark}\label{R-face-maps}
For any $n$ and $0\leqslant i\leqslant n$, define the injection
\[
\zeta^{\triangleleft,i}_n\co [n-1]\rightarrowtail [n]
\]
such that $i\mapsto i+1$ and $\zeta^{\triangleleft,i}_n|_{[i-1]}=id$, and the surjection
\[
\zeta^{\triangleright,i}_n\co [n+1]\twoheadrightarrow [n]
\]
such that $i+1\mapsto i$. Via compositions, $\mc C_\infty$ is generated by $\zeta^{\triangleleft,i}_n, \zeta^{\triangleright,i}_n$ for all $n$ and $0\leqslant i\leqslant n$. Hence $\mc S$ is generated by
\begin{align*}
&d_i^n:=d_{\zeta^{\triangleleft,i}_n}\co X_n \rto X_{n-1} & (\text{face map}),\\
&s_i^n:=d_{\zeta^{\triangleright,i}_n}\co X_{n} \rto X_{n+1} & (\text{degeneracy map}).
\end{align*}
\end{remark}

An element in $X_n$ is called an $n$-simplex. A simplex in the images of degeneracy maps is called degenerate, and otherwise nondegenerate.

\begin{defn}\label{D-skeleton}
The {\bf $n$-th skeleton} $\sk_n(\SX)$ of simplicial set $\SX$ is a simplicial set with
\[
\sk_n(\SX)_m:=X_m,\qq\forall\, m\leqslant n
\]
and $\sk_n(\SX)_m$ only having degenerate simplices in $X_m$ when $m>n$.
\end{defn}

\begin{defn}[homomorphism and $\triangleleft$-homomorphism]\label{D-homomorphism}
Let $\SX$ and $\SY$ be two simplicial sets (resp. manifolds). By a {\bf homomorphism} $\f\co \SX\rto \SY$ we mean a collection of maps (resp. smooth maps) $f_n\co X_n\rto Y_n$, $n\geqslant 0$ that are compatible with structure maps in $\mc S$, i.e,
\[
f_m\circ d_\zeta=d_\zeta \circ f_n, \qq\forall\; \zeta\in \mc C(m,n).
\]
Denote the set of homomorphisms from $\SX$ to $\SY$ by $\Hom(\SX,\SY)$.

If $\f$ is only compatible with structure maps in $\mc S^\triangleleft$, we say $\f$ is a {\bf $\triangleleft$-homomorphism}.
\end{defn}

The fundamental examples of simplicial sets include simplicial sets for the standard simplicial complex $\Delta^m$, its boundary $\partial\Delta^m$ and its $j$-th horn $\Lambda^m_j$ for each $0\leqslant j\leqslant m$. These are described %
in the following examples.

\begin{example}[The simplicial set for $\Delta^m$]
\label{E-Delta}
The {simplicial $m$-simplex $\Delta[m]$} is defined to be
$
(\Delta[m])_n:=\mc C(n,m)
$
or equivalently
\[
(\Delta[m])_n:=\{(i_0,i_1,\ldots,i_n)\mid 0\leqslant i_j\leqslant i_k\leqslant m, \forall\, 0\leqslant j\leqslant k\leqslant n\};
\]
with the structure maps
\begin{align*}
d^n_j(i_0,i_1,\ldots,i_n)&:= (i_0,\ldots,\widehat{i_j},\ldots,i_n),\\
s^n_j(i_0,i_1,\ldots,i_n)&:= (i_0,\ldots,i_{j-1},i_j,i_j,i_{j+1},\ldots,i_n)
\end{align*}
for $j\in \{0,1,\ldots,n\}$, $n\geqslant 0$.
\end{example}

Given a $\zeta\in \mc C(m,n)$, we have the homomorphism  $\msf F_\zeta\co \Delta[m]\to \Delta[n]$ given by
\begin{equation}\label{e-delta-homo}
F_{\zeta,\ell}\co \mc C(\ell,m)\rto \mc C(\ell,n),\qq \zeta'\mapsto \zeta\circ \zeta'.
\end{equation}

\begin{example}[The simplicial set for the boundary $\partial\Delta^m$]\label{E-boundary}
The simplicial set  $\partial\Delta[m]$ of the boundary of $\Delta[m]$ is defined to be $\sk_{m-1}(\Delta[m])$.
\end{example}

\begin{example}[The simplicial set for the horn $\Lambda^m_j$] \label{E-horn}
The simplicial set $\Lambda[m,j]$ of the horn $\Lambda^m_j$ of $\Delta[m]$ is defined to be
\begin{align*}
(\Lambda[m,j])_n&=\{f\in (\Delta[m])_n\mid \{0,\ldots, j-1,j+1,\ldots, m\}\not\subseteq \mathrm{Im}\,f\}\subseteq(\Delta[m])_n.
\end{align*}
\end{example}

It is clear that in the usual sense of simplicial subsets we have embedding homomorphisms
\begin{equation}\label{e-homomorphisms}
\lambda_{m,j}\co\Lambda[m,j]\xrightarrow{\lambda^\circ_{m,j}} \partial\Delta[m] \xrightarrow{\partial_m} \Delta[m].
\end{equation}

Let $\Delta^\bullet_m:=(0,1,\ldots,m)$ be the unique nondegenerated $m$-simplex in $\Delta[m]$. For any $\SX$ and $\f\co \Delta[n]\rto \SX$, $\f$ is uniquely determined by $f_n(\Delta^\bullet_n)$. Hence, \begin{equation}\label{e-n-simplex}
\Hom(\Delta[n],\SX)\cong X_n,\qq
\f\mapsto f_n(\Delta^\bullet_n).
\end{equation}
As a consequence,
\begin{equation*}
d_\zeta=(\SF_{\zeta}) ^\ast\co \Hom(\Delta[n], \SX)\rto \Hom(\Delta[m], \SX)
\end{equation*}
for $\zeta\in \mc C(m,n).$

Set
\begin{eqnarray}
& X_{m,j}=\Hom(\Lambda[m,j],\SX), \qq
& \lambda_{m,j}^\ast\co X_m\rto X_{m,j};
\label{e-a}\\
& X_{\partial m}=\Hom(\partial\Delta[m],\SX),\qq
& \partial_m^\ast\co X_m\rto X_{\partial m}.
\label{e-b}
\end{eqnarray}

\begin{remark}[Expression of $n$-simplices]\label{R-expression}
Let $x\in X_n$ be an $n$-simplex. By \eqref{e-n-simplex}, $x$ can be viewed as a homomorphism $x\co \Delta[n]\rto \SX$ and is identified with $x(\Delta^\bullet_n)$. We usually denote $x(\Delta^\bullet_n)$, and hence $x$, by $x_{0\ldots n}$. The faces of $x_{0\ldots n}$ are denoted by
$x_{i_0\ldots i_k}$ with $0\leqslant i_0<i_1<\ldots<i_k\leqslant n$.
\end{remark}

\subsection{Lie $n$-groupoids}\label{Sec-LnG}

A Lie $n$-groupoid is a simplicial manifold with Kan conditions.

\begin{defn}[Kan condition]\label{D-Kan-condition}
Let $\SX$ be a simplicial set (resp. manifold). We say that $\SX$ satisfies
\begin{itemize}
\item the Kan condition $\Kan(m,j)$ if the $\lambda^\ast_{m,j}$ in \eqref{e-a} is a surjection (resp. a surjective submersion), and
\item the unique Kan condition $\Kan!(m,j)$ if the $\lambda^\ast_{m,j}$ in \eqref{e-a} is a bijection (resp. a diffeomorphism).
\end{itemize}
We say $\SX$ satisfies $\Kan(m)$ (resp. $\Kan!(m)$) if it satisfies $\Kan(m,j)$ (resp. $\Kan!(m,j)$) for all $0\leqslant j\leqslant m$.
\end{defn}

\begin{defn}[Lie $n$-groupoid/group]\label{D-Lie-m}
A {\bf (Lie) $n$-groupoid} is a simplicial set (resp. manifold) that satisfies $\Kan(m)$ for all $1\leqslant m\leqslant n$ and $\Kan!(m)$ for all $m\geqslant n+1$.

A {\bf (Lie) $n$-group} is a (Lie) $n$-groupoid which contains a single $0$-simplex. We usually denote the unique 0-simplex by $\bullet$, and call $e:=s^0_0(\bullet)$ the unit.
\end{defn}

The well-definedness of Lie $n$-groupoids needs extra care, one may refer to \cite[Corollary 2.5]{Henriques2008-Compositio} for details.

By \cite[Proposition 2.37 and Remark 3.22]{Li-2015-higher-groupoid}, an $n$-groupoid is determined by $X_0,\ldots,X_{n+2}$. In fact,
\[
X_m\cong \Hom(\sk_{n+1}\Delta[m],\SX), \qq\forall\, m\geqslant n+3.
\]
In this paper, we are interested in the case that $m\leqslant 2$. Then it suffices to construct $X_m$, $0\leqslant m\leqslant 4$ with structure maps satisfying necessary Kan conditions.

Let $\SX$ and $\SY$ be two Lie $n$-groupoids. One may define an $n$-groupoid $\SHom(\SX,\SY)$ such that $\Hom_0(\SX,\SY)=\Hom(\SX,\SY)$: set \footnote{See for example \cite[Definition 5.1]{Friedman2012-RockyMountain} for the definition of products of simplicial sets.}
\[
\Hom_m(\SX,\SY)=\Hom(\Delta[m]\times \SX,\SY).
\]
For any $\zeta\co [\ell]\rto[m]$, the structure map
\[
d_\zeta\co \Hom_m(\SX,\SY)\rto \Hom_\ell(\SX,\SY)
\]
is given by
\[
(\Delta[m]\times \SX\xrightarrow{\u} \SY)
\mapsto (\Delta[\ell]\times \SX\xrightarrow{(\SF_\zeta,\msf{id})} \Delta[m]\times\SX\xrightarrow{\u} \SY),
\]
where $\SF_\zeta$ is given in \eqref{e-homomorphisms}.

\begin{example}\label{E-X-m}
Let $\SX$ be a Lie $n$-groupoid. For any $k\geqslant 0$, set
$\SX^{[k]}=\SHom(\Delta[k],\SX)$. For any $\zeta\co [\ell]\rto [k]$, we have the homomorphism
\begin{equation}
\d_\zeta=\SF_\zeta^\ast\co \SX^{[k]}\rto \SX^{[\ell]}.
\end{equation}
In particular, we have $\d^m_j$, $\s^m_j$'s.
\end{example}

Properness and \'etaleness are important concepts in Lie groupoids. Here we give the definitions of properness
and \'etaleness for higher Lie groupoids.

\begin{defn}[Proper Lie $n$-groupoid]\label{D proper}
A Lie $n$-groupoid $\SX$ is {\bf proper} if the map $\partial_m^\ast\co X_m\rto X_{\partial m}$ in \eqref{e-b} is proper for all $1\leqslant m\leqslant n$.
\end{defn}

\begin{defn}[\'etale Lie $n$-groupoid and group]\label{R local-Kan}
We say a simplicial manifold $\SX$ satisfies the {\'etale Kan condition} $\Kan!!(m,j)$ if the $\lambda^\ast_{m,j}$ in \eqref{e-a} is an \'etale map, i.e., a local diffeomorphism. We say $\SX$ satisfies $\Kan!!(m)$ if it satisfies $\Kan!!(m,j)$ for all $0\leqslant j\leqslant m$.

A Lie $n$-groupoid $\SX$ is {\bf $m$-\'etale} if it satisfies $\Kan!!(m)$, and {\bf \'etale} if it is $m$-\'etale for all $1\leqslant m\leqslant n$.

A Lie $n$-group $\SX$ is {\bf $m$-\'etale} if it satisfies $\Kan!!(m)$, and {\bf \'etale} if it is $m$-\'etale for all $2\leqslant m\leqslant n$.
\end{defn}

\subsection{Kan fibrations and hypercovers}\label{Sec-HK}
Let $\f\co \SX \rto \SY$ be a morphism of Lie $n$-groupoids. Denote the fibered product of the following diagram
\[
\begin{tikzcd}
&& Y_m=\Hom(\Delta[m],\SY) \arrow[d,"\lambda^\ast_{m,j}"]                  \\
{X_{m,j}=\Hom(\Lambda[m,j],\SX)} \arrow[rr, "f_{m,j}"] && {Y_{m,j}=\Hom(\Lambda[m,j],\SY)}
\end{tikzcd}
\]
by
\[
\Hom(\Lambda[m,j]\rto \Delta[m],\SX \rto \SY).
\]
Then the natural map $\lambda^\ast_{m,j}\co X_m\rto
X_{m,j}$ and the $f_m\co X_m\rto Y_m$ induce a map
\[
\tau_{m,j}=(\lambda^\ast_{m,j},f_m)\co X_m\rto \Hom(\Lambda[m,j]\rto \Delta[m],\SX \rto \SY).
\]
When $m=0$, we take
\[
\Hom(\Lambda[m,j]\rto \Delta[m],\SX \rto \SY):=Y_0
\]
and $\tau_{0,0}:=f_0$. Since $\lambda^\ast_{m,j}\co Y_m\rto Y_{m,j}$ is a surjective submersion,
\[
\Hom(\Lambda[m,j]\rto \Delta[m],\SX \rto \SY)
\]
is a  smooth manifold.

\begin{defn}[Kan fibration]\label{D-Kan-fibration}
A homomorphism $\f\co \SX \rto \SY$ of Lie $n$-groupoids is a {\bf Kan fibration} if $\tau_{m,j}$ are surjective submersions for all $0\leqslant j\leqslant m$, and $\tau_{n,j}$ are diffeomorphisms for all $0\leqslant j\leqslant n$ (which implies that $\tau_{m,j}$ are diffeomorphisms for all $m>n$).

A Kan fibration of Lie $n$-groupoids $\f\co \SX\rto\SY$ is called {\bf $m$-\'etale} if $\tau_{m,j}$ are local diffeomorphisms for all $0\leqslant j\leqslant m$, and {\bf \'etale} if it is $m$-\'etale for all $1\leqslant m\leqslant n-1$.

We denote the conditions that
\begin{itemize}
\item $\tau_{m,j}$ is a surjective submersion by $\Kan(m,j)(\f)$, and,
\item $\tau_{m,j}$ is a local diffeomorphism by $\Kan !!(m,j)(\f)$, and,
\item $\tau_{m,j}$ is a diffeomorphism by $\Kan !(m,j)(\f)$.
\end{itemize}
Similarly, we have $\Kan(m)(\f)$, $\Kan!!(m)(\f)$ and $\Kan!(m)(\f)$.
\end{defn}

A Kan fibration of Lie $n$-groupoids $\f\co \SX\rto \SY$ is treated as a fiber bundle. For each $y_0\in Y_0$, we define the fiber over $y_0$, denoted by $\SFib(y_0)$, to be the fibered product of the following diagram of homomorphisms of simplicial manifolds.
\[
\begin{tikzcd}
& \SX\arrow[d,"\f"]                  \\
{\Delta[0]} \arrow[r, "y_0"] & {\SY}
\end{tikzcd}
\]
The following Lemma is essentially proved in \cite[Lemma 3.36]{Li-2015-higher-groupoid}.

\begin{lemma}\label{L fiber}
Suppose $\f\co \SX\rto \SY$ is a Kan fibration of Lie $n$-groupoids. Then $\SFib(y_0)$ is a Lie $(n-1)$-groupoid.
\end{lemma}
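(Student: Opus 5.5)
We need to show that $\SF:=\SFib(y_0)$ is a Lie $(n-1)$-groupoid. First we identify its levels concretely. Let $c_m(y_0)\in Y_m$ be the totally degenerate $m$-simplex on $y_0$, namely the image of $y_0$ under the degeneracy $d_\zeta$ for $\zeta\co[m]\rto[0]$. Then
\[
F_m=f_m\inv(c_m(y_0))\subseteq X_m .
\]
The case $m=0$ of the Kan fibration condition says $\tau_{0,0}=f_0$ is a surjective submersion, so $F_0$ is a submanifold. For $m\geqslant1$ we use the fibered-product description of $\tau_{m,j}$: restricting $\tau_{m,j}$ to the constant simplices gives a map
\[
F_m\rto \Hom(\Lambda[m,j],\SF),
\]
and $\Hom(\Lambda[m,j],\SF)$ is exactly the fiber of $\Hom(\Lambda[m\to\Delta],\SX\to\SY)$ over $c_m(y_0)$ under the projection to $Y_m$. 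In other words, the horn-filling map of $\SF$ is the base change of $\tau_{m,j}$ along $\{c_m(y_0)\}\hookrightarrow Y_m$.

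The key step is smoothness, and it proceeds by induction on $m$. Suppose $F_k$ and $\Hom(\Lambda[k,j],\SF)$ are manifolds for $k<m$. We must show that $F_m=f_m\inv(c_m(y_0))$ and $\Hom(\Lambda[m,j],\SF)$ are submanifolds and that the restricted horn map is a surjective submersion. Since the restricted horn map is a base change of $\tau_{m,j}$, this follows once we know two things:
\begin{enumerate}
\item $\Hom(\Lambda[m,j],\SF)$ is a submanifold of $X_{m,j}$;
\item its preimage under $\tau_{m,j}$ is transverse, which holds because $\tau_{m,j}$ is a submersion.
\end{enumerate}
For the first point, $\Hom(\Lambda[m,j],\SF)$ is the preimage of the constant horn under $f_{m,j}\co X_{m,j}\rto Y_{m,j}$. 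We show $f_{m,j}$ is a submersion by building the horn up face by face (the standard argument of Henriques, \cite[Corollary 2.5]{Henriques2008-Compositio}). Each face is attached via a lower-dimensional horn, and at each stage the Kan fibration conditions $\Kan(k,i)(\f)$ for $k<m$ make the relevant maps submersions. This inductive transversality argument is the main obstacle, and we would follow Li's proof \cite[Lemma 3.36]{Li-2015-higher-groupoid} for it.

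Once smoothness is in place, $\Kan(m)$ for $\SF$ with $1\leqslant m\leqslant n-1$ follows from surjectivity of the base change of the surjective submersion $\tau_{m,j}$. The unique-filling condition follows from the assumption on $\tau_{n,j}$: since $\tau_{m,j}$ is a diffeomorphism for $m\geqslant n$, its base change is a diffeomorphism, so $\SF$ satisfies $\Kan!(m)$ for all $m\geqslant n$. Together these say precisely that $\SF$ is a Lie $(n-1)$-groupoid. The degenerate low-dimensional cases need separate attention; in particular, when $n=1$ the conclusion is that $\SF$ is a manifold, with $F_m\cong F_0$ for all $m$.
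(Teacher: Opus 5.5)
Your proposal is correct and takes essentially the route the paper relies on. The paper gives no argument of its own and defers to \cite[Lemma 3.36]{Li-2015-higher-groupoid}. There, as in your proposal, the horn-filling map of $\SFib(y_0)$ is the base change of $\tau_{m,j}$, so surjective submersions (giving $\Kan(m)$) and diffeomorphisms (giving $\Kan!(m)$ for $m\geqslant n$) are inherited. Smoothness of $\Hom(\Lambda[m,j],\SFib(y_0))$ comes from the Henriques-type argument that $f_{m,j}$ is a submersion. One small correction: that argument uses $\Kan(k,i)(\f)$ for $0\leqslant k\leqslant m-1$, including $\tau_{0,0}=f_0$, rather than an induction over the fiber's own levels.
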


We remark that a Kan fibration of Lie 1-groupoids is the same as a fiber bundle of Lie groupoids.

\begin{example}\label{E-Kan-fibration}
Suppose $\SG$ is a Lie 2-group, i.e., $G_0=\{\bullet\}$. Let $\pi\co\SK\rto\SG$ be a Kan fibration. We say $\SX:=\SFib(\bullet)$ is the fiber of the Kan fibration $\pi$. There is a natural inclusion $\mathsf i\co \SX\rto \SK$, hence we denote the Kan fibration by
\begin{equation}
\pi\co \SX\rightarrowtail \SK \twoheadrightarrow \SG.
\end{equation}
\end{example}

It is natural to define homomorphisms between Kan fibrations. Here we just consider a special case that is need in this paper.

\begin{defn}\label{D-KF-homomorphism}
Let $\pi_i\co \SX_i\rightarrowtail \SK_i\twoheadrightarrow \SG,i=1,2$ be two Kan fibrations over $\SG$. A homomorphism/isomorphism $\SF\co \SK_1\to\SK_2$ is called a {\bf homomorphism/isomorphism of Kan fibrations} if
$\pi_2\circ\SF=\pi_1$. Such an $\SF$ induces a homomorphism/isomorphism on fibers $\f\co \SX_1\to\SX_2$.

If $\SX_1=\SX_2=\SX$, we say $\SF$ is a {\bf $\bullet$-homomorphism/isomorphism of Kan fibrations} if $\f\co \SX\to\SX$ is the identity homomorphism/isomorphism.
\end{defn}

Equivalence is an important concept in the theory of Lie groupoids. For general Lie $n$-groupoids, the notation of hypercover was introduced in \cite[Definition 2.3]{Zhu2009-IMRN-Lie-n-grpds} which plays a role of particular equivalence. The usual equivalence is defined in Definition \ref{D-equivalence}.

For a homomorphism $\msf f\co \SX \rto \SY$ of Lie $n$-groupoids, denote the fibered product of the following diagram
\[
\begin{tikzcd}
&& Y_m=\Hom(\Delta[m],\SY) \arrow[d, "\partial^\ast_m"]                  \\
{ X_{\p m}=\Hom(\partial\Delta[m],\SX)} \arrow[rr, "f_{m,\partial}"] && { Y_{\p m}=\Hom(\partial\Delta[m],\SY)}
\end{tikzcd}
\]
by
\[
\Hom(\partial\Delta[m]\rto \Delta[m],\SX \rto \SY).
\]
Then the natural map $\partial^\ast_m\co X_m\rto  X_{\p m}$ and the $f_m\co X_m\rto Y_m$ induce a map
\[
\tau_m=(\partial^\ast_m,f_m)\co X_m\rto \Hom(\partial\Delta[m]\rto \Delta[m],\SX \rto \SY).
\]
When $m=0$, we take
\[
\Hom(\partial\Delta[m]\rto \Delta[m],\SX \rto \SY):=Y_0
\]
and $\tau_0:=f_0$.

\begin{defn}[Hypercover]\label{D hypercover}
A homomorphism $\f\co \SX \rto \SY$ between Lie $n$-groupoids is a {\bf hypercover} if all
\[
\Hom(\partial\Delta[m]\rto \Delta[m],\SX \rto \SY)
\]
are smooth manifolds \footnote{This fact can be proved inductively for $m\geqslant 1$, see \cite[Lemma 2.4]{Zhu2009-IMRN-Lie-n-grpds}.}, $\tau_m$ are surjective submersions, and $\tau_n$ is a diffeomorphism (which implies $\tau_m$ are diffeomorphisms for all $m>n$).

We denote the condition that $\tau_m$ is a surjective submersion by $\Acyc(m)(\f)$, and the condition that $\tau_m$ is a diffeomorphism by $\Acyc!(m)(\f)$.
\end{defn}

\begin{defn}[Morphism, Morita equivalence and automorphism]\label{D-morphisms}
Given two Lie $n$-groupoids $\SX$ and $\SY$, a {\bf (weak) morphism} from $\SX$ to $\SY$, denoted by $\f\co\SX\rightharpoonup \SY$,
is a diagram
\[
\SX\xleftarrow{\f^\ell} \SZ \xrightarrow{\f^r} \SY
\]
where $\f^\ell$ is a hypercover. We write $\f=(\f^\ell,\SZ,\f^r)$.

If $\f^r$ is also a hypercover, we say $\f$ is a {\bf Morita equivalence} \footnote{The standard definition only requires $\f^\ell$ and $\f^r$ to be equivalence.}. An {\bf automorphism} of $\SX$ is defined to be a Morita equivalence from $\SX$ to itself.
\end{defn}

\begin{example}\label{E-mor-Phi-i}
For any $\zeta\in \mc C^{\triangleleft}(m,n)$, the $\msf d_\zeta\co \SX^{[n]}\rto \SX^{[m]}$ is a hypercover.
\end{example}

\begin{defn}[Equivalence]\label{D-equivalence}
Let $\f\co \SY\rto\SX$ be a homomorphism of Lie $n$-groupoids. We say that $\f$ is an {\bf equivalence} if the homomorphism
\begin{equation}
\d^1_0\circ \msf{pr}_2\co \SY\times_{\f,\SX,\msf d^1_1} \SX^{[1]}\xrightarrow{\msf {pr}_2}\SX^{[1]}\xrightarrow{\msf d^1_0} \SX
\end{equation}
is a hypercover.
\end{defn}

Such an equivalence is also called a weak equivalence in \cite[Definition 3.20]{Behrend-Getzler2017}. Obviously, a hypercover $\f\co\SY\rto\SX$ is an equivalence.

\begin{defn}
[Natural transformation]\label{D-natural-transformation}
Let $\f$ and $\g$ be two homomorphisms from  $\SX$ to $\SY$. A {\bf natural transformation} from $\f$ to $\g$ is a homomorphism $\h\co \SX\rto \SY^{[1]}$ such that
$
\f=\msf d^1_1\circ \msf h$,
$
\g=\msf d^1_0\circ \msf h.
$
We write $\h\co \f\Rightarrow \g$.
\end{defn}

\subsection{Lie 1-groupoid vs Lie groupoid}\label{Sec-LL}

\def \X{\mathsf{X}}

It is well known that (Lie) 1-groupoids are equivalent to (Lie) groupoids. In fact, they are in one-to-one correspondence.

A {\bf groupoid} $\CX$ is a small category such that every morphism has a unique inverse. We denote the set of objects in $\CX$ by $X^0$ and the set of morphisms in $\CX$ by $X^1$. So $X^1$ is the collection of morphisms
\[
X^1=\bigsqcup_{(x,y)\in\, X^0\times X^0} X^1(x,y),
\]
where $X^1(x,y)$ \footnote{In the literature on the theory of categories, this $X^1(x,y)$ is denoted by $\Hom(x,y)$.} is the set of morphisms from $x$ to $y$. We call a morphism $g\in X^1(x,y)$ an arrow from $x$ to $y$, and call $x$ and $y$ the source and the target of $g$ respectively. We write
\[
x=s(g),\qq y=t(g),
\qq
\mbox{and}
\qq
g\co x\rto y\qq (\text{or } x \xrightarrow{g} y),
\]
where $s$, $t\co X^1\rto X^0$ are called the source map
and the target map of the groupoid $\CX$ respectively. Denote the composition of arrows \footnote{In this paper we use the convention that the composition of arrows of a groupoid is going from left to right. %
} by
\[
\cdot\co  X^1(x,y)\times  X^1(y,z)\rto  X^1(x,z), \qq
(g,h)\mapsto g\cdot h\qq (\text{or simply }gh).
\]
For every $x\in X^0$, there exists a unique  unit $1_x\in X^1(x,x)$ with respect to the composition ``$\cdot$'', i.e., $1_x \cdot g=g$ and $g\cdot 1_x=g$. For every arrow $g\in  X^1(x,y)$, there exists a unique inverse $h\in  X^1(y,x)$ such that $g\cdot h=1_x$ and $h\cdot g=1_y$. We denote $h$ by $g^{-1}$. We encode the unit and inverse into two maps:
\begin{itemize}
\item the unit map $u\co X^0\rto  X^1$, $x\mapsto 1_x$;
\item the inverse map $i\co X^1\rto  X^1$, $g\mapsto  g^{-1}$.
\end{itemize}
Therefore a groupoid $\CX$ is a pair of sets $(X^0,X^1)$ with structure maps $(s,t,\cdot,u,i)$.

\begin{defn}[Lie and orbifold groupoid]\label{D Lie-grpd}
For a groupoid $\CX=(X^1\rrto X^0)$, if both $X^0$ and $X^1$ are smooth manifolds and the structure maps are smooth with $s$ and $t$ both being submersions, we call $\CX$ a {\bf Lie groupoid}.

A Lie groupoid $\CX$ is proper if $s\times t\co X^1\rto X^0\times X^0$ is proper and \'etale if both $s$ and $t$ are \'etale, i.e., local diffeomorphisms. 

A proper \'etale Lie groupoid is called an {\bf orbifold (groupoid)}.
\end{defn}

We explain the bijective correspondence
between Lie 1-groupoids and Lie groupoids.

\vskip 0.1in
\noindent
{\em{From Lie 1-groupoids to Lie groupoids.} }
\vskip 0.1in

Let $\X$ be a Lie 1-groupoid. We associate to it  a Lie groupoid $\CX=(X^1\rrto X^0)$:
\begin{itemize}
\item set $X^0:=X_0$, $X^1:=X_1$;
\item set $s:=d^1_1$, $t:=d^1_0$, $u:=s^0_0$;
\item the $\cdot\co X^1\times_{t,X^0,s}X^1\rto X^1$ is the composition of maps
\[
X_{2,1}\xrightarrow{(\lambda^\ast_{2,1})^{-1}}
X_2\xrightarrow{d^2_1}X_1;
\]
\item let $\alpha\in X^1$, the left (resp. right) inverse is defined as the following:  let $\beta_l$ and $\beta_r$ be the 2-simplices of $\X$ such that
\[
\beta_{l,12}=\alpha,\qq \beta_{r,01}=\alpha,\qq
\beta_{l,02}=s^0_0(d^1_0(\alpha)),\and
\beta_{r,02}=s^0_0(d^1_1(\alpha);
\]
the left (resp. right) inverse of $\alpha$ is taken to be $\beta_{l,01}$ (resp. $\beta_{r,12}$);
\item the $\Kan!(3)$ of $\X$ implies that the product ``$\cdot$'' is associative, and the left and right inverses coincide.
\end{itemize}

\vskip 0.1in
\noindent
{\em{From Lie groupoids to Lie 1-groupoids.} }
\vskip 0.1in
Given a Lie groupoid $\CX=(X^1\rrto X^0)$, we associate to it a Lie groupoid $\X$:
\begin{itemize}
\item set $X_0:=X^0$, $X_1:=X^1$;
\item set $d^1_1:=s$, $d^1_0:=t$,
$s^0_0:=u$;
\item the $X_2$ consists of triples
$(\alpha_{01},\alpha_{12},\alpha_{02})\in (X_1)^3$ such that
$\alpha_{01}\alpha_{12}=\alpha_{02}$.
\end{itemize}

\begin{remark}
Under the identification between Lie 1-groupoids and Lie groupoids, one verifies that basic concepts in Lie groupoids such as equivalence, Morita equivalence, automorphism, and etc. coincide with those in Lie 1-groupoids.
\end{remark}

At the end of this section, we give a criterion for a Lie $2$-groupoid to be a Lie $1$-groupoid.

\begin{defn}
[2-isotropy free]
Let $\SZ$ be a Lie 2-groupoid and $z\in Z_0$ be a 0-simplex. Define the 2-isotropic set of $z$ to be
\[
\Gamma_z[2]=\{\alpha\in Z_2\mid
d^2_j(\alpha)=s^0_0(z), \forall\, 0\leqslant j\leqslant 2\}.
\]
The $\SZ$ is called {\bf 2-isotropy free} if, for any $z\in Z_0$, $\Gamma_z[2]$ consists of only one element.
\end{defn}

\begin{lemma}\label{L-property-of-1-free}
Suppose $\SZ$ is a $2$-isotropy free Lie $2$-groupoid. Then
\begin{enumerate}
\item[(1)] for every 2-simplex $\alpha_{012}\in Z_2$ with $\partial^\ast_2(\alpha_{012})=\partial^\ast_2(s^1_j(g))$ for some $g\in Z_1$, we must have $\alpha_{012}=s^1_j(g)$, and
\item[(2)] consequently, if $\partial^\ast_2(\alpha_{012})=\partial^\ast_2(\alpha_{012}')$ we must have $\alpha_{012}=\alpha_{012}'$, i.e., $\partial^\ast_2\co Z_2\rto \p Z_2$ is injective.
\end{enumerate}
\end{lemma}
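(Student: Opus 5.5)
The plan is to use the Kan conditions $\Kan(3)$ and $\Kan!(4)$ of the Lie 2-groupoid $\SZ$ to transport the comparison of two 2-simplices with the same boundary to a comparison at a degenerate boundary, where 2-isotropy freeness applies. The basic tool is a \emph{filler-difference}: given two 2-simplices $\alpha,\alpha'$ with $\partial^\ast_2\alpha=\partial^\ast_2\alpha'$, fill a suitable 3-horn to obtain a 3-simplex $\theta$. Its remaining face $\beta$ then has the boundary of a degenerate simplex, and $\beta$ is degenerate if and only if $\alpha=\alpha'$. For $\Kan!(3)$-type uniqueness I will use the fact that for a 2-groupoid, $\Kan(3,j)$ together with uniqueness at level 3 modulo faces gives that a 3-simplex is determined by its horn up to the choice of the last face. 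Concretely, the face missing from the horn is uniquely determined by the other three faces: if two 3-simplices share a horn $\Lambda[3,j]$, then a 4-simplex argument using $\Kan!(4)$ shows their $j$-th faces agree. I would verify this from $\Kan!(4)$ and the degenerate 3-simplices.

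For (1), fix $j=0$ (the case $j=1$ is symmetric). Write $g=z_{01}$ with $x=d^1_1 g$ and $y=d^1_0 g$, so that $s^1_0(g)$ has faces $(s^0_0 y, g, g)$ in the order $(d_0,d_1,d_2)$, and $\alpha$ has the same faces. Consider the horn $\Lambda[3,0]$ with faces
\[
d_1=\alpha,\qquad d_2=s^1_0(g),\qquad d_3=s^1_0(g).
\]
These faces are compatible because their edges agree. Fill this horn by $\Kan(3,0)$ to a 3-simplex $\theta$ and set $\gamma=d_0\theta$. Then $\gamma$ has all edges $s^0_0 y$, so $\gamma\in\Gamma_y[2]$, and hence $\gamma=s^1_0s^0_0 y$ by 2-isotropy freeness. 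Now compare with the degenerate 3-simplex $s^2_0 s^1_0(g)$. Its faces are $s^1_0s^0_0y$, $s^1_0 g$, $s^1_0 g$, $s^1_0 g$, so it shares with $\theta$ the horn $\Lambda[3,1]$, namely the faces $d_0,d_2,d_3$. By the uniqueness of the missing face, $d_1\theta=d_1 s^2_0s^1_0 g$, which says $\alpha=s^1_0(g)$.

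For (2), given $\alpha,\alpha'$ with equal boundary $(z_{12},z_{02},z_{01})$, I would fill the horn $\Lambda[3,3]$ with faces
\[
d_0=s^1_0(z_{12}),\qquad d_1=\alpha,\qquad d_2=\alpha'.
\]
The missing face $d_3\theta$ then has boundary $(z_{01},z_{01},s^0_0 x)$, which is $\partial^\ast_2 s^1_1(z_{01})$. By (1), $d_3\theta=s^1_1(z_{01})$. The degenerate simplex $s^2_1(\alpha')$ has faces $(s^1_0 z_{12},\alpha',\alpha',s^1_1 z_{01})$, so it agrees with $\theta$ on the faces $d_0,d_2,d_3$. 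Uniqueness of the missing face then forces $\alpha=\alpha'$.

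The main obstacle is the uniqueness-of-missing-face step: in a Lie 2-groupoid, fillers at level 3 need not be unique, but the missing face of a 3-horn is. I would prove this by taking two 3-simplices sharing a 3-horn, assembling them together with degenerate 3-simplices into a 4-horn, and applying $\Kan!(4)$, or equivalently by appealing to the standard fact (as in \cite{Zhu2009-IMRN-Lie-n-grpds}) that $d_j\co Z_3\to Z_2$ composed with horn filling is well defined. The remaining face/degeneracy index bookkeeping I expect to be routine.
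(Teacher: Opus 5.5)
Your argument is correct and is essentially the paper's. In (1) you fill a $\Lambda[3,0]$-horn built from $\alpha$ and degenerate simplices, observe that the $0$-th face lies in $\Gamma_y[2]$ and so is degenerate, and then identify the filler with a degenerate 3-simplex to read off $\alpha$. In (2) you fill a horn made of $\alpha$, $\alpha'$ and a degenerate simplex, apply (1) to the missing face, and compare with a degenerate 3-simplex. The paper uses $(s^1_1(\alpha_{12}),\alpha',\alpha)\in\Lambda_{3,1}Z$ and $s^2_2(\alpha)$; you use $\Lambda[3,3]$ and $s^2_1(\alpha')$.

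Two corrections, though.

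First, the step you call the main obstacle is not one. By the paper's definition of a Lie $n$-groupoid, a Lie 2-groupoid satisfies $\Kan!(m)$ for all $m\geqslant 3$. So 3-horn fillers are unique, and uniqueness of the missing face is immediate from $\Kan!(3)$, which is exactly what the paper invokes. Your proposed derivation from $\Kan(3)$ and $\Kan!(4)$ alone cannot work: those conditions define a Lie 3-groupoid, where the missing face of a 3-horn is in general not unique.

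Second, your degeneracy labels in (1) are swapped. $s^1_0(g)$ has faces $(g,g,s^0_0x)$; the simplex with faces $(s^0_0y,g,g)$ is $s^1_1(g)$. The degenerate 3-simplex with faces $(s^1_1s^0_0y,s^1_1g,s^1_1g,s^1_1g)$ is $s^2_2s^1_1(g)$, not $s^2_0s^1_0(g)$. So what you wrote is really the case $j=1$, the one the paper treats. The case $j=0$ is its mirror image, using $\Lambda[3,3]$ and then $\Kan!(3,2)$ against $s^2_0s^1_0(g)$. In (2) you revert to the correct convention; there the missing face has boundary $(s^0_0z_1,z_{01},z_{01})$ with $z_1=d^1_0(z_{01})$, which is indeed $\partial^\ast_2 s^1_1(z_{01})$.
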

\begin{proof}
(1). For the first assertion, we prove the case $j=1$, the case $j=0$ can be proved similarly. Suppose $\partial^\ast_2(\alpha_{012})=\partial^\ast_2(s^1_1(g))$. Denote  $z:=d^1_0(g)$. Denote by $1_z:=s^0_0(z)$ and $2_z:=s^1_0(s^0_0(z))=s^1_1(s^0_0(z))$ the degenerate 1-simplex and 2-simplex determined by $z$ respectively. By $\Kan!(3,0)$ of $\SZ$, we have a unique $\alpha_{0123}\in Z_3$ satisfies
\[
\lambda^\ast_{3,0}(\alpha_{0123}) =(\alpha_{012},\alpha_{012},\alpha_{012})\in \Lambda_{3,0}Z.
\]
The $d^3_0(\alpha_{0123})$ has all three faces being $d^2_0(\alpha_{012})=d^2_0(s^1_1(g))=1_z$. Therefore by the $1$-freeness of $\SZ$ we must have $d^3_0(\alpha_{0123})=2_z$. Then by $\Kan!(3,1)$ we must have
\[
\alpha_{0123}= (\lambda^\ast_{3,1})\inv(2_z,\alpha_{012},\alpha_{012}) =s^2_2(\alpha_{012}).
\]
Therefore
\[
\alpha_{012}=d^3_1(\alpha_{0123}) =d^3_1(s^2_2(\alpha_{012}))=s^1_1(g).
\]

(2). We next consider the second assertion. Suppose $\alpha_{012}$ and $\alpha_{012}'$ have the same faces, denoted by $\alpha_{ij}=\alpha_{ij}'$ respectively. Consider the following element
\[
(s^1_1(\alpha_{12})=s^1_1(\alpha_{12}'), \alpha_{012}',\alpha_{012})\in {\Lambda_{3,1}Z},
\]
which by $\Kan!(3,1)$ of $\SZ$ uniquely determines a $\beta\in Z_3$ with $d^3_0(\beta)$ has three faces being
\[
(\alpha_{12}',s^0_0(d^1_0(\alpha_{02})),\alpha_{12})= (\alpha_{12},s^0_0(d^1_0(\alpha_{02})),\alpha_{12})= \partial^\ast_2(s^1_1(\alpha_{12})).
\]
Therefore by the first assertion we have $d^3_0(\beta)=s^1_1(\alpha_{12})$. Also by the $\Kan!(3,2)$ we have
\[
\beta=(\lambda^\ast_{3,2})\inv(s^1_1(\alpha_{12}), s^1_1(\alpha_{12}),\alpha_{012})=s^2_2(\alpha_{012}).
\]
Therefore
\[
\alpha_{012}'=d^3_2(\beta) =d^3_2(s^2_2(\alpha_{012}))=\alpha_{012}.
\]
This finishes the proof.
\end{proof}

This lemma allows us to reduce a proper Lie $2$-groupoid to a proper Lie $1$-groupoid.

\begin{prop}\label{P-2-to-1}
Let $\SZ$ be a $2$-isotropy free, proper Lie $2$-groupoid. Then there is a proper Lie $1$-groupoid $\tilde\SZ$ and a hypercover $\f\co\SZ\rto\tilde\SZ$. If furthermore $\SZ$ is \'etale, then $\tilde\SZ$ is \'etale and every $f_n$ of $\f$ is \'etale.
\end{prop}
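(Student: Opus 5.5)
The plan is to take $\tilde\SZ$ to be the truncation of $\SZ$: the Lie 1-groupoid whose objects are $Z_0$ and whose arrows are the 1-simplices modulo 2-simplex homotopy, made into a Lie groupoid as in \S\ref{Sec-LL}. Concretely, set $\tilde Z_0:=Z_0$ and $\tilde Z_1:=Z_1/\!\sim$. Here $g\sim g'$ if there is $\alpha\in Z_2$ with $\alpha_{01}=g$, $\alpha_{12}=s^0_0(d^1_0 g)$ and $\alpha_{02}=g'$. By $\Kan(2)$ and $\Kan!(3)$ this is an equivalence relation. The composition, unit and inverse on $\tilde Z_1$ are the ones induced from $\SZ$ via horn fillers, exactly as in the construction of the Lie groupoid from a Lie 1-groupoid. $\Kan!(3)$ of $\SZ$ makes them well defined and associative. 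We then take $\tilde Z_2$ to be triples of classes composing correctly, and the map $\f$ to be the identity on $Z_0$, the quotient map on $Z_1$, and $\alpha\mapsto([\alpha_{01}],[\alpha_{12}],[\alpha_{02}])$ on $Z_2$.

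The main obstacle is smoothness: showing that $Z_1/\!\sim$ is a manifold and that $q\co Z_1\rto\tilde Z_1$ is a surjective submersion. First I will show that the relation $R=\{(g,g')\}\subset Z_1\times Z_1$ is the image of the map $\Gamma\co Z_2^{(u)}\rto Z_1\times Z_1$, $\alpha\mapsto(\alpha_{01},\alpha_{02})$. Here $Z_2^{(u)}:=\{\alpha\mid \alpha_{12}\text{ degenerate}\}$ is the preimage under $\lambda^\ast_{2,0}$ of a submanifold of $Z_{2,0}$, hence a manifold, since $\lambda^\ast_{2,0}$ is a surjective submersion. Next, Lemma \ref{L-property-of-1-free}(2) says $\partial^\ast_2$ is injective, so $\Gamma$ is injective. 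Composing with the projection to the first factor gives $Z_2^{(u)}\rto Z_1$, $\alpha\mapsto\alpha_{01}$, which is a surjective submersion, again by $\Kan(2,0)$-type arguments. So $R\rto Z_1$ is a surjective submersion, provided $\Gamma$ is an injective immersion with closed image. Properness of $\partial^\ast_2$ (Definition \ref{D proper}) will give that $\Gamma$ is proper. Immersivity I would derive from injectivity of $\partial^\ast_2$ in a neighborhood, since 2-isotropy freeness applies fiberwise. This is the delicate point; I would try to show that the kernel of $d\partial^\ast_2$ is tangent to the fibers of $\lambda^\ast_{2,j}$ and that those fibers are discrete.

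Given this, $R$ is a closed embedded submanifold and both projections to $Z_1$ are submersions. Godement's criterion then makes $\tilde Z_1=Z_1/R$ a manifold with $q$ a submersion, and Hausdorffness follows from the closedness of $R$. The structure maps descend smoothly because they are induced from smooth maps that are constant on $R$-classes. Properness of $\tilde\SZ$ follows from properness of $\partial^\ast_1$ on $\SZ$ and surjectivity of $q$: the preimage in $\tilde Z_1$ of a compact set is the image under $q$ of a compact set.

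For the hypercover conditions, $\tau_0$ is the identity and $\tau_1=q$, which is a surjective submersion. Then $\tau_2\co Z_2\rto \partial Z_2\times_{\partial\tilde Z_2}\tilde Z_2$ is bijective: surjectivity follows by lifting a composable pair of classes and using $\Kan(2)$, and injectivity is Lemma \ref{L-property-of-1-free}(2). It is a local diffeomorphism by dimension counting together with the submersion property, hence a diffeomorphism, so $\Acyc!(2)(\f)$ holds. In the étale case, the fibers of $\lambda^\ast_{2,j}$ are discrete, so $R\rto Z_1$ is étale. Hence $q$ is étale, $\tilde Z_1$ inherits étaleness of $s,t$, and every $f_n$ is étale.
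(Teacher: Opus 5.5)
Your construction of $\tilde\SZ$ and $\f$ coincides with the paper's: the identity on $Z_0$, the quotient of $Z_1$ by 2-simplices with degenerate $12$-edge, and boundary classes on $Z_2$. So does your use of Lemma~\ref{L-property-of-1-free}. The paper gets smoothness of $\tilde Z_1$ by regarding $\mc V=Z_2^{(u)}\rrto Z_1$ as a Lie groupoid, with source $d^2_2$ and target $d^2_1$. This groupoid is proper, and its isotropy is trivial by part (1) of that lemma. The paper then quotes the standard fact that such a groupoid has a manifold as orbit space. Your Godement argument unpacks this.

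However, the step you single out as delicate does not go through as you propose it. The fibers of $\lambda^\ast_{2,j}$ are discrete only under $\Kan!!(2)$, and the first assertion of the proposition has no \'etaleness hypothesis. For example, take the Lie $2$-group of the crossed module $S^1\xrightarrow{\mathrm{id}}S^1$, which is compact and $2$-isotropy free. Every fiber of $\lambda^\ast_{2,1}$ is a circle. $\Gamma$ is still an embedding there, but not for your reason.

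The missing idea is to use the groupoid structure of $Z_2^{(u)}$. In any Lie groupoid, right translation by an arrow $g$ carries $\ker ds\cap\ker dt$ at a unit isomorphically onto $\ker d_g s\cap\ker d_g t$. The former is the tangent space of the isotropy group. Trivial isotropy therefore makes $\Gamma=(s,t)$ an immersion, hence a closed embedding since it is proper and injective. Then Godement applies as you say.

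The same gap affects $\tau_2$. Bijectivity is fine, although surjectivity needs a $\Kan(3,2)$ filling against an element of $\mc V$, not only $\Kan(2)$. But ``dimension counting together with the submersion property'' appeals to a submersion property of $\tau_2$ that you never establish. What is needed is that $\partial^\ast_2$ is an immersion.

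Since $\ker d\partial^\ast_2\subseteq\ker d\lambda^\ast_{2,1}$, it suffices that $d^2_1$ is an immersion on each fiber of $\lambda^\ast_{2,1}$. Fix $\beta$ in the fiber over $(z_{01},z_{12})$. The horns built from $\beta$ and $s^1_1(z_{12})$, via $\Kan!(3,1)$ and $\Kan!(3,2)$, identify that fiber diffeomorphically with the $s$-fiber of $\mc V$ over $d^2_1(\beta)$. Under this identification $d^2_1$ becomes the target map, which is an immersion by the isotropy argument above. Then $\tau_2$ is a proper injective immersion onto the submanifold $\partial Z_2\times_{\partial\tilde Z_2}\tilde Z_2$, hence a diffeomorphism. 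The paper itself only asserts $\Acyc!(2)(\f)$ from the lemma, so here you would be filling in something it leaves implicit.

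One small slip: $\Lambda[2,0]$ consists of the edges $01,02$. So $Z_2^{(u)}$ is a preimage under $\lambda^\ast_{2,1}$ (or $\lambda^\ast_{2,2}$), not $\lambda^\ast_{2,0}$. Your properness and \'etale arguments are fine.
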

\begin{proof}

Denote by $\tilde{\CZ}=(\tilde Z^1\rrto\tilde Z^0)$ the Lie groupoid corresponding to the $\tilde\SZ$ we want. We sketch the construction of $\tilde {\CZ}$.
\begin{enumerate}
\item Set $\tilde Z^0:=Z_0.$
\item Construction of $\tilde Z^1$.
We define an equivalence relation on $Z_1$.
Consider
\[
\mc V=\{z_{012}\in Z_2\mid d^2_0(z_{012})=s^0_0(z_1)\}.
\]
This is a smooth manifold by $\Kan(2,1)$ of $\SZ$.

We say $z\sim z'\in Z_1$ if there exists a $z_{012}\in \mc V$ such that $d^2_2=z$ and $d^2_1=z'$. By $\Kan!(3)$ of $\SZ$, we know this is a well-defined equivalence relation over $Z_1$. Let $\mc Z_1=(\mc V\rrto Z_1)$ be the Lie groupoid describing this equivalence relation.

The $\tilde Z^1$ is defined to be  the coarse space of $\mc Z_1$. Since $\SZ$ is proper, $\mc Z_1$ is proper. By Lemma \ref{L-property-of-1-free}, $\mc Z_1$ is isotropy free. Hence
the coarse space of $\mc Z_1$ is a smooth manifold. We conclude that $\tilde Z^1$ is a smooth manifold;
\item source map: $s([z_{01}]):=d^1_1(z_{01})=z_0$;
\item target map: $t([z_{01}]):=d^1_0(z_{01})=z_1$;
\item unit: $u(z):=[s^0_0(z)]$;
\item multiplication:
let $z_0\xrightarrow{z_{01}}z_1$ and
$z_1\xrightarrow{z_{12}}z_2$, then
\[
[z_{01}]\cdot[z_{12}]:=[d^2_1(z_{012})]
\]
for any $z_{012}=(\lambda_{2,1})^{-1}(z_{01},z_{12})$.   One can verify this is well-defined.
\end{enumerate}

Let $\tilde{\SZ}$ be the Lie 1-groupoid corresponding to $\tilde{\mc Z}$. Consider the natural projection homomorphism $\f\co \SZ\rto\tilde\SZ$ of Lie $2$-groupoids:
\begin{enumerate}
\item $f_0\co Z_0\rto Z_0$ is identity;
\item $f_1\co Z_1\rto \tilde Z_1=Z_1/\sim$ is the projection, i.e., $z_{01}\mapsto [z_{01}]$;
\item $f_2\co Z_2\rto \tilde Z_2$ is the map
\[
z_{012}\mapsto ([z_{01}],[z_{12}],[z_{02}]).
\]
\end{enumerate}
The property $\Acyc!(2)(\f)$ is guaranteed by Lemma \ref{L-property-of-1-free}. Hence $\f$ is a hypercover.

Finally, we prove the \'etaleness of $\tilde\SZ$ and $\f$ under the assumption that $\SZ$ is \'etale.
For $\tilde\SZ$, we only have to prove it is $1$-\'etale. This follows from the $1$-\'etaleness of $\SZ$ as follows. The $1$-\'etalenss of $\SZ$ implies that for each $z_{01}\in Z_1$, there is a neighborhood $U_{z_{01}}\subseteq Z_1$ such that all other 1-simplices in this neighborhood have faces different from those of $z_{01}$, that is $d^1_j$ are both diffeomorphism over $U_{z_{01}}$. So $U_{z_{01}}$ forms a coordinates chart of $\tilde Z_1$. This shows that $\tilde \SZ$ is $1$-\'etale and $f_1$ is \'etale. The \'etaleness of $f_2$ follows from $\Kan!(3)$ and $2$-\'etaleness of $\SZ$.
\end{proof}

\section{The 2-group of automorphisms of
a Lie groupoid}\label{Sec-2gp-aut}

In this section, we study the structure on symmetries of a Lie groupoid $\CX$. We denote by $\SX$ the corresponding Lie 1-groupoid of $\CX$. Let $\mathbf{Aut}(\CX)$ be the set of automorphisms of $\CX$ (cf. Proposition \ref{P-aut-X}). It is known that this set admits groupoid structures. Different choices of automorphism definitions give rise to different groupoids.

In \cite{Chen-Du-WangR2019-grp-mor}, the automorphism is taken to be the one in Definition \ref{D-morphisms}, and the groupoid $\SAut(\CX)$ is constructed.
As usual, the composition of morphisms makes  $\mathbf{Aut}(\CX)=|\SAut(\CX)|$ into a group. It is natural to expect such a group structure to be reflected on $\SAut(\CX)$. We show that $\SAut(\CX)$ carries a so-called semi-strict group-like structure (cf. Definition \ref{D-semi-group-like} and Theorem \ref{T-semi-gl}). Furthermore, we show that such a groupoid can be made into a 2-group (cf. Theorem \ref{T-gpd-gp}). Hence the 2-group $\SB\SAut(\CX)$ is constructed. The details are covered in \S\ref{Sec-aut-X}.

In \S\ref{Sec-unit} and \S\ref{Sec-transformation}, we discuss some issues of 2-groups, such as the units and natural transformations. They play an important role in the study carried out in \S\ref{Sec-2p-action}.

\subsection{The 2-group $\SB\SAut(\CX)$}\label{Sec-aut-X}
We review the groupoid $\SAut(\CX)$ of automorphisms of a Lie groupoid $\CX$ constructed in \cite{Chen-Du-WangR2019-grp-mor}.
\vskip 0.1in \noindent
{\em Definition of $\Aut^0(\CX)$.} It consists of automorphisms of $\CX$ (cf. Definition \ref{D-morphisms}).
\vskip 0.1in
\noindent
{\em Definition of $\Aut^1(\mc X)$.} Let
$\f=(\f^\ell,\SY, \f^r)$, $\g=(\g^\ell,\SZ, \g^r)$.   An arrow from $\f$ to $\g$ is a natural transformation
\[
\alpha\co\f^r\circ \msf{pr}_1\Rightarrow \g^r\circ \msf{pr}_2\co \SY\times_{\f^\ell,\CX,\g^\ell} \SZ\rto \CX.
\]

The following result was proved in \cite{Chen-Du-WangR2019-grp-mor}.

\begin{prop}[\cite{Chen-Du-WangR2019-grp-mor}]\label{P-aut-X}
The $\SAut(\CX)$ is a groupoid. Let the set $\mathbf{Aut}(\CX)$  of automorphisms of $\CX$ to be the coarse set of $\SAut(\CX)$.
\end{prop}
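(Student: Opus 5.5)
To prove that $\SAut(\CX)$ is a groupoid, I would write down its structure maps (identity arrows, composition of arrows, inverses) and check the groupoid axioms. The idea is to reduce every statement about arrows $\f\Rightarrow\g$ to a statement about natural transformations between honest homomorphisms into $\SX$, all defined on iterated fibre products of the middle objects over $\SX$. The basic tool is that hypercovers are stable under base change and composition. Hence for $\f=(\f^\ell,\SY,\f^r)$ and $\g=(\g^\ell,\SZ,\g^r)$ the fibre product $\SY\times_{\f^\ell,\SX,\g^\ell}\SZ$ is again a Lie 1-groupoid, and both of its projections are hypercovers. So everything happens inside Lie groupoids, where natural transformations are just smooth maps $Y^0\times_{X^0}Z^0\to X^1$ satisfying the naturality square.

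\textbf{Identities and composition.} For the identity at $\f$, I would use the diagonal. On $\SY\times_{\SX}\SY$, the two homomorphisms $\f^r\circ\msf{pr}_1$ and $\f^r\circ\msf{pr}_2$ are related through $\f^\ell$. A point $(y,y')$ with $\f^\ell(y)=\f^\ell(y')$ yields, by $\Acyc(1)(\f^\ell)$, an arrow $y\to y'$ in $\SY$ lying over the identity of $\SX$. Since $\Acyc!(1)$ holds for a hypercover of Lie 1-groupoids, this arrow is unique and depends smoothly on $(y,y')$. Applying $\f^r$ to it gives the identity transformation $1_\f$.

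For composition, take $\alpha\co\f\Rightarrow\g$ on $\SY\times\SZ$ and $\beta\co\g\Rightarrow\h$ on $\SZ\times\SW$. I would pull both back to the triple product $\SY\times_\SX\SZ\times_\SX\SW$ and compose them pointwise in $\SX$, obtaining a transformation $\f^r\circ\msf{pr}_1\Rightarrow\h^r\circ\msf{pr}_3$ there. This must then be shown to \emph{descend} along the projection $\msf{pr}_{13}$ to $\SY\times_\SX\SW$. The projection $\msf{pr}_{13}$ is a hypercover, being a base change of $\g^\ell$, and hence a surjective submersion on objects.

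\textbf{Descent, the main obstacle.} The descent step is where I expect the difficulty. Two lifts $(y,z,w)$ and $(y,z',w)$ differ by a unique arrow $z\to z'$ in $\SZ$ over an identity of $\SX$. Naturality of $\alpha$ and $\beta$ along this arrow shows that the composite values agree, so $\beta\circ\alpha$ is constant on the fibres of $\msf{pr}_{13}$. Smoothness then follows because a surjective submersion admits local sections. The same descent argument gives inverses: $\alpha^{-1}$, defined on the swapped product, is a natural transformation $\g\Rightarrow\f$. The axioms (associativity, unit laws, inverse laws) are checked on a quadruple fibre product, where they reduce to the corresponding identities in $\SX$. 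They descend for the same reason, since functions pulled back along surjective submersions are equal once their pullbacks agree.

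The second statement is a definition. Once $\SAut(\CX)$ is known to be a groupoid, its isomorphism classes of objects form a set $|\SAut(\CX)|$, and we name this set $\mathbf{Aut}(\CX)$.
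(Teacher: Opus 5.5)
Your argument is correct, but there is no in-paper proof to compare it with. The paper simply imports this proposition from \cite{Chen-Du-WangR2019-grp-mor}, so you have supplied a self-contained proof of a cited fact. Your proof rests on one fact: for a hypercover $\f\co\SY\rto\SX$ of Lie 1-groupoids one has $Y^1\cong X^1\times_{X^0\times X^0}(Y^0\times Y^0)$, so objects with a common image are joined by a unique arrow over an identity. The paper uses exactly this fact nearby. It underlies the lemma producing $\u$ in the construction of $i^1$ and the identification $\ker(\f)\cong\SY\times_{\f,\SX,\f}\SY$. Your $1_\f$ is also given by the same map $\f^r\co\ker(\f^\ell)\rto\CX^{[1]}$ that the paper uses for $\psi_\ell(\f)$. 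The citation buys brevity. Your route makes explicit the composition of arrows: pull back to $\SY\times_\SX\SZ\times_\SX\msf W$ and descend along the hypercover $\msf{pr}_{13}$. The paper never writes this composition down but relies on it, for instance in the condition \eqref{E def-BG3} defining $(BG)_3$.

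Two verifications deserve a line more.

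First, inverses need no descent. The map $\alpha^{-1}(z,y):=\alpha(y,z)^{-1}$, precomposed with the swap $\SZ\times_\SX\SY\cong\SY\times_\SX\SZ$, is already natural. Descent enters only in checking $\alpha^{-1}\circ\alpha=1_\f$, and that check is not merely an identity in $\SX$. You need $\alpha(y,z)\cdot\alpha(y',z)^{-1}=f^r(u_{yy'})$, where $u_{yy'}\co y\rto y'$ is the unique arrow over an identity. This is naturality of $\alpha$ along the arrow $(u_{yy'},1_z)$ of $\SY\times_\SX\SZ$. The unit laws, by contrast, are immediate if you take the intermediate point on the diagonal.

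Second, well-definedness and smoothness of the descended composite do not by themselves make it natural on $\SY\times_\SX\msf W$. You should add that every arrow there lifts to the triple product between any prescribed lifts of its endpoints, because $\msf{pr}_{13}$ is a hypercover. Naturality upstairs then descends.

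These are routine omissions, not gaps.
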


Moreover, the $\SAut(\mc X)$ carries a natural product structure (\cite[\S 4.2]{Chen-Du-WangR2019-grp-mor}), which gives a group-like structure on $\SAut(\CX)$.
For simplicity, we denote $\SAut(\CX)$ by $\mc G$.

\vskip 0.1in
\noindent {\bf (1). Product on $\mc G$.}
The product structure is a homomorphism
$
\star\co \mc G\times \mc G\rto\mc G.
$
For $\f=(\f^\ell,\mathsf{Y},\f^r), \g=(\g^\ell,\mathsf{Z} ,\g^r)$,
\[
\f\star^0\g=
(\f^\ell\circ \msf{pr}_1,\SY \times_{\f^r,\CX,\g^\ell}\SZ,\g^r \circ \msf {pr}_2).
\]
The $\star^1$ is more involved and is given in \cite[\S 4.2]{Chen-Du-WangR2019-grp-mor}. The following result was proved in \cite[Lemma 4.10]{Chen-Du-WangR2019-grp-mor}.
\begin{prop}%
The product $\star$ is associative, modulo the canonical identification
$$
(\SZ_1\times_{\f^r,\CX,\g^\ell}\SZ_2)\times_{\g^r\circ pr_2,\mc X,\msf h^\ell}\SZ_3\cong \SZ_1\times_{\f^\ell,\CX,\g^\ell\circ pr_1}(\SZ_2\times_{\g^r,\mc X,\msf h^r}\SZ_3)$$
of  fibered products.
\end{prop}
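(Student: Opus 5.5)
The plan is to prove associativity of $\star$ at the object level and the arrow level separately, in both cases reducing to the universal property of fibered products of Lie 1-groupoids.

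\emph{Objects.} Take $\f=(\f^\ell,\SZ_1,\f^r)$, $\g=(\g^\ell,\SZ_2,\g^r)$, $\h=(\h^\ell,\SZ_3,\h^r)$. Unwinding $\star^0$ twice, $(\f\star^0\g)\star^0\h$ has middle object
\[
(\SZ_1\times_{\f^r,\CX,\g^\ell}\SZ_2)\times_{\g^r\circ\msf{pr}_2,\CX,\h^\ell}\SZ_3,
\]
with left leg $\f^\ell\circ\msf{pr}_1\circ\msf{pr}_1$ and right leg $\h^r\circ\msf{pr}_2$. Likewise $\f\star^0(\g\star^0\h)$ has middle object $\SZ_1\times_{\f^r,\CX,\g^\ell\circ\msf{pr}_1}(\SZ_2\times_{\g^r,\CX,\h^\ell}\SZ_3)$. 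Levelwise, in each simplicial degree $n$, both are the iterated fibered product of manifolds
\[
(Z_1)_n\times_{X_n}(Z_2)_n\times_{X_n}(Z_3)_n .
\]
These are manifolds because hypercovers are surjective submersions in each degree (this follows inductively from $\Acyc(m)$). So the canonical identification $((a,b),c)\mapsto(a,(b,c))$ is a diffeomorphism in each degree, commutes with all $d_\zeta$, and intertwines the left legs and the right legs. We also need that the composite legs are still hypercovers, i.e.\ that hypercovers are stable under pullback and composition; this is already implicit in $\star^0$ being well defined.

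\emph{Arrows.} Given arrows $\alpha\co\f\Rightarrow\f'$, $\beta\co\g\Rightarrow\g'$, $\gamma\co\h\Rightarrow\h'$, I would recall the formula for $\star^1$ from \cite[\S 4.2]{Chen-Du-WangR2019-grp-mor}. Roughly, $\alpha\star^1\beta$ is a natural transformation on
\[
(\SZ_1\times\SZ_2)\times_{\CX}(\SZ_1'\times\SZ_2'),
\]
obtained by composing $\alpha$ and $\beta$ in $\CX^{[1]}$ through the groupoid multiplication. That multiplication is encoded by $\Kan!(2,1)$, i.e.\ by $(\lambda^\ast_{2,1})^{-1}$ followed by $d^2_1$. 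The key point is that under the canonical identification above, both $(\alpha\star^1\beta)\star^1\gamma$ and $\alpha\star^1(\beta\star^1\gamma)$ send a point to a composite of the same three arrows of $\CX$, only bracketed differently. Equality then follows from associativity of the multiplication in $\CX$, which is the $\Kan!(3)$ condition of $\SX$.

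\emph{Main obstacle.} The hard part is the arrow level: one must check that $\star^1$ transports correctly along the reassociation of the sixfold fibered product domains, so that the two natural transformations are literally equal as homomorphisms into $\SX^{[1]}$ after the identification. I would handle this by writing both sides explicitly on $1$-simplices of the domain, using that a homomorphism into a Lie 1-groupoid is determined by its degree-$0$ and degree-$1$ components. Equality then reduces to equality of the two composites in $X^1$.
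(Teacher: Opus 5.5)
The paper gives no argument for this statement; it defers to \cite[Lemma 4.10]{Chen-Du-WangR2019-grp-mor}. Your object-level step is fine, and your form of the identification, with $\f^r$ and $\h^\ell$, is the right one (the display in the statement has $\f^\ell$ and $\h^r$ by a typo).

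The gap is at the arrow level, where your description of $\star^1$ is wrong. Take $\alpha\co\f\Rightarrow\f'$ and $\beta\co\g\Rightarrow\g'$. A $0$-simplex of the domain of $\alpha\star^1\beta$ is $(z_1,z_2,z_1',z_2')$ with $f^{r,0}(z_1)=g^{\ell,0}(z_2)$, $f'^{r,0}(z_1')=g'^{\ell,0}(z_2')$ and $f^{\ell,0}(z_1)=f'^{\ell,0}(z_1')$. The value must be an arrow $g^{r,0}(z_2)\to g'^{r,0}(z_2')$. But $\beta$ is defined only on pairs $(z_2,w)$ with $g^{\ell,0}(z_2)=g'^{\ell,0}(w)$ strictly, whereas here $g^{\ell,0}(z_2)$ and $g'^{\ell,0}(z_2')$ are merely joined by the arrow $a=\alpha(z_1,z_1')$. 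So $\alpha$ and $\beta$ cannot just be multiplied in $\CX$. Instead one chooses $w$ over $g^{\ell,0}(z_2)$ and takes the unique $c\co w\to z_2'$ in $\SZ_2'$ with $g'^{\ell,1}(c)=a$, which exists by $\Acyc!(1)(\g'^\ell)$. The value is then $\beta(z_2,w)\cdot g'^{r,1}(c)$, independent of $w$ by naturality of $\beta$ and the same uniqueness. Thus $\star^1$ is a horizontal composite in which $\alpha$ is transported through the automorphism $\g'$.

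Hence arrow-level associativity does not reduce to $\Kan!(3)$ of $\SX$. Take $w_2$ over $g^{\ell,0}(z_2)$ and let $c_2\co w_2\to z_2'$ be the lift of $a$. Put $A=\beta(z_2,w_2)\cdot g'^{r,1}(c_2)$, and pick $w_3$ with $h'^{\ell,0}(w_3)=h^{\ell,0}(z_3)$. Evaluating at a point of the reassociated domain gives
\begin{align*}
\bigl((\alpha\star^1\beta)\star^1\gamma\bigr)(z_1,z_2,z_3,z_1',z_2',z_3')&=\gamma(z_3,w_3)\cdot h'^{r,1}(c_3),\\
\bigl(\alpha\star^1(\beta\star^1\gamma)\bigr)(z_1,z_2,z_3,z_1',z_2',z_3')&=\gamma(z_3,w_3)\cdot h'^{r,1}(d_3)\cdot h'^{r,1}(c_3').
\end{align*}
Here $c_3\co w_3\to z_3'$, $d_3\co w_3\to u_3$ and $c_3'\co u_3\to z_3'$ are the $\h'^\ell$-lifts of $A$, of $\beta(z_2,w_2)$ and of $g'^{r,1}(c_2)$, with $u_3$ any point over $g'^{r,0}(w_2)$. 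Equivalently, the lift of $a$ through $\g'^\ell\circ\msf{pr}_1$ on $\SZ_2'\times_\CX\SZ_3'$ is $(c_2,c_3')$.

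Using the same $w_2,w_3$ on both sides is legitimate only because of the choice-independence above. The two values then agree because $c_3=d_3\cdot c_3'$, which is the uniqueness part of $\Acyc!(1)(\h'^\ell)$, together with functoriality of $\h'^r$. Associativity in $\CX$ only performs the final rebracketing. So your claim that both sides are ``the same three arrows, bracketed differently'' holds only after these lifting facts are established. With them, your plan of comparing the two sides on low-dimensional simplices goes through.
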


\noindent
{\bf (2). Unit of $\mc G$.}
Let $\msf{e}_{\mc X}$, or simply $\msf{e}$, denote the identity automorphism of $\CX$, i.e., $\sf{e}=(id,\CX,id)$. Denote the groupoid $(\{1_{\msf{e}}\}\rrto \{\msf{e}\})$ by $\varepsilon$. The $\varepsilon$ is the unit with respect to the product $\star$ in the sense that both the following two homomorphisms
\begin{align}\label{e-unit}
&\mc G \xrightarrow{({\sf id, p})}\mc G\times \mc G \xrightarrow{\star} \mc G \qq \mathrm{and}\qq
\mc G \xrightarrow{({\sf p,id })}\mc G\times \mc G \xrightarrow{\star} \mc G
\end{align}
are identities, modulo the canonical identification
\[
\SZ\times_{\psi,\CX,id}\CX\cong\SZ\cong \CX\times_{id,\CX,\psi}\SZ
\]
of fibered products, where $\msf p\co \mc G\rto \varepsilon$ denote the trivial homomorphism.

\vskip 0.1in
\noindent
{\bf (3). Inverse of $\mc G$.}
The inverse of $\mc G$ is a homomorphism $i\co\mc G\rto\mc G$ defined as follows.

Definition of $i^0$.
Suppose $\f=(\f^\ell,\SZ, \f^r)\in \mc G^0$,  set
$\iota^0(\f):=(\f^r,\SZ, \f^\ell)$.

Definition of $i^1$.
Let $\alpha$ be an arrow from $\f=(\f^\ell,\SY,\f^r)$ to $\g=(\g^\ell,\SZ, \g^r)$, i.e., $\alpha$ is a homomorphism
\[
\alpha\co  \SY\times_{\f^\ell,\CX,\g^\ell}\SZ\rto \CX^{[1]}.
\]
The $i^1(\alpha)$ is expected to be an arrow from $(\f^r,\SY,\f^\ell)$ to $(\g^r,\SZ,\g^\ell)$, i.e.,  a homomorphism
\[
i^1(\alpha)\co
\SY\times_{\f^r, \CX, \g^r} \SZ \rto \CX^{[1]}.
\]
We sketch the construction of $i^1(\alpha)$ from $\alpha$. As
homomorphisms from
$
\SY\times_{\f^\ell,\mc X,\g^\ell} \SZ\times_{\f^r,\mc X,\g^r} \SY
$
to $\SX$, we have
\[
(\msf d^1_1,\msf d^1_0)\circ \alpha\circ \msf{pr}_{12}=(\f^r\circ \msf{pr}_1, \f^r\circ \msf{pr}_3).
\]
Here $\msf{pr}_{12}$ is the projection to the fibered product of first two terms $\SY\times_{\f^\ell,\mc X,\g^\ell} \SZ$.

\begin{lemma}
There exists a unique homomorphism
\[
\u\co \SY\times_{\f^\ell,\mc X,\g^\ell} \SZ\times_{\f^r,\mc X,\g^r} \SY\rto \SY^{[1]}
\]
such that
\[
\f^{r,[1]}\circ \u= \alpha\circ \msf{pr}_{12}.
\]
\end{lemma}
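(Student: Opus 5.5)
The plan is to reduce the lemma to the hypercover property of $\f^r$, namely $\Acyc(m)(\f^r)$ for all $m$ and $\Acyc!(1)(\f^r)$, since everything is a Lie 1-groupoid. Write $\SW:=\SY\times_{\f^\ell,\mc X,\g^\ell} \SZ\times_{\f^r,\mc X,\g^r} \SY$. A homomorphism $\u\co\SW\rto\SY^{[1]}=\SHom(\Delta[1],\SY)$ is the same as a homomorphism $\Delta[1]\times\SW\rto\SY$. Its restrictions to the two ends $\{0\}\times\SW$ and $\{1\}\times\SW$ should be forced to be $\msf{pr}_1$ and $\msf{pr}_3$. This is the natural choice, because $(\msf d^1_1,\msf d^1_0)\circ\alpha\circ\msf{pr}_{12}=(\f^r\circ\msf{pr}_1,\f^r\circ\msf{pr}_3)$, so the endpoints of $\alpha\circ\msf{pr}_{12}$ are exactly the images under $\f^r$ of these two projections. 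Hence $\u$ should be the unique natural transformation $\msf{pr}_1\Rightarrow\msf{pr}_3$ lifting the natural transformation $\alpha\circ\msf{pr}_{12}\co \f^r\circ\msf{pr}_1\Rightarrow\f^r\circ\msf{pr}_3$. I read the lemma as also requiring $\msf d^1_1\circ\u=\msf{pr}_1$ and $\msf d^1_0\circ\u=\msf{pr}_3$, which is what makes uniqueness meaningful.

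First step: lift the component of the transformation on objects. For $w=(y,z,y')\in W_0$, the arrow $a:=\alpha_0(y,z)\in X_1$ goes from $f^r_0(y)$ to $f^r_0(y')$. By $\Acyc!(1)(\f^r)$, the map $\tau_1\co Y_1\rto Y_0\times Y_0\times_{X_0\times X_0}X_1$ is a diffeomorphism, so there is a unique $\tilde a\in Y_1$ with $\partial^\ast_1\tilde a=(y,y')$ and $f^r_1(\tilde a)=a$. Set $u_0(w):=\tilde a$, read as an element of $(\SY^{[1]})_0=\Hom(\Delta[1],\SY)=Y_1$. This map is smooth because $\tau_1^{-1}$ is smooth.

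Second step: the higher levels. Since $\SY$ is a 1-groupoid, a homomorphism into $\SY^{[1]}$ is determined by its $0$- and $1$-levels, and those must satisfy compatibility conditions. For an arrow $w\rto w'$ in $W_1$, I need to define a map $\Delta[1]\times\Delta[1]\rto\SY$, i.e.\ two 2-simplices filling the naturality square. Its boundary is already fixed: the edges $\msf{pr}_1(w\rto w')$, $\msf{pr}_3(w\rto w')$, $u_0(w)$ and $u_0(w')$, together with the diagonal, which is their composite. The image of this square under $\f^r$ is prescribed by $\alpha_1$. Applying $\Acyc!(2)(\f^r)$ (which follows from $\Acyc!(1)$ in the 1-groupoid setting) gives unique lifts of the two 2-simplices. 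The commutativity of the lifted square in $\SY$ itself follows from injectivity of $\tau_1$: both composites have the same endpoints and the same image under $f^r$, because $\alpha$ is natural. Uniqueness at level 0 and level 1 then gives compatibility with all face and degeneracy maps, since these are checked after applying $\f^r$ and using injectivity of $\tau_m$.

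The main obstacle is the bookkeeping: identifying the simplices of $\Delta[1]\times\SW$ and verifying naturality carefully, in particular the unit and composition compatibility at the level of 2-simplices. I will handle it uniformly through the general principle that, for a hypercover of 1-groupoids, lifts of maps from $\Delta[1]\times\SW$ with prescribed restriction to $\partial\Delta[1]\times\SW$ exist and are unique (full faithfulness). Uniqueness of $\u$ itself follows from the same injectivity of $\tau_1$ at every level.
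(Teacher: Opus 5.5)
Your argument is correct and is essentially the paper's. The paper notes that, because $\f^r$ is a hypercover, $\SY^{[1]}$ is the fibered product of $\SX^{[1]}\xrightarrow{(\msf d^1_1,\msf d^1_0)}\SX\times\SX$ and $\SY\times\SY\xrightarrow{(\f^r,\f^r)}\SX\times\SX$, and then obtains $\u$ from the universal property applied to $\alpha\circ\msf{pr}_{12}$ and $(\msf{pr}_1,\msf{pr}_3)$; this is exactly the full-faithfulness principle you unpack levelwise through $\Acyc!(1)(\f^r)$ and $\Acyc!(2)(\f^r)$, and your observation that uniqueness requires $\msf d^1_1\circ\u=\msf{pr}_1$ and $\msf d^1_0\circ\u=\msf{pr}_3$ is what the paper builds in implicitly through that choice of second component.
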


\begin{proof}
Since $\f^r\co \Y\rto\X$ is a hypercover, $\Y^{[1]}$ is the fibered product of
\[
\X^{[1]}\xrightarrow{(\d^1_1,\d^1_0)} \X\times \X  \and
\Y\times\Y\xrightarrow{(\f^r,\f^r)} \X\times\X.
\]
Furthermore, its projection to $\X^{[1]}$ is same as $\f^{r,[1]}$.

The homomorphisms
\[
\SY\times_{\f^\ell,\mc X,\g^\ell} \SZ\times_{\f^r,\mc X,\g^r} \SY\xrightarrow{\msf{pr}_{12}}
\SY\times_{\f^\ell,\mc X,\g^\ell} \SZ\xrightarrow{\alpha} \X^{[1]}
\]
and
\[
\SY\times_{\f^\ell,\mc X,\g^\ell} \SZ\times_{\f^r,\mc X,\g^r} \SY\xrightarrow{(\msf{pr}_{1},\msf{pr}_3)}
\SY\times\SY
\]
imply the homomorphism $\u$.
\end{proof}

Then $\u$ determines $i^1(\alpha)$ uniquely such that
\[
i^1(\alpha)\circ \msf{pr}_{23}
=\f^{\ell,[1]}\circ \msf u.
\]
Moreover, $i=(i^0,i^1)\co\mc G\rto \mc G$ is a homomorphism of inverse.

We next study the properties of the homomorphism of inverse. Let $\f\co\SY\rto \SX$ be a homomorphism of Lie 1-groupoids. Define the kernel of $\f$ by
\[
\ker(\f):=\SY^{[1]}\times _{\f^{[1]}, \SX^{[1]}, \mathsf s^0_0} \SX.
\]
The projection $\msf{pr}_1\co \ker(\f)\rto \SY^{[1]}$ defines an embedding of $\ker(\f)$ into $\SY^{[1]}$. So we may treat $\ker(\f)$ as a subgroupoid of $\SY^{[1]}$.

\begin{lemma}
Suppose $\f\co \SY\rto\SX$ is a hypercover, then
\begin{equation}
\ker({\f})\cong \SY\times_{\f,\SX,\f}\SY.
\end{equation}
\end{lemma}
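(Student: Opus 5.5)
The plan is to use the same description of $\SY^{[1]}$ that appears in the proof of the preceding lemma. Since $\f\co\SY\rto\SX$ is a hypercover of Lie 1-groupoids, $\SY^{[1]}$ is the fibered product of
\[
\SX^{[1]}\xrightarrow{(\d^1_1,\d^1_0)}\SX\times\SX \and \SY\times\SY\xrightarrow{(\f,\f)}\SX\times\SX,
\]
and its projection to $\SX^{[1]}$ is $\f^{[1]}$. I would check this levelwise. The $0$-simplices of $\SY^{[1]}$ are arrows of $\SY$, and $\Acyc!(1)(\f)$ says that an arrow of $\SY$ is exactly a pair of objects of $\SY$ together with an arrow of $\SX$ between their images. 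In higher degrees one uses $\Acyc!(m)(\f)$, $m\geqslant 2$, and the fact that $\SX$ and $\SY$ are determined by their low simplices.

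Substituting this into the definition of the kernel gives
\[
\ker(\f)=\SY^{[1]}\times_{\f^{[1]},\SX^{[1]},\msf s^0_0}\SX\cong(\SY\times\SY)\times_{(\f,\f),\SX\times\SX}\SX^{[1]}\times_{\SX^{[1]},\msf s^0_0}\SX.
\]
By pasting of pullbacks, the last two factors collapse to $\SX$. The composite $\SX\xrightarrow{\msf s^0_0}\SX^{[1]}\xrightarrow{(\d^1_1,\d^1_0)}\SX\times\SX$ is the diagonal, since $\d^1_i\circ\msf s^0_0=\msf{id}$. Hence
\[
\ker(\f)\cong(\SY\times\SY)\times_{\SX\times\SX,\Delta}\SX\cong\SY\times_{\f,\SX,\f}\SY.
\]
Concretely, the map sends $(u,x)$, with $u\in\SY^{[1]}$ and $\f^{[1]}(u)=\msf s^0_0(x)$, to $(\d^1_1u,\d^1_0u)$. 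The inverse sends $(y,y')$ with $\f(y)=\f(y')$ to the unique $u$ lying over $(y,y')$ and the identity transformation $\msf s^0_0(\f(y))$, paired with $x=\f(y)$.

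The remaining points are that everything is a simplicial manifold and that the maps are smooth homomorphisms. Smoothness is automatic because $\f$ is a surjective submersion levelwise, so $\SY\times_{\f,\SX,\f}\SY$ is a Lie groupoid, and the inverse is built from the smooth diffeomorphisms $\tau_m$. Compatibility with the structure maps $\d_\zeta$ holds because all constructions are natural in $\Delta[k]$.

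The main obstacle is the first step: justifying that $\SY^{[1]}=\SHom(\Delta[1],\SY)$ is honestly this fibered product as simplicial manifolds, not merely on $0$-simplices. For this I would use that a simplex of $\SHom(\Delta[1],\SY)$ in degree $k$ is a map $\Delta[1]\times\Delta[k]\rto\SY$. Such a map is determined by its values on the two vertex copies $\Delta[k]\times\{0\}$ and $\Delta[k]\times\{1\}$ together with its image in $\SX$. Indeed, each nondegenerate simplex of $\Delta[1]\times\Delta[k]$ has boundary already determined by these data, and $\Acyc!(m)(\f)$ gives unique lifts in all degrees $m\geqslant 1$. An induction over the simplices of $\Delta[1]\times\Delta[k]$, ordered by dimension, makes this precise.
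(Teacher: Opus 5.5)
Your argument is correct. The paper states this lemma without proof, and your route is the natural one it implicitly relies on: identify $\SY^{[1]}$ with the fibered product of $\SX^{[1]}\to\SX\times\SX$ and $\SY\times\SY\to\SX\times\SX$ (the description the paper itself invokes in the proof of the preceding lemma), then paste pullbacks along $\msf s^0_0$, whose composite with $(\d^1_1,\d^1_0)$ is the diagonal. Your inductive justification of that fibered-product description, via unique lifts from $\Acyc!(m)(\f)$ for $m\geqslant 1$, supplies what the paper only asserts.
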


\begin{prop}
The homomorphisms
\begin{align*}
&\CG\xrightarrow{({id,\iota})}\CG\times \CG \xrightarrow{\star} \CG \and \CG\xrightarrow{({\iota,id})} \CG\times \CG   \xrightarrow{\star} \CG
\end{align*}
are both natural transformed to $\msf p\co\CG\rto\varepsilon\subseteq\CG$. Suppose the natural transformations are denoted by
\begin{equation}\label{e-inverse}
\psi_\ell\co (\star\circ({\iota, id}))\Rightarrow\msf p \and \psi_r\co (\star\circ({id, \iota}))\Rightarrow\msf p.
\end{equation}
\begin{itemize}
\item Description of $\psi_\ell(\f)$ for an $\f\in \mc G^0$. The $\psi_\ell(\f)$ is a natural transformation
from $i^0(\f)\star \f$  to $\msf e$, which is given by
\begin{equation}
\psi_\ell(\f)=\left(\f^r\co \ker({\f^\ell})\rto \mc X^{[1]}\right).
\end{equation}
\item Description of $\psi_r(\f)$ for an $\f\in \mc G^0$. The $\psi_r(\f)$ is a natural transformation from $\f\star i^0(\f)$ to $\msf e$, which is given by
\begin{equation}
\psi_r(\f)=\left(\f^\ell\co \ker({\f^r})\rto \mc X^{[1]}\right).
\end{equation}
\end{itemize}
\end{prop}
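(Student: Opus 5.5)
We have to show that $\psi_\ell$ and $\psi_r$, as described, are well-defined natural transformations of homomorphisms $\CG\rto\CG$. We treat $\psi_\ell$; the case of $\psi_r$ is symmetric, with the roles of $\f^\ell$ and $\f^r$ swapped.

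\emph{Objects.} Fix $\f=(\f^\ell,\SZ,\f^r)\in\CG^0$. Then
\[
i^0(\f)\star^0\f=(\f^r\circ\msf{pr}_1,\ \SZ\times_{\f^\ell,\CX,\f^\ell}\SZ,\ \f^r\circ\msf{pr}_2)\and \msf e=(id,\CX,id).
\]
An arrow from $i^0(\f)\star\f$ to $\msf e$ is a natural transformation between two homomorphisms on $(\SZ\times_{\f^\ell,\CX,\f^\ell}\SZ)\times_{\f^r\circ \msf{pr}_1,\CX,id}\CX$. This fibered product is canonically isomorphic to $\SZ\times_{\f^\ell,\CX,\f^\ell}\SZ$, and under this isomorphism the two homomorphisms become $\f^r\circ\msf{pr}_1$ and $\f^r\circ\msf{pr}_2$. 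By the lemma above, since $\f^\ell$ is a hypercover we have $\ker(\f^\ell)\cong\SZ\times_{\f^\ell,\CX,\f^\ell}\SZ$, where $\ker(\f^\ell)$ sits inside $\SZ^{[1]}$. We then define
\[
\psi_\ell(\f):=\f^{r,[1]}|_{\ker(\f^\ell)}\co \ker(\f^\ell)\rto\CX^{[1]}.
\]
Its two ends are computed by $\msf d^1_1\circ\f^{r,[1]}=\f^r\circ\msf d^1_1$ and $\msf d^1_0\circ\f^{r,[1]}=\f^r\circ\msf d^1_0$. Under the identification, $\msf d^1_1$ and $\msf d^1_0$ on $\ker(\f^\ell)$ correspond to $\msf{pr}_1$ and $\msf{pr}_2$. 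This identification is the key input: a pair $(y,y')$ with $\f^\ell y=\f^\ell y'$ lifts, by $\Acyc!$ of $\f^\ell$ in the description of $\SZ^{[1]}$ as a fibered product over $\CX^{[1]}$, to a unique arrow over the unit. Hence $\psi_\ell(\f)$ is an arrow $i^0(\f)\star\f\Rightarrow\msf e$.

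\emph{Naturality.} Let $\alpha\co\f\to\g$ be an arrow, with $\g=(\g^\ell,\SY,\g^r)$. We must check the naturality square
\[
\psi_\ell(\g)\cdot\big(i^1(\alpha)\star^1\alpha\big)=\psi_\ell(\f)
\]
in $\CG^1$, where the arrow $\msf p(\alpha)=1_{\msf e}$ is trivial. Both sides are homomorphisms from $(\SZ\times_{\f^\ell,\CX,\f^\ell}\SZ)\times_{\CX}(\SY\times_{\g^\ell,\CX,\g^\ell}\SY)$ to $\CX^{[1]}$, so it suffices to compare them on simplices. The strategy is to use the defining property of $i^1(\alpha)$, namely $i^1(\alpha)\circ\msf{pr}_{23}=\f^{\ell,[1]}\circ\u$ together with $\f^{r,[1]}\circ\u=\alpha\circ\msf{pr}_{12}$. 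This rewrites the composite as $\alpha^{-1}\cdot(\text{lift through }\f^r)\cdot\alpha$. The lifts along kernels of $\f^\ell$ and $\g^\ell$ cancel against the $\alpha$ factors by uniqueness of lifts along the hypercovers, and what remains is exactly $\f^{r,[1]}$ on the kernel.

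The main obstacle is this naturality check, because $\star^1$ is only specified in \cite[\S 4.2]{Chen-Du-WangR2019-grp-mor}. I would handle it by reducing everything to uniqueness statements: a homomorphism into $\CX^{[1]}$ with prescribed ends that factors through a hypercover is unique by $\Acyc!(1)$. It then suffices to show that both sides have the same source, target and underlying lift.

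Finally, both $\psi_\ell$ and $\psi_r$ are smooth, since they are restrictions of the smooth maps $\f^{r,[1]}$ and $\f^{\ell,[1]}$. This completes the plan for the proposition.
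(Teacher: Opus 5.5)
The paper gives no proof here; it only says the verification is direct. So the question is whether your outline closes.

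\textbf{Object level.} This part is right apart from an orientation slip. An arrow $i^0(\f)\star\f\to\msf e$ is a transformation on $(\SZ\times_{\f^\ell,\CX,\f^\ell}\SZ)\times_{\f^r\circ\msf{pr}_1,\CX,id}\CX$. It runs from $\f^r\circ\msf{pr}_2$ (the right leg) to the $\CX$-coordinate, and that coordinate equals $\f^r\circ\msf{pr}_1$. So you must identify $\ker(\f^\ell)$ with $\SZ\times_{\f^\ell,\CX,\f^\ell}\SZ$ via $(\msf d^1_0,\msf d^1_1)$, not via $(\msf d^1_1,\msf d^1_0)$ as you wrote. With your choice, $\f^{r,[1]}|_{\ker(\f^\ell)}$ points the wrong way.

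\textbf{Naturality: the gap.} You flag this step as the main obstacle but do not carry it out, and the reduction you propose is empty as stated. Fix a point $((z_1,z_2),(y_1,y_2))$ of the domain. By $\Acyc!(1)$ of $\f^r$, every arrow $\f^r(z_2)\to\f^r(z_1)$ has a unique lift $z_2\to z_1$. So ``factoring through a hypercover'' imposes no condition, and two transformations with the same ends need not agree.

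The uniqueness that actually does the work is that of the arrow $\kappa\co z_2\to z_1$ in $\ker(\f^\ell)$; let $\kappa'\co y_2\to y_1$ be the corresponding arrow in $\ker(\g^\ell)$. To use it, you must show that the $\f^r$-lift of $\bigl(i^1(\alpha)\star^1\alpha\bigr)\cdot\psi_\ell(\g)$ lies over a unit of $\f^\ell$. That is the entire content of the proposition, and it rests on two facts you do not state.
\begin{enumerate}
\item[(i)] At diagonal points $((z,z),(y,y))$, the component of $i^1(\alpha)\star^1\alpha$ is an identity arrow. The reason is that $\star^1$ transports $\alpha$ along the arrow supplied by $i^1(\alpha)$. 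By construction, $i^1(\alpha)$ is $\f^{\ell,[1]}$ applied to the $\f^r$-lift of $\alpha$, so the transport undoes $\alpha$. This is exactly where the formula for $\star^1$ from \cite{Chen-Du-WangR2019-grp-mor} is needed; you noted it was unavailable and then bypassed it.
\item[(ii)] $i^1(\alpha)\star^1\alpha$ is itself natural. Moreover, $((1_{z_1},\kappa^{-1}),(1_{y_1},\kappa'^{-1}))$ is an arrow of its domain from $((z_1,z_1),(y_1,y_1))$ to $((z_1,z_2),(y_1,y_2))$. Naturality together with (i) then forces the component at $((z_1,z_2),(y_1,y_2))$ to be $\f^r(\kappa)\cdot\g^r(\kappa')^{-1}$.
\end{enumerate}
Composing with $\psi_\ell(\g)=\g^r(\kappa')$ leaves $\f^r(\kappa)=\psi_\ell(\f)$. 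This is the identity $\bigl(i^1(\alpha)\star^1\alpha\bigr)\cdot\psi_\ell(\g)=\psi_\ell(\f)$ on objects, written in the paper's left-to-right convention. It suffices, because a natural transformation of Lie $1$-groupoids is determined by its components on objects.

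In your sketch, the phrase ``kernel lifts cancel against the $\alpha$ factors'' gestures at this computation without supplying (i), which is the step that needs justification. With (i) and (ii) added, your plan is complete, and $\psi_r$ follows symmetrically.
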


The proof is direct, we omit it.

\begin{defn}\label{D-semi-group-like}
We say a groupoid $\mc G$ is {\bf semi-strict group-like} if it is equipped with the following structures:
\begin{enumerate}
\item an associative product $\star\co \CG\times\CG\rto \mc G$,
\item a unit element $\varepsilon=(\{1_e\}\rrto \{e\})$ for $e\in G^0$ such that \eqref{e-unit} holds,
\item an inverse homomorphism $i\co \CG\rto \CG$ with natural transformations \eqref{e-inverse}.
\end{enumerate}
\end{defn}

So far, we have shown  that

\begin{theorem}\label{T-semi-gl}
The $\SAut(\CX)$ is a semi-strict group-like groupoid.
\end{theorem}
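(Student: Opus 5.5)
The plan is to check the three items of Definition \ref{D-semi-group-like} one at a time for $\mc G=\SAut(\CX)$, taking as inputs the structures recalled above: the product $\star$, the unit groupoid $\varepsilon$ built from $\msf e=(id,\CX,id)$, and the inverse $i=(i^0,i^1)$. Proposition \ref{P-aut-X} already gives that $\mc G$ is a groupoid, so the underlying object is in place.

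\emph{Item (1).} That $\star$ is a homomorphism $\mc G\times\mc G\rto\mc G$ is the construction of $\star^0,\star^1$ in \cite[\S 4.2]{Chen-Du-WangR2019-grp-mor}. Associativity is the quoted result \cite[Lemma 4.10]{Chen-Du-WangR2019-grp-mor}, and it holds modulo the canonical identification of iterated fibered products. On objects this is immediate, because $(\f\star^0\g)\star^0\h$ and $\f\star^0(\g\star^0\h)$ have the same outer legs $\f^\ell\circ\msf{pr}_1$ and $\h^r\circ\msf{pr}_3$. On arrows I would check that the natural transformations obtained by composing the $\star^1$'s in either order agree after pulling back along the canonical isomorphism of the triple fibered products.

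\emph{Item (2).} For $\f=(\f^\ell,\SZ,\f^r)$ we have $\f\star^0\msf e=(\f^\ell\circ\msf{pr}_1,\SZ\times_{\f^r,\CX,id}\CX,\msf{pr}_2)$. Under $\SZ\times_{\f^r,\CX,id}\CX\cong\SZ$ this becomes $\f$. The same holds on arrows, since $\star^1$ against $1_{\msf e}$ just pulls a natural transformation back along that isomorphism. This gives \eqref{e-unit}, and the symmetric argument handles $\msf e\star\f$.

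\emph{Item (3).} First I would verify that $i$ is a homomorphism. We have $i^0(\f)=(\f^r,\SZ,\f^\ell)$, which is again an automorphism because both legs are hypercovers. For arrows, $i^1(\alpha)$ is defined through the unique lift $\u$ of the preceding lemma, so $i^1(\alpha)$ is uniquely determined. That same uniqueness then yields compatibility with composition of arrows and with units: both sides of each identity satisfy the same lifting equation $\f^{r,[1]}\circ\u=\alpha\circ\msf{pr}_{12}$. Next come the natural transformations \eqref{e-inverse}, which are supplied by the last proposition. Using the kernel lemma,
\[
\ker(\f^\ell)\cong\SZ\times_{\f^\ell,\CX,\f^\ell}\SZ,
\]
which is exactly the middle space of $i^0(\f)\star^0\f$. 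The map $\f^r$ applied to $\ker(\f^\ell)\subseteq\SZ^{[1]}$ gives an arrow from $i^0(\f)\star\f$ to $\msf e$, namely a natural transformation between $\f^r\circ\msf{pr}_1$ and $\f^r\circ\msf{pr}_2$. The case of $\psi_r$ is symmetric.

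The main obstacle is showing that these assignments $\f\mapsto\psi_\ell(\f)$ are \emph{natural} in $\f$, that is, that they are genuine natural transformations of homomorphisms $\mc G\rto\mc G$ and not merely objectwise arrows. For this I would take an arrow $\alpha\co\f\rto\g$ and compare $\psi_\ell(\g)\circ(i^1(\alpha)\star^1\alpha)$ with $\psi_\ell(\f)$ as maps into $\CX^{[1]}$. I would reduce the comparison to an identity of maps into $\SZ^{[1]}$, and then appeal to uniqueness of lifts through the hypercover $\f^r$, exactly as in the construction of $\u$. I expect this naturality check, together with the arrow-level part of associativity from \cite{Chen-Du-WangR2019-grp-mor}, to be the only nonformal step.
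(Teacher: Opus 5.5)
Your proposal is correct and follows the paper's own argument. The paper proves the theorem simply by assembling the three items of Definition~\ref{D-semi-group-like} from the preceding material: the associative product $\star$ from \cite{Chen-Du-WangR2019-grp-mor}, the unit $\varepsilon$ modulo the canonical fibered-product identifications, and the inverse $i$ built from the unique lift $\u$, together with $\psi_\ell(\f)=\f^r|_{\ker(\f^\ell)}$ and $\psi_r(\f)=\f^\ell|_{\ker(\f^r)}$. The functoriality of $i$ and the naturality of $\psi_\ell,\psi_r$ that you single out as the non-formal step are exactly the checks the paper asserts and omits as ``direct.''
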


Such a semi-strict group-like structure over $\SAut(\CX)$ can be integrated into a 2-group.

\begin{theorem}\label{T-gpd-gp}
A semi-strict group-like groupoid $\mc G=(\mc G^1\rrto \mc G^0)$ can be canonically associated with a 2-group
$\SB \mc G$. As a consequence, we have a 2-group $\SB\SAut(\mc X)$.
\end{theorem}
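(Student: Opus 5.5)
We construct $\SB\mc G$ as a simplicial set with one $0$-simplex $\bullet$, and specify the simplices in degrees $0,1,2,3$. Since a 2-groupoid is determined by its low skeleta, by the result of \cite{Li-2015-higher-groupoid} quoted in \S\ref{Sec-LnG}, the higher simplices are then forced: for $m\geqslant 4$ we set $(\SB\mc G)_m:=\Hom(\sk_3\Delta[m],\SB\mc G)$.

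The construction mimics the nerve of a monoidal groupoid (the Duskin nerve).
\begin{enumerate}
\item $(\SB\mc G)_0=\{\bullet\}$.
\item $(\SB\mc G)_1=\mc G^0$, with both faces equal to $\bullet$ and $s^0_0(\bullet)=e$.
\item A $2$-simplex is a quadruple $(g_{01},g_{12},g_{02};\alpha)$ with $g_{ij}\in\mc G^0$ and $\alpha\in\mc G^1$ an arrow $g_{01}\star g_{12}\to g_{02}$. Its faces are $d^2_0=g_{12}$, $d^2_1=g_{02}$, $d^2_2=g_{01}$.
\item The degeneracies $s^1_0(g)=(e,g,g;1_g)$ and $s^1_1(g)=(g,e,g;1_g)$ use the strict unit property \eqref{e-unit}.
\item A $3$-simplex is a quadruple of $2$-simplices $(\alpha_{123},\alpha_{023},\alpha_{013},\alpha_{012})$ with compatible edges satisfying the cocycle condition
\[
\alpha_{023}\circ(\alpha_{012}\star 1_{g_{23}})=\alpha_{013}\circ(1_{g_{01}}\star\alpha_{123})
\]
as arrows $g_{01}\star g_{12}\star g_{23}\to g_{03}$. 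This equation makes sense without associators precisely because $\star$ is strictly associative.
\end{enumerate}
Simplicial identities in low degrees then follow directly from functoriality of $\star$ and \eqref{e-unit}.

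The next step is to verify the Kan conditions. $\Kan(1)$ is trivial because there is only one $0$-simplex.

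For $\Kan(2,1)$, given $g_{01},g_{12}$ we take $g_{02}=g_{01}\star g_{12}$ and $\alpha$ the identity.

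For $\Kan(2,0)$ and $\Kan(2,2)$ we must solve for $g_{01}$ (resp.\ $g_{12}$) given the other edges; here the inverse enters. For $\Kan(2,0)$ we set $g_{12}=\iota(g_{01})\star g_{02}$, and the arrow
\[
g_{01}\star\iota(g_{01})\star g_{02}\to g_{02}
\]
is $\psi_r(g_{01})\star 1_{g_{02}}$ composed with the unit identification. $\Kan(2,2)$ is handled symmetrically using $\psi_\ell$.

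For $\Kan!(3,j)$, given three faces of a $3$-simplex, the fourth arrow is uniquely determined by solving the cocycle equation, since every morphism of $\mc G$ is invertible. For $j=0,3$ this uses invertibility of arrows of the form $1\star\beta$ and $\beta\star1$, which follows from functoriality of $\star$. $\Kan!(m)$ for $m\geqslant 4$ reduces to the statement that the cocycle identity on each $4$-simplex is automatic once it holds on four of its five faces. This is a standard diagram chase using the interchange law for $\star$ together with associativity.

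The main obstacle I expect is $\Kan(2,0)$ and $\Kan(2,2)$, together with checking that the chosen horn fillers are compatible with the $3$-dimensional uniqueness. The inverse $\iota$ is only inverse up to the natural transformations $\psi_\ell,\psi_r$, and these are not assumed to satisfy the zig-zag (triangle) identities. My plan is therefore to avoid needing any coherence of $\psi$ at all. Kan conditions only demand existence of fillers in dimension $2$, and in dimension $3$ uniqueness is settled arrow-by-arrow through invertibility, so no compatibility between $\psi_\ell$ and $\psi_r$ should be required.

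Canonicity holds because all data are built from $(\star,\varepsilon)$, with the inverse used only in the existence proofs. The consequence for $\SAut(\CX)$ then follows from Theorem~\ref{T-semi-gl}.
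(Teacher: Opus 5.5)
Your construction is the same as the paper's: the same $2$-simplices $(g_{01},g_{12},g_{02};\alpha)$ with $\alpha\colon g_{01}\star g_{12}\to g_{02}$, the same degeneracies, the same cocycle condition cutting out $(BG)_3$, and $(BG)_4=\Hom(\partial\Delta[4],\sk_3)$, with the higher simplices forced. The paper omits the verification of $\Kan!(3)$ and $\Kan!(4)$. Your sketch of that verification has a genuine gap at the outer horns.

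For $\Lambda[3,0]$ the unknown $g_{123}$ occurs in the cocycle equation only as $1_{g_{01}}\star g_{123}$. So you need
\[
\mc G^1(g_{12}\star g_{23},g_{13})\rto \mc G^1(g_{01}\star g_{12}\star g_{23},\,g_{01}\star g_{13}),\qquad \beta\mapsto 1_{g_{01}}\star\beta
\]
to be bijective. Invertibility of arrows of the form $1\star\beta$ says nothing about this. That invertibility is what the inner cases $j=1,2$ use, where the known arrows $g_{012}\star 1$ and $1\star g_{123}$ are inverted. The same problem arises for $\Lambda[3,3]$ with $(-)\star 1_{g_{23}}$.

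In dimension $4$, the pentagon chase through four faces returns the cocycle condition $X=Y$ of the fifth face only in whiskered form. You get $1_{g_{01}}\star X=1_{g_{01}}\star Y$ when the missing face is $d_0$, and $X\star 1_{g_{34}}=Y\star 1_{g_{34}}$ when it is $d_4$. Interchange, associativity and invertibility of arrows cannot remove the whiskering. Take the strict monoidal groupoid with objects $\{e,0\}$, $0$ absorbing, $\Aut(e)=\mathbb Z/2$ and $\Aut(0)$ trivial. The $\Lambda[3,0]$-horn with $g_{01}=0$ and $g_{12}=g_{23}=g_{13}=e$ (forcing $g_{02}=g_{03}=0$) has two fillers. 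So your claim that $\iota$ is needed only for the $2$-dimensional fillers is false: the inverse data must also enter $\Kan!(3,0)$, $\Kan!(3,3)$, $\Kan!(4,0)$ and $\Kan!(4,4)$.

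The missing idea is that the translations $L_g=g\star(-)$ and $R_g=(-)\star g$ are equivalences of groupoids, hence fully faithful.
\begin{itemize}
\item $\psi_\ell(g)\star 1_{(-)}\colon L_{\iota(g)}L_g\Rightarrow\mathrm{id}$ and $\psi_r(g)\star 1_{(-)}\colon L_gL_{\iota(g)}\Rightarrow\mathrm{id}$ are natural isomorphisms. Naturality in the argument comes from bifunctoriality of $\star$ and the strict unit, not from naturality of $\psi$ in $g$.
\item The same holds for $R_g$, using $1_{(-)}\star\psi_\ell(g)$ and $1_{(-)}\star\psi_r(g)$.
\item A functor with a two-sided quasi-inverse is fully faithful.
\end{itemize}
No triangle identities are involved, so your plan to avoid any coherence between $\psi_\ell$ and $\psi_r$ survives.

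With this in hand, the outer horns go through:
\begin{itemize}
\item For $j=0$, $g_{123}$ is the unique $L_{g_{01}}$-preimage of $(g_{012}\star 1_{g_{23}})\cdot g_{023}\cdot g_{013}^{-1}$ (composition read left to right, as in the paper).
\item The case $j=3$ is analogous with $R_{g_{23}}$.
\item Faithfulness of $L_{g_{01}}$ and $R_{g_{34}}$ gives $\Kan!(4,0)$ and $\Kan!(4,4)$.
\end{itemize}
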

\begin{remark}
There are other versions for endowing groupoid with group-like structures. The most strict one is a groupoid that generated from  a crossed module of groups;  the most  general one is given in \cite{Hoffman-Sjamaar2021-IMRN}, with all properties  (1)-(3) hold only up to certain natural transformations. The structure we adopt here lies in between, so we call it a semi-strict structure.
In \cite{Li-2015-higher-groupoid}, $\SB\CG$ is already constructed. Here, for the sake of self-contained,   we outline the construction.
\end{remark}

\begin{proof}[Proof of Theorem \ref{T-gpd-gp}] We construct $\sk_2,\sk_3,\sk_4$ of $\sf B\mc G$ step by step. Here, we assume $\sk_n$ is available once $(BG)_k,k\leqslant n$ are constructed.

\noindent {\bf Step 1. description of $\sk_2(\sf B\mc G)$}.
Set
\[
(BG)_0:=\{\bullet\} \and (BG)_1:=\mc G^0,
\]
and
\[
(BG)_2:=\left\{\alpha_{012}=(\alpha_{12}, \alpha_{02},\alpha_{01}, g_{012})\,\left|\,
\alpha_{01},\alpha_{02},\alpha_{12}\in \CG^0,
g_{012}\in \CG^1(\alpha_{01}\star \alpha_{12},\alpha_{02})\right.\right\}.
\]
The face maps $d^2_j$ are $d^2_0(\alpha_{012})=\alpha_{12}, d^2_1(\alpha_{012})=\alpha_{02}$ and $d^2_2(\alpha_{012})=\alpha_{01}$. The degeneracy maps are $s^0_0(\bullet)=e$ and
\[
s^1_0(\alpha_{01})=(\alpha_{01},\alpha_{01},e,1_{e\star \alpha_{01}}=1_{\alpha_{01}}) \and s^1_1(\alpha_{01})=(e,\alpha_{01},\alpha_{01}, 1_{\alpha_{01}\star e}=1_{\alpha_{01}}).
\]
Obviously, they satisfy the coherent condition (cf. Definition \ref{D-simplicial-set}). %

\noindent
{\bf Step 2. description of $\sk_3(\sf B\mc G)$}.
$(BG)_3$ is a subset of
\[
\widetilde{BG}_3:=\Hom(\partial\Delta[3], \sk_2(\SB\mc G)) \subseteq ((BG)_2)^4.
\]
We explain the condition for an element $\zeta\in \widetilde{BG}_3$ to be in $(BG)_3$. Suppose
\[
\alpha_{0123}=(\alpha_{123},\alpha_{023}, \alpha_{013},\alpha_{012})\in \widetilde{BG}_3\subseteq ((BG)_2)^4
\]
where $\alpha_{ijk}=(\alpha_{jk},\alpha_{ik}, \alpha_{ij},g_{ijk})$. Then we say $\alpha_{0123}\in (BG)_3$ if and only if
\begin{align}\label{E def-BG3}
(1_{\alpha_{01}}\star g_{123})\cdot g_{013} =(g_{012}\star 1_{\alpha_{23}})\cdot g_{023}.
\end{align}
The face maps $d^3_j(\alpha_{0123})$ are defined in an obvious way. The degeneracy maps $s^2_j\co (BG)_2\rto (BG)_3$ are given by
\begin{align*}
s^2_0(\alpha_{012})&=(\alpha_{012},\alpha_{012}, s^1_0(\alpha_{02}), s^1_0(\alpha_{01})),\\
s^2_1(\alpha_{012})&=(s^1_0(\alpha_{12}),\alpha_{012}, \alpha_{012},s^1_1(\alpha_{01})),\\
s^2_2(\alpha_{012})&=(s^1_1(\alpha_{12}),s^1_1(\alpha_{02}), \alpha_{012},\alpha_{012}).
\end{align*}
They also satisfy the coherent conditions.

\noindent {\bf Step 3. description of $\sk_4(\sf B\mc G)$}. Set
\[
(BG)_4:=\Hom(\partial\Delta[4], \sk_3(\sf B\mc G)).
\]
The face maps and degeneracy maps are defined in
a natural way.

It is easy to check that these structure maps between $(BG)_n$ for $0\leqslant n\leqslant 4$ satisfies the coherent condition.  For Kan conditions, it is easy to check the  $\Kan(n)$ for $n\leqslant 2$, but the verification of $\Kan!(3)$ and $\Kan!(4)$ is lengthy, albeit straightforward. We omit all computations.
\end{proof}

\subsection{Units of a 2-group}
\label{Sec-unit}

Let $\SG$ be a 2-group with unit $e$. Recall that $\SG$ consists of a collection of sets $\{G_n,n\geqslant 0\}$ and structure maps $\mc S$.

\begin{defn}\label{D-a-transformation}
Let $\alpha \in G_1$.
We say $\Omega_{0\cdots n}\in \SG^{[1]}_n$ is an {\bf $\alpha$-cylinder} if $\Omega_i=\alpha$ for every $0\leqslant i\leqslant n$. Let $G^{[1]}_{n,\alpha}$ denote the set of
$\alpha$-cylinders.
\end{defn}

\begin{defn}\label{D-transformation}
A {\bf transformation} of $\SG$ is an $e$-cylinder in $G^{[1]}_{1,e}$. Let $\Omega\in G^{[1]}_{1,e}$; we say it is a transformation from $\d^1_1(\Omega)$ to $\d^1_0(\Omega)$.

We say $e'\in G_1$ is a {\bf unit element} if there exists a transformation $\omega$ from $e$ to $e'$. We may denote $e'$ by $e_\omega$.
\end{defn}

The following proposition says that any unit element of $\SG$ can serve as a unit of $\SG$ after we modify its degeneracy maps.

\begin{prop}
Let $e_\omega$ be a unit element of $\SG$ via the transformation $\omega$. Then $\{G_n\}$ admits a new 2-group structure ${\mc S}_\omega$ such that $e_\omega$ is the unit and ${\mc S}^\triangleleft_\omega=\mc S^\triangleleft$. We denote this new 2-group by $(\SG,e_\omega)$.
\end{prop}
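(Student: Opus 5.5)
Since a 2-group is a Lie 2-groupoid with a single $0$-simplex, it is determined by $G_0,\dots,G_4$ together with the face maps (which are fixed) and the degeneracy maps (which we are free to change). The horn-filling conditions $\Kan(m)$, $\Kan!(m)$ involve only $\lambda^\ast_{m,j}$, hence only $\mc S^\triangleleft$. So once we build new degeneracies $s'^n_i$ compatible with the old faces, the Kan conditions are inherited automatically. The whole task is therefore to construct degeneracies $s'^n_i\co G_n\rto G_{n+1}$ with $s'^0_0(\bullet)=e_\omega$ and to check the simplicial identities between them and the unchanged face maps.

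The construction proceeds through the low skeleta. Set $s'^0_0(\bullet):=e_\omega$. For $g\in G_1$, the old degeneracy $s^1_1(g)$ is a 2-simplex with faces $(e,g,g)$. I will use the transformation $\omega\in G^{[1]}_{1,e}$, viewed as a homomorphism $\Delta[1]\times\Delta[1]\rto\SG$ whose two vertical edges equal $e$; it consists of two 2-simplices $\omega_{\mathrm{up}},\omega_{\mathrm{low}}$ glued along a diagonal. Together with $s^1_1(g)$, horn filling ($\Kan!(3)$ and $\Kan(2)$) lets me transport the edge $e$ to $e_\omega$. Concretely, I first fill a $2$-horn to obtain a 2-simplex with faces $(e_\omega,x,g)$ for some $x$, and then use $\Kan!(3)$ on a $3$-horn built from $s^1_1(g)$ and the pieces of $\omega$ to force $x=g$.

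A cleaner choice avoids the extra filling. Inside the Kan complex, $\omega$ gives a 2-simplex $\theta$ with faces $(e_\omega,e,e)$; take it, for instance, as the $d$-face of the unique $3$-simplex filling the horn formed by the two halves of $\omega$ and a degenerate simplex. Then define
\[
s'^1_1(g):=d^3_{2}\big((\lambda^\ast_{3,2})^{-1}(\theta, \ast, s^1_1(g))\big),
\]
and define $s'^1_0$ symmetrically. For $n=2,3$ I take $s'^n_i$ to be the unique fillers given by $\Kan!(3)$ and $\Kan!(4)$, with boundary prescribed by the simplicial identities $d_j s'_i$. Uniqueness of fillers then makes the higher degeneracy identities automatic, since $G_m\cong\Hom(\sk_3\Delta[m],\SG)$ for $m\ge 5$.

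The real work is the identities in dimensions $1$ and $2$:
\[
d^2_0s'^1_1=s'^0_0d^1_0 \qquad\text{and}\qquad d_is'_i=d_{i+1}s'_i=\mathrm{id},
\]
together with $s'^1_0s'^0_0=s'^1_1s'^0_0$. The last one is the main obstacle. It requires that the two ways of degenerating $e_\omega$ agree, which amounts to showing that a particular 2-simplex with boundary $(e_\omega,e_\omega,e_\omega)$, built from $\theta$ by the two constructions, is the same. My plan is to compare the two candidates by assembling a $3$-simplex from $\theta$, its reflected version, and both candidates, and then to use $\Kan!(3)$ together with the cylinder structure of $\omega$ (the extra $\Kan!$ uniqueness on $\Delta[1]\times\Delta[1]$) to conclude equality. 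If the two candidates genuinely disagree, I will instead define $s'^1_0(e_\omega)$ to be $s'^1_1(e_\omega)$ and push the correction to the definition of $s'^1_0$ on general $g$, again by a unique $3$-filler.
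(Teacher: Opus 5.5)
Your framework is sound. The horn spaces only see face-compatible tuples, so the Kan conditions carry over once the new degeneracies satisfy the simplicial identities, and identities landing in $G_m$, $m\geqslant 3$, reduce to lower ones. There is, however, a genuine gap at exactly the point where the proposition has content.

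First, a repairable slip in your displayed formula. A $3$-simplex $z$ with $d^3_0z=\theta$ and $d^3_3z=s^1_1(g)$ has $z_{13}=d^2_1\theta=e$, so its $d^3_2$-face has $d_0$-edge $e$, not $e_\omega$. Also, a $\Lambda[3,2]$-horn with an unspecified face $\ast$ is not a horn. What you want is $s'^1_1(g):=d^3_1\bigl((\lambda^\ast_{3,1})^{-1}(\theta,s^1_1(g),s^1_1(g))\bigr)$.

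The real problem is the step ``for $n=2,3$ take $s'^n_i$ to be the unique fillers given by $\Kan!(3)$, $\Kan!(4)$ with boundary prescribed by the simplicial identities''. $\Kan!(3)$ fills horns, not boundaries: three faces determine the fourth, and you must show the determined face is the prescribed one.
\begin{itemize}
\item For $s'^2_1(g_{012})$ the prescribed boundary is $(s'^1_0(g_{12}),g_{012},g_{012},s'^1_1(g_{01}))$. Its fillability is the triangle identity between the new left and right unit constraints.
\item In the monoidal-groupoid picture of $\SG$, $s'^1_1$ and $s'^1_0$ come from $\theta$ and its ``symmetric'' partner $\theta'$, i.e.\ from two isomorphisms $\psi,\phi$ between $e_\omega$ and $e$. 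This $3$-simplex exists iff $\phi=\psi$.
\item $\phi=\psi$ is the same condition that decides $s'^1_0s'^0_0=s'^1_1s'^0_0$.
\item For $i=0,2$ the required $3$-simplices express naturality of the new constraints. These can indeed be produced by a $\Kan!(4)$ argument from the $3$-simplices defining $s'^1_0,s'^1_1$.
\end{itemize}
You never specify $\theta'$, and you explicitly allow the two candidates to disagree, so the crux is left open. Your fallback of redefining $s'^1_0$ only at $e_\omega$ would make $s'^2_1(g_{012})$ fail to exist for general $g_{012}$. So the ``real work'' is misplaced. The dimension-one face identities are immediate from the construction. What is not addressed is the existence of the $s'^2_i$ and the compatibility of $\theta$ with $\theta'$.

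The paper sidesteps this with a uniform definition. $s^n_{\omega,i}(g)$ is the top $\d^1_0(\Omega)$ of the unique cylinder $\Omega\in G^{[1]}_{n+1}$ such that:
\begin{itemize}
\item its bottom is $s^n_i(g)$;
\item its edge square over the collapsed edge $i(i+1)$ is $\omega$;
\item its other edge squares are $\s^0_0$ of the corresponding edges.
\end{itemize}
Faces of such a cylinder are cylinders of the same type. Hence all face identities hold by construction, and every $s^2_{\omega,i}(g)$ exists as the top of a filled prism, so no boundary-filling problem ever arises. Because left and right degeneracies are transported along the same $\omega$, their compatibility is built in. The remaining degeneracy identities are reduced to comparing composite cylinders via Lemma~\ref{L-tech}.

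To repair your argument, either derive both $s'^1_0$ and $s'^1_1$ from this single cylinder, or define $\theta'$ from $\theta$ and prove that they induce the same identification of $e_\omega$ with $e$.
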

\begin{proof}
We define new degeneracy maps $s^n_{\omega,j}$. For $n=0$, set $s^0_{\omega,0}(\bullet)=e_\omega$.

Now consider $s^1_{\omega,i}$. For simplicity, suppose $i=1$. Let $g_{01}\in G_1$ and $h_{012}=s^1_1(g_{01})$. Then
\[
h_{01}=h_{02}=g_{01},\and h_{12}=e.
\]
We construct an element $\Omega_{012}\in G^{[1]}_2$ such that
\[
\d^1_1(\Omega_{012})=h_{012},\qq
\Omega_{01}=\Omega_{02}=\s^0_0({g_{01}}),\and
\Omega_{12}=\omega.
\]
Here $\s^n_j$ is as in Example \ref{E-X-m}. By $\Kan!(3)$ of $\SG$, there are exactly one such $\Omega_{012}$. Then we set
\[
s^1_{\omega,1}(g_{01}):=\d^1_0(\Omega_{012}).
\]

Similar constructions are applied to $s^n_{\omega,i}$, $n\geqslant 1$. Let $g_{0\ldots n}\in G_n$ and $h_{0\ldots (n+1)}=s^n_i(g_{01\ldots n})$. Then $h_{i(i+1)}=e$ and $h_{jk}=g_{\zeta(j)\zeta(k)}$ otherwise, where $\zeta=\zeta^{\triangleright,i}_n$.
There exists a unique  $\Omega_{0\ldots (n+1)}\in G^{[1]}_{n+1}$ such that
\begin{equation*}
\msf d^1_1(\Omega_{0\ldots (n+1)})=h_{0\ldots(n+1)},\qq
\Omega_{i(i+1)}=\omega,\and
\Omega_{jk}=\s^0_0({g_{\zeta(j)\zeta(k)}}).
\end{equation*}
We then set
\[
s^n_{\omega,i}(g_{0\ldots n}):=\msf d^1_0 (\Omega_{0\ldots (n+1)}).
\]
By the next Lemma \ref{L-tech}, one can show that
$\mc S_\omega=\{d^n_i, s^n_{\omega,j}\}$ gives a $2$-group structure on $\{G_n\}$. We omit the details.
\end{proof}

The following lemma is a useful technique result, the proof is straightforward, so we only state it.

\begin{lemma} \label{L-tech}
Let $\Omega$, $\Omega'$ and $\Omega''$ be elements in $G^{[1]}_n$, $n\geqslant 2$ such that
\[
\d^1_1(\Omega)=\d^1_1(\Omega'') \and
\d^1_0(\Omega)=\d^1_1(\Omega'),
\]
and furthermore, for any pair $0\leqslant i<j\leqslant n$,
\begin{enumerate}
\item $\Omega_{ij}$, $\Omega'_{ij}$ and $\Omega_{ij}''$ are $e$-cylinders in $G^{[1]}_{1,e}$;
\item there exists $\Theta_{ij}\in G^{[2]}_1$ such that
\[
\d^2_1(\Theta_{ij})=\Omega_{ij},\qq
\d^2_0(\Theta_{ij})=\Omega'_{ij},\and
\d^2_2(\Theta_{ij})=\Omega''_{ij};
\]
\item $\Theta_{ij}$ is either $\msf s^1_0(\Omega_{ij})$ or $\msf s^1_1(\Omega'_{ij})$.
\end{enumerate}
Then
$
\d^1_0(\Omega')=\d^1_0(\Omega'').
$
\end{lemma}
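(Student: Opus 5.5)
The plan is to compare $\Omega''$ with a canonical ``composite'' of $\Omega$ and $\Omega'$, built in the Lie $2$-groupoid $\SG^{[n]}=\SHom(\Delta[n],\SG)$. Its $1$-simplices are exactly the elements of $G^{[1]}_n$, and by Example \ref{E-X-m} it inherits $\Kan(1),\Kan(2)$ and $\Kan!(m)$, $m\geqslant 3$, from $\SG$. I then show that this composite and $\Omega''$ have the same $\d^1_0$, using only the unique Kan conditions of $\SG$ in dimensions $\geqslant 3$ and the fact that $n\geqslant 2$.

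\textbf{Step 1 (composite).} By the hypothesis $\d^1_0(\Omega)=\d^1_1(\Omega')$, the pair $(\Omega,\Omega')$ is a $\Lambda[2,1]$-horn in $\SG^{[n]}$. Fill it to some $\Xi\in(\SG^{[n]})_2$ with $\Xi_{01}=\Omega$ and $\Xi_{12}=\Omega'$, and put $\tilde\Omega:=\Xi_{02}$. Then
\[
\d^1_1(\tilde\Omega)=\d^1_1(\Omega)=\d^1_1(\Omega'') \and \d^1_0(\tilde\Omega)=\d^1_0(\Omega').
\]
So it suffices to prove $\d^1_0(\tilde\Omega)=\d^1_0(\Omega'')$.

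\textbf{Step 2 (edgewise comparison).} For each edge $0\leqslant i<j\leqslant n$, restricting $\Xi$ gives a triangle $\Xi_{ij}\in G^{[2]}_1$ with edges $\Omega_{ij},\Omega'_{ij},\tilde\Omega_{ij}$, all of which are $e$-cylinders. Hypothesis (3) says $\Theta_{ij}$ is degenerate.
\begin{itemize}
\item If $\Theta_{ij}=\s^1_0(\Omega_{ij})$, then $\Omega'_{ij}$ is the degenerate cylinder and $\Omega''_{ij}=\Omega_{ij}$.
\item If $\Theta_{ij}=\s^1_1(\Omega'_{ij})$, then $\Omega_{ij}$ is the degenerate cylinder and $\Omega''_{ij}=\Omega'_{ij}$.
\end{itemize}
In either case one of the two composed cylinders is a degenerate $e$-cylinder. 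Here I choose $\Xi$ so that on such edges $\Xi_{ij}$ is itself the corresponding degenerate triangle; I expect this choice can be made compatibly because the relevant lower-dimensional fillers are unique by $\Kan!(3)$ applied inside the prism. With that choice, $\tilde\Omega_{ij}=\Omega''_{ij}$ for every edge. On vertices there is nothing to check, since all vertex data equal $e$.

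\textbf{Step 3 (rigidity of cylinders).} I claim that an element $\Gamma\in G^{[1]}_n$ with $n\geqslant 2$ is uniquely determined by $\d^1_1(\Gamma)$ together with all its edge cylinders $\Gamma_{ij}$. Triangulate the prism $\Delta[1]\times\Delta[n]$ into the standard $(n+1)$-simplices
\[
\sigma_k=((0,0),\dots,(0,k),(1,k),\dots,(1,n)).
\]
These have dimension $\geqslant 3$. I proceed inductively from $\sigma_n$, which has a face in the $\d^1_1$-end, down to $\sigma_0$, whose $d_0$-face is the $\d^1_0$-end. At each stage, all faces of $\sigma_k$ except one are already known: they are either earlier inner faces, faces lying in the $\d^1_1$-end, or faces lying in some edge cylinder, where the triangles $(0,i),(1,i),(1,j)$ and $(0,i),(0,j),(1,j)$ sit in $\Gamma_{ij}$. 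So $\Kan!(m)$ with $m=n+1\geqslant 3$ fixes $\sigma_k$, and with it the remaining face. For $n+1>3$ the unique filling follows from $\Kan!(m)$ together with the fact that $\SG$ is determined by $\sk_3$. Applying the claim to $\tilde\Omega$ and $\Omega''$ gives $\d^1_0(\Omega'')=\d^1_0(\tilde\Omega)=\d^1_0(\Omega')$.

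The main obstacle is the combinatorics of Step 3: checking that at each stage exactly one face is missing, and that the $2$-faces interior to the $\d^1_0$-end really are forced. I would first verify the case $n=2$ explicitly using $\Kan!(3)$ on the three $3$-simplices of $\Delta[1]\times\Delta[2]$, and then induct on $n$.
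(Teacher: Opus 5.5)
The paper omits this proof. Your reduction is the natural one, and it is what the paper's later use of the lemma presupposes: build a $2$-simplex $\Xi$ of $\SG^{[n]}$ with $\Xi_{01}=\Omega$, $\Xi_{12}=\Omega'$ and edge restrictions $\Xi_{ij}=\Theta_{ij}$, then use that an element of $G^{[1]}_n$ is determined by its $\d^1_1$-end and its edge squares.

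\textbf{The gap: existence of $\Xi$.} You do not establish that such a $\Xi$ exists, and that is where the content of the lemma lies. An arbitrary $\Kan(2,1)$-filler from Step~1 does not help. $2$-dimensional fillers are not unique, so $\tilde\Omega_{ij}$ may differ from $\Omega''_{ij}$ (even $\tilde\Omega_i=e$ can fail), and then Step~3 gives nothing.

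What you need is to extend to $\Delta[2]\times\Delta[n]$ the map $\Lambda[2,1]\times\Delta[n]\cup\Delta[2]\times\sk_1\Delta[n]\rto\SG$ assembled from $\Omega$, $\Omega'$ and the $\Theta_{ij}$. These pieces glue, since each $\Theta_{ij}$ restricts at a vertex to $s^1_0s^0_0(\bullet)=s^1_1s^0_0(\bullet)$. Your appeal to $\Kan!(3)$ is not yet an argument: you must say which horns are filled, and in which order.

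The extension does exist, and there are two ways to see it.
\begin{itemize}
\item The inclusion is the pushout-product of the anodyne map $\Lambda[2,1]\subset\Delta[2]$ with the monomorphism $\sk_1\Delta[n]\subset\Delta[n]$, hence anodyne. Equivalently, restriction $\SG^{[n]}\rto\SHom(\sk_1\Delta[n],\SG)$ is a Kan fibration; apply its $\Kan(2,1)$ to the horn $(\Omega,\Omega')$ over the $2$-simplex $(\Theta_{ij})$.
\item Every nondegenerate simplex of $\Delta[2]\times\Delta[n]$ outside the subcomplex projects onto a face of $\Delta[n]$ of dimension $\geqslant 2$. So only horns of dimension $\geqslant 3$ occur, and once a filling order is fixed, $\Kan!(m)$ for $m\geqslant 3$ gives existence and even uniqueness. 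This is the precise form of your $\Kan!(3)$ remark, but it requires exhibiting the order, as you did in Step~3.
\end{itemize}
Either cite the pushout-product lemma or write out the filling order.

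\textbf{Two smaller points.} First, your case analysis in Step~2 does not follow from (2)--(3) as printed. $\s^1_0(\Omega_{ij})$ has $\d^2_0=\d^2_1=\Omega_{ij}$ and degenerate $\d^2_2$, so the printed hypotheses force $\Omega'_{ij}=\Omega_{ij}$ and $\Omega''_{ij}$ degenerate. Under that literal reading the statement is false. Take $\SG=K(\mathbb Z,2)$ (a single $1$-simplex, $G_2=\mathbb Z$) and $n=2$. The $\d^1_0$-end of a cylinder is its $\d^1_1$-end shifted by a signed sum of contributions from the three edge squares. With $\Omega'_{ij}=\Omega_{ij}$, one edge square contributing $1$, and $\Omega''$ degenerate, $\d^1_0(\Omega')$ and $\d^1_0(\Omega'')$ differ by $2$. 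You should state the reading you actually use: $\d^2_2(\Theta_{ij})=\Omega_{ij}$, $\d^2_0(\Theta_{ij})=\Omega'_{ij}$, $\d^2_1(\Theta_{ij})=\Omega''_{ij}$, with $\Theta_{ij}$ equal to $\s^1_1(\Omega_{ij})$ or $\s^1_0(\Omega'_{ij})$. This is exactly what the proof that $\d^1_0\circ\SN$ is a homomorphism requires.

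Second, in Step~3 the induction on $n$ is essential. For $n\geqslant 3$, the faces of $\sigma_k$ obtained by deleting $(0,i)$ with $i<k$, or $(1,i)$ with $i>k$, lie in the cylinders over the $i$-th face of $\Delta[n]$, not in edge squares. They are known only through the inductive hypothesis. With that, the only unknown face at stage $k$ is the one shared with $\sigma_{k-1}$ (the $\d^1_0$-end when $k=0$), and Step~3 is correct.
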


Now we consider the 2-group $\SB\SAut(\CX)$. The unit $\msf e$ is the identity automorphism. We are in particular interested in a unit element $\bfe$, the automorphism of $\CX$ given by
\[
\CX\xleftarrow{\msf s} \CX^{[1]}\xrightarrow{\msf t}\CX,
\qq \mbox{ or equivalently }\qq
\SX\xleftarrow{\d^1_1}\SX^{[1]} \xrightarrow{\d^1_0}\SX.
\]
There is a canonical natural transformation
$$
\omega: \msf e\Rightarrow \bfe
$$
which is the identity homomorphism
\[
\omega: \SX\times_\SX \SX^{[1]}\cong \SX^{[1]}\to \SX^{[1]}.
\]
Hence, $\omega$ corresponds to a 2-simplex
$\omega_{012}$ in $\SB\SAut(\CX)$ with $\omega_{01}=\omega_{12}=\msf e$ and $\omega_{02}=\bfe$.
Set
\begin{equation}
{\omega}_\bullet:=\pi_{1,1}^\ast(\omega_{012}).
\end{equation}
Here $\pi_{1,1}$ is given by \eqref{e-pi-n1} below.

\begin{lemma}
The $\omega_\bullet$ is a transformation from $\msf e$ to $\bfe$.
\end{lemma}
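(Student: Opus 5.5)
The plan is to unwind the definition of $\omega_\bullet=\pi_{1,1}^\ast(\omega_{012})$ and read off its boundary. Since $\SG^{[1]}=\SHom(\Delta[1],\SG)$, an element of $G^{[1]}_1$ is a homomorphism $\Delta[1]\times\Delta[1]\rto\SG$, that is, a square. By Definition \ref{D-transformation}, checking that $\omega_\bullet$ is a transformation from $\msf e$ to $\bfe$ amounts to three facts:
\begin{enumerate}
\item both vertices $\Omega_0,\Omega_1\in G^{[1]}_0=G_1$ of the cylinder equal the unit $e=\msf e$;
\item $\d^1_1(\omega_\bullet)=\msf e$;
\item $\d^1_0(\omega_\bullet)=\bfe$.
\end{enumerate}

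I take $\pi_{1,1}\co\Delta[1]\times\Delta[1]\rto\Delta[2]$ to be the homomorphism induced by the order-preserving map of posets $[1]\times[1]\rto[2]$ given by
\[
(0,0)\mapsto 0,\qq (0,1)\mapsto 0,\qq (1,0)\mapsto 1,\qq (1,1)\mapsto 2.
\]
Here the first coordinate is the $\Delta[1]$-direction of the simplex and the second is the cylinder direction. This map is monotone for the product order. Because a simplex of $\Delta[1]\times\Delta[1]$ is a chain in $[1]\times[1]$ and composing with a monotone map is compatible with all $d_\zeta$, $\pi_{1,1}$ is a homomorphism of simplicial sets. Consequently $\pi_{1,1}^\ast(\omega_{012})=\omega_{012}\circ\pi_{1,1}$, viewing $\omega_{012}$ as a map $\Delta[2]\rto\SG$ via \eqref{e-n-simplex}, is a well-defined element of $\Hom(\Delta[1]\times\Delta[1],\SG)=G^{[1]}_1$.

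Next I compute the four edges of the square by composing each edge inclusion with $\pi_{1,1}$:
\begin{itemize}
\item The cylinder edge over the vertex $0$ of $\Delta[1]$ is $(0,0)\to(0,1)$. It goes to the degenerate edge $(0,0)$ in $\Delta[2]$, so it equals $s^0_0(\bullet)=\msf e$.
\item The cylinder edge over the vertex $1$ is $(1,0)\to(1,1)$. It goes to $(1,2)$, so it equals $\omega_{12}=\msf e$.
\item The bottom face, $\d^1_1$ in Definition \ref{D-natural-transformation}, is $(0,0)\to(1,0)$. It goes to $(0,1)$, giving $\omega_{01}=\msf e$.
\item The top face $\d^1_0$ is $(0,1)\to(1,1)$. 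It goes to $(0,2)$, giving $\omega_{02}=\bfe$.
\end{itemize}
Together these give $\Omega_0=\Omega_1=e$, $\d^1_1(\omega_\bullet)=\msf e$ and $\d^1_0(\omega_\bullet)=\bfe$, which is exactly the claim.

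The only genuine point to watch is the bookkeeping of conventions. One must match which coordinate of $\Delta[1]\times\Delta[1]$ is the cylinder direction, and which of $\d^1_0,\d^1_1$ is source versus target, with the conventions of Example \ref{E-X-m}, Definition \ref{D-natural-transformation} and the later definition \eqref{e-pi-n1} of $\pi_{1,1}$. If \eqref{e-pi-n1} uses the transposed convention, the same argument applies after swapping the roles of the coordinates. In either convention the two nondegenerate 2-simplices of the square pull back to $s^1_0(\omega_{01})$-type degeneracies and to $\omega_{012}$ itself, so no Kan filling is needed.
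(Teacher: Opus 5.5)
Your proof is correct and follows the paper's argument, which just says the lemma follows from the definition. Reading off the four edges of $\pi_{1,1}^\ast(\omega_{012})$ gives vertical edges $\msf e,\msf e$, source $\msf e$ and target $\bfe$.

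One caveat: your map $(0,0),(0,1),(1,0),(1,1)\mapsto 0,0,1,2$ is not the $\pi_{1,1}$ literally written in \eqref{e-pi-n1}. That map sends $00,10,01,11\mapsto 0,1,2,2$, which gives vertical edges $\omega_{02}=\bfe$ and $\omega_{12}=\msf e$; with the coordinates swapped it gives a transformation from $\bfe$ to $\msf e$. So your closing claim that the argument survives a change of convention is false for the literal text. Your map is the only monotone one (in your convention) for which the lemma holds, and it is the one the paper tacitly uses later, e.g.\ for $N'_1$ and $N_1$. You should say explicitly that you are correcting \eqref{e-pi-n1}.
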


This follows directly from the definition.

\begin{remark}
Consider $\Delta[n]\times \Delta[1]$. We denote its 0-simplices by
\[
00,\ldots, n0; 01,\ldots, n1.
\]
It contains an $n+1$-simplex $\Delta[n+1]$ with vertices
\[
00,\ldots,n0,n1.
\]
Let
\begin{equation}\label{e-pi-n1}
\pi_{n,1}\co \Delta[n]\times \Delta[1]\rto \Delta[n+1]
\end{equation}
be a homomorphism that maps $i0$ to $i0$ and
$i1$ to $n1$.
\end{remark}

Set
\begin{equation}
(\SB\SAut(\CX),\bfe):=(\SB\SAut(\CX), e_{\omega_\bullet}).
\end{equation}

\subsection{Natural transformations of 2-group homomorphisms}\label{Sec-transformation}

We have already defined natural transformations for Lie $n$-groupoids (cf. Definition \ref{D-natural-transformation}). In this subsection we focus on natural transformations for Lie $2$-groups.

\begin{defn}
Let $(\SH,e_\SH)$ and $(\SG,e_\SG)$ be 2-groups.
A ($\triangleleft$-)homomorphism $\SN\co \SH\to \SG^{[1]}$ is called a {\bf natural ($\triangleleft$-)transformation} if $N_0(\bullet)=e_\SG$. We call $\d^1_1(\SN)$ and $\d^1_0(\SN)$ to be the source and target of $\SN$, respectively.
\end{defn}

\begin{lemma}
Let $\SN$ be a natural $\triangleleft$-transformation as above. For any $h\in H_1$, $N_1(h)$
is a transformation of $\SG$.
\end{lemma}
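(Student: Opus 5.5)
The claim is that for every $h\in H_1$ the element $N_1(h)\in G^{[1]}_1$ is an $e$-cylinder in the sense of Definition \ref{D-a-transformation}. By Definition \ref{D-transformation}, a transformation of $\SG$ means exactly this, with $e$ read as the unit $e_\SG$ of the given 2-group structure. So it suffices to show that both vertices of the cylinder $N_1(h)$ equal $e_\SG$.

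The plan is to use only the compatibility of $\SN$ with face maps, since a $\triangleleft$-homomorphism is compatible with $\mc S^\triangleleft$ but perhaps not with the degeneracies. Write $\Omega:=N_1(h)\in G^{[1]}_1=\Hom(\Delta[1]\times\Delta[1],\SG)$. Its two vertices $\Omega_0$ and $\Omega_1$ in the notation of Definition \ref{D-a-transformation} are the images under the face maps of the simplicial manifold $\SG^{[1]}$:
\[
\Omega_0=d^1_1(\Omega)\in G^{[1]}_0\cong G_1, \qquad \Omega_1=d^1_0(\Omega).
\]
Here the structure maps of $\SG^{[1]}=\SHom(\Delta[1],\SG)$ are pullbacks along $\SF_\zeta\times\mathsf{id}$ in the first factor, and $G^{[1]}_0=\Hom(\Delta[0]\times\Delta[1],\SG)\cong G_1$. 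Since $\SN$ commutes with $d^1_0$ and $d^1_1$, we get
\[
\Omega_0=N_0(d^1_1 h)=N_0(\bullet)=e_\SG \and \Omega_1=N_0(d^1_0h)=N_0(\bullet)=e_\SG,
\]
using that $H_0=\{\bullet\}$ and that $N_0(\bullet)=e_\SG$ by the definition of a natural $\triangleleft$-transformation.

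Hence $\Omega$ is an $e_\SG$-cylinder, i.e.\ an element of $G^{[1]}_{1,e}$, and therefore a transformation from $\d^1_1(\Omega)$ to $\d^1_0(\Omega)$. The only point needing care is bookkeeping. One must not confuse the face maps $d^1_j$ of $\SG^{[1]}$ as a simplicial object, which move along the first, simplicial-degree factor, with the homomorphisms $\d^1_j\colon\SG^{[1]}\to\SG$, which give source and target. One must also make sure that "vertex $\Omega_i$" in Definition \ref{D-a-transformation} means the restriction to $\{i\}\times\Delta[1]$. With that identification the argument is immediate, and no degeneracy compatibility is needed, which is why it holds for $\triangleleft$-transformations.
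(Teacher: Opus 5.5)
Your proposal is correct and is essentially the paper's proof: compatibility with face maps gives $d^1_i(N_1(h))=N_0(d^1_i(h))=N_0(\bullet)=e_\SG$ for $i=0,1$, so $N_1(h)$ is an $e_\SG$-cylinder. Your extra bookkeeping (identifying $\Omega_0,\Omega_1$ with $d^1_1(\Omega),d^1_0(\Omega)$, and observing that degeneracies are never used) is accurate and simply makes explicit what the paper leaves implicit.
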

\begin{proof}
Since $\SN$ is compatible with face maps,
\[
d^1_i(N_1(h))=N_0(d^1_i(h))=N_0(\bullet)=e_\SG.
\]
Hence $N_1(h)$ is a $e_\SG$-cylinder, which is a transformation of $\SG$.
\end{proof}

\begin{lemma}
Let $\SN$ be a natural transformation as above. Then $N_1(e_\SH)=\s^0_0(e_\SG)$.
\end{lemma}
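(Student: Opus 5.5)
Since $\SN$ is a natural transformation, it is a genuine homomorphism, so it commutes with all structure maps, including the degeneracies. The unit of $\SH$ is $e_\SH = s^0_0(\bullet)$, where $s^0_0$ is the degeneracy map of $(\SH,e_\SH)$. (If the unit has been changed to some $e_\omega$, the modified degeneracies $\mc S_\omega$ are the ones that count.) The plan is to push $N_1$ through this degeneracy.

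First, compatibility of $\SN$ with the degeneracy $s^0_0$ gives
\[
N_1(e_\SH)=N_1(s^0_0(\bullet))=s^0_0\bigl(N_0(\bullet)\bigr),
\]
where on the right $s^0_0$ is the degeneracy map of the simplicial manifold $\SG^{[1]}$ from degree $0$ to degree $1$. By definition of a natural transformation, $N_0(\bullet)=e_\SG$, which we regard as an element of $G^{[1]}_0=\Hom(\Delta[1],\SG)=G_1$.

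Next I would identify the degeneracy of $\SG^{[1]}=\SHom(\Delta[1],\SG)$ with the map $\s^0_0$ of Example \ref{E-X-m}. For $\SHom(\Delta[1],\SG)$, the level-$m$ structure maps are induced by precomposition with $\SF_\zeta\times \msf{id}$ on $\Delta[m]\times\Delta[1]$. In particular, the degeneracy $G^{[1]}_0\to G^{[1]}_1$ sends $u\co\Delta[1]\to\SG$ to
\[
\Delta[1]\times\Delta[1]\xrightarrow{\msf{pr}_2}\Delta[1]\xrightarrow{u}\SG .
\]
This is exactly the element $\s^0_0(u)$, namely the image of $u$ under the degeneracy in the simplicial direction of the homomorphism $\s^0_0\co \SG^{[0]}\to\SG^{[1]}$, evaluated in level $0$. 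Hence $N_1(e_\SH)=\s^0_0(e_\SG)$.

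The only real obstacle is bookkeeping: making sure the two simplicial directions of $\SG^{[1]}$ (the internal $\Delta[1]$ and the level index $n$) are not confused, so that the degeneracy used here really is the one denoted $\s^0_0$ in the statement. A second check is that when the unit is a modified unit $e_\omega$, the degeneracies in $\SG^{[1]}$ are still the ones inherited from the level structure, which are unaffected by $\omega$. With that identification settled, the lemma follows in one line.
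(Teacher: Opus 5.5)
Your main step is the paper's one-line proof: compatibility of $\SN$ with $s^0_0$ and $N_0(\bullet)=e_\SG$ give $N_1(e_\SH)=s^0_0(e_\SG)$. Here $s^0_0\co G^{[1]}_0\rto G^{[1]}_1$ is the level degeneracy of $\SG^{[1]}$, and the paper then silently writes the result as $\s^0_0(e_\SG)$.

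You were right that this last identification deserves a word, but the one you give is wrong. In the convention of Example~\ref{E-X-m}, $\s^0_0=\SF_\zeta^\ast\co\SG^{[0]}\rto\SG^{[1]}$ acts on the internal factor. So for $u\in G_1=G^{[0]}_1$ one has $\s^0_0(u)=u\circ\msf{pr}_1$ (projection to the level factor), not $u\circ\msf{pr}_2$. For general $u$ these are different squares:
\begin{itemize}
\item $u\circ\msf{pr}_1$ is the $e_\SG$-cylinder from $u$ to $u$. This is how the paper uses $\s^0_0(g)$, e.g.\ in defining $\Ad_1$.
\item $u\circ\msf{pr}_2=s^0_0(u)$ is a $u$-cylinder from $e_\SG$ to $e_\SG$.
\end{itemize}
So your assertion that $u\circ\msf{pr}_2$ ``is exactly $\s^0_0(u)$'' fails for nondegenerate $u$. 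Also, $\s^0_0$ ``evaluated in level $0$'' is the map $G_0\rto G_1$, $\bullet\mapsto e_\SG$, which does not take $u$ as input.

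The missing fact is specific to $u=e_\SG$. Because $e_\SG=s^0_0(\bullet)$ is degenerate, the map $e_\SG\co\Delta[1]\rto\SG$ factors through $\Delta[0]\xrightarrow{\bullet}\SG$. Hence $e_\SG\circ\msf{pr}_1$ and $e_\SG\circ\msf{pr}_2$ are both the constant homomorphism $\Delta[1]\times\Delta[1]\rto\Delta[0]\xrightarrow{\bullet}\SG$. Concretely, both squares consist of the $2$-simplex $s^1_0(e_\SG)=s^1_1(e_\SG)$, by the simplicial identity $s^1_0s^0_0=s^1_1s^0_0$. With this sentence in place of your identification, the proof is complete.

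The same observation settles your modified-unit concern, though not for the reason you state. The level degeneracies of $\SG^{[1]}$ are defined by precomposition, but the squares they produce have degeneracies of $\SG$ as their triangles, so they do depend on $\omega$. What makes the argument work for $(\SG,e_\omega)$ is that $e_\omega=s^0_{\omega,0}(\bullet)$ is degenerate for the degeneracies in force.
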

\begin{proof}
Since $\SN$ is compatible with the degeneracy maps,
\[
N_1(e_\SH)=N_1(s^0_0(\bullet))=
s^{0}_0(N_0(\bullet))=s^{0}_0(e_\SG).
\]
\end{proof}

\begin{lemma}
Suppose $\SN\co\SH\rto \SG^{[1]}$ is a natural $\triangleleft$-transformation. Then $\SN$ is determined by $\d^1_1\circ\SN$ and $N_1$. Conversely, given a $\triangleleft$-homomorphism $\f\co \SH\rto\SG$ and a map $N_1\co H_1\rto G^{[1]}_{1,e}$ such that
$
f_1=\d^1_1\circ N_1,
$
there exists a unique natural $\triangleleft$-transformation $\SN\co \SH\rto \SG^{[1]}$ such that the source of $\SN$ is $\f$, i.e,
$
\d^1_1\circ\SN=\f.
$
\end{lemma}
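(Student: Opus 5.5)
The plan is to construct $N_n$ degree by degree, using the fact that $\SG^{[1]}=\SHom(\Delta[1],\SG)$ is a Lie 2-groupoid. An $n$-simplex of $\SG^{[1]}$ is a map $\Delta[n]\times\Delta[1]\rto\SG$. Since $\SG$ satisfies $\Kan!(m)$ for $m\geqslant 3$, such a map is determined by its restriction to the $2$-skeleton, together with the data prescribed by the horn-filling structure of the prism.

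\emph{Uniqueness.} First, $N_0(\bullet)=e_\SG$ is forced. Next, $N_1(h)$ is given, and its source is $f_1(h)$. For $n=2$ and $\eta\in H_2$, the simplex $N_2(\eta)\in G^{[1]}_2$ is a prism $\Delta[2]\times\Delta[1]\rto\SG$. Its bottom face is $f_2(\eta)=\d^1_1(N_2(\eta))$ and its three side squares are $N_1(\eta_{ij})$; all of these are known. Decompose the prism into the three $3$-simplices $(00,10,20,21)$, $(00,10,11,21)$ and $(00,01,11,21)$. Fill the first using the given bottom $2$-simplex and the side-square triangles; $\Kan!(3)$ gives a unique filler from a horn. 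Proceeding successively, the remaining $3$-simplices are uniquely determined, and hence so is the top face $\d^1_0(N_2(\eta))$. Thus $N_2$ is determined by $\f$ and $N_1$. For $n\geqslant 3$, a simplex of $\SG^{[1]}$ is determined by its boundary, since $\SG^{[1]}$ is a 2-groupoid and therefore satisfies $\Acyc!$-type uniqueness in degrees $\geqslant3$. Compatibility with faces then forces $N_n$ inductively. This proves the first claim.

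\emph{Existence.} Use the same recipe as a definition. Set $N_0(\bullet)=e_\SG$, take $N_1$ as given, and let $N_2(\eta)$ be the unique prism obtained by the successive $\Kan!(3)$ fillings above. This $N_2(\eta)$ has $\d^1_1 N_2(\eta)=f_2(\eta)$ and $\d^2_j N_2(\eta)=N_1(d^2_j\eta)$ by construction, so $\SN$ is compatible with faces up to degree 2. The target face gives a $2$-simplex of $\SG$ whose edges are $\d^1_0N_1(\eta_{ij})$. Degrees $3$ and $4$ are then obtained by $\Kan!$ uniqueness, extending by $X_m\cong\Hom(\sk_3\Delta[m],\cdot)$ for $m\geqslant 4$ as in Section~\ref{Sec-LnG}. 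Degeneracies are not required, since we only ask for a $\triangleleft$-transformation. The condition $N_0(\bullet)=e_\SG$ holds, and $\d^1_1\circ\SN=\f$ holds by construction.

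\emph{Main obstacle.} The main obstacle is degree $3$. Given $\theta\in H_3$, the four prisms $N_2(d^3_j\theta)$ must fit together as the boundary of a $3$-dimensional prism $\Delta[3]\times\Delta[1]\rto\SG$ whose bottom is $f_3(\theta)$. In particular, the four top $2$-simplices must form an element of $G_3$, i.e.\ satisfy the compatibility that $\Kan!(3)$ imposes on boundaries. I would verify this by triangulating $\Delta[3]\times\Delta[1]$ into four $4$-simplices. Starting from the bottom $f_3(\theta)\in G_3$, I would fill successively using $\Kan!(4)$, or equivalently the fact that in a 2-groupoid any $\partial\Delta[4]$ horn data with three compatible $3$-faces forces the fourth. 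The key input is that the $\triangleleft$-compatibility of $\f$ guarantees that the lower faces agree. The filled $4$-simplices then exhibit the top face as a genuine $3$-simplex, and the prism is the required $N_3(\theta)$. This is a lengthy but mechanical check, parallel to the omitted verifications in Theorem~\ref{T-gpd-gp} and Lemma~\ref{L-tech}.
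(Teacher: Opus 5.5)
Your proposal is correct and takes the same route as the paper. In both, $N_2(\eta)$ is the unique prism with bottom $f_2(\eta)$ and side squares $N_1(\eta_{ij})$, obtained by $\Kan!(3)$ of $\SG$, and the higher $N_n$ follow by the same mechanism; your explicit triangulations and your degree-$3$ check by successive $\Kan!(4)$ fillings simply spell out what the paper compresses into ``the same argument applies to $N_n$ for all $n\geqslant 3$''.
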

\begin{proof}
We show ``$\Leftarrow$''. Suppose $\f$ and $N_1$ are given. We construct $N_2\co H_2\rto G^{[1]}_2$. Let $h\co\Delta[2]\rto \SH$ be an element in $H_2$ and denote $N_2(h)$ by $g\co\Delta[2]\times\Delta[1]\rto \SG$. By $\Kan!(3)$ of $\SG$, there exists a unique $g\in G^{[1]}_2$ such that
\[
\d^1_1(g)=f_2(h) \and  g_{ij}= N_1(h_{ij}).
\]
The same argument applies to $N_n$ for all $n\geqslant 3$. We get a natural $\triangleleft$-transformation $\SN$.
\end{proof}

\begin{prop}
Suppose $\SN\co \SH\rto \SG^{[1]}$ is a natural $\triangleleft$-transformation and $\d^1_1\circ \SN$ is a homomorphism. Let $\omega=N_1(e_\SH)$. Then
\[
\d^1_0\circ \SN\co (\SH,e_\SH)\to (\SG, e_\omega)
\]
is a homomorphism.
\end{prop}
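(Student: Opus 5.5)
Write $\f:=\d^1_1\circ\SN$ and $\g:=\d^1_0\circ\SN$. Since $\SN$ is a natural $\triangleleft$-transformation, $\g$ is a composition of $\triangleleft$-homomorphisms, so it already commutes with all face maps $d^n_i$. The statement therefore reduces to showing that $\g$ intertwines the degeneracy maps $s^n_j$ of $\SH$ with the modified degeneracy maps $s^n_{\omega,j}$ of $(\SG,e_\omega)$ from the proposition on unit elements.

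The base case is $n=0$. Here $g_1(s^0_0(\bullet))=g_1(e_\SH)=\d^1_0(N_1(e_\SH))=\d^1_0(\omega)=e_\omega=s^0_{\omega,0}(\bullet)$. This uses that $\omega=N_1(e_\SH)$ is a transformation from $\d^1_1(\omega)=f_1(e_\SH)=e_\SG$ to $e_\omega$, where $f_1(e_\SH)=e_\SG$ because $\f$ is a homomorphism.

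For $n\ge1$, fix $h\in H_n$ and $j$, and set $h'=s^n_j(h)\in H_{n+1}$. Then $N_{n+1}(h')\in G^{[1]}_{n+1}$ has the following properties:
\begin{itemize}
\item $\d^1_1 N_{n+1}(h')=f_{n+1}(s^n_j h)=s^n_j(f_n h)$, since $\f$ is a homomorphism;
\item $N_{n+1}(h')_{j(j+1)}=N_1(s^0_0\bullet)=N_1(e_\SH)=\omega$, since $\SN$ commutes with faces;
\item for every other edge, $N_{n+1}(h')_{kl}=N_1(h_{\zeta(k)\zeta(l)})$, where $\zeta=\zeta^{\triangleright,j}_n$.
\end{itemize}
The cylinder $\Omega$ that defines $s^n_{\omega,j}(g_n h)$ has instead $\d^1_1(\Omega)=s^n_j(f_n h)$, the edge $\omega$ at $j(j+1)$, and degenerate edges $\s^0_0(g_1(h_{\zeta(k)\zeta(l)}))$ elsewhere. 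So the two cylinders agree except on the edges away from $j(j+1)$.

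To compare their $\d^1_0$ faces I will use Lemma \ref{L-tech}, applied to the triple
\[
\Omega''=N_{n+1}(h'),\qquad \Omega'=\Omega,\qquad \Omega'''=\s^{n+1}_0\text{-type cylinder over } g_{n+1}(h'),
\]
where the composite relation is witnessed edgewise by $\Theta_{kl}=\msf s^1_1(N_1(\cdot))$ and $\Theta_{j(j+1)}=\msf s^1_0(\omega)$. A cleaner alternative is to build, via $\Kan!(3)$ and $\Kan!(4)$ of $\SG$, a cylinder in $G^{[2]}_{n+1}$ whose three faces are $N_{n+1}(h')$, the trivial cylinder $\s^{n+1}_0(\d^1_0 N_{n+1}(h'))$ (or its analogue), and $\Omega$. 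Either way, the conclusion should be $\d^1_0(N_{n+1}(h'))=\d^1_0(\Omega)$, which reads $g_{n+1}(s^n_j h)=s^n_{\omega,j}(g_n h)$.

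The main obstacle is matching the hypotheses of Lemma \ref{L-tech} exactly. The $\d^1_0$ faces must chain correctly, and each edge witness $\Theta_{kl}$ must be one of the two allowed degenerate forms: $\msf s^1_1(N_1(h_{\zeta(k)\zeta(l)}))$ on the ordinary edges and a degeneracy of $\omega$ on the $j(j+1)$ edge. Because a 2-group is determined by its simplices up to dimension $4$, it suffices to treat $n\le3$, and the uniqueness supplied by $\Kan!(m)$ for $m\ge3$ then makes each identification forced.
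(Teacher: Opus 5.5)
Your overall strategy (faces are automatic; check degeneracies; compare two cylinders via Lemma \ref{L-tech}) is the right one, but the comparison step has a genuine gap. The cylinder $\Omega$ defining $s^n_{\omega,j}(g_nh)$ does not have $\d^1_1(\Omega)=s^n_j(f_nh)$. By the definition of $s^n_{\omega,j}$ it starts at the simplex being degenerated, so $\d^1_1(\Omega)=s^n_j(g_nh)$. This is consistent with its ordinary edges $\s^0_0(g_1(h_{\zeta(k)\zeta(l)}))$ being identity cylinders on $g_1(\cdot)$. Thus $N_{n+1}(h')$ runs from $s^n_j(f_nh)$ to $g_{n+1}(h')$, while $\Omega$ runs from $s^n_j(g_nh)$ to $s^n_{\omega,j}(g_nh)$.

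Neither of your triples therefore satisfies the hypotheses of Lemma \ref{L-tech}, which require the first cylinder followed by the second to compose to the third, both on the $\d^1_i$-faces and edgewise. Take the triple with the trivial cylinder over $g_{n+1}(h')$. On an ordinary edge you would need $N_1(\cdot)$ followed by an identity to equal the identity $\s^0_0(g_1(\cdot))$. Your witness $\msf s^1_1(N_1(\cdot))$, whose faces are $N_1(\cdot)$, $N_1(\cdot)$ and an identity, cannot fill that. The $\d^1_1$-faces also disagree unless $\f=\g$. The same objection applies to the $G^{[2]}_{n+1}$ variant.

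The missing ingredient is an intermediate cylinder bridging $s^n_j(f_nh)$ and $s^n_j(g_nh)$. Take $\Omega_0:=\s^n_j(N_n(h))\in G^{[1]}_{n+1}$, the degeneracy in the simplicial set $\SG^{[1]}$; this is available even though $\SN$ itself is only a $\triangleleft$-map. Since $\d^1_0,\d^1_1$ are simplicial, $\d^1_1(\Omega_0)=s^n_j(f_nh)$ and $\d^1_0(\Omega_0)=s^n_j(g_nh)$. Its edge $j(j+1)$ is the degenerate cylinder $s^0_0(e_\SG)$, and its other edges are $N_1(h_{\zeta(k)\zeta(l)})$.

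Now apply Lemma \ref{L-tech} with the lemma's $(\Omega,\Omega',\Omega'')$ taken to be $(\Omega_0,\Omega,N_{n+1}(h'))$, where $\Omega$ is your defining cylinder for $s^n_{\omega,j}(g_nh)$. The face conditions hold. The edgewise witnesses are exactly the ones you wrote: $\msf s^1_1(N_1(h_{\zeta(k)\zeta(l)}))$ on ordinary edges and $\msf s^1_0(\omega)$ on $j(j+1)$. The lemma yields $\d^1_0(\Omega)=\d^1_0(N_{n+1}(h'))$, i.e.\ $s^n_{\omega,j}(g_nh)=g_{n+1}(s^n_jh)$.

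This factorization of $N_{n+1}(h')$ as ``degenerate copy of $N_n(h)$, then the $\omega$-cylinder'' is precisely the paper's argument. The paper writes it for $j=n$ and obtains $\Omega_0$ by $\Kan!(3)$-uniqueness.
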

\begin{proof}
Set $\u=\d^1_1\circ \SN$ and $\msf v=\d^1_0\circ \SN$. We show $\msf v$ is compatible with degeneracy maps, i.e,
\begin{equation}\label{e-to-prove}
s^n_{\omega,j}\circ v_n=v_{n+1}\circ s^n_j.
\end{equation}
For simplicity, we assume $j=n$. Let $h_{0\ldots n}\in H_n$, set
$
\tilde h_{0\ldots (n+1)}=s^n_n(h_{0\ldots n}).
$ Let
\[
g_{0\ldots n}=u_n(h_{0\ldots n}),\;\;\;
g'_{0\ldots n}=v_n(h_{0\ldots n}).
\]
Let
\[
\Omega''_{0\ldots (n+1)}=N_{n+1}(\tilde h_{0\ldots (n+1)}).
\]
To show \eqref{e-to-prove}, it is sufficient to show
\[
\d^1_0(\Omega''_{0\ldots (n+1)})
=s^n_{\omega,n}(g'_{0\ldots n}),
\]
since the left hand side of the equation is $v_{n+1}(s^n_n(h_{0\ldots n}))$.

By the assumption that $\u$ commutes with degeneracy maps,
\[
\d^1_1(\Omega''_{0\ldots (n+1)})
=s^n_n(g_{0\ldots n})=:\tilde g_{0\ldots (n+1)}.
\]
Then $\tilde g_{n(n+1)}=e$. We construct $\Omega_{0\ldots(n+1)}$, $\Omega'_{0\ldots (n+1)}$ in $G^{[1]}_{n+1}$:
\begin{itemize}
\item $\Omega_{0\ldots (n+1)}$ satisfies the condition
\[
\d^1_1(\Omega_{0\ldots(n+1)})=\d^1_1(\Omega''_{0\ldots(n+1)}), \qq \Omega_{n(n+1)}=s^0_0(e_G),\and
\Omega_{ij}=N_1(\tilde h_{ij}),
\]
in the last term, $(ij)\not=(n(n+1))$; by the construction, we see that
\begin{equation*}
\d^1_0(\Omega_{0\ldots(n+1)})=s^n_n(g'_{0\ldots n}) =:\tilde g'_{0\ldots (n+1)}.
\end{equation*}
\item $\Omega'_{0\ldots (n+1)}$ satisfies the condition
\[
\d^1_1(\Omega'_{0\ldots(n+1)})
=\d^1_0(\Omega_{0\ldots(n+1)}),\qq
\Omega'_{n(n+1)}=N_1(e_\SH)=\omega,\and
\Omega'_{ij}=s^0_0(\tilde g'_{ij}).
\]
\end{itemize}
One can verify that $\Omega,\Omega'$ and $\Omega''$
satisfy the hypotheses of Lemma \ref{L-tech}, yielding
\[
\d^1_0(\Omega'_{0\ldots(n+1)})=
\d^1_0(\Omega''_{0\ldots (n+1)}).
\]
The right hand side is precisely $s^n_{\omega,j}( g'_{0\ldots n})$. Indeed,  $\d^1_1(\Omega')$ is $s^n_n(g')$, by the definition of $\Omega'$, $\d^1_0(\Omega')$
is $s^n_{\omega,n}(g')$.
\end{proof}

Now we focus on the special case that $\SH=\SG$.

\begin{coro} \label{C-transformation}
Let $(\SG,e)$ be a 2-group. Let $\SN\co \SG\rto \SG^{[1]}$ be a natural $\triangleleft$-transformation from identity to $\u=\d^1_0\circ \SN$. Suppose $\omega=N_1(e)$. Then
\[
\u \co(\SG,e)\rto (\SG,e_\omega)
\]
is a 2-group homomorphism. Furthermore, if $N_1(e_\omega)=s^0_0({e_\omega})$, then $\u\co (\SG,e_\omega)\rto (\SG,e_\omega)$ is also a 2-group homomorphism.
\end{coro}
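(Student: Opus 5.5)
The first assertion is a direct specialization of the preceding proposition. Take $\SH=\SG$ with $e_\SH=e$. Then $\d^1_1\circ\SN=\mathsf{id}$ is a homomorphism $(\SG,e)\rto(\SG,e)$, and $N_1(e)=\omega$. The proposition then says that $\u=\d^1_0\circ\SN\co(\SG,e)\rto(\SG,e_\omega)$ is a homomorphism. Before invoking it, I will check that $\omega$ is a transformation, so that $(\SG,e_\omega)$ makes sense. This follows from the lemma that $N_1(h)$ is an $e$-cylinder, since $N_0(\bullet)=e$. Moreover $\d^1_1(\omega)=e$ and $\d^1_0(\omega)=u_1(e)=:e_\omega$, so $\omega$ is a transformation from $e$ to $e_\omega$.

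For the second assertion, face-map compatibility of $\u$ is automatic, because the face maps of $(\SG,e_\omega)$ coincide with those of $(\SG,e)$ (${\mc S}^\triangleleft_\omega={\mc S}^\triangleleft$). It remains to prove $s^n_{\omega,j}\circ u_n=u_{n+1}\circ s^n_{\omega,j}$. I will run the argument of the previous proposition again, with $(\SG,e_\omega)$ as source. Fix $g\in G_n$ and set $\tilde g:=s^n_{\omega,j}(g)$, so that $\tilde g_{j(j+1)}=e_\omega$. Set $\Omega'':=N_{n+1}(\tilde g)$; then $\d^1_1(\Omega'')=\tilde g$ and $\d^1_0(\Omega'')=u_{n+1}(\tilde g)$. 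The edge $\Omega''_{j(j+1)}=N_1(e_\omega)$ equals $s^0_0(e_\omega)$ by hypothesis, and the other edges are $N_1(\tilde g_{ik})=N_1(g_{\zeta(i)\zeta(k)})$.

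Next I construct the comparison cylinders by $\Kan!(3)$ uniqueness, exactly as in the preceding proof. Let $\Omega\in G^{[1]}_{n+1}$ be the unique cylinder with $\d^1_1(\Omega)=\tilde g$, $\Omega_{j(j+1)}=s^0_0(e_\omega)$ and $\Omega_{ik}=N_1(g_{\zeta(i)\zeta(k)})$ otherwise. Comparing with $\Omega''$, we get $\Omega=\Omega''$. Alternatively, I will check that $\d^1_0(\Omega)=s^n_{\omega,j}(u_n(g))$. For this I use that $\tilde g$ is obtained from $s^n_j(g)$ by the $\omega$-cylinder of the definition of $s^n_{\omega,j}$, and that $N_{n+1}(s^n_j(g))$ is computed by the same data. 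The cleanest route is Lemma~\ref{L-tech}, applied to three cylinders:
\begin{itemize}
\item the $\omega$-cylinder $\Omega_1$ from $s^n_j(g)$ to $\tilde g$;
\item the cylinder $\Omega''$ from $\tilde g$ to $u_{n+1}(\tilde g)$;
\item the composite route: first $N_{n+1}(s^n_j(g))$, from $s^n_j(g)$ to $s^n_{\omega,j}(u_n(g))$, which holds by the first assertion, followed by the $\omega$-cylinder from $s^n_j(u_n(g))$.
\end{itemize}
The required edgewise 2-cells $\Theta_{ik}$ are degenerate. On the edge $(j,j+1)$, the relevant triangle has edges $\omega$, $s^0_0(e_\omega)$ and $\omega$, which is $\s^1_1(\omega)$; this is exactly where the hypothesis $N_1(e_\omega)=s^0_0(e_\omega)$ enters. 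On the other edges the triangle is $\s^1_0(N_1(g_{\zeta(i)\zeta(k)}))$. Lemma~\ref{L-tech} then gives equality of the two targets, i.e. $u_{n+1}(\tilde g)=s^n_{\omega,j}(u_n(g))$.

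The main obstacle is the bookkeeping in this last step. One has to arrange the three cylinders so that they satisfy the hypotheses of Lemma~\ref{L-tech} verbatim, with $\d^1_1(\Omega)=\d^1_1(\Omega'')$ and $\d^1_0(\Omega)=\d^1_1(\Omega')$. The difficulty is that the $\omega$-cylinder must be inserted at the source end rather than the target end as in the previous proposition. If this orientation fails, I would first apply the inverse transformation $\omega^{-1}$, which exists by $\Kan(2)$ on $\SG^{[1]}$, so as to reduce to the configuration already treated.
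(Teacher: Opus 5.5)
Your argument is correct. For the first assertion you do exactly what the paper intends: the corollary is stated, without further proof, as the case $\SH=\SG$ of the preceding proposition.

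For the second assertion your route is genuinely different, and it is needed. The proposition does not apply verbatim with source $(\SG,e_\omega)$. Since $N_0(\bullet)=e$, the target structure in the proposition would be $(\SG,e)$, and $\d^1_1\circ\SN=\mathsf{id}\co(\SG,e_\omega)\rto(\SG,e)$ is not a homomorphism unless $e_\omega=e$. Your reduction supplies the missing step:
\begin{itemize}
\item[(i)] apply Lemma~\ref{L-tech} with $\Omega$ the cylinder defining $s^n_{\omega,j}(g)$ (from $s^n_j(g)$ to $\tilde g$), $\Omega'=N_{n+1}(\tilde g)$ and $\Omega''=N_{n+1}(s^n_j(g))$;
\item[(ii)] use the first assertion to identify $\d^1_0(\Omega'')=u_{n+1}(s^n_j(g))=s^n_{\omega,j}(u_n(g))$.
\end{itemize}
This also isolates exactly where the hypothesis $N_1(e_\omega)=\s^0_0(e_\omega)$ enters: the degenerate prism $\s^1_1(\omega)$ on the edge $(j,j+1)$.

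Some tidying is in order.
\begin{itemize}
\item[(a)] The third cylinder is just $N_{n+1}(s^n_j(g))$. Drop ``followed by the $\omega$-cylinder from $s^n_j(u_n(g))$'': that piece is not even composable, since it starts at $s^n_j(u_n(g))$ rather than at $s^n_{\omega,j}(u_n(g))$. Your edgewise analysis in fact already uses $N_{n+1}(s^n_j(g))$ alone.
\item[(b)] The orientation worry is unfounded. The lemma's hypotheses hold verbatim: $\d^1_1(\Omega)=\d^1_1(\Omega'')=s^n_j(g)$ and $\d^1_0(\Omega)=\tilde g=\d^1_1(\Omega')$. So no $\omega^{-1}$ is needed.
\item[(c)] The paragraph concluding ``$\Omega=\Omega''$'' is vacuous and can be removed.
\item[(d)] Your edge prisms are the two degenerate types, placed oppositely to the proposition's proof: $\s^1_1$ on the special edge, $\s^1_0$ elsewhere. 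Lemma~\ref{L-tech} covers this if read as allowing either degenerate prism on each edge, which is the reading its use in the proposition already requires (its literal indexing in (2)--(3) is garbled). The vertex triangles are compatible because $s^1_0(e)=s^1_1(e)$.
\item[(e)] Treat $n=0$ separately, since the lemma needs cylinders of dimension at least $2$. Here $u_1(e_\omega)=\d^1_0(N_1(e_\omega))=\d^1_0(\s^0_0(e_\omega))=e_\omega$.
\end{itemize}
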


\begin{coro}
Given $\omega$ be a transformation from $e$ to $e_\omega$, set a natural $\triangleleft$-transformation $\SN$ such that
\[
N_1(g)=\left\{
\begin{array}{ccc}
\omega & \mbox{if} & g=e,\\
s^0_0(g) &\mbox{if} &g\not=e.
\end{array}
\right.
\]
Then
$\Psi_\omega:=\d^1_0(\SN)$ is a 2-group homomorphism from $(\SG,e_\omega)$ to itself.
\end{coro}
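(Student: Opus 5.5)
The natural $\triangleleft$-transformation $\SN$ is obtained from the lemma that builds a natural $\triangleleft$-transformation out of a $\triangleleft$-homomorphism $\f$ and a map $N_1\colon G_1\to G^{[1]}_{1,e}$ with $f_1=\d^1_1\circ N_1$. We take $\f=\mathsf{id}\colon\SG\to\SG$, which is a genuine homomorphism of $(\SG,e)$. For the given $N_1$ the source condition holds: $\d^1_1(\omega)=e$ because $\omega$ is a transformation from $e$ to $e_\omega$, and $\d^1_1(s^0_0(g))=g$. Both $\omega$ and $s^0_0(g)$ are $e$-cylinders, so $N_1$ lands in $G^{[1]}_{1,e}$. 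The lemma then yields a unique $\SN$ with $\d^1_1\circ\SN=\mathsf{id}$, with higher components determined by $\Kan!(3)$ of $\SG$. Set $\Psi_\omega=\d^1_0\circ\SN$.

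Since $N_1(e)=\omega$, Corollary \ref{C-transformation} already shows that $\Psi_\omega\colon(\SG,e)\to(\SG,e_\omega)$ is a 2-group homomorphism. To upgrade the domain to $(\SG,e_\omega)$, the second half of Corollary \ref{C-transformation} needs $N_1(e_\omega)=s^0_0(e_\omega)$. If $e_\omega\neq e$, this is exactly the definition of $N_1$ on $g\neq e$. If $e_\omega=e$, then $N_1(e)=\omega$, so we additionally need $\omega=s^0_0(e)$; this degenerate case has to be handled separately.

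The main obstacle is therefore the case $e_\omega=e$ with $\omega\neq s^0_0(e)$. Here I would argue directly rather than through the corollary. The degeneracies of $(\SG,e_\omega)$ are $s^n_{\omega,j}$, which are built by transporting along $\omega$. Compatibility of $\Psi_\omega$ with them reduces to the same Lemma \ref{L-tech} argument as in the preceding proposition: take the three cylinders $\Omega$, $\Omega'$, $\Omega''$ with edge data $\omega$, $s^0_0(e)$, $\omega$, where the required $\Theta_{ij}$ are degenerate ($\msf s^1_0$ or $\msf s^1_1$ of the $e$-cylinders involved). If this fails, I would instead read the statement as requiring only $e_\omega\neq e$, so that the corollary applies verbatim.

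Finally, I would check that the face-map compatibility of $\Psi_\omega$ is automatic, since $\SN$ is a $\triangleleft$-homomorphism and $\d^1_0$ commutes with faces. Only the degeneracy compatibility carries content, and it is settled by the two steps above.
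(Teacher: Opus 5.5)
Your first two paragraphs are exactly the argument the paper intends. The paper states this corollary without proof, as an immediate consequence of Corollary \ref{C-transformation}: $N_1(e)=\omega$ gives the first half, and $N_1(e_\omega)=\s^0_0(e_\omega)$, which is automatic once $e_\omega\neq e$, gives the second. You are right that this reduction breaks down when $e_\omega=e$ and $\omega\neq\s^0_0(e)$.

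The gap is your third paragraph. No direct argument can handle that case, because the statement as written is false there.

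\emph{Why the argument fails.} Compare $\Psi_{\omega,n+1}(s^n_{\omega,j}(h))$ with $s^n_{\omega,j}(\Psi_{\omega,n}(h))$.
The left side arises from $s^n_j(h)$ by two transports. The second is $N_{n+1}(s^n_{\omega,j}(h))$, whose edge at the degenerate position is $N_1(e_\omega)=N_1(e)=\omega$, not $\s^0_0(e)$ as your triple assumes. So the left side is transported by $\omega$ twice along that edge and the right side only once. Lemma \ref{L-tech} needs the edgewise composites to match, so it gives nothing here.

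\emph{Counterexample.} Take $\SG$ with $G_1=\{e\}$, $G_2=A$ a nontrivial abelian group, and $G_n$ the normalized $2$-cocycles on $\Delta[n]$.
Every element of $G^{[1]}_1\cong A^2$ is a transformation from $e$ to $e$.
Record $\Omega_{ij}$ by its values $(a_{ij},b_{ij})$ on the triangles $(i0,j0,j1)$ and $(i0,i1,j1)$, and put $v_{ij}=a_{ij}-b_{ij}$.
The cocycle condition on the three $3$-simplices of $\Delta[2]\times\Delta[1]$ then forces $\d^1_0(\Omega)=\d^1_1(\Omega)+v_{02}-v_{01}-v_{12}$ for $\Omega\in G^{[1]}_2$.
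With $v=v_\omega$ this gives $s^1_{\omega,1}(e)=-v$ and $\Psi_{\omega,2}(x)=x-v$.
Hence $\Psi_{\omega,2}(s^1_{\omega,1}(e))=-2v$, whereas $s^1_{\omega,1}(\Psi_{\omega,1}(e))=-v$.
These differ whenever $v\neq 0$, for example $\omega=(a,0)$ with $a\neq 0$.

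So your fallback is not optional. The corollary must be read with $e_\omega\neq e$, or with $\omega=\s^0_0(e)$, in which case $\Psi_\omega=\mathsf{id}$. Under that reading your proof coincides with the paper's.
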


\begin{remark}
Let $\rho\co\SH\rto (\SG,e)$ be a 2-group homomorphism. Then $\rho$ is only a $\triangleleft$-homomorphism from $\SH$ to $(\SG,e_\omega)$.
On the other hand, by composing with $\Psi_\omega$, then $\Psi_\omega\circ \rho\co \SH\rto (\SG, e_\omega)$ is a 2-group homomorphism.
\end{remark}

We apply the construction above to the 2-group $(\SB\CG,\msf e)$ where $\CG=\SAut(\CX)$. First, recall that we have a transformation $\omega_\bullet$ from $\msf e$ to $\bfe$. Hence, we have a 2-group $(\SB\CG,\bfe)$ and a homomorphism $\Psi\co(\SB\CG,\msf e)\rto (\SB\CG,\bfe)$.

Now we introduce a natural $\triangleleft$-transformation $\SN$, This is crucial  for constructing a universal Kan fibration $\SK_{\mathrm{uni}}$ over $\SB\SAut(\mc X)$ (cf. \S\ref{Sec-2p-action}).

We first construct a natural transformation
$\mc N\co \CG^0\rto \CG^1$ whose source is
$\msf {id}\co \CG\to\CG$.
\vskip 0.1in
\noindent
{\bf Step 1. Construction of  $\mc L^0:=t\circ\mc N$.}
\vskip 0.1in
 Let
$\f=(\f^\ell,\SZ,\f^r)\in \CG^0$ be an automorphism of $\CX$. We associate to it an automorphism $\g=(\g^\ell, \SY,\g^r)$ which we set  to be $t(\mc N(\f))$.

{\em Step 1.1.  Construction of  $Y^0$.} Consider the space $$
W:=X^1\times_{t,X^0,f^{\ell,0}} Z^0\times_{f^{r,0},X^0,s} X^1
$$ with an element $(a,z_0,b)$ illustrated as
\[
\xymatrix{
x_0\ar[r]^-a & y_0 &z_0 \ar@{-->}[l]_-{f^{\ell,0}} \ar@{-->}[r]^-{f^{r,0}} & w_0\ar[r]^-b & u_0.}
\]
Since both $\f^\ell$ and $\f^r$  are hypercovers, the space $W$ is a smooth manifold. $W$ admits a (right) $\SZ$-action (cf. \cite[Definition 2.14]{Adem-Leida-Ruan2007} and \cite[Section 5.3]{Moerdijk-Mrcun2003}) whose anchor map is the projection $pr_2$, the action map is given by
\begin{align*}
W\times_{{pr_2},X^0,s} Z^1&\rto W\\
((a,z_0,b),c)&\mapsto (a\cdot f^{\ell,1}(c),t(c), [f^{r,1}(c)]\inv\cdot b),
\end{align*}
i.e.,
\[
\xymatrix{
x_0\ar[r]^-a & y_0 \ar[d]_-{f^{\ell,1}(c)}&z_0 \ar[d]_-c \ar@{-->}[l]_-{f^{\ell,0}} \ar@{-->}[r]^-{f^{r,0}} & \ar[d]^-{f^{r,1}(c)} w_0\ar[r]^-b & u_0\\
& y_0' &z_0' \ar@{-->}[l]_-{f^{\ell,0}} \ar@{-->}[r]^-{f^{r,0}} & w_0'.&}
\]
Define $Y^0$ to be the coarse space of the action Lie groupoid $W\rtimes \SZ$.

\begin{lemma}
The $Y^0$ is a smooth manifold.
\end{lemma}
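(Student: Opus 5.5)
The approach is to show that the action Lie groupoid $W\rtimes\SZ$ is Morita equivalent to a manifold, i.e.\ that the action is free and proper with a smooth quotient, by identifying the quotient explicitly.

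The natural candidate is $X^1\times_{t,X^0,s}X^1$, or rather the space of composable-through-$\SZ$ pairs modulo $\SZ$. I would construct local slices for the action of $\SZ$ on $W$. Since $\f^\ell$ is a hypercover, the map
\[
Z^1\to Z^0\times Z^0\times_{X^0\times X^0}X^1
\]
is a diffeomorphism by $\Acyc!(1)$. The same holds for $\f^r$.

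First I would check freeness. If $(a,z_0,b)\cdot c=(a,z_0,b)$, then $t(c)=z_0$ and $f^{\ell,1}(c)=1_{y_0}$. By $\Acyc!(1)(\f^\ell)$ the arrow $c$ is determined by its source, its target and $f^{\ell,1}(c)$, so $c=1_{z_0}$. Hence all isotropy groups of $W\rtimes\SZ$ are trivial.

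Next I would check properness and the equivalence relation. Two points $(a,z_0,b)$ and $(a',z_0',b')$ are related iff there is $c\colon z_0\to z_0'$ with $a'=a\cdot f^{\ell,1}(c)$ and $b'=f^{r,1}(c)^{-1}b$. By $\Acyc!(1)(\f^\ell)$ such a $c$ exists, uniquely, exactly when $x_0=x_0'$; the candidate is $f^{\ell,1}(c)=a^{-1}a'$. The remaining condition is $b'=f^{r,1}(c)^{-1}b$.

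So the map $W\times_{\mathrm{pr}_2,X^0,s}Z^1\to W\times W$ is identified, via $c\mapsto(w,a^{-1}a')$, with a closed embedding. It is the graph of a smooth map defined on the closed subset cut out by the equation on $b'$. Thus the groupoid is a free, proper, submersive action groupoid, i.e.\ the relation $R\subset W\times W$ is a closed embedded submanifold with $s|_R$ a submersion. Godement's criterion then gives that $Y^0=W/\SZ$ is a smooth (Hausdorff) manifold and $W\to Y^0$ is a submersion.

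Alternatively, I would give explicit slices. Near a point, choose a local section $\sigma$ of the surjective submersion $Z^0\to X^0$ defined via $t\circ\mathrm{pr}$ on $Z^0\times_{f^{\ell,0},X^0,t}X^1$, which exists by $\Acyc(0)(\f^\ell)$. Every orbit then meets the set of $(a,z_0,b)$ with $a=1$-normalized data through $\sigma$ exactly once. In this way $Y^0$ is locally diffeomorphic to a fibered product $U\times_{X^0}X^1$.

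The main obstacle is Hausdorffness and the closedness of $R$, which I expect to reduce to the fact that $\f^\ell$ is a hypercover, so that arrows in $\SZ$ are a pullback of arrows of $\CX$. The rest is routine.
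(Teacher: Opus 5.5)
Your proposal is correct and takes the same route as the paper, which only asserts that $W\rtimes\SZ$ is proper and isotropy free and therefore has a smooth coarse space $Y^0$. You fill in the details: $\Acyc!(1)(\f^\ell)$ gives trivial isotropy and identifies the orbit relation $R\subset W\times W$ as a closed embedded submanifold, and Godement's criterion then yields the smooth quotient.
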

\begin{proof}
It can be directly checked that the groupoid $W\rtimes \SZ$ is proper and isotropy free, hence, is Morita equivalent to a smooth manifold. Consequently, its coarse space, $Y^0$, is a smooth manifold.
\end{proof}

{\em Step 1.2. Construction of $Y^1$.} Set
\[
Y^1:=X^1\times_{s,X^0,s \circ pr_1}  Y^0\times_{t \circ pr_3,X^0,s} X^1,
\]
of which an element $(c,[a,z_0,b],d)$ is illustrated as
\[
\xymatrix{
x_0'& \ar[l]_-c [x_0\ar[r]^-a & y_0 &z_0 \ar@{-->}[l]_-{f^{\ell,0}} \ar@{-->}[r]^-{f^{r,0}} & w_0\ar[r]^-b & u_0]\ar[r]^-d &u_0'.}
\]
Clearly, $Y^1$ is a smooth manifold.

\begin{lemma}\label{L construct-CY}
The structure maps described below makes $\SY:=(Y^1\rrto Y^0)$ into a Lie groupoid:
\begin{align*}
s(c,[a,z_0,b],d)&=[a,z_0,b],\\
t(c,[a,z_0,b],d)&=[c\inv \cdot a,z_0,b \cdot d],\\
(c,[a,z_0,b],d)\cdot(c',[c\inv\cdot a ,z_0,b\cdot d],d')&=(c\cdot c',[a,z_0,b],d\cdot d'),\\
(c,[a,z_0,b],d)\inv& =(c\inv,[c\inv\cdot a,z_0,b\cdot d],d\inv),\\
1_{[a,z_0,b]}&=(1_{s(a)},[a,z_0,b],1_{t(b)}).
\end{align*}
\end{lemma}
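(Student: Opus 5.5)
The plan is to recognize $\SY$ as an action groupoid, so that every groupoid axiom follows from the axioms of a groupoid action rather than from separate computations. Consider the Lie groupoid $\CX\times\CX^{op}$, with objects $X^0\times X^0$ and arrows $X^1\times X^1$. In the left copy an arrow $c\co x_0\rto x_0'$ acts by $a\mapsto c\inv\cdot a$. In the right copy an arrow $d\co u_0\rto u_0'$ acts by $b\mapsto b\cdot d$. I will show three things: that this defines a smooth action of $\CX\times\CX^{op}$ on $Y^0$ with anchor $\mu=(s\circ pr_1,\, t\circ pr_3)$; that $Y^1$ is exactly the arrow space $(X^1\times X^1)\times_{(s,s),X^0\times X^0,\mu}Y^0$, up to reordering factors; and that the formulas in the statement are the standard action-groupoid structure maps. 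The standard fact that an action groupoid of a Lie groupoid acting smoothly on a manifold is a Lie groupoid then finishes the proof.

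\textbf{Step 1 (well-definedness on $Y^0$).} First I check that the $\CX\times\CX^{op}$-action on $W$ commutes with the right $\SZ$-action defined above. The $\SZ$-action changes $(a,z_0,b)$ to $(a\cdot f^{\ell,1}(e),\,t(e),\,[f^{r,1}(e)]\inv\cdot b)$, while our action multiplies $a$ on the left and $b$ on the right. Associativity in $\CX$ gives commutation directly. It follows that $[c\inv\cdot a,z_0,b\cdot d]$ depends only on the class $[a,z_0,b]$.

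The maps $s\circ pr_1$ and $t\circ pr_3$ on $W$ are invariant under $\SZ$, because $s(a\cdot f^{\ell,1}(e))=s(a)$ and $t([f^{r,1}(e)]\inv\cdot b)=t(b)$. Hence they descend to $Y^0$, and the composition, inverse and unit formulas in the statement are well defined on classes.

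\textbf{Step 2 (smoothness).} This is where I expect the main work. By the preceding lemma, $W\rtimes\SZ$ is proper and free, so the quotient map $q\co W\rto Y^0$ is a surjective submersion; indeed it is a principal $\SZ$-bundle. A map out of $Y^0$ is smooth precisely when its composite with $q$ is smooth. Applying this to the anchor $\mu$ and to the action map shows both are smooth.

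For the action map, I will use that $q\times id$ restricted to the relevant fibered products is again a surjective submersion. This holds because $s\co X^1\rto X^0$ is a submersion, so the fibered products upstairs are manifolds and pull back the submersion $q$. In particular $Y^1$ is a manifold, as already noted in the text.

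\textbf{Step 3 (axioms and submersions).} The source $s\co Y^1\rto Y^0$ is the projection of a fibered product along $s\times s\co X^1\times X^1\rto X^0\times X^0$, which is a submersion. Therefore $s$ on $Y^1$ is a submersion. The target equals the source composed with the inversion, which is a diffeomorphism (it is an involution), so the target is also a submersion.

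Associativity, the unit laws and the inverse laws reduce to the corresponding laws in $\CX$, componentwise in $c$ and in $d$, since the middle entry is simply carried along. For example, the composite formula reads $(c\cdot c',\,\cdot\,,\,d\cdot d')$, and $(c c')\inv a=c'^{-1}(c\inv a)$ together with $b(dd')=(bd)d'$ shows that the targets match. This completes the verification that $\SY$ is a Lie groupoid.
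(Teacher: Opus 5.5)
Your proposal is correct. The paper states this lemma without proof, and the formulas are exactly those of an action groupoid. Your route is the natural way to fill the gap: define a right action on $W$, check that it commutes with the $\SZ$-action so that it descends to $Y^0$, identify $Y^1$ with the arrow space, and invoke the standard fact that the action groupoid of a smooth Lie groupoid action is a Lie groupoid. It also handles smoothness, which the paper never addresses. You do this by descending along the surjective submersion $q\co W\rto Y^0$ (free and proper quotient) and its pullback to $Y^0\times_{\mu,s\times s}(X^1\times X^1)$.

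One correction: the acting groupoid should be $\CX\times\CX$, not $\CX\times\CX^{op}$. Both $a\mapsto c\inv\cdot a$ and $b\mapsto b\cdot d$ are right actions of $\CX$ with respect to its own composition, since $(c\cdot c')\inv\cdot a=(c')\inv\cdot(c\inv\cdot a)$ and $b\cdot(d\cdot d')=(b\cdot d)\cdot d'$. This is what matches the composite $(c\cdot c',[a,z_0,b],d\cdot d')$ in the statement and the fibered product over $(s,s)$ that you write for $Y^1$. With an opposite factor, the relevant source map would be $t$ and the order of composition would be reversed. Since inversion gives $\CX^{op}\cong\CX$, this is only a relabeling, and nothing else in your argument depends on it.
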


{\em Step 1.3. Construction of $\g$.}
The hypercovers $\g^\ell\co\SY\rto \CX$ and $\g^r\co\SY\rto \CX$ are given by
\begin{align*}
g^{\ell,0}([a,z_0,b])=s(a),\qq  & g^{\ell,1}(c,[a,z_0,b],d)=c, \\
g^{r,0}([a,z_0,b])=t(b),\qq & g^{r,1}(c,[a,z_0,b],d)=d.
\end{align*}
Set $\g=(\g^\ell,\SY,\g^r)$.

\vskip 0.1in\noindent
{\bf Step 2. Construction of $\mc N$.}
\vskip 0.1in

By  definition,
$\mc N(\f)$ is a natural transformation from $\f$ to $\g=\mc L^0(\f)$. It is defined by
\[
\mc N(\f)\co \SZ^0\times_{f^{\ell,0},X^0,g^{\ell,0}}Y^0\rto X^1,\qq (z_0,[a,z_0',b])\mapsto f^{r,1}\{(f^{\ell,1})\inv(a)\}\cdot b,
\]
and is illustrated in the following diagram
\[
\xymatrix{
z_0\ar@{-->}[d]^-{f^{\ell,0}} \ar@/^/[drr]^-{(f^{\ell,1})\inv(a)}\ar@{-->}[rrr]^-{f^{r,0}}  &&&f^{r,0}(z_0)\ar[d]^-{f^{r,1}\left[(f^{\ell,1})\inv(a)\right]} \ar@/^1.5pc/[dr]^-{\mc N(\f)(z_0,[a,z_0',b])}&
\\
x_0'\ar[r]^-a &y_0' &z_0'\ar@{-->}[l]_-{f^{\ell,0}} \ar@{-->}[r]^-{f^{r,0}} & w_0'\ar[r]^-b &u_0'.}
\]
This finishes the construction of $\mc N$.

Set
\[
\mc L=\d^1_0(\mc N)\co \CG\rto \CG.
\]

\begin{prop}\label{P-L-projection}
We have $\mc L^0(\msf e)=\bfe$.
Let $\g=\mc L^0(\f)$. Then $\mc N(\g)$ is identified with $1_\g$.
\end{prop}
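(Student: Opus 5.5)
The plan is to unwind the three-step construction of $\mc L^0$ and $\mc N$ in both situations. We prove the two assertions separately, identifying $\SY$ explicitly in each case and exhibiting the required isomorphisms of groupoids compatible with $\g^\ell,\g^r$.

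\emph{First assertion.} Take $\f=\msf e=(id,\CX,id)$. Then
\[
W=X^1\times_{t,X^0,id}X^0\times_{id,X^0,s}X^1\cong X^1\times_{t,X^0,s}X^1 .
\]
The $\CX$-action is $(a,x,b)\cdot c=(ac,t(c),c^{-1}b)$. The product map $(a,x,b)\mapsto a\cdot b\in X^1$ is invariant under this action, and it identifies the orbits. Indeed, if $ab=a'b'$ with $t(a)=s(b)$ and $t(a')=s(b')$, then $c:=a^{-1}a'$ moves $(a,t(a),b)$ to $(a',t(a'),b')$. The action is free, and the map has a smooth section $g\mapsto(g,t(g),1_{t(g)})$, so $Y^0\cong X^1$ as manifolds.

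Under this identification,
\[
Y^1\cong\{(c,g,d)\}\subseteq X^1\times X^1\times X^1,
\]
with $s(c,g,d)=g$ and $t(c,g,d)=c^{-1}gd$. The maps $g^{\ell,0},g^{r,0}$ become the source and target of $X^1$. This is exactly the groupoid $\CX^{[1]}$, that is, the Lie 1-groupoid $\SX^{[1]}=\SHom(\Delta[1],\SX)$. Its arrows are commutative squares with sides $c$ and $d$, and the projections $g^{\ell,1},g^{r,1}$ are $\d^1_1,\d^1_0$. Hence $\mc L^0(\msf e)=(\d^1_1,\SX^{[1]},\d^1_0)=\bfe$.

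As a consistency check, $\mc N(\msf e)(x,g)=1\cdot g=g$. This is the identity $\SX\times_\SX\SX^{[1]}\cong\SX^{[1]}$, i.e.\ it recovers $\omega$.

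\emph{Second assertion.} Now let $\f$ be arbitrary and $\g=\mc L^0(\f)=(\g^\ell,\SY,\g^r)$. The objects of $\SY$ are classes $[a,z_0,b]$, and $g^{\ell,1},g^{r,1}$ record arbitrary pairs of arrows $(c,d)$. So $\SY$ is already ``saturated'': it is the pullback of $\SX\times\SX$ along $(\g^\ell,\g^r)$ on arrows, and so behaves like $\SX^{[1]}$ twisted by $\f$.

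We compute $\mc L^0(\g)$ with $W'=X^1\times_{X^0}Y^0\times_{X^0}X^1$. A point $(a',[a,z_0,b],b')$ can be moved by the $\SY$-action with the arrow $(a'^{-1},\,\cdot\,,b')$, which sends it to $(1,[a'a,z_0,bb'],1)$. This gives a canonical isomorphism $W'/\SY\cong Y^0$, $[a',[a,z_0,b],b']\mapsto[a'a,z_0,bb']$, and hence $\mc L^0(\g)\cong\g$.

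It remains to evaluate $\mc N(\g)$ at
\[
(y,[a',y',b'])\quad\text{with } g^{\ell,0}(y)=s(a').
\]
By definition this is
\[
g^{r,1}\{(g^{\ell,1})^{-1}(a')\}\cdot b'.
\]
Here $(g^{\ell,1})^{-1}(a')$ is the unique arrow of $\SY$ from $y$ with left component $a'$ and target $y'$. Under the identification above its right component is forced to be $b'^{-1}$. Therefore $\mc N(\g)(y,[a',y',b'])=b'^{-1}b'=1$, which is the identity transformation $1_\g$ after identifying $\mc L^0(\g)$ with $\g$.

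\emph{Main obstacle.} The delicate step is making precise the canonical identification $\mc L^0(\g)\cong\g$, and the analogous $Y^0\cong X^1$ in the first part. We must check that these identifications are diffeomorphisms of Lie groupoids commuting with $\g^\ell$ and $\g^r$. We must also check that the uniqueness of the lift $(g^{\ell,1})^{-1}(a')$ is legitimate. This relies on $\g^\ell$ being a hypercover (fully faithful on arrows given endpoints), and those endpoints must be tracked carefully through the quotient by the $\SY$-action.
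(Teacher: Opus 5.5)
Your overall route is the paper's. The second assertion is proved there by exactly the identification $[a',[a,z,b],b']_{\msf W}\mapsto[a'a,z,bb']_{\SY}$. Your explicit computation $Y^0\cong X^1$, $\SY\cong\CX^{[1]}$ for $\f=\msf e$ correctly spells out what the paper calls straightforward.

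The gap is in your final evaluation of $\mc N(\g)$. You assert that the right component of $(g^{\ell,1})\inv(a')$ is forced to be $b'^{-1}$, so that $\mc N(\g)$ is identically a unit arrow. This cannot hold in general. $\mc N(\g)(y,w)$ must be an arrow from $g^{r,0}(y)$ to $h^{r,0}(w)=t(b')$, and these points differ in general, so a unit arrow is not even of the right type. The domain of $\mc N(\g)$ is the whole fibered product $Y^0\times_{g^{\ell,0},X^0,h^{\ell,0}}W^0$, not its diagonal. Moreover, the identity arrow $1_\g\in\CG^1(\g,\g)$ is not a constant unit transformation. As in the description of $\psi_\ell$ and $\psi_r$, it is $\g^r$ applied on $\ker(\g^\ell)\cong\SY\times_{\g^\ell,\CX,\g^\ell}\SY$. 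That is, its value at $(y_1,y_2)$ is $g^{r,1}$ of the unique arrow $y_1\to y_2$ of $\SY$ lying over a unit arrow of $\CX$. What you actually verified is only the diagonal case, where $w$ corresponds to $y$ itself; the general formula you wrote is false.

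The repair is a short computation. Write $y=[a_0,z_0,b_0]$. Since $\f^\ell$ is fully faithful, $(g^{\ell,1})\inv(a')$ is $(a',y,d)$ for a unique $d$ with $y'=[a'^{-1}a_0,z_0,b_0d]$. Hence $\mc N(\g)(y,[a',y',b'])=d\,b'$. Under your identification, $[a',y',b']_{\msf W}$ corresponds to $[a_0,z_0,b_0db']$. The unique arrow of $\ker(\g^\ell)$ from $y$ to this point is $(1_{s(a_0)},y,db')$, whose $g^{r,1}$-component is again $db'$. Thus $\mc N(\g)=1_\g$ on the entire domain, and $d=b'^{-1}$ holds precisely on the diagonal.
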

\begin{proof} The first statement is straightforward.
Now we show the second statement.
Suppose $\f=(\f^\ell,\SZ,\f^r)$, and
\[
\g=\mc L^0(\f)=(\g^\ell,\SY,\g^r),\;\;\;
\h=\mc L^0(\g)=(\h^\ell, \msf W,\h^r).
\]
Then $Y^0$ consists of equivalence classes $[a,z,b]_\Y$. $W^0$ consists of equivalence classes $[a',[a,z,b]_\SY, b']_{\sf W}$. Here $[]_\Y$ and $[]_{\sf W}$ denote the equivalence class in $Y^0$ and $W^0$. Then $W^0$ has a canonical identification with $Y^0$:
\[
[a',[a,z,b]_\Y,b']_{\msf W} \mapsto [a'a, z,bb']_\Y.
\]
Under this identification, $\g=\h$, and $\mc N(\g)=1_\g$.
\end{proof}

Now we use $\mc N$ to define a natural $\triangleleft$-transformation
\begin{equation*}
\SN\co \SB\CG\rto (\SB\CG)^{[1]}
\end{equation*}
whose source is the identity of $\SB\CG$. The map $N_1\co (\SB\CG)_1\rto (\SB\CG)^{[1]}_{1}$ is defined by
\[
N_1(\f)= \pi_{1,1}^\ast (\mc N(\f)).
\]
Here $\mc N(\f)$ is an element in $\mc G^1$ and is identified with a 2-simplex of $(\SB\CG)_2$, $\pi_{1,1}$ is defined in \eqref{e-pi-n1}.

\begin{coro}\label{C-L} Set $\msf L=\d^1_0\circ\SN$. Then
$\msf L\co (\SB\mc G,\bfe )\to (\SB\mc G,\bfe)$
is a 2-group homomorphism.
\end{coro}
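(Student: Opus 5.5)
The plan is to apply Corollary \ref{C-transformation} to the natural $\triangleleft$-transformation $\SN\co\SB\CG\rto(\SB\CG)^{[1]}$ constructed above, in two stages. First we apply it with base unit $\msf e$ to get a homomorphism $(\SB\CG,\msf e)\rto(\SB\CG,e_\omega)$ with $\omega=N_1(\msf e)$. Then we use the second clause of that corollary to upgrade this to a homomorphism $(\SB\CG,e_\omega)\rto(\SB\CG,e_\omega)$. Finally we identify $e_\omega$, together with the degeneracies $\mc S_\omega$, with $\bfe$ and the 2-group structure $(\SB\CG,\bfe)=(\SB\CG,e_{\omega_\bullet})$.

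\emph{Step 1: $\SN$ is a natural $\triangleleft$-transformation from the identity.} By the lemma that a natural $\triangleleft$-transformation is determined by its source and by $N_1$, it suffices to check two things. The first is that $N_1(\f)=\pi_{1,1}^\ast(\mc N(\f))$ lies in $(\SB\CG)^{[1]}_{1,\msf e}$, i.e.\ is an $\msf e$-cylinder. The second is that $\d^1_1\circ N_1=\mathrm{id}$. Both follow from the shape of $\pi_{1,1}$: the vertical edges $00\to01$ and $10\to11$ map to the edges $0\to2$ and $1\to2$ of $\Delta[2]$. Under the identification of $\mc N(\f)\in\CG^1(\f\star\msf e,\mc L^0(\f))$ with a 2-simplex whose $12$-edge is $\msf e$, these edges are degenerate, the bottom edge is $\f$, and the top edge is $\mc L^0(\f)$. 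Hence $\msf L=\d^1_0\circ\SN$ satisfies $L_1=\mc L^0$ on $1$-simplices.

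\emph{Step 2: identify $\omega=N_1(\msf e)$ with $\omega_\bullet$.} By Proposition \ref{P-L-projection}, $\mc L^0(\msf e)=\bfe$. Moreover $\mc N(\msf e)$ is the natural transformation $\msf e\Rightarrow\bfe$ given by $(x,[a,x',b])\mapsto a\cdot b$. Under the canonical identification $\CX\times_\CX\CX^{[1]}\cong\CX^{[1]}$ (the quotient $Y^0$ for $\f=\msf e$ being $X^1$ via $[a,x,b]\mapsto ab$), this is the identity map, i.e.\ the transformation $\omega$ used to define $\omega_\bullet$. Hence $N_1(\msf e)=\pi_{1,1}^\ast(\omega_{012})=\omega_\bullet$ and $e_\omega=\bfe$, with the same degeneracies $\mc S_{\omega_\bullet}$. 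Corollary \ref{C-transformation} then gives that $\msf L\co(\SB\CG,\msf e)\rto(\SB\CG,\bfe)$ is a 2-group homomorphism.

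\emph{Step 3: the condition $N_1(\bfe)=s^0_0(\bfe)$.} This is the second clause of Corollary \ref{C-transformation}, and it follows from the second statement of Proposition \ref{P-L-projection}. Since $\bfe=\mc L^0(\msf e)$, $\mc N(\bfe)$ is identified with $1_{\bfe}$. Therefore $N_1(\bfe)=\pi_{1,1}^\ast(s^1_1(\bfe))$, using that $1_{\bfe}$ corresponds to the degenerate 2-simplex $s^1_1(\bfe)=(\msf e,\bfe,\bfe,1_{\bfe})$. Pulling back a degenerate simplex along $\pi_{1,1}$ gives $\s^0_0(\bfe)$, so Corollary \ref{C-transformation} yields the claim.

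The main obstacle is making the identifications in Steps 2 and 3 honest. Proposition \ref{P-L-projection} only gives equalities modulo canonical isomorphisms of fibered products and quotients (e.g.\ $W^0\cong Y^0$). One must check that these isomorphisms are exactly the ones implicit in the semi-strict structure of $\CG$, so that $\mc N(\mc L^0(\f))$ really is the unit arrow in $\CG^1$ rather than merely isomorphic to it. I would handle this by fixing the canonical representatives of all fibered products, as in the associativity and unit statements for $\star$, and verifying that the map $[a',[a,z,b],b']\mapsto[a'a,z,bb']$ respects them.
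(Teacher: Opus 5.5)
This is the paper's proof, unpacked: the paper also applies Corollary~\ref{C-transformation} to $\SN$ and cites Proposition~\ref{P-L-projection} for $N_1(\bfe)=\s^0_0(\bfe)$ (written there loosely as $N_1(\bfe)=\bfe$), and your Step~2 usefully makes explicit that $N_1(\msf e)$ must equal $\omega_\bullet$ exactly, since the degeneracies $\mc S_\omega$ depend on $\omega$ and not only on $e_\omega$. One local slip, inherited from the paper's description of $\pi_{1,1}$: read literally, $\pi_{1,1}$ sends the vertical edges to the $02$- and $12$-edges (here $\mc L^0(\f)$ and $\msf e$) and the top edge to a degenerate edge, so your Step~1 assertions and the Step~3 pullback fail for that map, whereas the vertex map $00\mapsto 0$, $10\mapsto 1$, $01\mapsto 0$, $11\mapsto 2$ gives exactly the $\msf e$-cylinder from $\f$ to $\mc L^0(\f)$ you describe and pulls $s^1_1(\bfe)$ back to $\s^0_0(\bfe)$.
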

\begin{proof} By Proposition \ref{P-L-projection},  $N_1(\bfe)=\bfe$. Then the claim
 follows from Corollary \ref{C-transformation}.
\end{proof}

\section{2-group actions on Lie groupoids}
\label{Sec-2p-action}

Let $\CX$ be a Lie groupoid and $\SG$ be a 2-group. Here we may treat $\SG$ as a Lie 2-group with discrete topology.
As the automorphisms of $\CX$ can be equipped with a 2-group $\SB\CG:=\SB\SAut(\CX)$, it is natural to think that $\SG$-action on $\mc X$, first of all, is given by a 2-group homomorphism from $\SG$ to $\SB\CG$. Recall that $\SB\CG$ contains the unit $\msf e$, a unit element $\bfe$ and the transformation $\omega_\bullet$.
Set
\begin{equation}
\CH_\SG(\CX)L=\{\rho\co \SG\rto (\SB\CG, \bfe)\mid \rho \mbox{ is a 2-group homomorphism.}\}.
\end{equation}

\begin{defn}\label{D-equiv-H}
We say $\rho_1,\rho_2\in \CH_\SG(\CX)$ are equivalent if
there exists a natural transformation $\SN\co \SG\rto (\SB\CG)^{[1]}$ such that $\d^1_1(\SN)=\rho_1$
and $\d^1_0(\SN)=\rho_2$. Set
\[
\mathbf H_\SG(\CX):=\CH_\SG(\CX)/\sim.
\]
\end{defn}

Let $\CK_\SG(\SX)$ be the set of Kan fibration over $\SG$ with fiber $\SX$, the Lie 1-groupoid corresponding to $\CX$.

\begin{defn}\label{D-equiv-K}
We say two Kan fibrations $\SK_1,\SK_2\in \CK_\SG(\SX)$ are equivalent if there exists a $\bullet$-isomorphism (cf. Definition \ref{D-KF-homomorphism})
$\SF\co \SK_1\rto \SK_2$ of Kan fibrations. Set
\[
\mathbf K_\SG(\SX):=\CK_\SG(\SX)/\sim.
\]
\end{defn}

We may also denote $\CK_\SG(\SX)$ by $\CK_\SG(\CX)$.

The main theorem of this section is

\begin{theorem}\label{T-HK-equivalence}
There is a bijection between
$\mathbf H_\SG(\CX)$ and $\mathbf K_\SG(\CX)$.
\end{theorem}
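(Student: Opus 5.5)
We will construct the two maps $\Lambda\co\CK_\SG(\CX)\rto\CH_\SG(\CX)$ and $\Pi\co\CH_\SG(\CX)\rto\CK_\SG(\CX)$ announced in the introduction, show that both respect the equivalence relations of Definitions \ref{D-equiv-H} and \ref{D-equiv-K}, and then show that the induced maps $\mathbf\Lambda,\mathbf\Pi$ are mutually inverse. The overall pattern is the one for manifolds recalled in the introduction, where a Kan fibration plays the role of the action map \eqref{e-Phi}.

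\emph{Construction of $\Pi$.} We first build a universal Kan fibration $\SK_\uni\twoheadrightarrow(\SB\CG,\bfe)$ with fiber $\SX$. The natural candidate is suggested by Step 1 of the construction of $\mc L$. The $0$-simplices of $\SK_\uni$ are $X_0$, and a $1$-simplex over $\f=(\f^\ell,\SZ,\f^r)\in(\SB\CG)_1$ is a point of $Y^0$ for $\g=\mc L^0(\f)$, that is, a class $[a,z_0,b]$, with source $s(a)$ and target $t(b)$. A $2$-simplex over $(\alpha_{12},\alpha_{02},\alpha_{01},g_{012})$ is a compatible triple of such classes. The compatibility is read off from the $2$-morphism $g_{012}$, evaluated on the fibered product $\SZ_{01}\times_\CX\SZ_{12}$. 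Using $\bfe$ as unit is essential: the degenerate $1$-simplex over $\bfe$ at $x$ must be $[1_x,1_x,1_x]\in X^1$. This is exactly why Proposition \ref{P-L-projection} and Corollary \ref{C-L} are needed. The fiber over $\bullet$ is then recovered as $\SX$, since $\mc L^0(\bfe)=\bfe$ and the $Y^0$ for $\bfe$ is $X^1$.

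We then verify $\Kan(1)$ and $\Kan(2)$ for the projection, using that $\f^\ell,\f^r$ are hypercovers so that $[a,z_0,b]$ can be extended along horns. We also verify $\Kan!(2)$, using that $W\rtimes\SZ$ is isotropy free, so that the classes are determined uniquely. Given $\rho\co\SG\rto(\SB\CG,\bfe)$, we set $\Pi(\rho):=\rho^\ast\SK_\uni$. Pullbacks of Kan fibrations are Kan fibrations, and the fiber over $\bullet$ is unchanged. Suppose $\SN$ is a natural transformation from $\rho_1$ to $\rho_2$. Transporting simplices along $\SN$ produces a $\bullet$-isomorphism $\rho_1^\ast\SK_\uni\cong\rho_2^\ast\SK_\uni$: the $1$-cylinders $N_1(g)$ are $2$-morphisms in $\CG$, and these act on the classes $[a,z_0,b]$ by composition. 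The identity on the fiber comes from $N_1(e)$ being degenerate.

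\emph{Construction of $\Lambda$.} Let $\pi\co\SX\rightarrowtail\SK\twoheadrightarrow\SG$ be a Kan fibration and $g\in G_1$. Let $\SZ_g$ be the Lie groupoid whose objects are the $1$-simplices of $\SK$ over $g$, with arrows given by $2$-simplices over $s^1_0(g)$ or $s^1_1(g)$. Source and target give $\rho(g)=(d_1,\SZ_g,d_0)$. Both legs are hypercovers, by $\Kan(1)$ and $\Kan(2)$ of $\pi$ together with $\Kan!(2)(\pi)$. A $2$-simplex of $\SG$ lifts uniquely once two edges are fixed, by $\Kan!(2,j)(\pi)$. This gives the $2$-morphism $\rho(g_{01})\star\rho(g_{12})\Rightarrow\rho(g_{02})$, and $\Kan!(3)$ of $\SK$ yields the cocycle condition \eqref{E def-BG3}. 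Since $\SZ_e$ is canonically $\SX^{[1]}$, we get $\rho(e)=\bfe$. A $\bullet$-isomorphism $\SF\co\SK_1\rto\SK_2$ induces isomorphisms $\SZ^1_g\cong\SZ^2_g$ compatible with the legs, hence a natural transformation between $\Lambda(\SK_1)$ and $\Lambda(\SK_2)$.

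\emph{Inverse property.} For $\Lambda\circ\Pi$, the groupoid $\SZ_g$ attached to $\rho^\ast\SK_\uni$ is the $Y$ of $\mc L^0(\rho(g))$. The natural transformation $\mc N$, promoted to $\SN$ from the identity to $\msf L$, therefore gives an equivalence $\rho\sim\msf L\circ\rho=\Lambda\Pi(\rho)$. For $\Pi\circ\Lambda$, we construct a map $\SK\rto\Lambda(\SK)^\ast\SK_\uni$. On $0$-simplices it is the identity, and a $1$-simplex $k$ over $g$ is sent to $[1,k,1]$. By $\Kan!(2)$ on both sides this extends uniquely to all simplices, and it is a $\bullet$-isomorphism.

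We expect the main obstacle to be the construction of $\SK_\uni$ together with the verification of its unique Kan conditions in degree $2$ and $3$. The difficulty is that $\star^1$ and the associativity identifications of $\CG$ enter the definition of $2$-simplices. The approach we would try first is to define the $2$-simplices of $\SK_\uni$ as the graph of the $2$-morphism $g_{012}$ acting on classes. Uniqueness would then reduce to isotropy freeness of $W\rtimes\SZ$.
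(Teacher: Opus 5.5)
Your overall route is the paper's. $\Lambda$ is built fiberwise over $g\in G_1$ with legs $d^1_1,d^1_0$, and $\rho_2$ is read off from unique lifts. $\SK_\uni$ has as $1$-simplices over $\f$ the points of the $Y^0$ of $\mc L^0(\f)$ and $2$-simplices cut out by $g_{012}$, with $\Pi(\rho)=\rho^\ast\SK_\uni$. Finally $\mathbf\Lambda\circ\mathbf\Pi=id$ comes from $\Lambda\Pi(\rho)=\msf L\circ\rho$ together with $\SN\co\msf{id}\Rightarrow\msf L$. Your explicit map $k\mapsto[1,k,1]$ for $\mathbf\Pi\circ\mathbf\Lambda=id$ goes beyond the paper's ``obvious by construction'' and is right: by two horn fillings with $\Kan!(2)(\pi)$, every class $[a,k,b]$ has exactly one representative with trivial outer arrows. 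Two steps, however, need repair.

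First, the arrows of $\SZ_g$ cannot be ``$2$-simplices over $s^1_0(g)$ or $s^1_1(g)$''. A $2$-simplex over $s^1_1(g)$ relates two lifts of $g$ with the same $d^1_1$ (trivial left leg). One over $s^1_0(g)$ relates two lifts with the same $d^1_0$ (trivial right leg). Their union is not closed under composition, and neither leg is then a hypercover, since $\Acyc!(1)$ fails. Moreover, for $g=e$ you would get $X_2$ rather than $X^{[1]}_1$, contradicting your claim $\SZ_e\cong\SX^{[1]}$, on which $\rho_1(e)=\bfe$ rests. The paper takes $Z^1_g=(\pi^{[1]}_1)^{-1}(\s^0_0(g))$, i.e.\ squares over the degenerate square, each consisting of one $2$-simplex over $s^1_1(g)$ and one over $s^1_0(g)$ glued along the diagonal. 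With this choice, $\Acyc!(1)$ for both legs reduces to two horn fillings, and $\SZ_e=\SX^{[1]}$.

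Second, in both equivalence steps you treat a cylinder as a single $2$-morphism $\rho_1(g)\Rightarrow\rho_2(g)$. But Definition \ref{D-equiv-H} asks for a natural transformation into $(\SB\CG,\bfe)$. So each $N_1(g)$ is a square with vertical edges $\bfe$, i.e.\ two $2$-morphisms $\rho_1(g)\star\bfe\Rightarrow h$ and $\bfe\star\rho_2(g)\Rightarrow h$ with common diagonal $h$. For $\Lambda$, the leg-compatible isomorphism $\SZ^1_g\cong\SZ^2_g$ only yields a cylinder with vertical edges $\msf e$. This must be deformed along $\omega_\bullet$, as in Remark \ref{R-deform-square}, before extending to all $N_n$ by $\Kan!(3)$. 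For $\Pi$, the paper does not let $N_1(g)$ ``act by composition''. It lifts $N_1(g)$ through the Kan fibration $\SK_\uni$, obtaining $\SQ\co\SK\rto\SK_\uni^{[1]}$ over $\SN$, and reads the $\bullet$-isomorphism off the two ends of $\SQ$.
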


The proof is given in \S\ref{Sec-K-vs-H}. This theorem leads to the following definition.
\begin{defn}\label{D-2gp-action}
Let $\CX$ be a Lie groupoid and $\SG$ be a 2-group. The following two definitions are equivalent:
\begin{enumerate}
\item A $\SG$-action on $\CX$ is an equivalence class of 2-group homomorphisms in $\mathbf H_\SG(\CX)$.
\item A $\SG$-action on $\CX$ is an equivalence class of Kan fibrations in $\mathbf K_\SG(\CX)$.
\end{enumerate}
\end{defn}

Then we consider general Lie 2-group $\SG$-actions on $\CX$. We use Kan fibration in (2) of Definition \ref{D-2gp-action} to give the definition of smooth $\SG$-actions on Lie groupoids (cf. Definition \ref{D-Kan-smoothaction}).

\subsection{$\mathbf K_\SG(\CX)$ versus $\mathbf H_\SG(\CX)$}\label{Sec-K-vs-H}

\subsubsection{From $\mathbf K_\SG(\CX)$ to $\mathbf H_\SG(\CX)$ }

We first show that a Kan fibration  naturally determines  a homomorphism from $\SG$ to $\SB\SAut(\CX)$.

\begin{prop} Let $\CX$ be a Lie groupoid and $\SG$ be a 2-group.
Given a Kan fibration $\pi\co \SX\rightarrowtail \SK\twoheadrightarrow\SG$, one can associate to it a homomorphism $\rho\co \SG\rto(\SB\CG, \bfe)$. In particular, this yields a map
\begin{equation}
\Lambda\co \mc K_\SG(\mc X)\rto \mc H_\SG(\mc X).
\end{equation}
\end{prop}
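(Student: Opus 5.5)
We build $\rho$ skeleton by skeleton. Since $\SB\CG$ is determined by its $2$-skeleton together with the cocycle condition \eqref{E def-BG3}, and a Lie 2-group homomorphism is determined by its components in degrees $\leqslant 2$ (higher components being forced by $\Kan!(3)$ and $\Kan!(4)$), it suffices to define $\rho_0,\rho_1,\rho_2$ and check the relevant compatibilities. Throughout we use the Kan conditions $\Kan(1)(\pi)$, $\Kan(2)(\pi)$ and $\Kan!(3)(\pi)$; by Lemma \ref{L fiber} the fiber over $\bullet$ is $\SX$.

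\textbf{Degree one.} Fix $g\in G_1$. Form the simplicial manifold $\SZ_g:=\SFib(g)$, the fiber of $\d^1$-restriction $\SK^{[1]}\rto\SG^{[1]}$ over the $1$-simplex $g$. Concretely, its $0$-simplices are $1$-simplices $k\in K_1$ with $\pi(k)=g$, and its $1$-simplices are the appropriate prisms $\Delta[1]\times\Delta[1]\rto\SK$ lying over $\pi_{1,1}^\ast$ of degenerate data on $g$. The two endpoint maps $\d^1_1,\d^1_0$ then restrict to homomorphisms $\SX\xleftarrow{\phi_g}\SZ_g\xrightarrow{\psi_g}\SX$. $\Kan(1,j)(\pi)$ gives surjectivity on $0$-simplices, and $\Kan(2,j)(\pi)$ together with $\Kan!(3)(\pi)$ should give $\Acyc(m)$ and $\Acyc!(1)$ for $\phi_g,\psi_g$. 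Hence both are hypercovers, and we set $\rho_1(g):=(\phi_g,\SZ_g,\psi_g)\in\CG^0$.

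For $g=e$ we must arrange $\rho_1(e)=\bfe$. Our plan is to use the degeneracy $s^0_0$ on $\SK$ to identify $\SZ_e$ with a groupoid canonically isomorphic to $\SX^{[1]}$, with the endpoint maps corresponding to $\d^1_1,\d^1_0$. This is exactly why the target is $(\SB\CG,\bfe)$ rather than $(\SB\CG,\msf e)$. If the identification is only up to canonical isomorphism, we would instead post-compose with $\msf L$ from Corollary \ref{C-L}: by Proposition \ref{P-L-projection} it normalises $\rho_1(e)$ to $\bfe$ while remaining a 2-group homomorphism.

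\textbf{Degree two.} For $\gamma\in G_2$ with faces $g_{01},g_{12},g_{02}$, we need an arrow $\rho_2(\gamma)\in\CG^1(\rho_1(g_{01})\star\rho_1(g_{12}),\rho_1(g_{02}))$. An object of $\SZ_{g_{01}}\times_\SX\SZ_{g_{12}}$ is a pair of composable lifts $(k_{01},k_{12})$. By $\Kan(2,1)(\pi)$ this pair fills to a $2$-simplex over $\gamma$, whose face $k_{02}\in\SZ_{g_{02}}$ is determined up to a unique arrow by $\Kan!(3)(\pi)$. This yields the natural transformation
\[
\psi_{g_{12}}\circ\msf{pr}_2\Rightarrow\psi_{g_{02}}\circ\msf{pr}_2
\]
on $(\SZ_{g_{01}}\times_\SX\SZ_{g_{12}})\times_\SX\SZ_{g_{02}}$, and we take it as $\rho_2(\gamma)$.

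\textbf{Verification.} Two things remain to check.
\begin{enumerate}
\item The compatibility \eqref{E def-BG3} on $3$-simplices, which follows by filling $3$-simplices of $\SK$ over a given $3$-simplex of $\SG$ via $\Kan!(3)(\pi)$, and comparing the two composites of arrows in $\CG$.
\item Compatibility with degeneracies.
\end{enumerate}
We expect the main obstacle to be this degeneracy/unit bookkeeping: showing that $\rho_2(s^1_j g)$ is the identity arrow relative to $\bfe$, which requires the modified degeneracies $s^n_{\omega,j}$ of $(\SB\CG,\bfe)$. Our first attempt is to build $\rho$ as a $\triangleleft$-homomorphism, and then invoke Corollary \ref{C-transformation} and Corollary \ref{C-L}, applying $\msf L$ to upgrade it to a genuine 2-group homomorphism into $(\SB\CG,\bfe)$.

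Finally, different choices of fillers change $\rho$ by a natural transformation. Consequently $\Lambda$ is well defined into $\CH_\SG(\CX)$ after fixing choices, and canonically so on $\mathbf H_\SG(\CX)$.
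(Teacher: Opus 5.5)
Your degree-one construction agrees with the paper's: the paper writes the arrows of $\rho_1(g)$ as squares over $\s^0_0(g)$ with the two directions of the square interchanged, which is only a transposition. In fact $\SZ_e=\SX^{[1]}$ on the nose, since $\SFib$ commutes with $\SHom(\Delta[1],-)$. So $\rho_1(e)=\bfe$ and no normalisation by $\msf L$ is needed.

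The gap is in $\rho_2$.
\begin{enumerate}
\item By Definition \ref{D-Kan-fibration}, a Kan fibration of Lie $2$-groupoids has every $\tau_{2,j}$ a diffeomorphism, so $\Kan!(2)(\pi)$ holds. Hence the filler of $(k_{01},k_{12})$ over $\gamma$, and with it $k_{02}$, is unique. There are no choices anywhere, and $\Lambda$ is an honest map $\CK_\SG(\CX)\rto\CH_\SG(\CX)$. Your final paragraph and the appeal to $\Kan!(3)(\pi)$ are beside the point.
\item More seriously, knowing $k_{02}$ does not yet define a natural transformation. Its component at $(k_{01},k_{12},k'_{02})$ must be an arrow of $\CX$ from $\psi_{g_{12}}(k_{12})=\psi_{g_{02}}(k_{02})$ to $\psi_{g_{02}}(k'_{02})$. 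Here $k'_{02}$ is an \emph{independent} lift of $g_{02}$ with the same initial vertex; it is not a face of any filler over $\gamma$, so nothing in your argument produces this arrow.
\end{enumerate}
The paper supplies the arrow by a second filling over a degenerate simplex. By $\Kan!(2,0)(\pi)$ there is a unique $z_{012}$ over $s^1_1(g_{02})$ with $z_{01}=k_{02}$ and $z_{02}=k'_{02}$. Its edge $z_{12}$ lies over $e$, hence in $X^1$, and goes from $\psi_{g_{02}}(k_{02})$ to $\psi_{g_{02}}(k'_{02})$. Equivalently, take the unique arrow $k_{02}\to k'_{02}$ of $\SZ_{g_{02}}$ lying in $\ker(\phi_{g_{02}})$, which exists because $\phi_{g_{02}}$ is a hypercover, and apply $\psi_{g_{02}}$. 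That uniqueness is what your phrase ``determined up to a unique arrow'' should have referred to. It comes from the hypercover $\phi_{g_{02}}$, not from any non-uniqueness of fillers over $\gamma$.

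Your fallback for the degeneracies would also fail. The proposition preceding Corollary \ref{C-transformation} needs $\d^1_1\circ\SN$ to be a homomorphism already, and post-composing a $\triangleleft$-homomorphism $\rho$ with $\msf L$ cannot repair degeneracies:
\begin{enumerate}
\item By $\Kan!(3)$, the prism $N_2(\alpha)$ is determined by its side squares together with either its bottom $\alpha$ or its top $L_2(\alpha)$. So $L_2$ is injective on $2$-simplices with prescribed edges.
\item $\rho_2(s^1_j g)$ and $s^1_{\omega_\bullet,j}(\rho_1(g))$ have the same edges.
\item $\msf L$ is a homomorphism of $(\SB\CG,\bfe)$ by Corollary \ref{C-L}.
\end{enumerate}
Together these show that $\msf L\circ\rho$ respects $s^1_j$ at $g$ exactly when $\rho$ does. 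Compatibility with degeneracies, like the cocycle condition \eqref{E def-BG3}, therefore has to be checked directly on the filler description of $\rho_2$. For $\gamma=s^1_1(g)$, both fillings take place over $s^1_1(g)$, and one compares $z_{12}$ with the arrow defining $s^1_{\omega_\bullet,1}$. The paper leaves both checks implicit in ``$\rho_n$ for $n\geqslant 3$ are determined by Kan conditions''.
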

\begin{proof} {\em Step 1: description of $\rho_1$.} Let $g\in G_1$, we describe $\u:=\rho_1(g)$. Suppose $\u$ is denoted by
\[
\u\co\CX\xleftarrow{\u^\ell} \mc Y \xrightarrow{\u^r}\CX.
\]
Set
\[
Y^0:=\pi^{-1}_1(g), \qq
u^{\ell,0}:=d^1_1, \and
u^{r,0}:=d^1_0.
\]
Here $d^1_i$ are face maps of $\SK$. Consider $\pi^{[1]}\co \SK^{[1]}\rto \SG^{[1]}$. Set
\[
Y^1:=(\pi^{[1]}_1)^{-1}(\s^0_0(g)),\qq
u^{\ell,1}:=d^{[1],1}_1, \and
u^{r,1}:=d^{[1],1}_0.
\]
{\em Step 2: description of $\rho_2$.}  Let $g_{012}\in G_2$, suppose $\u_{ij}:=\rho_1(g_{ij})$ are
\[
\u_{ij}\co \CX\xleftarrow{\u^\ell_{ij}} \mc Y_{ij} \xrightarrow{\u^r_{ij}}\CX.
\]
The $\rho_2(g_{012})$ is supposed to be given by
a natural transformation from $\u_{01}\star \u_{12}$ to $\u_{02}$. The natural transformation, denoted by
\[
\phi\co  Y^0_{02}\times_{u^{\ell,0}_{02},X^0,u^{\ell,0}_{01}}Y^0_{01}\times_{u^{r,0}_{01},X^0,u^{\ell,0}_{12}} Y^0_{12}\to X^1,
\]
is defined as the following. Let $(y'_{02},y_{01},y_{12})$ be a point of the domain, its image is supposed to be
an arrow from $x_2=u^{r,0}_{12}(y_{12})$ to $x'_2=u^{r,0}_{02}(y'_{02})$. Note that
\[
x_2=u^{r,0}_{12}(y_{12})=\d^1_0(y_{12})=:y_2 \and x'_2=u^{r,0}_{02}(y'_{02})=\d^1_0(y'_{02})=:y'_2.
\]
By the Kan fibration condition, there exists a unique point $y_{012}\in \pi^{-1}_2(g_{012})$ such that its boundary is $(y_{01},y_{12},y_{02})$. Now consider $g'_{012}=s^1_1(g_{02})$, i.e.,
\[
g'_{01}=g'_{02}=g_{02}\and g'_{12}=e.
\]
By the Kan fibration condition, there exists a unique $z_{012}$ over $g'_{012}$ such that
\[
z_{01}=y_{02}\and z_{02}=y'_{02}.
\]
Then $z_{12}\in X^1$ is an arrow from $y_2$ to $y'_2$. Set
\begin{equation}
\phi(y'_{02},y_{01},y_{12}):=z_{12}.
\end{equation}

Once $\rho_2$ is constructed. By Kan conditions, $\rho_n$ for $n\geqslant 3$ are determined by $\rho_2$.  Hence, $\rho$ is constructed.
\end{proof}

\begin{prop}
Let $\SK\sim\tilde \SK$ be equivalent Kan fibrations in $\mc K_\SG(\CX)$, i.e., there exists a $\bullet$-isomorphism
$\SF\co \SK\rto \tilde \SK$. Then $\Lambda(\SK)\sim \Lambda(\tilde\SK)$. Hence, $\Lambda$ induces a map
\[
\mathbf{\Lambda}\co\mathbf{K}_\SG(\CX)\rto \mathbf{H}_\SG(\CX).
\]
\end{prop}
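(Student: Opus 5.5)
We will construct an explicit natural transformation $\SN\co \SG\rto (\SB\CG)^{[1]}$ with $\d^1_1(\SN)=\Lambda(\SK)=:\rho$ and $\d^1_0(\SN)=\Lambda(\tilde\SK)=:\tilde\rho$. Since $\SF$ is a $\bullet$-isomorphism, it commutes with the projections to $\SG$ and restricts to the identity on the fiber $\SX$ over $\bullet$. By the lemma on natural $\triangleleft$-transformations, such an $\SN$ is determined by its source $\rho$ together with a map $N_1\co G_1\rto (\SB\CG)^{[1]}_{1,\bfe}$ satisfying $\d^1_1\circ N_1=\rho_1$. So the construction reduces to producing, for each $g\in G_1$, an arrow in $\CG=\SAut(\CX)$ from $\rho_1(g)$ to $\tilde\rho_1(g)$. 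We will then promote this arrow to a cylinder via $\pi_{1,1}^\ast$, exactly as in the definition of $N_1(\f)=\pi_{1,1}^\ast(\mc N(\f))$. It will remain to check that $\d^1_0\circ\SN=\tilde\rho$ and that $\SN$ respects degeneracies.

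\emph{Step 1 (the arrow at level 1).} Write $\rho_1(g)=(\u^\ell,\mc Y,\u^r)$ with $Y^0=\pi_1^{-1}(g)$ and $Y^1=(\pi^{[1]}_1)^{-1}(\s^0_0(g))$. Similarly write $\tilde\rho_1(g)=(\tilde\u^\ell,\tilde{\mc Y},\tilde\u^r)$. Since $\SF$ preserves fibers over $g$ and over $\s^0_0(g)$, its components $F_1$ and $F^{[1]}_1$ give an isomorphism of Lie groupoids $\SF_g\co\mc Y\rto\tilde{\mc Y}$. Because $\SF$ is the identity on $\SX$ and commutes with $d^1_0,d^1_1$, we get $\tilde\u^\ell\circ\SF_g=\u^\ell$ and $\tilde\u^r\circ\SF_g=\u^r$. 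An arrow from $\rho_1(g)$ to $\tilde\rho_1(g)$ is a natural transformation
\[
\u^r\circ\msf{pr}_1\Rightarrow\tilde\u^r\circ\msf{pr}_2\co \mc Y\times_{\u^\ell,\CX,\tilde\u^\ell}\tilde{\mc Y}\rto\CX.
\]
To define it, take a pair $(y,\tilde y)$ with $d^1_1 y=d^1_1\tilde y$. Then $(F_1(y),\tilde y)$ is a pair in $\tilde\SK$ over $g$ with a common source. Fill the horn over $s^1_1(g)$ (or over $s^1_0(g)$, depending on the direction) using $\Kan!(2)$ of $\tilde\SK\rto\SG$. This is precisely the recipe used for $\phi$ in the construction of $\Lambda$. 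The remaining edge is an arrow of $\SX$ from $d^1_0 y$ to $d^1_0\tilde y$. Uniqueness of fillers gives smoothness and naturality in $Y^1\times\tilde Y^1$. When $g=e$, this recipe yields the identity on $\bfe$, so $N_1(e)=\s^0_0(\bfe)$.

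\emph{Step 2 (higher levels and compatibility).} Apply the cited lemma to extend $N_1$ uniquely to a natural $\triangleleft$-transformation $\SN$ with source $\rho$. We then have to show that $\d^1_0(\SN)_2(g_{012})=\tilde\rho_2(g_{012})$, after which the higher levels agree by $\Kan!(3)$ and $\Kan!(4)$. Concretely, we must prove that the arrow $\tilde\rho_2(g_{012})$ of $\CG$ equals the composite built from $\rho_2(g_{012})$ and the three arrows $N_1(g_{ij})$; this is the defining condition \eqref{E def-BG3} of a 3-simplex. The verification uses the fact that $\SF$ carries the unique $2$-simplex $y_{012}$ of $\SK$ over $g_{012}$ to the unique such simplex of $\tilde\SK$. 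One then compares fillers inside $\tilde\SK_3$ over degenerate $3$-simplices of $\SG$, using $\Kan!(3)$ of $\tilde\SK\rto\SG$. Finally, since $N_1(e)=\s^0_0(\bfe)$, Corollary \ref{C-transformation} shows that $\SN$ is compatible with degeneracies, hence is a genuine natural transformation.

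The main obstacle is Step 2. It requires a careful bookkeeping of the $3$-simplex condition \eqref{E def-BG3} in terms of horn fillers in $\tilde\SK$, together with the semi-strict product $\star^1$. The latter is only referenced from earlier work, so we would first reduce it to an explicit description on representatives and then check the identity pointwise.
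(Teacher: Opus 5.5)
Your strategy is the paper's, and so is your arrow. The paper takes the unique arrow $F_1(y)\to\tilde y$ in $\ker(\tilde{\msf u}^\ell)$ and applies $\tilde u^{r,1}$. In that square the triangle over $s^1_0(g)$ has a degenerate edge, so it is $s^1_0(\tilde y)$ by $\Kan!(2)$ of $\tilde\SK\to\SG$. The triangle over $s^1_1(g)$ is then exactly your horn filler.

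\textbf{The gap is the promotion to a cylinder.} Pull back the $2$-simplex $(\msf e,\tilde\rho_1(g),\rho_1(g),\mc N(g))$ to a cylinder with $\d^1_1=\rho_1(g)$ and $\d^1_0=\tilde\rho_1(g)$. Its vertical edges are necessarily $s^0_0(\bullet)=\msf e$ and $(\mc N(g))_{12}=\msf e$, so it is an $\msf e$-cylinder; the paper records exactly this for its $N'_1(g)$.
\begin{itemize}
\item Since $\rho$ and $\tilde\rho$ are homomorphisms into $(\SB\CG,\bfe)$, Definition \ref{D-equiv-H} requires $N_0(\bullet)=\bfe$. So every $N_1(g)$ must be a $\bfe$-cylinder, which is also what the proof that $\Pi$ is well defined uses.
\item You cannot instead work in $(\SB\CG,\msf e)$. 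Then $\rho=\d^1_1\circ\SN$ would be a homomorphism into $(\SB\CG,\msf e)$, forcing $\rho_1(e)=\msf e$, whereas $\rho_1(e)=\bfe$.
\end{itemize}
So your $N_1$ does not land in $(\SB\CG)^{[1]}_{1,\bfe}$ as you claim. The missing step is Remark \ref{R-deform-square}. Using $\Kan!(3)$, fill a cube $\Delta[1]^3\to\SB\CG$ with these faces:
\begin{itemize}
\item bottom face: the $\msf e$-cylinder;
\item the two faces through its vertical edges: $\omega_\bullet$;
\item the two faces through its horizontal edges: $\s^0_0(\rho_1(g))$ and $\s^0_0(\tilde\rho_1(g))$.
\end{itemize}
The top face of this cube is the $\bfe$-cylinder $N_1(g)$.

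\textbf{This affects your final step.} For the undeformed cylinder, $N_1(e)=\s^0_0(\bfe)$ holds only for the degeneracies of $(\SB\CG,\msf e)$. Compatibility of $\SN$ with degeneracies requires the deformed $N_1(e)$ to be the degenerate square of $(\SB\CG,\bfe)$: all edges $\bfe$, triangles $s^1_{\omega_\bullet,j}(\bfe)$. This has to be checked through the cube. Once it holds, compatibility at all levels follows from the uniqueness in the lemma on natural $\triangleleft$-transformations, together with the fact that $\rho$ is a homomorphism. Corollary \ref{C-transformation} is not the relevant result here, since it concerns the target of a transformation whose source is the identity.

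Your level-$2$ check that $\d^1_0\circ\SN=\tilde\rho$ via \eqref{E def-BG3} is genuinely needed, because a $2$-simplex of $\SB\CG$ is not determined by its boundary, and the paper omits it. It must, however, be carried out for the deformed cylinders.
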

\begin{proof}
Let $\rho=\Lambda(\SK)$ and
$\tilde\rho=\Lambda(\tilde \SK)$. We construct a natural transformation $\SN\co \SG\rto (\SB\CG)^{[1],\bfe}$ from $\rho$ to $\tilde\rho$.

We first construct a map $\mc N\co G_1\rto\Aut^1(\CX)$ such that
\[
s(\mc N(g))=\rho_1(g) \and t(\mc N(g))=\tilde \rho_1(g).
\]
Suppose $\rho_1(g)$ and $\tilde\rho_1(g)$ are
\[
\u\co \CX\xleftarrow{\u^\ell} \mc Y\xrightarrow{\u^r}\CX \and \tilde \u\co \CX\xleftarrow{\tilde\u^{\ell}} \tilde{\mc Y}\xrightarrow{\tilde\u^{r}}\CX.
\]
Then $\mc N(g)$ is supposed to be a map
\begin{equation}
\mc N(g)\co Y^0\times_{u^{\ell,0},X^0,\tilde u^{\ell,0}} \tilde Y^0\rto X^1.
\end{equation}
By the construction of $\Lambda$,
\[
Y^0=\pi^{-1}_1(g)=: K_g, \;\;\;
\tilde Y^0=\tilde\pi^{-1}_1(g):=\tilde K_g.
\]
Also $\SF$ induces an isomorphism
$\mc F_g\co \mc Y\to \tilde {\mc Y}$.  We are ready to define $\mc N(g)$. Let $(y,\tilde y)$ be a point in the domain of $\mc N(g)$, its image is expected to be an arrow from $u^{r,0}(y)$ to $\tilde u^{r,0}(\tilde y)$, which is constructed as the following. Since
\[
\tilde u^{\ell,0}(F^0_g(y))=u^{\ell,0}(y)
=\tilde u^{\ell,0}(\tilde y).
\]
There exists a unique arrow $F(y)\xrightarrow{\alpha} \tilde y$ in the kernel of $\tilde \u^\ell$.
We verify the source and target of $\tilde u^{r,1}(\alpha)$:
\[
s(\tilde u^{r,1}(\alpha))
=\tilde u^{r,0}(\tilde s(\alpha))
=\tilde u^{r,0}(F^0_g(y))=u^{r,0}(y);
\]
\[
t(\tilde u^{r,1}(\alpha))
=\tilde u^{r,0}(\tilde t(\alpha))
=\tilde u^{r,0}(\tilde y).\]
Set
$\tilde u^{r,1}(\alpha)$
to be the desired arrow $\mc N(g)(y,\tilde y)$.

Since $\mc N(g)$ defines a natural transformation from $\rho_1(g)$ to $\tilde\rho_1(g)$, it also determines a 2-simplex in $\SB\CG$ with
\[
(\mc N(g))_{01}=\rho_1(g), \qq
(\mc N(g))_{12}=\msf e,\and
(\mc N(g))_{02}=\tilde \rho_1(g).
\]
Set
\[
N'_1\co G_1\rto (\SB\CG)^{[1]}_{1,\msf e};
\qq
g\mapsto \pi_{2,1}^\ast (\mc N(g)).
\]
Here $N'_1(g)$ satisfies
\[
d^1_0(N'_1(g))=d^1_1(N'_1(g))=\msf e,\;\;\;
\d^1_1(N'_1(g))=\rho(g),\;\;\;
\d^1_0(N'_1(g))=\tilde \rho(g).
\]
Finally, by the Remark \ref{R-deform-square} below, we deform $N'_1(g)$ to be a $\bfe$-transformation $N_1(g)$. This completes the construction of $N_1$, hence of $\SN$.
\end{proof}

\begin{remark}\label{R-deform-square}
Let $\SG$ be a 2-group and $\omega\in G^{[1]}_1$ be a transformation from $e$ to $e'$.
Given an $e$-transformation $\Omega$ from $g$ to $\tilde g$, we may deform it to be a $e'$-transformation $\Omega'$ from $g$ to $\tilde g$. Let $\Delta^{(i)}[1],i=1,2,3$ be three copies of $\Delta[1]$.
Consider an element
\[
\Theta\co \Delta^{(1)}[1]\times \Delta^{(2)}[1]\times \Delta^{(3)}[1]\to \SG.
\]
The $d^{(i),1}_j(\Theta)$ has an obvious meaning. For example
\[
d^{(2),1}_0(\Theta)=\Theta\co \Delta^{(1)}[1]\times \{1\} \times \Delta^{(3)}[1]\to \SG.
\]
By $\Kan!(3)$ of $\SG$, $\Theta$ is uniquely determined by:
\[
d^{(3),1}_1(\Theta), \qq d^{(1),1}_0(\Theta),\qq
d^{(1),1}_1(\Theta), \qq d^{(2),1}_0(\Theta),\and d^{(2),1}_1(\Theta).
\]
We set them to be the following:
\begin{enumerate}
\item $d^{(3),1}_1(\Theta)=\Omega$. Then
$\Omega$ is identified as
\[
\Omega\co \Delta^{(1)}[1]\times \Delta^{(2)}[1]\times\{0\}\to \SG.
\]
Furthermore,
\[
d^1_i(\Omega)=d^{(1),1}_i(\Omega)=e, \qq
\d^1_1(\Omega)=d^{(2),1}_1(\Omega)=g,\and
\d^1_0(\Omega)=d^{(2),1}_0(\Omega)=\tilde g.
\]
\item set $d^{(1),1}_i(\Theta)=\omega$;
\item set $d^{(2),1}_1(\Theta)=\s^0_0(g)$ and $d^{(2),1}_0(\Theta)=\s^0_0(\tilde g)$.
\end{enumerate}
Set $\Omega':=d^{(3),1}_0(\Theta)$.
\end{remark}

\subsubsection{From $\mathbf H_\SG(\CX)$ to $\mathbf K_\SG(\CX)$}

In this subsection, our main goal is to associate a homomorphism $\rho\in \mc H_\SG(\CX)$ with a Kan fibration $\SK_\rho\in \mc K_\SG(\CX)$.
For this purpose,  we construct a Kan fibration over $\SG=(\SB\CG,\bfe)$, which we call the universal Kan fibration of $\CX$:
\begin{equation}
\pi_{\uni}\co\SX\rightarrowtail \SK_{\uni}\twoheadrightarrow \SB\CG.
\end{equation}
The construction is based on the homomorphism $\sf L$ (cf. Corollary \ref{C-L}).
\vskip 0.1in\noindent
{\em Description of $K_{\uni,n}$:}
\begin{enumerate}
\item[(1)] {Description of $K_{\uni,0}$}. Set $K_{\uni,0}:=X^0$.
\item[(2)] {Description of $K_{\uni,1}$}. Let $\f_{01}\in (\SB\CG)_1$. Suppose $L_1(\f_{01})=(\g^\ell_{01},\mc Y_{01},\g^r_{01}).$ Set the fiber $K_{\uni,\f_{01}}$ of $\SK_{\uni}$ over $\f_{01}$ to be $Y^0_{01}$.
\item[(3)] {Structure maps $d^1_j$ and $s^0_0$.} The (restrictions of) face maps $d^1_j$ on  $K_{\uni,\f_{01}}$ is
    \[
    d^1_1=g^{\ell,0}_{01}\and d^1_0=g^{r,0}_{01}.
    \]
    The degeneracy map $s^0_0\co K_{\uni,0}\rto K_{\uni,\bfe}$ is given by
    \[
    s^0_0(x)=[1_x, x, 1_x].
    \]
\item[(4)] {Description of $K_{\uni,2}$}. For a 2-simplex of $\f_{012}$ of $\SB\CG$, denote the fiber of $\SK_\uni$ over $\f_{012}$ by $K_{\uni,\f_{012}}$. Suppose
\[
\f_{ij}=(\f^\ell_{ij},\SZ_{ij},\f^{r}_{ij})\and
L_1(\f_{ij})=\g_{ij}=(\g^\ell_{ij},\SY_{ij},\g^r_{ij}).
\]
By the definition of $\f_{012}$, it determines a natural transformation from
\[
f_{012}\co \f_{01}\star \f_{12}\Rightarrow \f_{02},
\]
which is a map
\[
f_{012}\co Z^{0}_{02}\times_{f^{\ell,0}_{02}, X^0, f^{\ell,0}_{01}}
Z^0_{01}\times_{f^{r,0}_{01},X^0, f^{\ell,0}_{12}} Z^0_{12}\rto X^1.
\]
Then
\[
K_{\uni,\f_{012}}\subseteq \Hom(\partial\Delta[2],\sk_1(\SK_\uni))\cap \left(
K_{\uni,\f_{12}}\times
K_{\uni,\f_{02}}\times K_{\uni,\f_{01}}\right).
\]
Take an element of the set on the right hand side
\[
(x_{12},x_{02},x_{01}), \qq \mbox{ where } \qq x_{ij}=[a_{ij},z_{ij},b_{ij}]\in K_{\uni,\f_{ij}}=Y^0_{ij}.
\]
For simplicity of the expression, we choose particular representatives of $x_{ij}$:
\begin{itemize}
\item for $x_{01}$, take $b_{01}$ to be the unit arrow, i.e,
\[
x_{01}=[a_{01},z_{01}, 1_x], \mbox{ where } x=f^{r,0}_{01}(z_{01});
\]
\item for $x_{12}$, take $a_{12}$ to be the unit arrow, i.e,
\[
x_{12}=[1_x, z_{12},b_{12}];
\]
\item for $x_{02}$, take $z_{02}$ such that
\[
f^{\ell,0}_{01}(z_{01})
=f^{\ell,0}_{02}(z_{02}),
\]
and take $a_{02}$ to be $a_{01}$, i.e,
\[
x_{02}=[a_{01},z_{02},b_{02}].
\]
\end{itemize}
With these choices,
\begin{equation}
(x_{12},x_{02},x_{01})\in
K_{\uni,\f_{012}}
\iff f_{012}(z_{01},z_{12},z_{02})\cdot b_{02}=b_{12}.
\end{equation}
\item[(5)] {Description of structure maps $d^2_j$ and $s^1_i$.} Suppose
$x_{012}=(x_{12},x_{02},x_{01})$, then
\[
d^2_0(x_{012})=x_{12},\qq
d^2_1(x_{012})=x_{02},\and
d^2_2(x_{012})=x_{01}.
\]
The degeneracy maps are given by
\begin{align*}
s^1_0([1_{f^{\ell,0}(z)},z,b])&= ([1_{f^{\ell,0}(z)},z,b],[1_{f^{\ell,0}(z)},z,b], [1_{f^{\ell,0}(z)},f^{\ell,0}(z),1_{f^{\ell,0}(z)}]),\\
s^1_1([a,z,1_{f^{r,0}(z)}])&=([1_{f^{r,0}(z)},f^{r,0}(z),1_{f^{r,0}(z)}], [a,z,1_{f^{r,0}(z)}], [a,z,1_{f^{r,0}(z)}]).
\end{align*}
\end{enumerate}
The $K_{\uni,i}$ for $0\leqslant i\leqslant 2$ are enough to generate a Kan fibration $\SK_\uni$ over $\SB\CG$. This completes the construction of $\SK_\uni$. We omit the details  verifying that $\SK_\uni$ is a Kan fibration over $\SB\CG$.

\begin{prop}
Given a homomorphism $\rho\co \SG\rto (\SB\SAut(\mc X),\bfe)$, one can associate to it a Kan fibration
\[
\pi_\rho\co \mc X\rightarrowtail \msf K_\rho \twoheadrightarrow \SG.
\]
This yields a map
\begin{equation}
\Pi\co \mc H_\SG(\mc X)\rto \mc K_\SG(\mc X).
\end{equation}
\end{prop}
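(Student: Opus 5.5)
The plan is to obtain $\SK_\rho$ as the pullback of the universal Kan fibration $\pi_\uni\co\SX\rightarrowtail\SK_\uni\twoheadrightarrow(\SB\CG,\bfe)$ along $\rho$. Concretely, for each $n$ we set
\[
K_{\rho,n}:=G_n\times_{\rho_n,(BG)_n,\pi_{\uni,n}}K_{\uni,n},
\]
with structure maps $d_\zeta$ acting diagonally and projection $\pi_\rho=\msf{pr}_1$. Since $\SG$ is treated as discrete, $K_{\rho,n}$ is simply the disjoint union $\bigsqcup_{g\in G_n}K_{\uni,\rho_n(g)}$ of the fibres of $\SK_\uni$. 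Each fibre is a manifold: over a $1$-simplex it is $Y^0$ of $\mc L^0(\f)$, a manifold by the lemma in Step 1.1, and in higher degrees it is a submanifold cut out by the Kan condition of $\pi_\uni$. So the $K_{\rho,n}$ are smooth manifolds and the structure maps are smooth.

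The first thing to verify is that this is a simplicial manifold. This is where we need $\rho$ to be a genuine 2-group homomorphism into $(\SB\CG,\bfe)$, that is, compatible with the degeneracy maps of the $\bfe$-structure. The degeneracies of $\SK_\uni$ were built over $\bfe$: for example $s^0_0(x)=[1_x,x,1_x]\in K_{\uni,\bfe}$, and $s^1_j$ uses representatives with unit arrows. Since $\rho_1(e)=\bfe$ and $\rho$ commutes with the $s^n_{\omega,j}$, the pair $(s_j(g),s_j(k))$ again lies in the fibre product. Compatibility with face maps is automatic. The simplicial identities then follow from those of $\SG$ and $\SK_\uni$ componentwise.

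The Kan fibration property is formal, by the standard fact that pullbacks preserve Kan fibrations. For a horn in $\SK_\rho$ together with a filler $g\in G_m$ of its image, the horn data projects to a horn in $\SK_\uni$ whose image is filled by $\rho_m(g)$. The map $\tau_{m,j}$ for $\pi_\rho$ is therefore the base change of $\tau_{m,j}$ for $\pi_\uni$ along $\rho_m$:
\[
\Hom(\Lambda[m,j]\rto\Delta[m],\SK_\rho\rto\SG)\cong G_m\times_{(BG)_m}\Hom(\Lambda[m,j]\rto\Delta[m],\SK_\uni\rto\SB\CG).
\]
Surjective submersions and diffeomorphisms are stable under base change, so $\Kan(m,j)(\pi_\rho)$ holds for all $m$ and $\Kan!(2,j)(\pi_\rho)$ holds, and these are inherited from $\pi_\uni$. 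Because $\SG$ is a 2-group and $\pi_\rho$ is a Kan fibration of Lie 2-groupoids, $\SK_\rho$ is itself a Lie 2-groupoid. Lemma \ref{L fiber} then gives that the fibre over $\bullet$ is a Lie $1$-groupoid, and it coincides with the fibre of $\SK_\uni$ over $\bullet$. That fibre has objects $X^0$ and arrows $K_{\uni,\bfe}=Y^0$ for $\mc L^0(\msf e)=\bfe$, which by Proposition \ref{P-L-projection} is $\{[a,x,b]\}\cong X^1$ via $[a,x,b]\mapsto a\cdot b$. Its $2$-simplices are the composable triples, so the fibre is identified with $\SX$. This gives $\Pi(\rho):=\SK_\rho\in\CK_\SG(\CX)$.

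I expect the main obstacle to be the unverified input that $\pi_\uni$ really is a Kan fibration, with the degeneracies compatible. In particular one has to check $\Kan!(2,j)$: the defining relation $f_{012}(z_{01},z_{12},z_{02})\cdot b_{02}=b_{12}$ must let each of the three faces be solved uniquely from the other two. One also has to check that the description is independent of the chosen representatives, using the $\SZ$-action relation on $W$. If I were to fill this in, I would handle $j=1$ first by solving for $b_{02}$ directly, then treat $j=0,2$ by using the inverse structure $\psi_\ell,\psi_r$ to transport $b$'s across. Higher Kan conditions would come from \eqref{E def-BG3} together with associativity of the composition in $\CX$.
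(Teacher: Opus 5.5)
Your proposal is correct and follows the paper's approach. The paper's entire proof is the single line $\SK_\rho:=\rho^\ast\SK_\uni$. The base-change argument for the $\tau_{m,j}$, the observation that compatibility of $\rho$ with the $\bfe$-degeneracies is what makes the pullback simplicial, and the identification of the fibre over $\bullet$ with $\SX$ are details the paper leaves implicit. Like the paper, you ultimately rely on the omitted verification that $\pi_\uni$ is a Kan fibration over $(\SB\CG,\bfe)$, which you rightly flag.
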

\begin{proof}
Set $\msf K_\rho=\rho^\ast \msf K_{\uni}$.
\end{proof}

\begin{prop}
If $\rho\sim \tilde\rho$ are equivalent homomorphisms in $\CH_\SG(\CX)$, then $\Pi(\rho)$ and $\Pi(\tilde \rho)$ are equivalent Kan fibrations in $\CK_\SG(\CX)$. Therefore $\Pi$ induces a map
\begin{equation}
\mathbf{\Pi}\co\mathbf H_\SG(\CX)\rto \mathbf K_\SG(\CX).
\end{equation}
\end{prop}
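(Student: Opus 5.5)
The plan is to realize the equivalence $\rho\sim\tilde\rho$ as a pullback of $\SK_\uni$ along a single homomorphism. Let $\SN\co\SG\rto(\SB\CG)^{[1]}$ be a natural transformation with $\d^1_1(\SN)=\rho$ and $\d^1_0(\SN)=\tilde\rho$. Via the identification $(\SB\CG)^{[1]}=\SHom(\Delta[1],\SB\CG)$, $\SN$ is the same as a homomorphism
\[
\hat\SN\co\Delta[1]\times\SG\rto\SB\CG
\]
that restricts to $\rho$ on $\{0\}\times\SG$ and to $\tilde\rho$ on $\{1\}\times\SG$. We then form the pullback Kan fibration $\hat\SK:=\hat\SN^\ast\SK_\uni\rto\Delta[1]\times\SG$. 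Pullbacks of Kan fibrations are Kan fibrations, since the maps $\tau_{m,j}$ are base-changed. By construction, $\hat\SK$ restricted over $\{0\}\times\SG$ is $\SK_\rho=\Pi(\rho)$, and over $\{1\}\times\SG$ it is $\SK_{\tilde\rho}=\Pi(\tilde\rho)$.

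Next we build a $\bullet$-isomorphism $\SF\co\SK_\rho\rto\SK_{\tilde\rho}$ by transporting along the $\Delta[1]$-direction, using the Kan conditions of $\hat\SK$. Take a simplex $k\in(K_\rho)_n$ over $g\in G_n$. Its image in $\hat\SK$ lies over $(0\cdots0,g)$. The prism $\Delta[n]\times\Delta[1]$ decomposes into $(n+1)$-simplices, and the base simplex $g$ together with the constant path gives a prism in $\Delta[1]\times\SG$ lying over $\pi_{n,1}$-type simplices. We fill this prism in $\hat\SK$ step by step:
\begin{itemize}
\item On vertices, $x\in X^0$ over $(0,\bullet)$ is sent to itself. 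Over $\{\bullet\}$ we have $N_0(\bullet)=\bfe$, and the $1$-simplex $([1_x,x,1_x])$ of $\SK_\uni$ over $\bfe$ gives the canonical lift.
\item On $1$-simplices, $k_{01}$ over $g$ is transported using the unique lift of the $2$-simplices of the square $\Delta[1]\times\Delta[1]$ over $N_1(g)$. This uses $\Kan!(2)$ of the fibration, which holds because $\SK_\uni\rto\SB\CG$ is a Kan fibration of Lie $2$-groupoids with $1$-groupoid fibers, so $\tau_{2,j}$ are diffeomorphisms.
\item On higher simplices, uniqueness of fillers ($\tau_{m,j}$ bijective for $m\geqslant 2$) determines everything.
\end{itemize}
Because every filler is unique, $\SF$ commutes with face maps. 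It commutes with degeneracies because $\SN$ is a genuine natural transformation; here we use $N_1(e)=\s^0_0(\bfe)$, which follows from the earlier lemma on natural transformations.

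It remains to check that $\SF$ is an isomorphism and that it is the identity on the fiber $\SX$. Running the same construction with the reversed transformation $\tilde\rho\Rightarrow\rho$, obtained by inverting $\SN$ in the groupoid direction via $\Kan!(3)$, produces an inverse. Uniqueness of fillers forces the two composites to be the identity. On the fiber over $\bullet$, $\SF$ acts on $X^0$ as the identity by the choice above. On $X^1$, the relevant lifts sit over $N_1(e)=\s^0_0(\bfe)$, so they are degenerate and act trivially. This is where Proposition~\ref{P-L-projection} enters: $\mc L^0(\msf e)=\bfe$ and $\mc N(\bfe)=1_{\bfe}$.

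The main obstacle is the step just described: showing that the transport is well defined and equals the identity on the fiber. The first thing to try is to use the explicit representatives $[a,z,b]$ of $K_{\uni,\f}$ to show that lifting over $\s^0_0(\bfe)$ is literally degenerate. The remaining verifications, that the prism fillers are compatible across faces, are routine consequences of $\Kan!(m)$ for $m\geqslant 3$.
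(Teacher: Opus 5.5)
Your proposal is correct and is essentially the paper's argument: the pullback $\hat\SN^\ast\SK_\uni$ over $\Delta[1]\times\SG$ is the adjoint form of the paper's lift $\SQ\co\SK\rto\SK_\uni^{[1]}$ covering $\SN$, with the same vertical edges $s^0_{\uni,0}(x)=[1_x,x,1_x]$ over $N_0(\bullet)=\bfe$ and the same unique square and prism fillers producing $\SF$. One simplification: you need not invert $\SN$ to get $\SF^{-1}$, since the fillers can equally be run from the $\tilde\rho$-end of the prism, so $y$ and $\tilde y$ determine each other (this is what the paper means by its invertibility remark).
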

\begin{proof} Let $\SN\co \SG\rto (\SB\CG)^{[1]}$ be the natural transformation from $\rho$ to $\tilde \rho$.
Hence, for each $g\in G_1$, $N_1(g)$ is a $\bfe$-transformation, i.e,
\[
d^1_i(N_1(g))=\bfe,\qq\text{for}\qq i=0,1,
\]
and
\[
\d^1_1(N_1(g))=\rho_1(g) \and
\d^1_0(N_1(g))=\tilde\rho_1(g).
\]

Let $\SK$ and $\tilde \SK$ denote Kan fibrations $\Pi(\rho)$ and $\Pi(\tilde\rho)$ respectively.
By the definition of $\Pi$, we have $\bullet$-homomorphisms of Kan fibrations:
\[
\SP\co \SK\rto\SK_\uni \and
\tilde \SP\co \tilde \SK\rto \SK_\uni.
\]
We next construct a homomorphism $\SQ\co \SK\to \SK_{\mathrm{uni}}^{[1]}$ that covers $\SN\co \SG\to(\SB\CG)^{[1]}$ such that it is a natural transformation from $\SP$ to $\tilde \SP$.
\vskip 0.1in
\noindent
{\em Definition of $\SQ$:}
\vskip 0.1in
\begin{enumerate}
\item Description of $Q_0$.
As $N_0(\bullet)=\bfe$, $Q_0$ is a map from  $K_\bullet=X^0$ to $K_{\uni,\bfe}=X^1$; set $Q_0$ to be the unit of $\CX$, or equivalently, $s^0_{\uni,0}$ of $\SK_{\uni}$.
\item Description of $Q_1$. We describe $Q_1(y)$ for $y\in K_1$. Suppose
\[
\pi_{\rho,1}(y)=g,\qq
d^1_{\rho,1}(y)=x, \and d^1_{\rho,0}(y)=x'.
\]
The $Q_1(y)$ is an element in $K^{[1]}_{\uni,1}$ which is over $N_1(g)$, by the Kan fibration condition of $\SK_\uni$, $Q_1(y)$ is determined by
\[
d^1_{\uni,1}(Q_1(y))=s^0_{\uni,0}(x),\qq
d^1_{\uni,0}(Q_1(y))=s^0_{\uni,0}(x'),\and
\d^1_{\uni,1}(Q_1(y))=P_1(y).\]
\item Description of $Q_n,n\geq 2$. This follows from $Q_1$ and $\Kan!(3)$ of $\SK_\uni$.
\end{enumerate}
The $\SQ$ then induces a $\bullet$-isomorphism from
\[
\SF\co \SK\rto \tilde\SK
\]
by $\SN$. We explain the definition of $F_1$, others are similar. Let $y\in K_1$, by the construction of $Q_1(y)$,
\[
\d^1_{\uni,1}(Q_1(y))=P_1(y) \and
\d^1_{\uni,0}(Q_1(y))=\tilde P_1(\tilde y)
\]
for some $\tilde y\in \tilde K_1$. Since $\tilde P_1$ is invertible, we get a correspondence $y\mapsto \tilde y$. This defines $F_1$. It is easy to see that $y$ and $\tilde y$ are determined by each other, so $F_1$ is invertible.
\end{proof}

\subsubsection{$\mathbf H_\SG(\CX)\cong \mathbf K_\SG(\CX)$}\label{Sec-H=K}

Let $\rho_\uni$ be the homomorphism given by $\SK_\uni$.

\begin{theorem}
By the construction of $\SK_{\uni}$ we have $\rho_\uni=\msf L.$
\end{theorem}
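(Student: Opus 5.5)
We compare $\rho_\uni=\Lambda(\SK_\uni)$ and $\msf L=\d^1_0\circ\SN$ degree by degree. Both are 2-group homomorphisms $(\SB\CG,\bfe)\rto(\SB\CG,\bfe)$: $\msf L$ by Corollary \ref{C-L}, and $\rho_\uni$ by the construction of $\Lambda$. Since a homomorphism into a 2-group is determined by its values on $1$- and $2$-simplices (by $\Kan!(3)$ and $\Kan!(4)$ of the target), it suffices to check $(\rho_\uni)_1=L_1$ and $(\rho_\uni)_2=L_2$, the latter modulo the canonical identifications of fibered products already used throughout \S\ref{Sec-aut-X}.

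\emph{Degree one.} Let $\f\in(\SB\CG)_1$ and write $L_1(\f)=\mc L^0(\f)=\g=(\g^\ell,\SY,\g^r)$. By the construction of $\Lambda$, $\rho_{\uni,1}(\f)$ has object manifold $\pi_{\uni,1}^{-1}(\f)=K_{\uni,\f}$, which by definition is $Y^0$, with legs $d^1_1=g^{\ell,0}$ and $d^1_0=g^{r,0}$. For the arrows, $\rho_{\uni,1}(\f)$ uses $(\pi^{[1]}_{\uni,1})^{-1}(\s^0_0(\f))$. I will show that an element of this set, which is a square in $\SK_\uni$ over the degenerate square on $\f$, is determined by its two vertical edges in $K_{\uni,\bfe}=X^1$ and its two horizontal edges $[a,z,b],[a',z',b']\in Y^0$. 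The filling $2$-simplices over $s^1_0(\f)$ and $s^1_1(\f)$ then force $[a',z',b']=[c\inv a,z,bd]$, where $c,d$ are the vertical edges. Here I use the explicit description of $K_{\uni,\f_{012}}$ in item (4), with the natural transformation $f_{012}$ being the identity for degenerate simplices. This identifies the arrow manifold with $Y^1=X^1\times Y^0\times X^1$, with source and target as in Lemma \ref{L construct-CY}, and the legs with $g^{\ell,1},g^{r,1}$. So $(\rho_\uni)_1=L_1$.

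\emph{Degree two.} Fix $\f_{012}$ with associated $f_{012}\co\f_{01}\star\f_{12}\Rightarrow\f_{02}$. On one side, $\rho_{\uni,2}(\f_{012})$ is the transformation $\phi$ built in Step 2 of the construction of $\Lambda$: take the unique $y_{012}\in K_{\uni,\f_{012}}$ with prescribed boundary, then fill over $s^1_1(\f_{02})$ and read off $z_{12}$. On the other side, $L_2(\f_{012})=\d^1_0(N_2(\f_{012}))$ is the unique completion, via $\Kan!(3)$, of the prism whose bottom is $\f_{012}$ and whose vertical faces are the $\mc N(\f_{ij})$. As a $2$-cell this is the composite $\mc N(\f_{02})\circ f_{012}\circ(\mc N(\f_{01})\star\mc N(\f_{12}))^{-1}$, so in coordinates it is the arrow in $X^1$ given by the formula for $\mc N$, conjugating $f_{012}$. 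Using the normalized representatives from item (4), I will evaluate $\phi$ on a point $(x'_{02},x_{01},x_{12})$. The membership criterion $f_{012}(z_{01},z_{12},z_{02})\cdot b_{02}=b_{12}$ and the degenerate criterion for $s^1_1(\f_{02})$ together give $z_{12}=b_{02}\inv\cdot f_{012}(\dots)^{-1}\cdots\cdot b'_{02}$. This is exactly what the $\mc N$-formula $f^{r,1}\{(f^{\ell,1})\inv(a)\}\cdot b$ produces, once the representatives are changed via the $\SZ$-action defining $Y^0$.

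The main obstacle is this degree-two comparison. It requires tracking representatives in the coarse spaces $Y^0$ and the three different fibered-product domains, and checking that the answer is independent of the normalizations in item (4). I would handle it by first proving that both sides are natural transformations with the same source and target, and then comparing them only on normalized representatives, where the formulas become explicit. Finally, compatibility with the unit, namely $\rho_\uni(\bfe)=\bfe=L_1(\bfe)$ by Proposition \ref{P-L-projection} and the definition of $s^0_0$ in $\SK_\uni$, ensures that the two homomorphisms agree as maps of 2-groups with unit $\bfe$.
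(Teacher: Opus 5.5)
Your proposal is correct and follows essentially the same route as the paper, which gives no argument beyond ``by the construction of $\SK_\uni$''. Your degree-one check (objects of $\rho_{\uni,1}(\f)$ are $K_{\uni,\f}=Y^0$ by definition, arrows identified with $Y^1$) and your degree-two comparison of $\phi$ with the prism filler $L_2$ are exactly that unwinding made explicit. One minor correction: since the base is $(\SB\CG,\bfe)$, the degenerate 2-simplices you use are $s^1_{\omega,j}(\f)$, whose 2-cells are the canonical transformations $\f\star\bfe\Rightarrow\f$ and $\bfe\star\f\Rightarrow\f$ induced by $\omega_\bullet$ rather than identities; feeding these into the criterion of item (4) still gives the top edge $[c\inv a,z,bd]$, so your conclusion stands.
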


We are now ready to proof Theorem \ref{T-HK-equivalence}.

\begin{proof}[Proof of Theorem \ref{T-HK-equivalence}]
By the construction, it is obvious that $\mathbf \Pi\circ \mathbf \Lambda=id$.

Let $\rho\in \mc H_\SG(\CX)$, then
\[
(\Lambda\circ\Pi)(\rho)=\msf L\circ\rho.
\]

Since $\msf L\Rightarrow \msf{id}\co \SB\CG\rto \SB\CG$,
then $\msf L\circ$ is natural transformed to
$\msf{id}\circ\co \CH_\SG(\CX)\rto \CH_\SG(\CX)$ which is $id \co \CH_\SG(\CX)\rto \CH_\SG(\CX)$, hence,
\begin{equation}
\Lambda\circ \Pi\Rightarrow id.
\end{equation}
We have $\msf L\circ\rho\sim \rho$. Therefore $\mathbf \Lambda\circ\mathbf \Pi=id$.
\end{proof}

\subsection{Lie 2-group actions on Lie groupoids}

In previous subsection, we explain that a discrete 2-group $\SG$-action on a Lie groupoid $\CX$ has two equivalent descriptions, via $\mathbf H_\SG(\CX)$ and $\mathbf K_\SG(\CX)$. We may regard the Kan fibration $\Pi(\rho)$ as a geometric realization of the action homomorphism $\rho\co \SG\rto \SB\SAut(\CX)$. This can be seen from the following example.

\begin{example}\label{E-smoothaction-manifold}
Recall that a group $G$-action on a smooth manifold $X$ given  by a group homomorphism $\rho\co G\rto \Diff(X)$ has a geometric realization $\Phi\co G\times X\rto X$. Now we translate them into the language used in \S\ref{Sec-HK}.

In terms of higher groups, $\rho$ corresponds to a 1-group homomorphism
\[
\SB\rho\co \SB G=(G\rrto\bullet)\rto \SB\Diff(X)=(\Diff(X)\rrto \bullet).
\]
The Kan fibration $\Pi(\SB\rho)$ is
\[
\pi\co \SK= (G\times X\rrto X) \rto (G\rrto \bullet),
\]
and the $\Phi$ is the target map of $\SK$.
\end{example}

We give the geometric realization of $\rho$ a name.

\begin{defn}\label{D-quotient}
For $\rho\in \mc H_\SG(\CX)$, we call the Kan fibration $\Pi(\rho)$ the {\bf quotient groupoid} of $\CX$ with respect to $\rho$ and denote it by $\CX\rtimes_\rho \SG$.
\end{defn}

\begin{remark}
Although we only define actions of a Lie 2-group $\SG$ on  a Lie groupoid $\CX$, one may generalize the definition to allow $\SG$ to be a Lie 2-groupoid. Such a generalization is trivial with slight modifications.
\end{remark}

Now we consider the case that $\SG$ is a Lie 2-group. As $\SB\SAut(\CX)$ is not equipped with any smooth structure yet, it is impossible to define smooth action of $\SG$ on $\CX$ in terms of $\mathbf H_\SG(\CX)$, while it can be done in terms of $\mathbf K_\SG(\CX)$.  This leads to the following definition.

\begin{defn}\label{D-Kan-smoothaction}
Let $\CX$ be a Lie groupoid and $\SG$ be a Lie 2-group. By a smooth $\SG$-action on $\CX$, we mean an equivalent class in $\mathbf K_\SG(\CX)$.
\end{defn}

Let $\pi\co\CX\rightarrowtail \SK\twoheadrightarrow \SG$ be a Kan fibration. We may write $\SK=\CX\rtimes \SG$, or $\CX\rtimes_\rho\SG$ for $\rho=\Lambda(\SK)$. On the other hand, giving a $\rho\in \mc H_\SG(\CX)$, we say it induces a smooth action on $\CX$ if the Kan fibration $\SK=\Pi(\rho)$ is a smooth Kan fibration with respect to the smooth structure of $\SG$ (rather than the discrete topology).

We discuss some important concepts related to $\SG$-actions.

\vskip 0.1in\noindent
{\bf $\SG$-invariant groupoid.}
\vskip 0.1in

Let $\pi\co \CX\rightarrowtail \SK\twoheadrightarrow \SG$ be a Kan fibration. Note that $K_0=X^0$. It is natural to consider $\SG$-invariant sub-groupoid of the action.

Let $x\in X^0$. Let
\[
\Gamma^\SK_{x,1}=\{y\in K_1 \mid d^1_i(y)=x, i=0,1\}
\]
be the $\SK$-isotropy set of $x$. We say $x$ is {\bf $\SG$-invariant} if $\pi_1\co \Gamma^\SK_{1,x}\rto G_1$ is surjective. Let $(X^0)^\SG\subseteq X^0$ be the subspace of $\SG$-invariant points. One can show that this is an $X^1$-invariant space, i.e., for
any arrow $x\xrightarrow{\alpha}y'$ with $x$ being $\SG$-invariant, $x'$ is also $\SG$-invariant. Set $\CX^\SG:=(X^0)^\SG\rtimes \CX$. We remark that $(X^0)^\SG$ usually is not a smooth manifold and $\CX^\SG$ is not a Lie groupoid.

\vskip 0.1in
\noindent
{\bf $\SG$-equivariant homomorphisms and equivalent actions}
\vskip 0.1in

Let $\pi_i\co\SX_i\rightarrowtail \msf K_{i}\twoheadrightarrow \SG, i=1,2$ be two Kan fibrations. Suppose $\msf F\co \msf K_1\rto \msf K_2$ is a homomorphism of Kan fibrations. Then we say that the restriction of $\msf F$ on fibers, denoted by
$\f\co\SX_1\rto \SX_2$, is a {\bf $\SG$-equivariant homomorphism}. If $\f$ is an equivalence, we say the two 2-group actions given by Kan fibrations are equivalent. This is illustrated by the following diagram.
\begin{equation}\label{e-Kan-homomorphism}
\begin{tikzcd}
\SX_1 \arrow[d, "\msf f"'] \arrow[r] & \SK_1 \arrow[r, "\pi_1"] \arrow[d, "\msf F"'] & \SG \arrow[d, "\cong"] \\
\SX_2 \arrow[r]                      & \SK_2 \arrow[r, "\pi_2"]                      & \SG.
\end{tikzcd}
\end{equation}
\vskip 0.1in
\noindent
{\bf $\SG$-equivariant vector bundles and sections}
\vskip 0.1in
Consider the diagram \eqref{e-Kan-homomorphism}. If $\sf F\co K_1\rto K_2 $ is a vector bundle, and structure maps of $\SK_1$ are compatible with vector bundle structures, we say $\SK_1$ is a {\bf Kan-vector bundle} over $\SK_2$. Then $\f\co \SX_1\rto \SX_2$ is called a {\bf $\SG$-equivariant vector bundle}. If a section $\sigma_\SG$ of $\SF\co \SK_1\rto \SK_2$ is a homomorphism of Kan fibrations, we say $\sigma_\SG$ is a {\bf Kan-section}. Let $\sigma$ be the restriction of $\sigma_\SG$ on fibers, $\sigma$ is called an {\bf $\SG$-equivariant section} of the bundle $\f\co \SX_1\rto\SX_2$. Let $\Gamma_\SG(\SK_2,\SK_1)$ be the vector space of Kan-sections and $\Gamma_\SG(\SX_2,\SX_1)$ be the vector space of $\SG$-equivariant sections. Then
\begin{equation}
\Gamma_\SG(\SK_2,\SK_1)\cong
\Gamma_\SG(\SX_2,\SX_1).
\end{equation}

\vskip 0.1in
\noindent
{\bf Induced actions and the pushed-forward of Kan fibrations}
\vskip 0.1in

Let $\phi\co \SH\to \SG$ be a homomorphism of Lie 2-groups. Let
\[
\pi\co \X\rightarrowtail \SK \twoheadrightarrow \SG
\]
be the Kan fibration of a $\SG$-action on $\SX$. Then $\phi^\ast \SK$ is a Kan fibration
\[
\pi'\co \X\rightarrowtail \phi^\ast\SK \twoheadrightarrow \SH
\]
which induces an $\SH$-action on $\SX$.

Now we consider a morphism $\phi\co \SH\rightharpoonup \SG$ given by
\[
\phi\co \SH\xleftarrow{\phi^\ell} \tilde{\SH} \xrightarrow{\phi^r} \SG,
\]
where $\phi^\ell$ is a hypercover. Then we have a pulled-back Kan fibration
\[
\tilde \pi\co \X\rightarrowtail
\phi^{r,\ast}\SK \twoheadrightarrow \tilde \SH.
\]
In general, we do not have push-forward for Kan fibrations. Nevertheless, we have the following result.

\begin{prop}\label{P push-forward}
Let $\f\co\SX\rto\SY$ be a hypercover of Lie $2$-groupoids with $f_0$ being a diffeomorphism and $\pi\co \SE\rto \SX$ be a Kan fibration over $\SX$. Then there exists a Lie $2$-groupoid Kan fibration $\f_\ast\SE\rto\SY$ in the sense that $\f^\ast(\f_\ast\SE)\cong\SE$. In particular, this applies to the case that $\SX$ and $\SY$ are both Lie 2-groups.
\end{prop}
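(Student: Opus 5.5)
The plan is to build $\f_\ast\SE$ level by level, using $f_0$ to identify $X_0$ with $Y_0$. A Lie 2-groupoid Kan fibration is determined by its simplices in degrees $0,1,2$ together with the unique-filler conditions in degrees $3,4$. Since $\f$ is a hypercover, $\Acyc(1)(\f)$ says that $f_1\co X_1\rto Y_1$ is a surjective submersion. Because $f_0$ is a diffeomorphism, the fibered product $\Hom(\partial\Delta[1]\rto\Delta[1],\SX\rto\SY)$ is just $Y_1$. Likewise $\Acyc!(2)(\f)$ says that $X_2\cong \Hom(\partial\Delta[2],\SX)\times_{Y_{\partial 2}}Y_2$. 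In words, the only new information in $\SX$ compared to $\SY$ is the extra choice of 1-simplices in the fibers of $f_1$, and $\SX$ has a unique 2-simplex over each compatible boundary. Define $(\f_\ast\SE)_0:=E_0$, viewed over $Y_0$ via $f_0\circ\pi_0$.

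For degree one, I would use that the fibers $f_1\inv(y)$ are "connected by unique 2-simplices". For $x,x'\in f_1\inv(y)$, the degenerate 2-simplex $s^1_1(y)$ together with $\Acyc!(2)$ gives a unique $\theta_{x,x'}\in X_2$ with boundary $(s^0_0(d^1_0 y), x', x)$. By $\Kan!(2)(\pi)$ for the Kan fibration $\SE$, lifting $\theta_{x,x'}$ identifies $\pi_1\inv(x)$ with $\pi_1\inv(x')$: fill the horn with the given $e$ and the degenerate edge, and read off the third edge. $\Kan!(3)$ of $\SX$ and $\SE$ make these identifications cocycle-consistent, so $\{\pi_1\inv(x)\}_{x\in f_1\inv(y)}$ descends. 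Concretely, set $(\f_\ast\SE)_1:=E_1/\!\sim$, where $e\sim e'$ iff they are related by such a lift. This is the quotient of a free, proper action groupoid along the fibers of the submersion $f_1$, so it is a manifold (a local section of $f_1$ gives charts). Then $\pi$ descends to $(\f_\ast\SE)_1\rto Y_1$. In degree two, set $(\f_\ast\SE)_2$ to be the set of triples of classes that admit a representative 2-simplex in $E_2$, modulo the analogous identification. Equivalently, define it as the fibered product $Y_2\times_{Y_{\partial 2}}\Hom(\partial\Delta[2],\sk_1\f_\ast\SE)\times\cdots$ cut out by the condition that lifts exist, and define degrees $3,4$ via boundaries. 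Face and degeneracy maps descend because they are natural in the construction.

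The main check, and what I expect to be the main obstacle, is well-definedness in degree two. One has to show that whether a triple of classes bounds a 2-simplex of $\SE$, and the resulting class, does not depend on the chosen representatives in $E_1$ and $X_1$. This requires a repeated use of $\Kan!(3)$ for $\SE\rto\SX$ and $\Acyc!(2),\Acyc!(3)$ for $\f$. The idea is to glue the comparison 2-simplices $\theta$ onto a 2-simplex and compare across a 3-simplex, whose fourth face is forced. Once this holds, the Kan conditions $\Kan(m,j)$ for $\f_\ast\SE\rto\SY$ follow from those of $\SE\rto\SX$ by choosing a lift of the horn to $\SX$, using surjectivity of the $\tau_m$ of $\f$, lifting to $\SE$, and projecting. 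Submersivity also descends since the quotient maps are submersions.

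Finally, $\f^\ast(\f_\ast\SE)\cong\SE$ comes from the canonical map $E_n\rto X_n\times_{Y_n}(\f_\ast\SE)_n$. In degree one this is a bijection by construction: a class together with a chosen $x$ in the fiber picks out exactly one representative. In degrees $\geqslant 2$ it follows from uniqueness of fillers on both sides, and smoothness follows because the quotient is by local diffeomorphism-type identifications. The 2-group case is the special case $X_0=Y_0=\{\bullet\}$.
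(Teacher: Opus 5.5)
Your proposal is correct in outline and uses the same descent mechanism as the paper. Fibers of $E_n\rto X_n$ over points of one fiber of $f_n$ are identified by lifting, through the Kan condition on $\pi$, homotopies in $\SX$ that lie over degenerate data in $\SY$. The quotient is smooth because it is the orbit space of a free action of the submersion groupoid $X_n\times_{Y_n}X_n$.

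The organization differs. The paper works uniformly and inductively in $n$. It realizes the submersion groupoid of $f_n$ as cylinders $\CV^1_n\subseteq X^{[1]}_n$ over the degenerate cylinders $\msf p_n^\ast(Y_n)$, obtains an action groupoid $\CF_n=E_n\rtimes\CV_n$ from $\Kan!(n+1)(\pi)$, and sets $F_n=|\CF_n|$ at every level. Each level is therefore visibly a fiber bundle over $Y_n$.

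You quotient only at level $1$, using the triangles $\theta_{x,x'}$ over $s^1_1(y)$. You then use the rigidity of a Kan fibration of Lie $2$-groupoids: a $2$-simplex is determined by a horn plus its image in $Y_2$, and $3$- and $4$-simplices are determined by their boundary plus the base simplex. This concentrates the work in a single well-definedness lemma at level $2$. That lemma is the paper's level-$2$ prism action cut into tetrahedra, and you will need one tetrahedron per edge you change, not one in total.

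Your parametrization has a small advantage: $\Acyc!(2)$ makes $\{\theta_{x,x'}\}$ literally a copy of $X_1\times_{Y_1}X_1$. By contrast, a square in $X^{[1]}_1$ over $\msf p_1^\ast(y)$ also carries a diagonal edge, a further lift of $y$, which must be normalized. In exchange, you must argue smoothness of level $2$ separately, e.g.\ through $(\f_\ast\SE)_2\cong\Hom(\Lambda[2,1],\f_\ast\SE)\times_{\Hom(\Lambda[2,1],\SY)}Y_2$.

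Finally, keep the $Y_2$-coordinate explicit in your first description of level $2$. Triples of classes alone do not determine a $2$-simplex, as your ``equivalently'' formulation correctly reflects.
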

\begin{proof}
For simplicity, we denote the desired Lie 2-groupoid $\f_\ast\SE$ by $\SF$. The strategy of constructing $F_n$ can be described as the following. The surjective submersion map $f_n\co  X_n\rto Y_n$ defines an equivalence relation on $X_n$, which is interpreted by the Lie groupoid
\[
\CX_n=(X_n\times_{f_n,Y_n,f_n}X_n\rrto X_n),
\]
called a submersion groupoid in the literature (see for example \cite[Example 3.2.3]{Hoyo2013}), whose source and target maps are $pr_1$ and $pr_2$. The coarse space $|\CX_n|=Y_n$. The coarse map is just $f_n$. The goal is to describe a natural $\CX_n$ action on $\pi_n\co E_n\rto X_n$, i.e., a fibration $\CF_n=E_n\rtimes \CX_n$ over $\CX_n$. Then $F_n$, defined as the coarse space $|\CF_n|$, is a fiber bundle over $Y_n$.

To achieve the goal, we identify $\CX^1_n=X_n\times_{f_n,Y_n,f_n}X_n$ with a subspace $\CV^1_n$ of $X^{[1]}_n$. The Lie groupoid $\CX_n$ then yields a Lie groupoid $\CV_n=(\CV^1_n\rrto X_n)$. The structure maps of $\CV_n$ then can be interpreted as some canonical maps between $\SX$ and $\SX^{[1]}$.

The constructions of $\CF_n$ and $\CV_n$ can be done inductively on $n$. We need the projection homomorphisms
\begin{align}\label{E map-p-n}
\msf p_n\co \Delta[1]\times \Delta[n]\rto \Delta[n],
\end{align}
which maps $\Delta^\bullet_1\times\Delta^\bullet_n$
to $\Delta^\bullet_n$. Set $W_n^1\subseteq Y^{[1]}_n$ be the subspace
\[
W_n^1:=\left\{\left.\msf p_n^\ast(\beta_n)\in Y^{[1]}_n\,\right|\, \beta_n\in Y_n\right\}\cong Y_n.
\]
It is clear that $d^n_j(W^1_n)=W^1_{n-1}$.

By the mathematical induction, for each $n\geqslant 0$,
we will construct a fiber bundle of Lie groupoids
\[
q_n\co \CF_n=(\CF^1_n\rrto E_n)\rto \CV_n=(\CV^1_n\rrto X_n),
\]
and an isomorphism $u_n=(u_n^1,id_{X_n})\co \CV_n\rto \CX_n$.

We first explain the construction for $n=0$. Let $\CV^1_0\subseteq (f^{[1]}_0)^{-1}(W^1_0)\subseteq X^{[1]}_0$ be the subspace
\[
\CV^1_0:=\left\{\left.\msf p^\ast_0(\alpha_0)\in X_0^{[1]}\,\right|\,\alpha_0\in X_0\right\} \cong X_0,
\]
and $u^1_0\co \CV^1_0\rto \CX^1_0$ be the natural isomorphism. Let $\CF^1_0\subseteq (\pi^{[1]}_0)^{-1}(V^1_0)$ be the subspace
\[
\CF^1_0:=\left\{\left.\msf p^\ast_0(z_0)\in E_0^{[1]}\,\right|\, z_{0}\in E_0\right\} \cong E_0.
\]
We have the diagrams
\begin{align}\label{D Prop1.25-1}
\begin{split}
\xymatrix{
\CF^1_0 \ar[r]^-{(\d^1_i)_0}\ar[d]_-{\pi^{[1]}_0}  & E_0 \ar[d]^-{\pi_0} \\ \CV^1_0 \ar[r]^-{(\d^1_i)_0} & X_0.}
\end{split}
\end{align}
 They serve as the source and target maps of the following two Lie groupoids
\[
\CF_0=(\CF^1_0\rrto E_0),\qq \text{and}\qq\CV_0=(\CV^1_0\rrto X_0),
\]
such that $u_0=(u^1_0,id_{X_0})\co \CV_0\rto \CX_0$ is an isomorphism of Lie groupoids. Moreover, the diagram \eqref{D Prop1.25-1} is cartesian, i.e., $\CF_0^1$ is naturally diffeomorphic to the pullbacks $(\d^1_i)_0^*E_0$ via $(\pi_0^{[1]},(\d^1_i)_0)$.  Therefore $\tilde\pi_0=(\pi^{[1]}_0,\pi_0)\co \CF_0\rto \CV_0$ is a Lie groupoid morphism such that $\CF_0$ is the action Lie groupoid of the $\CV_0$-action on $E_0$. Set $F_0=|\mc F_0|$, it is a fiber bundle over $Y_0=|\CV_0|$. Obviously, $(F_0\rto Y_0)\cong (E_0\rto X_0)$.

As the mathematical induction assumption, we suppose
\begin{itemize}
\item $\CF_{n-1}=(\CF^1_{n-1}\rrto E_{n-1})$, $\CV_{n-1}=(\CV^1_{n-1}\rrto X_{n-1})$ with source and target maps being given by $(\d^1_i)_{n-1}$, and
\item the isomorphism $u_{n-1}=(id_{X_{n-1}},u^1_{n-1})\co \CV_{n-1}\rto \CX_{n-1}$
\end{itemize}
are constructed, such that $\tilde\pi_{n-1}=(\pi^{[1]}_{n-1},\pi_{n-1})\co \CF_{n-1}\rto \CV_{n-1}$
is the action Lie groupoid of the $\CV_{n-1}$-action on $E_{n-1}$, i.e., we have the cartesian diagram
\begin{align}\label{D Prop1.25-2}
\begin{split}
\xymatrix{
\CF^1_{n-1} \ar[r]^-{(\d^1_i)_{n-1}} \ar[d]_-{\pi^{[1]}_{n-1}}  & E_{n-1} \ar[d]^-{\pi_{n-1}}\\
\CV^1_{n-1} \ar[r]^-{(\d^1_i)_{n-1}} & X_{n-1}.}
\end{split}
\end{align}
We next construct $\CV_n$, $\CF_n$, the isomorphism $u_n=(id_{X_n},u_n^1)\co \CV_n\rto\CX_n$ and prove the $n$-th cartesian diagram as the $(n-1)$-th one in \eqref{D Prop1.25-2}.

Recall that $d^n_j(W^1_n)\cong W^1_{n-1}$. For $f^{[1]}_n\co X^{[1]}_n\rto Y^{[1]}_n$, set $\CV^1_n\subseteq
(f^{[1]}_n)^{-1}(W^1_n)\subseteq X_n^{[1]}$ to be the subspace
\[
\CV^1_n:=\left\{\left.\alpha\in (f^{[1]}_n)^{-1}(W^1_n)\,\right|\,d^n_j (\alpha)\in \CV^1_{n-1}\right\}.
\]
For $\pi^{[1]}_n\co  E^{[1]}_n\rto X^{[1]}_n$, set $\CF^1_n\subseteq
(\pi^{[1]}_n)^{-1}(V^1_n)\subseteq E_n^{[1]}$ to be the subspace
\[
\CF^1_n:=\left\{\left. z\in (\pi^{[1]}_n)^{-1}(V^1_n)\,\right|\, d^n_j (z)\in \CF^1_{n-1}\right\}.
\]
We also have the diagram
\begin{align}\label{D Prop1.25-3}
\begin{split}
\xymatrix{
\CF^1_n \ar[r]^-{(\d^1_i)_n} \ar[d]_-{\pi^{[1]}_n}  & E_n \ar[d]^-{\pi_n}\\
\CV^1_n \ar[r]^-{(\d^1_i)_n} & X_n.}
\end{split}
\end{align}
We define the diffeomorphism $u_n^1\co \CV_n^1\rto \CX_n^1$ by
\[
u^1_n(\alpha)=\left((\d^1_0)_n (\alpha), (\d^1_0)_n\circ f^{[1]}_n(\alpha), (\d^1_1)_n(\alpha)\right).
\]
Then by $\Acyc!(n+1)$ for $n\geqslant 1$ we see that $u_n^1$ is an diffeomorphism. Moreover, via this isomorphism, the groupoid structure of $\CX_n$ induces a groupoid structure over $\CV_n:=(\CV_n^1\rrto X_n)$ with source and target maps being given by $(\d^1_1)_n$ and $(\d^1_0)_n$ respectively. This gives rise to the isomorphism $u_n=(u_n^1,id_{X_n})\co \CV_n\rto\CX_n$. Finally, by $\Kan!(n+1)(\pi)$ for $n\geqslant 1$ of the Kan fibration $\SE\rto\SX$ of Lie $2$-groupoids, we have the diffeomorphisms
\[
((\d^1_i)_n,\pi_n^{[1]})\co \CF_n^1\rto E_n\times_{\pi_n,X_n,(\d^1_i)_n}\CV_n^1,\qq\forall\, i=0,1.
\]
This shows that the diagram \eqref{D Prop1.25-3} is cartesian. Therefore
\[
\tilde\pi_n=(\pi^{[1]}_n,\pi_n)\co \CF_n=(\CF^1_n\rrto E_n)\rto \CV_n=(\CV^1_n\rrto X_n)
\]
is a fiber bundle of Lie groupoids corresponding to the $\CV_n$-action on $E_n$.

So by the mathematical induction, we have constructed
\begin{description}
\item[(1)] an isomorphism $u_n\co \CV_n\rto \CX_n$;
\item[(2)] a fiber bundle $\tilde\pi_n\co \CF_n\rto \CV_n$.
\end{description}
Set $F_n=|\CF_n|$, then it is a fiber bundle over
$Y_n=|\CX_n|$. With structure maps induced from $\SE$, $F_n$ are integrated into a Lie 2-groupoid $\SF$. Moreover the $\Kan(1)(\pi)$ and $\Kan!(n)(\pi)$ for $n\geqslant 2$ of $\SE\rto\SX$ also implies that $\SF\rto \SY$ is a Kan fibration of Lie $2$-groupoids. Finally, by the construction we have $\f^*\SF=\SE$. We leave details to readers.
\end{proof}

\begin{coro}
Let $\phi\co \SH\rightharpoonup
\SG$ be a Lie 2-group morphism, and $\SK$ be a Kan fibration over $\SG$ with fiber $\X$. Then we have
pull-back Kan fibration $\phi^\ast\SK$ over $\SH$ which is defined to be $\phi^{\ell}_\ast\phi^{r,\ast}\SK$.
\end{coro}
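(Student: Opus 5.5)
The statement to prove is the corollary: for a morphism $\phi=(\phi^\ell,\tilde\SH,\phi^r)\co\SH\rightharpoonup\SG$ of Lie 2-groups and a Kan fibration $\SK$ over $\SG$ with fiber $\SX$, the object $\phi^\ell_\ast\phi^{r,\ast}\SK$ is a well-defined Kan fibration over $\SH$ with fiber $\SX$. I will obtain it by composing a pull-back with the push-forward of Proposition \ref{P push-forward}, and the plan has three steps.

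\emph{Step 1: pull back along $\phi^r$.} Set $\SE:=\phi^{r,\ast}\SK=\tilde\SH\times_{\phi^r,\SG,\pi}\SK$, levelwise $E_n=\tilde H_n\times_{G_n}K_n$. These are manifolds because $\pi_n$ is a surjective submersion. The horn map $\tau_{m,j}$ of $\SE\to\tilde\SH$ is the base change of the horn map $\tau_{m,j}$ of $\pi$ along
\[
\Hom(\Lambda[m,j]\rto\Delta[m],\tilde\SH\rto\tilde\SH)\rto\Hom(\Lambda[m,j]\rto\Delta[m],\SK\rto\SG).
\]
Surjective submersions and diffeomorphisms are stable under base change, so $\SE\to\tilde\SH$ is a Kan fibration of Lie 2-groupoids. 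Over the $0$-simplex, $\tilde H_0$ maps to $G_0=\{\bullet\}$, so each fiber of $\SE$ over a point of $\tilde H_0$ is $\SFib(\bullet)=\SX$. Hence $\SE$ has fiber $\SX$.

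\emph{Step 2: push forward along $\phi^\ell$.} Proposition \ref{P push-forward} requires $\phi^\ell\co\tilde\SH\to\SH$ to be a hypercover whose $0$-th component is a diffeomorphism. The paper states that this applies when both are Lie 2-groups. In the intended situation $\tilde\SH$ is also a Lie 2-group, so $\phi^\ell_0\co\{\bullet\}\to\{\bullet\}$ is trivially a diffeomorphism. The proposition then gives a Kan fibration $\phi^\ell_\ast\SE\to\SH$ with $(\phi^\ell)^\ast(\phi^\ell_\ast\SE)\cong\SE$.

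This step is the main obstacle. If $\tilde\SH$ is only a Lie 2-groupoid, then $\phi^\ell_0\co\tilde H_0\to\{\bullet\}$ is merely a surjective submersion. In that case I would first replace $\phi$ by an equivalent morphism whose middle term has a single $0$-simplex. To do this, restrict $\tilde\SH$ to the fiber of $\tilde H_0$ over a chosen base point $h_0$. By $\Acyc(1)$, hypercovers remain hypercovers after such a restriction, so this yields a hypercover from a Lie 2-group to $\SH$, and pulling $\SE$ back along the inclusion keeps it a Kan fibration with fiber $\SX$. One should also check that the construction is independent of this choice up to $\bullet$-isomorphism, using arrows in $\tilde H_1$ between base points.

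\emph{Step 3: identify the fiber.} The push-forward is levelwise the coarse space of the submersion groupoid action. In degree $0$ we get $F_0\cong E_0$, and more generally $\SFib_{\phi^\ell_\ast\SE}(\bullet)\cong\SFib_\SE(\bullet)=\SX$. Therefore $\phi^\ast\SK:=\phi^\ell_\ast\phi^{r,\ast}\SK$ is a Kan fibration over $\SH$ with fiber $\SX$, as claimed.
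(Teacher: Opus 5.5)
Your proposal is correct and follows the route the paper intends (the corollary is stated without a separate proof): pull $\SK$ back along $\phi^r$, which gives a Kan fibration by base change of the horn maps, then apply Proposition \ref{P push-forward} to the hypercover $\phi^\ell\co\tilde\SH\rto\SH$, whose $0$-th component is $\{\bullet\}\rto\{\bullet\}$ because $\tilde\SH$ and $\SH$ are Lie 2-groups, exactly the case singled out at the end of that proposition. Your fallback for a middle term that is only a Lie 2-groupoid goes beyond what the paper needs, and in Step 1 the source of the base-change map should read $\Hom(\Lambda[m,j]\rto\Delta[m],\SE\rto\tilde\SH)$.
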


\subsection{Group actions on Lie groupoids}\label{Sec-gp-actionon1}

We consider the special situation where a Lie group $G$ acts on a Lie groupoid $\CX=(X^1\rrto X^0)$. By Theorem \ref{T-gpd-gp}, the Lie groupoid $(G\rrto \bullet)$ has an associated Lie 1-group $\SB G$.

Let $\pi\co \CX\rightarrowtail \SK \rto \SB G$ be a Kan fibration of a $\SB G$-action on $\CX$. We remark that this is a Kan fibration of Lie 2-groupoids.

\begin{lemma}
Since $\SB G$ is a Lie 1-group, the $\SK$ is a Lie 1-groupoid.
\end{lemma}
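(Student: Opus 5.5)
We must show that $\SK$ satisfies $\Kan!(m)$ for all $m\geqslant 2$. By Definition \ref{D-Lie-m}, $\SK$ is already a Lie 2-groupoid, so $\Kan(1)$ holds and $\Kan!(m)$ holds for $m\geqslant 3$. The only remaining point is to upgrade $\Kan(2,j)$ to $\Kan!(2,j)$, i.e.\ to show that $\lambda^\ast_{2,j}\co K_2\rto K_{2,j}$ is a diffeomorphism for $j=0,1,2$.

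The plan is to factor $\lambda^\ast_{2,j}$ through the Kan fibration structure of $\pi$. Consider
\[
K_2\xrightarrow{\tau_{2,j}}\Hom(\Lambda[2,j]\rto\Delta[2],\SK\rto\SB G)=K_{2,j}\times_{(\SB G)_{2,j}}(\SB G)_2\xrightarrow{\msf{pr}_1}K_{2,j}.
\]
Since $\pi$ is a Kan fibration of Lie 2-groupoids, Definition \ref{D-Kan-fibration} says that $\tau_{2,j}$ is a diffeomorphism. Since $\SB G$ is a Lie 1-group, it satisfies $\Kan!(2,j)$, so $\lambda^\ast_{2,j}\co(\SB G)_2\rto(\SB G)_{2,j}$ is a diffeomorphism. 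The projection $\msf{pr}_1$ is the base change of this map along $\pi_{2,j}\co K_{2,j}\rto(\SB G)_{2,j}$, and a base change of a diffeomorphism is a diffeomorphism. Hence $\lambda^\ast_{2,j}=\msf{pr}_1\circ\tau_{2,j}$ is a composition of diffeomorphisms, which is exactly $\Kan!(2,j)$ for $\SK$.

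The higher conditions $\Kan!(m)$ for $m\geqslant 3$ are either already known from $\SK$ being a Lie 2-groupoid, or follow by the same factorization, using $\Kan!(m)$ of $\SB G$ together with $\tau_{m,j}$ being a diffeomorphism. Therefore $\SK$ is a Lie 1-groupoid.

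The only real care needed is to make sure that the fibered product defining $\Hom(\Lambda[2,j]\rto\Delta[2],\SK\rto\SB G)$ is the smooth one, which holds because $\lambda^\ast_{2,j}$ on $\SB G$ is a surjective submersion. After that, the argument is formal.
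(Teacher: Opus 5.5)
Your proof is correct and is essentially the paper's argument. The paper only calls this lemma straightforward, but the proof of Corollary \ref{C 1-grpd-action} uses the same reasoning, deducing $\Kan!(2)$ of the total space from $\Kan!(2)(\pi)$ and $\Kan!(2)$ of the base; your factorization $\lambda^\ast_{2,j}=\msf{pr}_1\circ\tau_{2,j}$, with $\msf{pr}_1$ a base change of the diffeomorphism $\lambda^\ast_{2,j}$ on $\SB G$, is exactly the detail behind that one-line claim.
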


The proof is straightforward. The $\Kan(1)(\pi)$ condition says that the maps
\begin{equation}
(s,\pi)\co K_1\rto K_0\times G \and
(t,\pi)\co K_1\rto K_0\times G
\end{equation}
are surjective submersions. Here $s,t$ are source and target maps of $\SK$.

We consider a special case when $G$ acts strictly on $\CX$ in the following sense.
Suppose the $G$-action on $\CX$ is given by a homomorphism
\begin{equation}
\Phi\co G\times \CX\rto \CX.
\end{equation}
Then $\Phi^i\co G\times X^i\rto X^i, i=0,1$ give $G$-actions on both $X^0$ and $X^1$. The associated Kan fibration is
\begin{equation}
\pi\co \SK=\CX\rtimes\SB G=(X^1\times G\rrto X^0)\rto \SB G.
\end{equation}\label{e-KF-strict}
The structure maps of $\SK$ are
\begin{enumerate}
\item $s_\SK(\alpha, g)=s(\alpha)$,
$t_\SK(\alpha,g)=t(\alpha)\cdot g$;
\item $u(x)=(1_x, 1)$;
\item $(\alpha, g)\cdot (\beta,h)=(\alpha\cdot (\beta\cdot g^{-1}), gh)$;
\item $i(\alpha,g)=(i(\alpha\cdot g), g^{-1})$.
\end{enumerate}

\begin{lemma}\label{L-strict-quotient}
Suppose $G$ acts strictly on $\CX$. If $G$ acts freely on $X^0$, (hence, also on $X^1$), then
\[
\CX/G:=(X^1/G\rrto X^0/G)
\]
is a Lie groupoid. Furthermore, the obvious projection $\SK=\CX\rtimes\SB G\rto \mc X/G$ is a hypercover.
\end{lemma}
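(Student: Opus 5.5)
The plan has two parts. First I show that $X^1/G$ and $X^0/G$ are smooth manifolds and that the groupoid structure descends. Then I check the hypercover conditions $\Acyc(0)$, $\Acyc(1)$ and $\Acyc!(2)$ for the projection $\SK\rto\CX/G$.

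\emph{Smoothness and structure maps.} Here I will assume, as is implicit in the statement, that the free action is also proper; otherwise the quotients need not be Hausdorff manifolds. With that, $X^0/G$ is a smooth manifold and $X^0\rto X^0/G$ is a principal $G$-bundle.

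Freeness on $X^1$ follows from equivariance of $s$: if $\alpha g=\alpha$, then $s(\alpha)g=s(\alpha)$, so $g=1$. Properness of the action on $X^1$ follows in the same way, because $s$ is an equivariant map to a proper $G$-space. Hence $X^1/G$ is a manifold too.

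Since $\Phi$ is a groupoid homomorphism for each $g$, the maps $s,t,u,i$ and the multiplication are $G$-equivariant, so they descend. The induced $\bar s,\bar t$ are submersions because $s,t$ and the quotient maps are.

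The one point needing care is composition. Given $[\alpha],[\beta]$ with $\bar t[\alpha]=\bar s[\beta]$, we have $t(\alpha)=s(\beta)g$ for a unique $g$, and we set $[\alpha][\beta]:=[\alpha\cdot(\beta g)]$. Uniqueness of $g$, which comes from freeness on $X^0$, makes this well defined. Smoothness follows by working in local sections of the principal bundle $X^0\rto X^0/G$. Associativity and the unit and inverse axioms are inherited from $\CX$.

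\emph{Hypercover conditions.} Define $\f\co\SK\rto\CX/G$ by $f_0(x)=[x]$ and $f_1(\alpha,g)=[\alpha]$. Compatibility with the structure maps listed for $\SK$ is a direct check. For example, the target is $t_\SK(\alpha,g)=t(\alpha)g$, and $[t(\alpha)g]=\bar t[\alpha]$.

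$\Acyc(0)$: the map $f_0$ is the quotient map, which is a surjective submersion.

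$\Acyc!(1)$: I show that
\[
\tau_1\co X^1\times G\rto (X^0\times X^0)\times_{X^0/G\times X^0/G}X^1/G,\qquad (\alpha,g)\mapsto (s(\alpha),\,t(\alpha)g,\,[\alpha]),
\]
is a diffeomorphism. For injectivity, suppose $(\alpha,g)$ and $(\alpha',g')$ have the same image. Then $\alpha'=\alpha h$ for some $h$, and $s(\alpha')=s(\alpha)$ together with freeness forces $h=1$. Then $t(\alpha)g=t(\alpha)g'$ gives $g=g'$. For surjectivity, take $(x,y,[\alpha])$. The element $\alpha$ can be translated uniquely so that $s(\alpha)=x$, and then there is a unique $g$ with $t(\alpha)g=y$. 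The inverse map is smooth, again using local sections of the principal bundles. Since both groupoids are Lie $1$-groupoids, $\Acyc!(1)$ implies $\Acyc!(m)$ for all $m\ge 2$. In particular $\Acyc!(2)$ holds: $2$-simplices on both sides are determined by their boundaries.

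I expect the main obstacle to be the smoothness bookkeeping: showing that the quotient multiplication and the inverse of $\tau_1$ are smooth. I plan to handle both through local trivializations of $X^0\rto X^0/G$.
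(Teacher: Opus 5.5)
Your proposal is correct, and it is exactly the direct verification the paper has in mind when it calls the proof straightforward and omits it: the structure maps descend by equivariance, with freeness on $X^0$ supplying the unique group element needed for composition, and $\tau_1(\alpha,g)=(s(\alpha),t(\alpha)g,[\alpha])$ is a diffeomorphism, which also gives $\Acyc!(2)$ (injectivity of $\tau_2$ is automatic, and surjectivity follows by applying injectivity of $\tau_1$ to $a_{01}a_{12}$ and $a_{02}$). You are also right that properness of the action has to be added to the hypotheses---for a free non-proper action, e.g.\ an irrational linear flow of $\mathbb R$ on the unit groupoid of $T^2$, the quotient $X^0/G$ is not Hausdorff---and properness does hold in the paper's application in Proposition~\ref{P-tilde-X}, where $G$ acts on $\tilde X^0=K^0\times G_0$ by translation on the $G_0$ factor.
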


The proof is straightforward. We omit the details.
Of course, a general $G$-action on $\mc X$ may not be strict.

\begin{example}
Let $X=S^1$ and $G=S^1$ act on $X$ as rotations. Let $\{U_1,U_2\}$ be an open cover of $S^1$ where $U_i\not=X$. We have a Lie groupoid $\CX$ that is a refinement of $X$ with
\[
X^0=U_1\sqcup U_2 \and X^1=(U_1\cap U_2) \sqcup (U_2\cap U_1).
\]
It is clear that the $G$-action on $X$ can not be realized as a strict $G$-action on $\CX$.
\end{example}

Although the $G$-action on $\CX$ is not strict, if $\CX$ is replaced by $X$, the action is strict. We may ask  whether any $\CX$ has an equivalence replacement such that the induced $G$-action is strict. The answer is positive.

\begin{prop} \label{P-tilde-X}
Let $\pi\co \CX\rightarrowtail\SK\rto \SB G$ be a Kan fibration. There exists a Lie groupoid $\tilde \CX$ with strict $G$-action that is equivalent to the given action  in the following sense:
\[
\begin{tikzcd}
\CX \arrow[d, "\f"'] \arrow[r] & \SK \arrow[r, "\pi"] \arrow[d, "\msf F"'] & \SB G \arrow[d, "\cong"] \\
\tilde\CX \arrow[r]            & \tilde \SK \arrow[r, "\tilde\pi"]         & {\SB G,}
\end{tikzcd}
\]
where $\f$ is an equivalence.
\end{prop}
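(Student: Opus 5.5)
The plan is to build $\tilde\CX$ directly from $\SK$, by ``spreading out'' the fiber $\CX$ along the group $G$. Since $\SK$ is a Lie $1$-groupoid (by the lemma preceding Lemma \ref{L-strict-quotient}), write $\SK=(K_1\rrto K_0)$ with $K_0=X^0$, $\pi\co K_1\rto G$ a groupoid homomorphism, and fiber $\pi^{-1}(1)=X^1$. Define
\[
\tilde X^0:=\pi^{-1}(G)=K_1 \quad\text{(arrows of $\SK$, each remembered with its group label)},
\]
viewed with a map to $X^0$ given by the source $s_\SK$. Put
\[
\tilde X^1:=\{(k,k')\in K_1\times K_1\mid \pi(k)=\pi(k')\}\cong K_1\times_{s_\SK,X^0,s_\SK}\ldots
\]
More precisely, take $\tilde\CX:=\CX\times_{\f,\SK,\ldots}$, i.e.\ the pullback of $\CX$ along $s_\SK\co K_1\rto X^0$. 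So $\tilde X^0=K_1$, and $\tilde X^1=\{(k,\alpha,k')\mid \alpha\in X^1,\ s(\alpha)=s_\SK(k),\ t(\alpha)=s_\SK(k')\}$. Since $s_\SK$ is a surjective submersion (this is the $\Kan(1)(\pi)$ condition together with surjectivity of $K_1\rto G$ at each label), this pullback is a Lie groupoid and the projection $\f'\co\tilde\CX\rto\CX$ is a hypercover. Hence $\tilde\CX$ is equivalent to $\CX$.

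Next I would define the strict action. $G$ acts on $\tilde X^0=K_1$ by $k\cdot g:=k\cdot \sigma(g)$, which is not canonical. So I would rather use the labelled version: $\tilde X^0:=\{(k,h)\in K_1\times G\mid \pi(k)=h\}$, which is again $K_1$. The action by right multiplication by $g\in G$ must then be realized by arrows of $\SK$ over $g$. To keep it strict, I would instead reparametrize as
\[
\tilde X^0:=K_1\times_{\pi,G}G\ \text{ with } g\cdot k \text{ defined via } \pi(k)\mapsto \pi(k)g,
\]
and I would use the target map $t_\SK\co K_1\rto X^0$ as the map to $\CX$ after twisting by the label. Concretely, set
\[
\tilde X^0:=K_1,\qquad \tilde X^1:=\{(k,\beta,k')\in K_1\times K_1\times K_1\mid \beta\co s_\SK(k)\to s_\SK(k'),\ \pi(\beta)=1\},
\]
with $G$ acting freely on neither factor. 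Then realize the action through the $G$-torsor structure $K_1\rto X^0\times G$: the fibers of $(s_\SK,\pi)$ are $\CX$-torsors. This yields a strict $G$-action on $\tilde\CX$ given by
\[
(k,\beta,k')\cdot g:=(k\ast g,\beta,k'\ast g),
\]
where $\ast$ is the action obtained by relabelling $\pi(k)\mapsto \pi(k)g$ and composing with $t$. The corresponding Kan fibration $\tilde\SK=\tilde\CX\rtimes \SB G$ is the one in \eqref{e-KF-strict}.

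It remains to construct $\SF\co\SK\rto\tilde\SK$ covering the identity of $\SB G$. On objects I would send $x\mapsto 1_x\in K_1$. On arrows, $k\co x\to y$ over $g$ is sent to $((1_x,\,k\cdot(\text{canonical}),\,\ldots),g)$, using that every arrow over $g$ factors uniquely as a fiber arrow followed by the $g$-relabelling. Compatibility with composition should follow from associativity in $\SK$. The restriction of $\SF$ to fibers is $\f\co\CX\rto\tilde\CX$, $x\mapsto 1_x$. This is fully faithful, and it is essentially surjective because every $k\in K_1$ is connected to $1_{s(k)}$ by the arrow $(k,1,1_{s(k)})$. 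Therefore $\f$ is an equivalence in the sense of Definition \ref{D-equivalence}.

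The main obstacle is making the $G$-action on $\tilde\CX$ genuinely strict, i.e.\ satisfying $(\xi\cdot g)\cdot h=\xi\cdot(gh)$ on the nose. The resolution I expect is that $G$ should act on $\tilde X^0=K_1$ by precomposition/postcomposition within $K_1$ itself. The cleanest candidate is to act on the label only: take $\tilde X^0=\{k\in K_1\}$ and let $g$ act by $k\mapsto k\cdot u_g$, where associativity is automatic. The subtlety is that $u_g$ must be a global section, which does not exist in general. To avoid choosing sections, I would use the set of all $k$ with target-side free label, so that the action becomes $(k,g)\mapsto$ ``the same arrow, viewed via $\pi(k)g^{-1}$''. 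Verifying that this is well-defined, smooth, and compatible with $\tilde X^1$ is the step I expect to need the most care.
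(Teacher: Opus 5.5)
Your proposal has a genuine gap, exactly where you suspect it: no strict $G$-action on your $\tilde\CX$ is ever defined.

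With object space $K_1$ (as in your pullback groupoid $s_\SK^*\CX$ and in all your variants), the label $\pi(k)$ is a function of $k$, not a free coordinate. So ``relabelling $\pi(k)\mapsto\pi(k)g$'' can only be realized by composing with an arrow of $\SK$ lying over $g$. Doing this strictly requires a smooth choice, for every $x\in X^0$ and $g\in G$, of an arrow of $\SK$ with source $x$ lying over $g$, compatible with composition in $g$. Such a multiplicative splitting of $\pi$ would already make the given action on $\CX$ strict, and it does not exist in general; this is what fails in the $S^1$-refinement example just before the proposition. The expressions ``$k\ast g$'' and ``the same arrow, viewed via $\pi(k)g^{-1}$'' do not define maps $K_1\rto K_1$.

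The same problem breaks your $\SF$. The claimed unique factorization of an arrow over $g$ as a fiber arrow followed by ``the $g$-relabelling'' presupposes a canonical arrow over each $g$. That is precisely the unavailable splitting.

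The missing idea is to make the group label an independent coordinate, so that $G$ can move it without touching anything in $\SK$. The paper takes $\SE G=(G_0\times G\rrto G_0)$, the action groupoid of $G$ on itself, which is a Kan fibration over $\SB G$ with fiber $G_0$. It then sets $\tilde\CX:=\SK\times_{\SB G}\SE G$. Concretely:
\begin{itemize}
\item $\tilde X^0=X^0\times G_0$ and $\tilde X^1\cong K_1\times G_0$;
\item $\tilde s(\alpha,g_0)=(s_\SK(\alpha),g_0)$ and $\tilde t(\alpha,g_0)=(t_\SK(\alpha),g_0\pi(\alpha))$;
\item $h\in G$ acts by $(x,g_0)\mapsto(x,h^{-1}g_0)$ and $(\alpha,g_0)\mapsto(\alpha,h^{-1}g_0)$, which is strict and free and involves no choices.
\end{itemize}
The equivalence is the inclusion at level $e$, namely $\f(x)=(x,e)$ and $\f(\alpha)=(\alpha,e)$. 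It is fully faithful because the arrows between objects at level $e$ are exactly the arrows over $e$, i.e.\ $X^1$. It is essentially surjective because the relevant map $X^0\times_{\f,\tilde X^0,\tilde s}\tilde X^1\rto\tilde X^0$ is $\alpha\mapsto(t_\SK(\alpha),\pi(\alpha))$, a surjective submersion by $\Kan(1)(\pi)$. Finally, $\SF(x)=(x,e)$ and $\SF(\alpha)=(\pi(\alpha),\alpha,e)$ define the map into $\tilde\SK=\tilde\CX\rtimes\SB G$.

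Your pullback of $\CX$ along $s_\SK$ is a correct Morita-equivalent model of $\CX$. It is the wrong one here, though, because its objects have no free $G$-direction for a strict action to move along.
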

\begin{proof}
Let $\SE G=(G_0\times G\rrto G_0)$ be the Lie groupoid of
right $G$-action on $G$, where we denote the second $G$ by $G_0$. We denote $\SE G=G_0\rtimes \SB G$. It is a Kan fibration
\[
\pi'\co G_0\rightarrowtail \SE G\twoheadrightarrow  \SB G.
\]
Set
\[
\tilde\CX=\SK\times_{\SB G}\SE G.
\]
It is a Lie groupoid, and the 0 and 1-spaces of $\tilde\CX$ are
\[
\tilde X^0=K^0\times G_0 \and
\tilde X^1=K^1\times_G (G_0\times G)\cong K^1\times G_0.
\]
The source and target maps are
\[
\tilde s(\alpha, g_0)=(s_\SK (\alpha), g_0) \and
\tilde t (\alpha, g_0)=(t_\SK(\alpha), g_0\pi_1(\alpha)).
\]
As the fiber of $\pi$ is $\CX$, we have the embedding $\msf i\co \CX\rto \SK$.

Set
\begin{equation}
f^0\co X^0\xrightarrow{\iota^0} K^0\rto K^0\times \{e_0\}\subset \tilde X^0 \and
f^1\co X^1\xrightarrow{\iota^1} K^1\rto K^1\times \{e_0\}\subset \tilde X^1.
\end{equation}
Here $e_0$ is the unit of $G_0$. This defines a homomorphism of Lie groupoids $\f\co \CX\rto \tilde\CX$. It is straightforward to show that $\f$ is an equivalence.

The groupoid $\tilde\CX$ admits a strict left $G$-action: for any $h\in G$, the action of $h$ is given by the following morphism
\[
\lambda_h\co \tilde\CX\rto \tilde\CX;\qq (x,g)\xrightarrow{\lambda_h^0}
(x,h^{-1}g),\and
(\alpha,g)\xrightarrow{\lambda_h^1}(\alpha,h^{-1}g).
\]
Since the action is free, it is direct to see that $\tilde \CX/G\cong \SK$.

On the other hand, applying \eqref{e-KF-strict} to the strict action of $G$ on $\tilde \CX$, we get the Kan fibration
\begin{equation}
\tilde\pi\co \tilde\CX\rightarrowtail\tilde{\SK}\twoheadrightarrow \SB G,
\end{equation}
where
\begin{equation}
\tilde {\SK}= (G\times K^1\times G_0\rrto K^0\times G_0).
\end{equation}
Define $\msf F\co \SK\rto \tilde{\SK}$ by setting
\[
F^0(m)=(m,e_0) \and  F^1(\alpha)=(\pi^1(\alpha),\alpha, e_0).
\]
One verifies that $\f,\sf F$ fit into the claimed diagram.
\end{proof}

A similar argument yields the slice theorem for group actions on Lie groupoids. We recall the slice theorem for a Lie group $G$-action on a smooth manifold $X$. For simplicity, we assume $G$ and $X$ are compact. For any $x\in X$ with isotropy group $\Gamma_x$, there exists a  $\Gamma_x$-invariant smooth submanifold $S_x$ passing through $x$ together with a diffeomorphism
\[
\psi\co S_x\times_{\Gamma_x} G\rto U
\]
onto a neighborhood $U$ of the orbit $G\cdot x$. Moreover, $\psi$ is $G$-equivariant.

\begin{remark}
It is known that given a proper Lie groupoid $\CX$, for any $x\in X^0$ the  slice exists. Hence, the slice $S_x$ can be interpreted as a slice of the proper Lie groupoid
$G\ltimes X$.
\end{remark}

The diffeomorphism of $\psi$ gives $U\cong S_x\times_{\Gamma_x}G$, which in Lie groupoid terms reads
\[
S_x\times_{\Gamma_x}G\cong (\Gamma_x\times S_x\times G\rrto S_x\times G)=:\Gamma_x\ltimes (S_x\times G).
\]
The diffeomorphism $\psi$ is $G$-equivariant can be read as
\[
U\rtimes G\cong (\Gamma_x\ltimes(S_x\times G))\rtimes G.
\]
In particular, we see that the $G$-action on the Lie groupoid $\Gamma_x\ltimes (S_x\times G)$ is strict and free on both $0$ and $1$-space.

Now we consider the group $G$-action on a Lie groupoid $\CX$. Let $\pi\co \SK\rto \SB G$ be the Kan fibration. It is easy to see that $G$ acts on the coarse space $|\CX|$ of $\CX$ and $|\SK|\cong|\CX|/G$. In fact, the coarse map $|\msf i|$ of the inclusion $\msf i\co \CX\rto\SK$
induces the projection map $|\CX|\to |\SK|.$

Since $\CX$ is compact and $G$ is compact, we can show that $\SK$ is proper. Hence, for any $x\in X^0$, there exists slice \footnote{This follows from the linearization theorem, see \cite{Weinstein2000-linearization, Zung2006, Pflaum-Posthuma-Tang2014, Crainic-Struchiner2013-Linearization,Mendes-Radeschi2018}.} $S_x\subset K^0$ of $\SK$, i.e,
\begin{equation}
\Gamma_x\ltimes S_x \cong \SK_{|S_x|}.
\end{equation}
Here $|S_x|$ is an open neighborhood of $|x|$, $\SK_{|S_x|}$ is the induced Lie groupoid structure on $|S_x|$. Let $U=|\msf i|^{-1}(|S_x|)$. It is clear we have a restriction of $G$-action on $\CX_U$ whose Kan fibration is
\begin{equation}
\pi_U\co \CX_U\rightarrowtail
\SK_{|S_x|}\twoheadrightarrow \SB G.
\end{equation}
Now we apply the construction given in Proposition \ref{P-tilde-X}:
\begin{equation}\label{e-4.18}
\CX_U\simeq \tilde{\CX}_U\cong \SK_{|S_x|}
\times_{\SB G} \SE G
\cong (\Gamma_x\ltimes S_x)\times_{\SB G}\SE G.
\end{equation}
By the restriction of $\pi$, we have the map $\pi: \Gamma_x\rtimes {x}\rto \SB G$, this induces a group homomorphism
\[
\pi^1\co \Gamma_x\rto G.
\]
We can further compute  the right hand side in \eqref{e-4.18}. It is
\[
(\Gamma_x\times S_x\times G_0\rrto S_x\times G_0)=\Gamma_x\ltimes (S_x\times G_0),
\]
where $\Gamma_x$ acts on $G_0$ via the group homomorphism $\pi^1\co \Gamma_x\rto G$. Hence we prove the following.

\begin{theorem}[The slice theorem]
We have $\CX_U\simeq \Gamma_x\ltimes (S_x\times G)$, where $U\subset |\CX|$ is an open neighborhood of orbit $G\cdot |x|$.
Further, the $G$-action on $\CX_U$ is equivalent to the strict $G$-action on $\Gamma_x\ltimes (S_x\times G)$ by acting on $G$ from right. Then
\[
\SK_{|S_x|}\simeq \Gamma_x\ltimes S_x
\simeq (\Gamma_x\ltimes (S_x\times G))\rtimes G.
\]
\end{theorem}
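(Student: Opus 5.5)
The plan is to assemble the slice theorem from three ingredients that precede the statement: the strictification of Proposition \ref{P-tilde-X}, the existence of a linearizing slice for the proper Lie groupoid $\SK$, and the computation of the fibered product with $\SE G$.

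First, I will show that $\SK$ is proper and that its orbit space is $|\SK|\cong|\CX|/G$. Properness should follow from compactness of $G$ and $\CX$: the map $(s,t)\co K^1\rto K^0\times K^0$ factors through $(s,t,\pi)\co K^1\rto K^0\times K^0\times G$. By $\Kan(1)(\pi)$ together with properness of $\CX$, the fibers of this map are locally modelled on the fibers of $(s,t)\co X^1\rto X^0\times X^0$, and $G$ is compact. The linearization theorem then supplies a slice $S_x\subset K^0$ with $\SK_{|S_x|}\cong\Gamma_x\ltimes S_x$, where $\Gamma_x$ is the isotropy group of $x$ in $\SK$. Restricting $\pi$ to $\Gamma_x$ (a subgroupoid over the point $x$) gives a Lie group homomorphism $\pi^1\co\Gamma_x\rto G$.

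Next, set $U=|\msf i|^{-1}(|S_x|)$. Since $|\msf i|$ is the quotient map $|\CX|\rto|\CX|/G$, $U$ is an open $G$-invariant neighborhood of $G\cdot|x|$. The restricted action on $\CX_U$ has Kan fibration $\SK_{|S_x|}\rto\SB G$. Applying Proposition \ref{P-tilde-X} to this restricted fibration gives an equivalence
\[
\CX_U\simeq\SK_{|S_x|}\times_{\SB G}\SE G
\]
with a strict $G$-action, and moreover $\tilde\CX_U/G\cong\SK_{|S_x|}$. Substituting $\Gamma_x\ltimes S_x$ for $\SK_{|S_x|}$, I will compute the fibered product directly from the formulas in that proof:
\begin{itemize}
\item objects are $S_x\times G_0$;
\item arrows are $\Gamma_x\times S_x\times G_0$;
\item the target is $(\gamma\cdot s,\,g_0\pi^1(\gamma))$.
\end{itemize}
This is exactly the action groupoid $\Gamma_x\ltimes(S_x\times G)$, with $\Gamma_x$ acting on $G$ through $\pi^1$. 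The left $G$-action $\lambda_h$ on the $G_0$ factor commutes with this $\Gamma_x$-action. So the $G$-action is strict and free, and Lemma \ref{L-strict-quotient} identifies the quotient $(\Gamma_x\ltimes(S_x\times G))\rtimes G$ with $\Gamma_x\ltimes S_x$ up to hypercover, which gives the last chain of equivalences.

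I expect the main obstacle to be the compatibility of the chain of equivalences with the actions. Proposition \ref{P-tilde-X} only gives an equivalence $\f$ together with a homomorphism of Kan fibrations $\msf F$. Replacing $\SK_{|S_x|}$ by the isomorphic $\Gamma_x\ltimes S_x$ requires transporting $\msf F$ along that isomorphism over $\SB G$, and this step needs care. I would handle it by noting that the isomorphism from the linearization theorem is a groupoid isomorphism, and that composing it with $\pi$ defines the Kan fibration $\Gamma_x\ltimes S_x\rto\SB G$. With that choice, both sides are Kan fibrations over the same $\SB G$ and the diagram commutes on the nose. A secondary point to check is that the convention for left versus right actions on $G_0$ matches the statement, namely $G$ acting on the right.
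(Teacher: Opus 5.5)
Your proposal is correct and follows the paper's own route: properness of $\SK$ from compactness, a linearizing slice $S_x$ with $\SK_{|S_x|}\cong\Gamma_x\ltimes S_x$, the neighborhood $U=|\msf i|^{-1}(|S_x|)$, Proposition~\ref{P-tilde-X} applied to the restricted Kan fibration, and the explicit computation of $(\Gamma_x\ltimes S_x)\times_{\SB G}\SE G$ as $\Gamma_x\ltimes(S_x\times G)$ with $\Gamma_x$ acting on $G$ through $\pi^1$. Your extra remarks make explicit steps the paper leaves implicit: the sketch of properness, the use of Lemma~\ref{L-strict-quotient} for the final chain of equivalences, and transporting the Kan fibration along the linearization isomorphism.
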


\subsection{Lie 2-group actions on  orbifolds}\label{Sec-2gp-orbifold}

In this section, we consider the case where the target $\CX$ on which $\SG$ acts is an orbifold, i.e., a proper \'etale Lie groupoid. The differential geometry on orbifolds has many similarity with that on manifolds. It is natural to ask how to formulate $\SG$-equivariant theory of $\CX$ via Kan fibrations.  This subsection is devoted to formulating some fundamental concepts of 2-group $\SG$-equivariant differential geometry on orbifolds.

Let $\CX$ be an orbifold, i.e., a proper \'etale Lie groupoid. The \'etaleness implies the existence of tangent bundle $\ST\CX=(T\CX^1\rrto T\CX^0)$ and cotangent bundle $\ST^*\CX=(T^*\CX^1\rrto T^*\CX^0)$.
Therefore, we have all kinds of tensor bundles over $\CX$. A smooth section of a tensor bundle is called a (smooth) tensor field. A section of $\ST\CX$ is a vector field of $\CX$, let $\Vect(\CX)$  denote the space of vector fields. A section of $\Lambda^k(\ST^\ast \CX)$ is a $k$-form of $\CX$, let $\Omega^k(\CX)$ denote the space of $k$-forms. In general,
we denote the vector space of
 tensor fields of $\mc T\CX$ by $\Gamma(\mc T\CX)$.

 A $k$-form $\omega$ is a pair $(\omega^0,\omega^1)$ of $k$-forms over $\CX^0$ and $\CX^1$, respectively, that are compatible with the source and target maps. Similarly, a vector field
 $V$ is a pair  $(V^0,V^1)$.  As in the differential geometry on manifolds, we have the exterior differential $d$, the contraction $i_V$, namely
\begin{eqnarray*}%
& d\co \Omega^k(\CX)\rto \Omega^{k+1}(\CX),&\qq (\omega^0,\omega^1)\mapsto
(d\omega^0,d\omega^1);\\
& i\co \Vect(\CX)\otimes \Omega^k(\CX)\rto \Omega^{k-1}(\CX),&\qq
(V^0,V^1)\otimes (\omega^0,\omega^1)
\mapsto (i_{V^0}(\omega^0),i_{V^1}(\omega^1)).
\end{eqnarray*}

Now consider a smooth Lie 2-group $\SG$-action on $\CX$ given by the Kan fibration
\[
\pi\co \mc X\rightarrowtail\SK\twoheadrightarrow\SG
\]
We explain how to obtain $\SG$-equivariant objects on $\CX$ via the Kan fibration $\SK$.

\vskip 0.1in
\noindent
{\bf $\SG$-action on the tangent bundle $\ST\CX$.}
\vskip 0.1in
The $\SG$-action on $\CX$ induces an action on the tangent bundle $\ST\CX$. This means we should obtain a Kan fibration
\begin{equation}
\pi_\ST\co \ST\CX \rightarrowtail \SE \twoheadrightarrow \SG.
\end{equation}
Here the $\SE$ is taken to be $\zeta\co \hat\ST\SK\to \SK$, the vertical tangent vector bundle with respect to the fibration $\SK\to \SG$. Precisely, for each
$g\in G_n$, the fiber $E_g$ is the tangent bundle of $K_g=\pi_n^{-1}(g)$; for each point $y\in K_g$, the fiber $\zeta^{-1}(y)$ is the tangent space of $y$ in $K_g$.
The following lemma is straightforward.

\begin{lemma}
$\hat\pi\co \ST\CX\rightarrowtail\hat\ST\SK\twoheadrightarrow \SG$ is a Kan fibration. This defines the induced action of $\SG$ on $\ST\CX$.
\end{lemma}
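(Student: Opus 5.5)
The plan is to check, level by level, that $\hat\ST\SK$ is a simplicial manifold, that $\hat\pi=\pi\circ\zeta\co\hat\ST\SK\rto\SG$ is a Kan fibration, and that its fiber over $\bullet$ is $\ST\CX$.

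\emph{Step 1: simplicial structure.} Since $\pi\co\SK\rto\SG$ is a Kan fibration, each $\pi_n\co K_n\rto G_n$ is a surjective submersion; for $n=0,1$ this is $\Kan(n)(\pi)$. For $n\geqslant 2$ it follows because $\tau_{n,j}$ is a surjective submersion and the horn-space projection $\lambda^\ast_{n,j}\co G_n\rto G_{n,j}$ is one as well. Hence the vertical tangent bundle $\hat T K_n=\ker(d\pi_n)\subseteq TK_n$ is a smooth vector bundle. Every structure map $d_\zeta$ of $\SK$ covers $d_\zeta$ of $\SG$, so $d(d_\zeta)$ sends $\ker d\pi_n$ into $\ker d\pi_m$. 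This gives a simplicial manifold $\hat\ST\SK$ together with simplicial maps $\zeta\co\hat\ST\SK\rto\SK$ and $\hat\pi\co\hat\ST\SK\rto\SG$.

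\emph{Step 2: the Kan conditions for $\hat\pi$.} Fix $g\in G_n$. The map $\tau_{n,j}$ for $\pi$ restricts to a map from the fiber $K_g=\pi_n^{-1}(g)$ to the fiber over $g$ of $\Hom(\Lambda[n,j]\rto\Delta[n],\SK\rto\SG)$, i.e.\ to the space of horns in $\SK$ lying over $\lambda^\ast_{n,j}(g)$. This restriction is a surjective submersion for $n\leqslant 1$ and a diffeomorphism for $n\geqslant 2$. Now compare tangent data: the vertical horns over $g$ are exactly tuples of vertical tangent vectors on the faces of a horn in $K_g$ that agree on common faces, i.e.\ the tangent space of the fiber horn space. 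So $\tau_{n,j}$ for $\hat\pi$ is the differential of the fiberwise $\tau_{n,j}$, bundled over $g$. The differential of a surjective submersion between fibers is fiberwise surjective and, assembled over the base, is again a surjective submersion. The differential of a diffeomorphism is a diffeomorphism. This gives $\Kan(n)(\hat\pi)$ for $n\leqslant 1$ and $\Kan!(n)(\hat\pi)$ for $n\geqslant 2$. Together with Step~1, $\hat\ST\SK$ is then a Lie 2-groupoid by the usual argument, since the Kan conditions of $\SG$ lift through a Kan fibration.

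\emph{Step 3: the fiber.} Over $\bullet$ one has $K_0=X^0$, and since $G_0=\{\bullet\}$, $\hat TK_0=TX^0$. Over the degenerate simplex $e\in G_1$ the fiber is $X^1$, with vertical tangent bundle $TX^1$. The higher levels are determined by $\Kan!$. Hence $\SFib(\bullet)$ is the nerve of $(TX^1\rrto TX^0)=\ST\CX$, using that $\CX$ is \'etale, so that $\ST\CX$ is a Lie groupoid under the differentials of the structure maps.

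\emph{Main obstacle.} The key point is Step~2: showing that the fibered product defining the horn space for $\hat\pi$ is smooth and equals the vertical tangent bundle of the horn space of $\pi$. My approach is to write the horn space for $\pi$ as an iterated transverse fiber product. Transversality holds because the face maps in the relevant directions are submersions, by the Kan conditions on $\SK$ and $\SG$. The tangent bundle of a transverse fiber product is the fiber product of the tangent bundles, and taking kernels of $d\pi$ commutes with this construction.
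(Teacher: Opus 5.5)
Your route is the natural one: pass level-wise to the vertical bundles $\ker d\pi_n$, identify the relative horn maps $\tau_{n,j}$ of $\hat\pi$ with the vertical parts of the differentials of the $\tau_{n,j}$ of $\pi$, and read off the fiber over $\bullet$ as $TX_\bullet$. This is presumably what the paper means by calling the lemma straightforward; it gives no proof. Steps 2 and 3 go through once one point is fixed. As written, however, Step 1 contains an incorrect inference, and the rest of the argument depends on it.

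For $n\geqslant 2$ you factor $\pi_n$ as $K_n\xrightarrow{\tau_{n,j}}K_{n,j}\times_{G_{n,j}}G_n\xrightarrow{\mathrm{pr}_2}G_n$ and appeal to $\lambda^\ast_{n,j}\co G_n\rto G_{n,j}$ being a surjective submersion. That property makes $\mathrm{pr}_1$ (onto $K_{n,j}$) a surjective submersion, not $\mathrm{pr}_2$. The map $\mathrm{pr}_2$ is the base change of the induced map on horn spaces $\pi_{n,j}\co K_{n,j}\rto G_{n,j}$, and it is $\pi_{n,j}$ that you must show is a surjective submersion.

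This is true, but it needs a small induction from the lower Kan conditions on $\pi$. For instance, since $G_0=\{\bullet\}$, the map $K_{2,1}=K_1\times_{d^1_0,K_0,d^1_1}K_1\rto G_1\times G_1$ factors as $(y_{01},y_{12})\mapsto(y_{01},\pi_1(y_{12}))\mapsto(\pi_1(y_{01}),\pi_1(y_{12}))$. The first map is a base change of $(d^1_1,\pi_1)\co K_1\rto K_0\times G_1$ (one of the $\tau_{1,j}$), and the second is $\pi_1\times\mathrm{id}$; both are surjective submersions by $\Kan(1)(\pi)$. The cases $j=0,2$, and then higher $n$ using $\Kan(2)(\pi)$, follow in the same way.

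The repair is not cosmetic, because Step 2 needs the same fact. Transversality of the face maps, which you cite at the end, only makes $K_{n,j}$ and $TK_{n,j}$ well behaved. You also need $d\pi_{n,j}$ to have full rank, i.e.\ $\pi_{n,j}$ to be a submersion, for two things:
\begin{itemize}
\item the vertical horns $\Hom(\Lambda[n,j],\hat\ST\SK)$, identified with $\ker d\pi_{n,j}\subseteq TK_{n,j}$, to form a smooth vector bundle;
\item the fiber horn space over $g$ to have tangent space equal to the compatible tuples of vertical vectors.
\end{itemize}

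With this lemma inserted, the argument is complete. Two smaller points. In the case $n=1$, your ``assembling over the base'' uses that $\tau_{1,j}$ is a submersion of total spaces (locally a projection), not merely a fiberwise one. And the appeal to \'etaleness in Step 3 is unnecessary, since $TX^1\rightrightarrows TX^0$ is a Lie groupoid for any Lie groupoid.
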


So we have the Kan fibration
\[
\pi_{\mc T}\co \mc T\mc X\rightarrowtail\hat{\mc T}\SK\twoheadrightarrow\SG
\]
for tensor bundle $\mc T\CX$. For example
\begin{equation}
\pi^k\co \Lambda^k( T^\ast\mc X)\rightarrowtail
\Lambda^k(\hat{ \ST}^\ast\SK)\twoheadrightarrow\SG.
\end{equation}

\vskip 0.1in
\noindent
{\bf $\SG$-equivariant tensor fields on $\CX$.}
\vskip 0.1in
We have already defined $\SG$-equivariant sections of $\SG$-equivariant vector bundles. Hence, $\SG$-equivariant sections of $\mc T\CX\to \CX$ are $\SG$-equivariant tensor fields. When $\mc T\CX=\ST\CX$, a $\SG$-equivariant tensor field is called a {\bf $\SG$-equivariant vector field}. Let $\Vect_\SG(\CX)$ denote the space of $\SG$-equivariant vector fields. Similarly, we have the space $\Omega^k_\SG(\CX)$ of {\bf $\SG$-equivariant $k$-forms}.
The differential $d$ and contraction $i$ can be defined on $\SG$-equivariant objects:
\begin{equation}
d\co \Omega^k_\SG(\CX)\to \Omega_\SG^{k+1}(\CX) \and
i\co \Vect_\SG(\CX)\otimes\Omega_{\SG}^k(\CX)
\to \Omega^{k-1}_\SG(\CX).
\end{equation}

\vskip 0.1in\noindent
{\bf 1-free action of 2-groups}
\vskip 0.1in
Let $\Phi\co \CX\rightarrowtail\SK\twoheadrightarrow\SG$ be the Kan fibration of a $\SG$-action.

\begin{defn}\label{D-1-free-action}
We say the $\SG$-action is {\bf 1-free} if $\SK$ is 2-isotropy free.
\end{defn}

As a consequence of Proposition \ref{P-2-to-1},

\begin{lemma}
The $\SK$ is equivalent to a Lie 1-groupoid if the action is 1-free.
\end{lemma}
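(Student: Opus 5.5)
The statement to prove: if the action is 1-free, then $\SK$ is equivalent to a Lie 1-groupoid. The plan is to apply Proposition \ref{P-2-to-1} directly to the total space $\SK$. That proposition needs two inputs: $\SK$ must be a $2$-isotropy free Lie $2$-groupoid, and it must be proper.

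\emph{Step 1: $\SK$ is a 2-isotropy free Lie 2-groupoid.} By Definition \ref{D-1-free-action}, 1-freeness of the action means exactly that $\SK$ is 2-isotropy free. That $\SK$ is a Lie 2-groupoid follows from the Kan fibration conditions:
\begin{itemize}
\item the map $\lambda^\ast_{m,j}$ of $\SK$ factors as $\tau_{m,j}$ followed by the projection of $\Hom(\Lambda[m,j]\rto\Delta[m],\SK\rto\SG)$ onto $K_{m,j}$;
\item that projection is a base change of $\lambda^\ast_{m,j}$ for $\SG$, so it is a surjective submersion for $m\leqslant 2$ and a diffeomorphism for $m\geqslant 3$;
\item composing with $\tau_{m,j}$, which is a surjective submersion, resp.\ a diffeomorphism, gives $\Kan(m)$ for $m\leqslant 2$ and $\Kan!(m)$ for $m\geqslant 3$.
\end{itemize}

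\emph{Step 2: properness, the main obstacle.} Proposition \ref{P-2-to-1} requires $\SK$ to be proper, and this is the only non-formal point. In the present setting $\CX$ is an orbifold, hence proper. I would take $\SG$ to be proper as well; for instance, an étale Lie 2-group with compact $G_1$ is the case relevant for the later Hamiltonian applications. If this is not automatic, I would add it as a standing hypothesis.

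To transfer properness to $\SK$, I would argue as follows:
\begin{itemize}
\item $\partial_m^\ast$ for $\SK$ factors through $\tau_m$-type maps over $\SG$;
\item over a compact set of $G_m$, the fibers are controlled by the fiber $\CX$ via the Kan conditions, using local triviality of the fibration;
\item compactness then pulls back.
\end{itemize}
If properness cannot be guaranteed in full, I would instead run the construction in the proof of Proposition \ref{P-2-to-1} directly. There, properness was used only to make the quotient $\tilde Z^1$ of $Z_1$ a manifold. So it suffices that the equivalence-relation groupoid $\mc Z_1=(\mc V\rrto K_1)$ be proper. This in turn follows from properness of $\partial_2^\ast$ restricted to 2-simplices with one degenerate face, which reduces to properness of $\CX$ together with the diffeomorphism $\tau_{2,j}$ of the Kan fibration.

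\emph{Step 3: conclusion.} With these inputs, Proposition \ref{P-2-to-1} gives a proper Lie 1-groupoid $\tilde\SK$ and a hypercover $\f\co\SK\rto\tilde\SK$. A hypercover is in particular an equivalence, so $\SK$ is equivalent to a Lie 1-groupoid. If $\SG$ and $\CX$ are étale, then $\SK$ is étale too, since the $\tau_{m,j}$ are local diffeomorphisms over étale $\lambda^\ast_{m,j}$. The second half of Proposition \ref{P-2-to-1} then shows that $\tilde\SK$ is étale, i.e.\ an orbifold groupoid.
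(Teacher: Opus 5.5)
Your Steps 1 and 3 are exactly the paper's argument. The paper records the lemma simply ``as a consequence of Proposition~\ref{P-2-to-1}'', since 1-freeness is by definition 2-isotropy freeness of $\SK$. Step 1 is superfluous, because a Kan fibration of Lie 2-groupoids has a Lie 2-groupoid as total space by definition. You are right that Proposition~\ref{P-2-to-1} also needs $\SK$ proper, which the statement does not supply; the paper passes over this and assumes ``$\SK$ proper'' explicitly only later, in Theorem~\ref{T-orbifold-reduction}.

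However, your Step 2 does not close this gap, and it cannot be closed without an extra hypothesis. Take the étale Lie 2-group $\SG=\SB(\mathbb Z\xrightarrow{\partial}S^1)$ of the crossed module $\partial(n)=e^{2\pi i n\theta}$ with $\theta$ irrational. Then $G_1=S^1$ and $G_2=\{(g_{01},g_{12},g_{02},n)\mid g_{01}g_{12}=\partial(n)g_{02}\}\cong S^1\times S^1\times\mathbb Z$. Let it act trivially on a point, i.e.\ $\SK=\SG$ with $\pi=\mathrm{id}$. It satisfies $\Kan!!(2)$, it is 2-isotropy free because $\partial$ is injective, and the fibre is a proper orbifold. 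Yet the groupoid $\mc V\rightrightarrows K_1$ from the proof of Proposition~\ref{P-2-to-1} is the irrational rotation groupoid $S^1\rtimes\mathbb Z$. Moreover $\SK$ is not equivalent to any Lie 1-groupoid: for instance, a hypercover $\SK\to\tilde\SZ$ would make $f_1\co S^1\to\tilde Z_1$ a continuous homomorphism with dense kernel $\partial(\mathbb Z)$.

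This refutes your fallback. A 2-simplex of $\SK$ with degenerate $12$-face lies over a 2-simplex of $\SG$ with $12$-face $e$, and that simplex need not be degenerate. So properness of $\mc V\rightrightarrows K_1$ is a condition on $\SG$, which neither properness of $\CX$ nor the diffeomorphisms $\tau_{2,j}$ can detect. The same example shows that an étale 2-group with compact $G_1$ need not be proper: with $G_1$ compact, properness of $\partial_2^\ast\co G_2\to G_1^3$ means $G_2$ is compact.

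So a hypothesis must be added. The cleanest is the one the paper actually uses, $\SK$ proper, after which the lemma is immediate from Proposition~\ref{P-2-to-1}. You may instead assume $\CX$ and $\SG$ proper, i.e.\ $G_1$ and $G_2$ compact. Then the transfer to $\SK$ is true, but it comes from the Kan conditions, not from ``local triviality'', which you do not have.
\begin{itemize}
\item For $m=2$: $\tau_2=(\partial_2^\ast,\pi_2)$ is injective by $\Kan!(2,j)(\pi)$ and is a homeomorphism onto a closed subset of $K_{\partial 2}\times_{G_{\partial 2}}G_2$. Hence $(\partial_2^\ast)^{-1}(C)$ is homeomorphic to a closed subset of the compact set $C\times_{G_{\partial 2}}G_2$.
\item For $m=1$: let $k_n\in K_1$ have endpoints in a compact set, and pass to a subsequence with $d^1_1k_n\to x$ and $\pi_1(k_n)\to g$. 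Put $k_n'=\sigma(d^1_1k_n,\pi_1(k_n))$ for a local section $\sigma$ of $\tau_{1,1}$ near $(x,g)$. Let $a_n\in X^1$ be the $12$-face of the 2-simplex over $s^1_1(\pi_1(k_n))$ with $01$-face $k_n'$ and $02$-face $k_n$, given by $\Kan!(2,0)(\pi)$. Properness of $\CX$ gives a convergent subsequence of $a_n$. Then $k_n$, recovered from $k_n'$, $a_n$ and $s^1_1(\pi_1(k_n))$ via $\tau_{2,1}^{-1}$, converges.
\end{itemize}

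Finally, your closing étale claim is false when $\dim G_1>0$. Since $\lambda^\ast_{1,j}$ of $\SG$ is not étale, $d^1_j\co K_1\to K_0$ has fibres of dimension $\dim G_1$, so only $\Kan!!(2)$ transfers to $\SK$. Correspondingly $\tilde\SK$ has $\dim\tilde K^1=\dim X^0+\dim G_1>\dim\tilde K^0$, so it is not étale. This is why the paper needs the slice construction in its reduction section.
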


\vskip 0.1in
\noindent
{\bf The infinitesimal actions of \'etale Lie 2-group actions on orbifolds}\label{Subsec 3.2}
\def \GL{\mathrm{GL}}
\vskip 0.1in
For the rest of this section, we assume that $\SG$ is \'etale and develop the infinitesimal action.

We first describe the (co)adjoint action of $\SG$ on $\fk g:=T_eG_1$ \footnote{In fact, from the description of the $\SG$-action on $\fk g$ below, one can see that, it is natural to view $\fk g$ as the Lie algebra of $\SG$. However, we will not discuss this issue here as it is not needed in this paper.}, the tangent space of $G_1$ at the unit $e$. Let $\GL(\fk g)$ be the group of linear automorphisms of $\fk g$. It is associated with a $1$-group $\SB\GL(\fk g)$.  The action groupoid with the projection
\[
\pi_\fk g\co \SK(\fk g):= \fk g\rtimes \GL(\fk g)\rto \SB\GL(\fk g)
\]
is the Kan fibration of $\SB\GL(\fk g)$-action on $\fk g$.
Similarly, we have $\GL(\fk g^*)$, $\SB\GL(\fk g^*)$ and $\SK(\fk g^*)$.

The {\bf adjoint action} of $\SG$ on $\fk g$ is given by a strict homomorphism
\begin{equation}
\SAd \co \SG\rto\SB\GL(\fk g)
\end{equation}
described as follows. Since $\SB\GL(\fk g)$ is a 1-group, $\sf Ad$ is determined by $\Ad_1$. Let $g\in G_1$, we describe the $\Ad_1(g)\in \GL(\fk g)$.  Consider an
$\alpha=\s^0_0(g)\in G^{[1]}_1$. Then
\[
d^1_0(\alpha)=d^1_0(\alpha)=e \and
\d^1_0(\alpha)=\d^1_1(\alpha)=g.
\]
Let $U\subseteq G_1$ be a small neighborhood of the unit $e$.  By the \'etaleness of $\SG$, for any $a\in U$, there exists a unique $\alpha_a\in U_{e,g}\subset  G^{[1]}_1$, where $U_{e,g}$ is a small neighborhood of $s^1_0(g)\times_{d^2_1,G_1,d^2_1}s^1_1(g)$, such that
\[
d^1_1(\alpha_a)=a\and \d^1_0(\alpha_a)
=\d^1_1(\alpha_a)=g.
\]
Let $\phi_g\co U\to G_1$ denote the map
\[
a\mapsto d^1_0(\alpha_a).
\]
Set $\Ad_1(g)$ to be the tangent map of $\phi_g$ at $e$.  It is direct to check that for a $2$-simplex $g_{012}\in G_2$, there holds
\[
\Ad_1(g_{01})\cdot\Ad_1(g_{12})=\Ad_1(g_{02}).
\]
Therefore, the $\Ad_1$ above determines a strict morphism $\msf{Ad}\co\SG\rto\SB\GL(\fk g)$.

Let
\begin{equation}
\g_{\msf{Ad}}:=\msf{Ad}^\ast \SK(\fk g)=\SG\times_{\SB\GL(\fk g)}\SK(\fk g)
\end{equation}
be Kan fibration with respect to adjoint action of $\SG$ on $\fk g$. Note that $(\g_\SAd)_n=\fk g\times G_n$.

Similarly,  the {\bf coadjoint action} is
\[
\SAd\co \SG\rto\SB\GL(\fk g^*),
\]
which over $G_1$ is given by
\begin{align}\label{E Ad-*}
g\mapsto (\Ad_1(g)^*)\inv.
\end{align}
The Kan fibration for this action is
\[
\g^*_{\SAd}:=\SAd^*\SK(\fk g^*)=\SG\times_{\SB\GL(\fk g^*)}\SK(\fk g^*)\rto \SG.
\]
Then $(\g_\SAd^*)_n=\fk g^*\times G_n$.

We next describe the infinitesimal action of a $\SG$-action on $\CX$. This should have two properties:
\begin{enumerate}
\item assign to each $\xi\in \fk g$ a vector field $V_\xi$ on $\CX$, i.e., give a map $\phi\co\fk g\rto \Vect(\CX)$;
\item $\phi$ is $\SG$-equivariant.
\end{enumerate}
We realize $\phi$ by a section $\sigma$ of the vector bundle
\begin{equation}\label{e-inf-vector-bundle}
\pi_2^\ast \ST\CX\to \fk g\times \CX
\end{equation}
where $\pi_2\co \fk g\times \CX\to \CX$ is the projection. Of course, $\pi_2^\ast \ST\CX=\fk g\times \ST\CX$.

This bundle and its base admit a (diagonal) $\SG$-action. The bundle is  a $\SG$-equivariant bundle. Hence, we may require $\sigma$ to be a $\SG$-equivariant section which ensures that $\phi$ is $\SG$-equivariant.

\begin{remark}
Let $\pi_i\co \SX_i\rightarrowtail \SK_i\twoheadrightarrow \SG, i=1,2,$ be two Kan fibration. Then the diagonal $\SG$ action on $\SX_1\times \SX_2$ is given by
\begin{equation}
(\pi_{1},\pi_2)\co
\SX_1\times\SX_2\rightarrowtail
\SK_1\times_\SG\SK_2\twoheadrightarrow
\SG.
\end{equation}
\end{remark}

The Kan fibration version of \eqref{e-inf-vector-bundle} is the bundle
\begin{align}
\g_\SAd\times_\SG {\sf \hat T}\SK
\rto \g_\SAd\times_\SG \SK.
\end{align}
Let $\sigma_\SK$ be the Kan-section of $\sigma$.
We construct $\sigma_\SK$ which serves as the infinitesimal action.

An important observation is that $\sigma_\SK$ is determined by its restriction on
\[
(\g_\SAd\times_\SG\SK)_0=\fk g\times X^0.
\]
In fact, for any $g\in G_n$, $n\geqslant 1$, the
$d^n_j\co K_g\rto K_{g'}$, where $g'=d^n_j(g)$, is a local diffeomorphism, the section at $K_g$ is determined by $K_g'$.
Denote this restriction vector field by $\sigma_0$. To define $\sigma_0$ we need the following observation.

\begin{lemma}\label{L etale-kan-fib}
Since both $\CX$ and $\SG$ are \'etale, the Lie $2$-groupoid Kan fibration $\pi\co\SK\rto\SG$ is an \'etale Kan fibration.
\end{lemma}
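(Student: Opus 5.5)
We need to show that $\tau_{m,j}\co K_m\rto \Hom(\Lambda[m,j]\rto\Delta[m],\SK\rto\SG)$ is a local diffeomorphism for $m=1$, which is the only level required since $\SK$ is a Lie 2-groupoid Kan fibration. The case $m=0$ is trivial. For $m\geqslant 2$ the maps $\tau_{m,j}$ are already diffeomorphisms: the Kan fibration condition gives $\Kan!(2)(\pi)$, and hence $\Kan!(m)(\pi)$ for all $m\geqslant 2$. So the content is $\Kan!!(1,j)(\pi)$ for $j=0,1$. Concretely, I will show that
\[
(d^1_1,\pi_1)\co K_1\rto K_0\times G_1\and (d^1_0,\pi_1)\co K_1\rto K_0\times G_1
\]
are local diffeomorphisms. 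We already know they are surjective submersions, so the task reduces to a dimension count: $\dim K_1=\dim X^0+\dim G_1$.

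I will compute $\dim K_1$ fiberwise over $G_1$. For $g\in G_1$, the fiber $K_g=\pi_1^{-1}(g)$ is, by the construction of $\Lambda$, the object manifold $Y^0$ of the automorphism $\rho_1(g)=(\u^\ell,\mc Y,\u^r)$. Here $u^{\ell,0}=d^1_1|_{K_g}\co K_g\rto X^0$ is the object part of a hypercover. For the unit fiber this identification can be made directly: over $e=s^0_0(\bullet)$ one has $K_e\cong X^1$, the arrows of the fiber $\CX$. The étaleness of $\CX$ then gives $\dim K_e=\dim X^1=\dim X^0$.

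To pass from $K_e$ to a general fiber $K_g$, I will use $\Kan!(2)(\pi)$ together with $\Kan(2,1)$ of $\SG$. Given $y\in K_g$, any $a\in K_e$ composable with $y$ fills a horn over $s^1_1(g)$, and this filling is unique. Applying the face $d^2_1$ produces a map $K_e\times_{X^0}K_g\rto K_g$ that is a diffeomorphism onto the relevant fibered product. This is exactly the statement that $K_g$ carries a free, transitive-along-fibers action of the étale groupoid $\CX$ through $d^1_1$.

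Since $\CX$ is étale, hence $\dim X^1=\dim X^0$, and $d^1_1|_{K_g}$ is a submersion, the fibers of $d^1_1|_{K_g}$ are acted on simply transitively by the discrete isotropy data of $\CX$. Therefore $d^1_1|_{K_g}\co K_g\rto X^0$ is a local diffeomorphism, and $\dim K_g=\dim X^0$. The same argument with $s^1_0$ handles $d^1_0$. Finally, $\pi_1\co K_1\rto G_1$ is a submersion, because $\tau_{1,j}$ is a submersion and $K_0\times G_1\rto G_1$ is one. Hence $\dim K_1=\dim X^0+\dim G_1$, and each submersion $\tau_{1,j}$ between equidimensional manifolds is a local diffeomorphism.

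The main obstacle is making precise that $d^1_1|_{K_g}$ is étale. This requires the hypercover $\u^\ell$ attached to $g$ to have étale object map. It follows because $\Acyc(1)$ combined with $\Acyc!(2)$ for a hypercover onto an étale 1-groupoid forces $\mc Y$ to be étale, with $u^{\ell,0}$ a local diffeomorphism. I expect to have to write this out through the fibered product $Y^1\cong X^1\times_{X^0\times X^0}(Y^0\times Y^0)$ and compare dimensions. The étaleness of $\SG$ itself enters only implicitly, in guaranteeing that $G_1$ is a manifold for which the fibered product above is smooth.
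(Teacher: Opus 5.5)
Your main line is correct, and it is essentially the ``simple dimension counting'' that the paper leaves to the reader, carried out with more care. Since $\pi_1$ is a submersion, it suffices to show $\dim K_g=\dim X^0$ for every $g\in G_1$. The diffeomorphism $K_g\times_{d^1_0,X^0,s}X^1\cong K_g\times_{d^1_1,X^0,d^1_1}K_g$ comes from $\Kan!(2,1)(\pi)$ and $\Kan!(2,0)(\pi)$ over $s^1_1(g)$, together with $K_e\cong X^1$. It gives $\dim K_g+\dim X^1-\dim X^0=2\dim K_g-\dim X^0$, hence $\dim K_g=\dim X^1=\dim X^0$. This handles every fibre $K_g$ directly, without assuming $K_1$ is equidimensional. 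One correction in wording: the fibre of $d^1_1|_{K_g}$ through $y$ is identified with the source fibre $s^{-1}(d^1_0(y))$ of $\CX$, not with isotropy data. It is discrete because $s$ is a local diffeomorphism.

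Your final paragraph, however, should be dropped, because its claim is false. A hypercover onto an \'etale groupoid need not have \'etale middle groupoid or \'etale object map. Take any surjective submersion $p\co P\rto X^0$ with positive-dimensional fibres, and let $p^\ast\CX$ be the pullback groupoid with objects $P$ and arrows $P\times_{X^0}X^1\times_{X^0}P$. It maps to $\CX$ by a hypercover $\mathsf u$, so $(\mathsf u,p^\ast\CX,\mathsf u)$ is even an object of $\SAut(\CX)$. Yet $p^\ast\CX$ is not \'etale and $p$ is not a local diffeomorphism. Accordingly, $Y^1\cong X^1\times_{X^0\times X^0}(Y^0\times Y^0)$ only yields $\dim Y^1=2\dim Y^0-\dim X^0$, which says nothing about $\dim Y^0$. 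The \'etaleness of $u^{\ell,0}=d^1_1|_{K_g}$ comes from the Kan fibration structure (the fibre $K_e$ being $X^1$ and $\Kan!(2)(\pi)$), not from the hypercover property, and your torsor argument already proves it.

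Two minor points. The case $m=0$ is not ``trivial'': $\pi_0\co X^0\rto\{\bullet\}$ is not a local diffeomorphism, but that case is simply not required by the definition of an \'etale Kan fibration. Also, the \'etaleness of $\SG$ plays no role in your argument.
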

\begin{proof}
This follows by a simple dimension counting. We omit the details.
\end{proof}

Now we proceed to define $\sigma_0$. For any pair $(\xi,x)\in \fk g\times X^0$, by the \'etale condition $\Kan!!(1,1)(\pi)$, the \'etale map $\tau_{1,1}\co K_{1}\rto K_{0}\times G_1=X^0\times G_1$, there is a unique tangent vector $\xi'\in T_{s^0_0(x)} K_{1}$ satisfying
\[
d_{s^0_0(x)}\tau_{1,1}(\xi')=(0,\xi)\in T_{(x,e)}(X^0\times G_1).
\]
Then we set
\begin{align}\label{E V-0-1}
\sigma_0(\xi,x):=d_{s^0_0(x)}d^1_0(\xi').
\end{align}

\begin{remark}
The $\sigma_0$ can be also defined as follows. For every $(\xi,x)$, take a curve $\gamma_\xi(t)$ in $G_1$ representing $\xi\in T_eG_1$, i.e., $\gamma_\xi(0)=e$, and $\gamma_\xi'(0)=\xi$. Then by $\Kan!!(1,1)(\pi)$ we have a curve $\tilde\gamma_\xi(t)$ in $K_1$ satisfying $\tilde\gamma_\xi(0)=s^0_0(x)$ and $\tau_{1,1}(\tilde\gamma_\xi(t))=(x,\gamma_\xi(t))\in X^0\times G_1$, therefore $d^1_1(\tilde\gamma_\xi(t))=x$ and $\pi_\rho(\tilde\gamma_\xi(t))=\gamma_\xi(t)$. Then we have
\begin{align}\label{E V-0-2}
\sigma_0(\xi,x)=d_{s^0_0(x)}d^1_0(\tilde\gamma_\xi'(0)) =(d^1_0\circ\tilde\gamma_\xi(t))'|_{t=0}.
\end{align}
\end{remark}

\begin{remark}\label{R-inf-actionn}
The section $\sigma\co \fk g\times \CX\rto \fk g\times \ST\CX$ can be viewed as a bundle map, still denoted by
\begin{equation}
\sigma\co \fk g\times \CX\rto \ST\CX.
\end{equation}
Then $\sigma_\SK$ can be viewed as
a Kan-bundle map
\begin{equation}
\sigma_\SK\co \g_\SAd\times_\SG \SK\rto \hat\ST\SK.
\end{equation}
\end{remark}

\section{Symplectic orbifolds and symplectic reductions}\label{Sec-symplectic-reduction}

As an application, we formulate Hamiltonian 2-group actions on symplectic orbifolds and study their reductions. In \cite{Hoffman-Sjamaar2021-IMRN}, a similar issue was studied. Our approach is essentially different; see Remark \ref{R-comparison} for the comparison.

\subsection{Statements of results}
\label{Sec-sym-reduction-statement}
Let us first recall the theory on symplectic manifolds (cf. cf. \cite{Marsden-Weinstein1974-reduction,Meyer1973-reduction}).
Let $(X,\omega)$ be a symplectic manifold, $G$ be a compact Lie group and $\fk g$ the Lie algebra of $G$. Suppose $G$ acts smoothly on $X$. For each $\xi\in \fk g$, let $V_\xi$ be the infinitesimal vector field generated by $\xi$. The action is called a Hamiltonian action if there exists an equivariant map $\mu\co X\rto \fk g^\ast$ such that, for any $\xi\in \fk g$,
\[
d\langle \mu, \xi\rangle=-i(V_\xi, \omega).
\]
The $\mu$ is called a moment map of the Hamiltonian action.

Now suppose $0\in \fk g^\ast$ is a regular value of $\mu$, then the level set $Y=\mu^{-1}(0)$ is a smooth submanifold of $X$. For any $y\in Y$, $V_\xi(y)\in T_yY$, and
\[
V_\xi(y)=0\iff \xi=0.
\]
This implies that $G$ acts on $Y$ and the action is locally free. If the action is {free}, the symplectic reduction is the quotient manifold $Z=Y/G$. Moreover, $\omega$ induces a symplectic structure $\omega_0$ on $Z$, $(Z,\omega_0)$ is known as a symplectic reduction.

Now let us focus on the case where the action is {\em not free}. We obtain a quotient groupoid $\tilde{\mc Z}=Y\rtimes G$. Since the action is locally free, $\tilde {\mc Z}$ is equivalent to a proper, \'etale Lie groupoid $\mc Z$, which also carries a symplectic structure $\omega_0$. Then we get the symplectic reduction $(\mc Z,\omega_0)$.

Using the language of Lie groupoids, we treat $X$ as a Lie 0-groupoid and it admits a Lie 1-group $\SB G$-action. The symplectic reduction $Y\rtimes \SB G$ is a Lie 1-groupoid, and is equivalent to a proper, \'etale Lie groupoid $\mc Z$.
Furthermore, if the $\SB G$-action on $Y$ is free, $\mc Z$ can be further equivalent to a Lie 0-groupoid. This is illustrated as the following line:
\[
\{\mbox{Lie $0$-groupoid}\}
\xrightarrow{\mbox{reduction}}
\{\mbox{Lie $1$-groupoid}\}
\xrightarrow{\mbox{free action}}
\{\mbox{Lie 0-groupoid}\}.
\]
From this point of view, we see that the symplectic reduction of (a Hamiltonian {\bf 1-group} action on) a Lie 0-groupoid lies outside of the scope of Lie 0-groupoids unless the action satisfies a further freeness assumption.

In this section, our main result Theorem \ref{T-orbifold-reduction} may be summarized as the following:
\begin{enumerate}
\item The symplectic reduction of a Hamiltonian 1-group action on an orbifold is an orbifold, i.e,
\[
\{\mbox{Lie $1$-groupoid}\}
\xrightarrow{\mbox{1-group reduction}}
\{\mbox{Lie $1$-groupoid}\}.
\]
\item The symplectic reduction of a Hamiltonian {\bf \'etale 2-group} action on an orbifold is an \'etale Lie 2-groupoid:
\[
\{\mbox{Lie $1$-groupoid}\}
\xrightarrow{\mbox{2-group reduction}}
\{\mbox{Lie $2$-groupoid}\}.
\]
\item Continue (2), if the reduction groupoid is 1-free, it is equivalent to an orbifold:
\[
\{\mbox{Lie $2$-groupoid}\}
\xrightarrow{\mbox{1-free action}}
\{\mbox{Lie 1-groupoid}\}.
\]
\end{enumerate}

In \cite{Hoffman-Sjamaar2021-IMRN}, Hoffman--Sjamaar also studied the Hamiltonian 2-group $\SG$-action \footnote{Their definition is different from the one in Definition \ref{D hamiltonian-action}.} on symplectic Lie groupoids. For reader's convenience, we make the following comparison.

\begin{remark}\label{R-comparison}
[{\bf Comparison with the work in \cite{Hoffman-Sjamaar2021-IMRN}}]
~

{\em On results.} In \cite{Hoffman-Sjamaar2021-IMRN}, the target Lie groupoid $\CX$ is assumed to be \'etale, and results (1) and (3) listed above are essentially obtained.

We mainly focus on the case that $\CX$ is an orbifold, i.e., proper and \'etale, though the properness assumption is not essential. On the other hand, we give a complete answer for the symplectic reduction by including (2).

{\em On approaches.} In \cite{Hoffman-Sjamaar2021-IMRN}, the Hamiltonian action and symplectic reduction are constructed under the assumption of a strict 2-group with strict action. For a non-strict 2-group $\SG$
with a (non-strict) action on $\CX$, it is theoretically proved that  one may replace $\SG$ by a strict 2-group $\tilde \SG$, $\CX$ by $\tilde{\CX}$
such that the original action is equivalent to a strict action of $\tilde \SG$ on $\tilde{\CX}$. Also note that  $\tilde{\CX}$ can not be \'etale in general.
\end{remark}

The approach in \cite{Hoffman-Sjamaar2021-IMRN} is certainly not convenient for general actions.
In this section, we provide a direct approach to study the symplectic geometry of any action via Kan fibrations. An additional advantage of our approach is that the \'etaleness is preserved.

\subsection{Hamiltonian actions on symplectic orbifolds }\label{Sec-Hamiltonian}

In this subsection we define Hamiltonian actions of \'etale Lie $2$-groups on symplectic orbifolds. We first recall the definition of symplectic forms on orbifold groupoids.

\begin{defn}\label{D-symplectic}
Let $\CX$ be an orbifold groupoid. A 2-form $\omega=(\omega^0,\omega^1)$ on $\CX$ is called a symplectic form  if $\omega^0$ on $\CX^0$ is a symplectic form. Then $(\CX,\omega)$ is called a {\bf symplectic orbifold groupoid}.
\end{defn}

This definition also works for general \'etale Lie groupoids. By the \'etaleness of $\CX$, for a symplectic form $\omega$, the 2-form $\omega^1$ on $\CX^1$ is also a symplectic form.

Let $(\CX,\omega)$ be a symplectic orbifold groupoid and $\SG$ be an \'etale Lie 2-group, and $\pi\co \CX\rightarrowtail\SK\twoheadrightarrow \SG$ be a Kan fibration.

\begin{defn}\label{D symplectic-action}
We say the action given by $\SK$ is a {\bf symplectic action} if $\omega$ is a $\SG$-equivariant form. Let $\omega_\SK$ be the corresponding Kan-form on $\SK$.
\end{defn}

Let $\sigma$ and $\sigma_\SK$
be the bundle map of the infinitesimal action (cf. Remark \ref{R-inf-actionn}). Let also
$\sigma_\SK$ be the Kan-vector field of the infinitesimal action. Suppose its restriction on $\fk g\times \CX$ is $V$.
We have the  homomorphism
\[
\zeta\co \fk g\times \CX\xrightarrow{\sigma}
\ST\CX\xrightarrow{-i(\cdot,\omega)} \ST^\ast\CX,
\]
which is a composition of vector bundle maps.
Its equivariant version is
\[
\zeta_\SK\co \g_{\SAd}\times_\SG \SK\xrightarrow{\sigma_\SK}
\hat\ST\SK\xrightarrow{-i(\cdot,\omega_\SK)} \hat\ST^\ast\SK.
\]
The dual of $\zeta$ is a bundle map
\[
\tilde\zeta^\ast\co \ST\CX\rto \fk g^\ast\times\CX,
\]
and its equivariant version is
\[
\tilde\zeta^\ast_\SK\co \hat \ST\SK\rto \g^\ast_\SAd \times_\SG\SK.
\]
With further projections, we have
\[
\zeta^\ast\co \ST\CX\rto \fk g^\ast
\and
\zeta^\ast_\SK\co \hat \ST\SK\rto \g^\ast_\SAd.
\]

\begin{defn}\label{D hamiltonian-action}
Let $\pi\co \CX\rightarrowtail\SK\twoheadrightarrow\SG$ be the Lie $2$-groupoid Kan fibration of a symplectic action of an \'etale Lie $2$-group $\SG$ on a symplectic orbifold groupoid $(\CX,\omega)$.    If there is an $\SG$-equivariant  strict homomorphism $\mu\co \CX\to \fk g^\ast$ (equivalently, a Kan homomorphism $\mu_\SK\co \SK\to \g^\ast_\SAd$) such that $d\mu=\zeta^\ast$, we call $\SK$ a {\bf Hamiltonian action} with the moment map $\mu$.
\end{defn}

By the definition, we have
\begin{equation}\label{E-momoent-cond}
d\mu=i_V(\omega)\and
\hat d\mu_\SK=i_{V_\SK}(\omega_\SK).
\end{equation}
Here $\hat d$ means the differential followed by the projection to $\hat\ST^*\SK$. The \eqref{E-momoent-cond} is the classical condition on moment maps (cf. 
\cite[\S 22.1]{daSilva2008}).

\subsection{Symplectic reduction}
\label{Sec-sym-reduction}
Now we discuss the symplectic reductions with respect to Hamiltonian actions. The arguments are essentially same as that for smooth manifolds.

For the $0\in \fk g^\ast$,
denote $\mu^{-1}(0)$ by $\CY$. As $\mu$ is $\SG$-equivariant, the  action preserves $\mc Y$. The Kan fibration $\SH\rto \SG$ of the $\SG$-action on $\CY$ is given by
\[
\SH=\mu_\SK^{-1}(0\times\SG), \qq\text{where}\qq 0\times\SG\subseteq \g_\SAd^\ast.
\]
So $\SH\subseteq\SK$ is closed.

\begin{lemma}
If $0\in\fk g^*$ is a regular value of $\mu$, the level set $\mc Y$ is an orbifold groupoid. Moreover $\sf H$ is an \'etale Kan fibration of Lie $2$-groupoids over $\SG$ with fiber $\mc Y$.
\end{lemma}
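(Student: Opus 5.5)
The plan has two parts. First, $\CY$ is an orbifold groupoid. Second, each structure map of $\SH$ satisfies the Kan conditions of $\SK$ after restriction.

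\emph{Step 1: $\CY$ is an orbifold groupoid.} The moment map $\mu=(\mu^0,\mu^1)$ is a strict homomorphism to the manifold $\fk g^*$, so $\mu^1=\mu^0\circ s=\mu^0\circ t$. Since $0$ is a regular value of $\mu^0$ and $s,t$ are local diffeomorphisms, $0$ is also a regular value of $\mu^1$. Hence $Y^0=(\mu^0)^{-1}(0)$ and $Y^1=(\mu^1)^{-1}(0)=s^{-1}(Y^0)=t^{-1}(Y^0)$ are closed submanifolds.

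The restrictions of $s$ and $t$ to $Y^1$ are local diffeomorphisms onto $Y^0$, because they are local diffeomorphisms of the ambient spaces that preserve these submanifolds. So $\CY=(Y^1\rrto Y^0)$ is a full, saturated, \'etale Lie subgroupoid. Properness of $s\times t$ restricted to $Y^1=(s\times t)^{-1}(Y^0\times Y^0)$ follows from properness of $\CX$.

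\emph{Step 2: the levels of $\SH$ are manifolds.} For each $n$ we have $H_n=(\mu_{\SK,n})^{-1}(0\times G_n)$, with $(\g^*_\SAd)_n=\fk g^*\times G_n$. By Lemma \ref{L etale-kan-fib}, $\pi\co\SK\rto\SG$ is an \'etale Kan fibration. Together with $\Kan!(m)(\pi)$ for $m\geqslant 2$, this shows that every face map $d_\zeta$ restricted to a fiber $K_g\rto K_{d_\zeta g}$ is a local diffeomorphism. Iterating, each fiber $K_g$ is locally diffeomorphic to $K_0=X^0$ via a composite of face maps to a vertex.

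Because $\mu_\SK$ is a Kan homomorphism covering $\SG$, it commutes with these face maps. Hence on $K_g$ the composite of $\mu_{\SK,n}$ with the projection to $\fk g^*$ is locally $\mu^0\circ d$. Here $d$ is this local diffeomorphism, and the identification of the $\fk g^*$-coordinate uses the coadjoint structure maps \eqref{E Ad-*}, which are linear isomorphisms fixing $0$.

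Therefore $0$ is a regular value of the $\fk g^*$-component of $\mu_{\SK,n}$ on $K_n$, uniformly in $g$. Consequently $H_n$ is a closed submanifold of $K_n$, and $H_n\cap K_g$ is locally identified with $Y^0$. This is the step I expect to be the main obstacle. One must make precise that $\mu_{\SK,n}$ is a submersion transverse to $0\times G_n$, rather than only fiberwise regular. I would handle it by noting that $K_n\rto G_n$ is a submersion, and that $\mu_{\SK,n}$ covers the identity of $G_n$. Fiberwise regularity then implies transversality.

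\emph{Step 3: the Kan conditions for $\SH$.} Consider a horn in $\SH$ over a simplex $g\in G_m$. By $\Kan(m,j)(\pi)$, there is a filler in $K_m$, unique for $m\geqslant 2$ and locally unique for $m=1$ by $\Kan!!$. All its faces lie over $0$. Its value of $\mu_\SK$ is determined by any vertex via the Kan homomorphism property, so the filler lies in $H_m$.

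Thus $\tau_{m,j}$ for $\SH$ is the restriction of $\tau_{m,j}$ for $\SK$ to a saturated submanifold, namely the preimage of the horn space of $\SH$. It is therefore still a surjective submersion, a local diffeomorphism, or a diffeomorphism, according to which property the ambient map has. Hence $\SH\rto\SG$ is an \'etale Kan fibration of Lie $2$-groupoids. Its fiber over $\bullet$ is $\SFib(\bullet)\cap\SH$, which is the Lie $1$-groupoid of $\CY$, as Lemma \ref{L fiber} requires.
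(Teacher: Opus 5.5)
Your proposal is correct and takes the same route as the paper. The paper shows that $\CY$ is a full closed subgroupoid of $\CX$ (your Step~1) and calls the Kan-fibration statement obvious; your Steps~2--3 supply that omitted check by restricting the Kan conditions of $\SK$ to the saturated closed submanifolds $H_n$. (Step~2 can be shortened: since the coadjoint maps fix $0$, $H_n$ is just the preimage of $Y^0$ under the vertex map $K_n\to X^0$, a composite of face maps and hence a submersion, so no fiberwise regularity argument is needed.)
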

\begin{proof}
When $0\in\fk g^*$ is a regular value of $\mu$, $\mc Y$ is a full closed subgroupoid of $\CX$, hence is an orbifold groupoid. The rest is obvious.
\end{proof}

Since $\SH\rto \SG$ is the Lie $2$-groupoid Kan fibration of the $\SG$-action on $\CY$, we have

\begin{lemma}
The restriction of $\sigma_{\SK}$ on $\g_\SAd\times_\SG \SH$ is a section of $\g_\SAd\times_\SG \hat\ST\SH\rto \g_\SAd\times_\SG\SH$. Denote the restriction section by $\sigma'_\SH$ and its restriction on $\fk g\times \mc Y$ by $\sigma'$.
\end{lemma}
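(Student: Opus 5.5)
The plan is to use the two structural facts already recorded: $\sigma_\SK$ is determined by its restriction $\sigma_0$ on $(\g_\SAd\times_\SG\SK)_0=\fk g\times X^0$, and $\SH=\mu_\SK^{-1}(0\times\SG)$ is an \'etale Kan fibration over $\SG$ with fiber $\CY$. It therefore suffices to prove two things. First, at level $0$ the vector $\sigma_0(\xi,y)$ is tangent to $Y^0=(\mu^0)^{-1}(0)$ for every $y\in Y^0$. Second, the tangency propagates to all higher simplices $H_n$ along the face maps.

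For the level-$0$ statement I would use the moment map condition \eqref{E-momoent-cond}, $d\mu=i_V(\omega)$, together with equivariance of $\mu$. The cleanest route is the curve description \eqref{E V-0-2}. Take $\gamma_\xi(t)\subset G_1$ with $\gamma_\xi(0)=e$, and let $\tilde\gamma_\xi(t)\subset K_1$ be its lift through $\Kan!!(1,1)(\pi)$ starting at $s^0_0(y)$, so that $d^1_1(\tilde\gamma_\xi(t))=y$. Since $\mu_\SK\co\SK\to\g^\ast_\SAd$ is a Kan homomorphism, it sends $\tilde\gamma_\xi(t)$ to a $1$-simplex of $\g^\ast_\SAd$ over $\gamma_\xi(t)$ whose source is $\mu^0(y)=0$. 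Its target is therefore $(\Ad_1(\gamma_\xi(t))^\ast)^{-1}(0)=0$ by \eqref{E Ad-*}. Hence $\mu^0(d^1_0\tilde\gamma_\xi(t))=0$ for all small $t$, so the curve $d^1_0\tilde\gamma_\xi(t)$ lies in $Y^0$. Differentiating at $t=0$ gives $\sigma_0(\xi,y)\in T_yY^0$. The same argument also shows that the lift $\tilde\gamma_\xi(t)$ itself stays inside $H_1$.

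For the higher levels, fix $g\in G_n$ and a point of $H_g=\SH\cap K_g$. The section of $\g_\SAd\times_\SG\hat\ST\SK$ over $K_g$ is obtained from the section over lower simplices by pulling back along the local diffeomorphisms $d^n_j\co K_g\to K_{d^n_j g}$; ultimately it is determined by $\sigma_0$ at the vertices. Because $\SH$ is a simplicial submanifold, these face maps restrict to local diffeomorphisms $H_g\to H_{d^n_jg}$ by the \'etaleness of $\SH\to\SG$. Inductively, the tangent vector at a point of $H_g$ is the unique preimage of a vector tangent to $H_{d^n_jg}$, so it is tangent to $H_g$. Compatibility with all structure maps is inherited from $\sigma_\SK$, which makes $\sigma'_\SH$ a Kan-section, and $\sigma'$ is then simply its fiber over $\bullet$.

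The main obstacle I expect is the claim that $\mu_\SK$ maps the lifted curve to $0$ at its target. This requires knowing that the Kan homomorphism $\mu_\SK$ on $1$-simplices is exactly the coadjoint transport, and that $0$ is fixed by it. If the direct argument is delicate, I would instead argue infinitesimally at level $0$. By $d\mu=i_V\omega$, tangency of $V_\xi$ to $Y^0$ is equivalent to $\omega(V_\eta,V_\xi)=0$ for all $\eta,\xi$, that is, to $\langle d\mu(V_\xi),\eta\rangle=0$. This last identity is the derivative of the equivariance relation $\mu(\xi\cdot y)=\Ad^\ast\mu(y)$ at a point where $\mu=0$.
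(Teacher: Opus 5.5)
Your proposal is correct and is essentially the argument the paper leaves implicit. The paper justifies the lemma only by noting that $\SH\to\SG$ is itself the (\'etale) Kan fibration of the restricted action on $\CY$: the $\Kan!!(1,1)$-lift defining $\sigma_0$ therefore stays in $H_1$, its $d^1_0$-image is tangent to $Y^0$, and the face-map determination of $\sigma_\SK$ propagates this to all levels. You simply make explicit why the face maps preserve $\SH$, using that $\mu_\SK$ is a Kan homomorphism into $\g^\ast_\SAd$, whose $1$-simplices act linearly and hence fix $0$; this step is routine, so your fallback infinitesimal argument is not needed.
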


\begin{lemma}\label{L V-0-nonvanishing}
If $0\in\fk g^*$ is a regular value of $\mu$, for any $\xi\in \fk g$, the vector field $\sigma_0(\xi,\cdot)$ is nowhere vanishing on $Y^0$.
\end{lemma}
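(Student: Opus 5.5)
We argue by contradiction, as in the classical manifold case: a zero of the infinitesimal vector field on the level set forces the derivative of the moment map to fail to be surjective. Fix $\xi\in\fk g$ with $\xi\neq 0$ (for $\xi=0$ the statement is vacuous, so the claim is understood for nonzero $\xi$). Suppose $\sigma_0(\xi,y)=0$ for some $y\in Y^0=(\mu^0)^{-1}(0)$.

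The moment map condition \eqref{E-momoent-cond} holds on $X^0$. Evaluated at $y$, it reads
\[
d_y\langle\mu^0,\eta\rangle=-\omega^0\big(\sigma_0(\eta,y),\cdot\big)\qquad\text{for all }\eta\in\fk g.
\]
Here we use that the restriction of $V$ to $\fk g\times X^0$ is exactly $\sigma_0$ defined in \eqref{E V-0-1}, and that $\zeta^\ast$ is the dual of $\zeta=-i(\sigma(\cdot),\omega)$. Pairing with a tangent vector $v\in T_yX^0$ gives
\[
\langle d_y\mu^0(v),\eta\rangle=-\omega^0\big(\sigma_0(\eta,y),v\big)=\omega^0\big(v,\sigma_0(\eta,y)\big).
\]
Taking $\eta=\xi$, the assumption $\sigma_0(\xi,y)=0$ yields $\langle d_y\mu^0(v),\xi\rangle=0$ for every $v$. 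So the image of $d_y\mu^0\colon T_yX^0\to\fk g^\ast$ lies in the annihilator of $\xi$, a proper hyperplane. Hence $d_y\mu^0$ is not surjective.

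The remaining step is to check that ``$0$ is a regular value of $\mu$'' means surjectivity of $d\mu^0$ along $(\mu^0)^{-1}(0)$. Since $\mu=(\mu^0,\mu^1)\colon\CX\to\fk g^\ast$ is a strict homomorphism to a manifold, $\mu^1=\mu^0\circ s=\mu^0\circ t$. The source map $s$ is étale, so regularity of $\mu^1$ is equivalent to regularity of $\mu^0$. Either way, the previous paragraph contradicts the regular-value hypothesis, so $\sigma_0(\xi,\cdot)$ has no zero on $Y^0$.

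The main obstacle is making the identification in the first paragraph rigorous: the equality $d\mu=\zeta^\ast$ from Definition \ref{D hamiltonian-action} must be evaluated pointwise on $X^0$, with signs and dualities tracked through $\zeta$, $\tilde\zeta^\ast$ and the final projection to $\fk g^\ast$. We must also confirm that $\sigma_0$ is indeed the $X^0$-component of the infinitesimal vector field $V$. Nondegeneracy of $\omega^0$ is not even needed for this direction; it would only be used for the converse, that nonvanishing implies regularity.
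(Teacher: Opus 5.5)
Your proof is correct and is the standard argument the paper means when it says the proof is omitted: evaluating $d\mu=\zeta^\ast$ at a zero $y\in Y^0$ of $\sigma_0(\xi,\cdot)$ gives $\mathrm{im}\, d_y\mu^0\subseteq\{\eta\in\fk g^\ast\mid\langle\eta,\xi\rangle=0\}$, which contradicts the assumption that $0$ is a regular value. You are also right to read the lemma for $\xi\neq 0$, since $\sigma_0(\cdot,y)$ is linear and so $\sigma_0(0,\cdot)\equiv 0$.
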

The proof is standard, we omit it.

As explained in Remark \ref{R-inf-actionn}, $\sigma'$ and $\sigma'_\SH$ are bundle maps
\[
\sigma'\co \fk g\times \mc Y\rto \ST\mc Y\and
\sigma'_\SH\co \g_\SAd\times_\SG\SH\rto \hat\ST\SH.
\]
By Lemma \ref{L V-0-nonvanishing}, $\sigma'$ is an injective map and is determined by the restriction of $\sigma_0$, denoted by $\sigma'_0$. Let $P\subseteq TY^0$ be the sub-bundle of the image of $\sigma'_0$; then $P$ is an integrable distribution over $Y^0$. We have the exact sequence of vector bundles over $Y^0$
\[
P\rto TY^0\rto Q,
\]
where $Q$ is the quotient bundle $TY^0/P$. One can still define the $\omega$-complementary $P^\omega$ of $P$ and we have $P^\omega=TY^0$. Hence $\omega$ induces a non-degenerated section $\omega'$ of $\Lambda^2(Q^\ast)$.

The above argument certainly can be applied to the equivariant setting, so we have the sub-bundle $\sf P$ of ${\sf \hat T} \SH$, the quotient bundle ${\sf Q}$ over $\sf H$, and the $\omega'_\SH\in\Lambda^2(\sf Q^\ast)$ that restricts to $\omega'$.

To summarize, we have

\begin{theorem}\label{T reduction}
Let $\mu\co\CX\rto \fk g^\ast$ be the moment map of the Hamiltonian action. If $0\in\fk g^*$ is a regular value of $\mu$, then $\CY=\mu^{-1}(0)$ is an orbifold that admits the induced $\SG$-action with the Kan fibration $\SH\subseteq \SG$. The symplectic form $\omega$ on $\CX$ induces a section $\omega'$ on $\Lambda^2(Q^\ast)\rto \CY$ that is $\SG$-equivariant. Hence, we have a section $\omega'_\SH$ of the vector bundle
$
\Lambda^2(\sf Q^\ast).
$
\end{theorem}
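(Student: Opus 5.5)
The plan is to assemble the three assertions of Theorem \ref{T reduction} from the lemmas just proved, working first on the fiber $\CY$ and then transporting everything to $\SH$ through the \'etale Kan fibration structure.

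First, the level set. Since $0$ is a regular value of $\mu^0\co Y^0\subseteq X^0\rto\fk g^\ast$, the set $Y^0=(\mu^0)^{-1}(0)$ is a closed submanifold. Because $\mu$ is a strict homomorphism to the manifold $\fk g^\ast$, we have $\mu^0\circ s=\mu^0\circ t$. Hence $Y^1=s^{-1}(Y^0)=t^{-1}(Y^0)$, and $\CY$ is the full subgroupoid over $Y^0$. It inherits properness and \'etaleness, so it is an orbifold. The equivariance of $\mu_\SK\co\SK\rto\g^\ast_\SAd$ shows that $\SH=\mu_\SK^{-1}(0\times\SG)$ is a simplicial submanifold. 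I would check the Kan conditions levelwise: $\tau_{m,j}$ for $\SH$ is the restriction of $\tau_{m,j}$ for $\SK$. By Lemma \ref{L etale-kan-fib} the latter is \'etale, and the preimage of the $0$-section is saturated, since faces of points over $0$ lie over $0$. Therefore the restrictions remain surjective local diffeomorphisms, respectively diffeomorphisms for $m\geqslant2$. This gives the induced action with Kan fibration $\SH\rto\SG$, as in the earlier lemma.

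Second, the reduced form on $\CY$. By Lemma \ref{L V-0-nonvanishing}, $\sigma'_0\co\fk g\times Y^0\rto TY^0$ is fiberwise injective, so $P=\mathrm{Im}\,\sigma'_0$ is a subbundle of constant rank $\dim\fk g$. Tangency, $\sigma_0(\xi,y)\in T_yY^0$, follows from $d\mu(V_\xi)=\omega(V_\xi,V_\xi)$-type identities coming from equivariance of $\mu$. I would then prove $TY^0\subseteq P^\omega$ using \eqref{E-momoent-cond}: for $v\in T_yY^0$, $\omega(V_\xi,v)=\pm d\langle\mu,\xi\rangle(v)=0$. This shows $P$ is isotropic and $\ker(\omega^0|_{TY^0})=TY^0\cap (TY^0)^\omega=P$ by dimension count: $(TY^0)^\omega$ has rank $\dim\fk g$ and contains $P$. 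Hence $\omega^0|_{TY^0}$ descends to a nondegenerate $\omega'^0\in\Gamma(\Lambda^2Q^\ast)$. The same construction on $Y^1$ is compatible with $s,t$ because they are local diffeomorphisms intertwining $\omega^1$, $\omega^0$ and $\sigma$. This yields $\omega'$ on $\CY$.

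Third, equivariance. I would repeat the construction fiberwise on each $H_g=\pi_n^{-1}(g)$, using $\omega_\SK$ and $\sigma_\SK$, to obtain $\SP\subseteq\hat\ST\SH$, the quotient $\SQ$, and $\omega'_\SH$. The face and degeneracy maps restricted to fibers are local diffeomorphisms preserving $\omega_\SK$ and $\sigma_\SK$, since these are Kan-objects. So they preserve $\SP$ and $\omega'_\SH$, making $\omega'_\SH$ a Kan-section restricting to $\omega'$ on $\CY=\SFib(\bullet)$. This says $\omega'$ is $\SG$-equivariant, via $\Gamma_\SG(\SK_2,\SK_1)\cong\Gamma_\SG(\SX_2,\SX_1)$.

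The main obstacle is this compatibility of $\SP$ with the structure maps of $\SH$. It amounts to showing that $\sigma_\SK$, which is defined only from $\sigma_0$ by \'etale transport, really takes values in the vertical bundle with the coadjoint twist. I would verify this on $1$-simplices using the $\SAd$-equivariance built into $\g_\SAd\times_\SG\SH$, and then extend to higher simplices by $\Kan!(2)$ and $\Kan!(3)$.
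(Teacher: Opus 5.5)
Your proposal is correct and follows the paper's route. That route is: $\CY$ as the full subgroupoid over the regular level set, with $\SH$ obtained by restricting the \'etale Kan fibration $\SK$; $P=\mathrm{Im}\,\sigma'_0$ of rank $\dim\fk g$ by Lemma \ref{L V-0-nonvanishing}; $TY^0\subseteq P^\omega$ from \eqref{E-momoent-cond}, so $\omega$ descends nondegenerately to $Q$ (your count $P=(TY^0)^\omega$ is equivalent to the paper's $P^\omega=TY^0$); and then the same construction fiberwise on $\SH$. The one imprecise step is tangency, since $\omega(V_\xi,V_\xi)=0$ only kills the $\xi$-component of $d\mu(V_\xi)$; the right reason is equivariance of $\mu_\SK$ (the lift $\tilde\gamma_\xi$ starting at $s^0_0(x)$, $x\in Y^0$, has $d^1_0\tilde\gamma_\xi(t)\in Y^0$ because $\SAd$ fixes $0\in\fk g^\ast$), which is exactly the paper's statement that $\sigma_\SK$ restricts to a section over $\SH$.
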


\begin{defn}\label{D reduction}
We call $(\SH,\omega'_{\SH})$ obtained in Theorem \ref{T reduction} the {\bf symplectic reduction} of
 the Hamiltonian action $\SK$ at $0\in\fk g^\ast$.
\end{defn}

We next explain that $\SH$ has an equivalent \'etale Lie 2-groupoid.

\begin{theorem}\label{T etale-reduction}
Under the assumption of Theorem \ref{T reduction}, the symplectic reduction $(\SH,\omega'_{\SH})$
is equivalent to a symplectic \'etale Lie 2-groupoid $(\SZ,\omega'')$. Here by symplectic we mean that $\omega''_n$ on $Z_n$ is symplectic for every $n\geqslant 0$.
\end{theorem}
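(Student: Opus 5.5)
The plan is to mimic the classical construction: quotient $\SH$ by the foliation $\SP$ generated by the infinitesimal action, using local slices. For the classical reduction $Y\rtimes G\simeq$ an étale groupoid one chooses transversals to the orbits; here we do the same simplicially.

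\textbf{Step 1: a transversal in degree $0$.} The bundle $P\subseteq TY^0$ is integrable of rank $\dim\fk g$ by Lemma \ref{L V-0-nonvanishing}. Choose a submanifold $S^0\subseteq Y^0$, possibly disconnected, with two properties: it is transverse to $P$ with $TS^0\oplus P|_{S^0}=TY^0|_{S^0}$, and it meets every $\SH$-orbit. Such an $S^0$ exists because the leaves of $P$ are locally the orbits of the $\SG$-action, through the curves $\tilde\gamma_\xi$ of \eqref{E V-0-2}. The restriction $\omega'|_{S^0}$ is then identified with the section $\omega'$ of $\Lambda^2(Q^\ast)$, since $TS^0\cong Q|_{S^0}$. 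It is therefore nondegenerate, and it is closed because $\omega$ is closed.

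\textbf{Step 2: restrict $\SH$ to the transversal.} Define $\SZ$ to be the restriction of $\SH$ to $S^0$:
\[
Z_n:=\{h\in H_n\mid \text{all vertices of } h \text{ lie in } S^0\}.
\]
This is the pullback of $\SH$ along $S^0\hookrightarrow H_0$, taken in the sense of the fibered product of $\SH^{[1]}$ used in Definition \ref{D-equivalence}. Since $S^0$ meets every orbit, the inclusion $\SZ\to\SH$ is essentially surjective. The standard argument then shows that $\SZ$ is a Lie $2$-groupoid and that $\SZ\to\SH$ is an equivalence.

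\textbf{Step 3: étaleness of $\SZ$.} This is the main point. By Lemma \ref{L etale-kan-fib}, the projection $\SH\to\SG$ is an étale Kan fibration, and $\SG$ is étale. A dimension count then gives
\[
\dim H_1=\dim Y^0+\dim G_1=\dim Y^0+\dim\fk g .
\]
The vertical directions of $d^1_1\co H_1\to H_0$, restricted along $s^0_0$, map under $d^1_0$ isomorphically onto $P$; this is exactly the definition \eqref{E V-0-1} of $\sigma_0$. Consequently the map $(d^1_1,d^1_0)$ restricted to $Z_1$ is transverse, and $Z_1$ is a manifold of dimension $\dim S^0$ on which both $d^1_i$ are local diffeomorphisms. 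At a general point of $Z_1$ rather than at a degenerate one, I would transport this statement using the left and right action of $\SG$, i.e. by composing with horn fillers, so that the vertical directions are again identified with $P$.

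For $Z_2$, the étale conditions $\Kan!!(2)(\pi)$ and $\Kan!!(2)$ of $\SG$ show that $\lambda^\ast_{2,j}\co Z_2\to Z_{2,j}$ is a local diffeomorphism. The higher $Z_n$ follow from $\Kan!$.

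I expect this step to be the main obstacle. The transversality must hold uniformly at non-degenerate simplices, which needs an equivariance of $P$ under $d^1_0$ and $d^1_1$. I would prove that equivariance from the $\SG$-equivariance of $\sigma_\SK$, namely that it is a Kan-section.

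\textbf{Step 4: symplectic forms.} Put $\omega''_0:=\omega'|_{S^0}$ and $\omega''_n:=(\text{vertex map})^\ast\omega''_0$. These forms agree for every choice of vertex, because $\omega'_\SH$ is a Kan-form, i.e. compatible with all $d_\zeta$. Each $\omega''_n$ is symplectic because every face map of $\SZ$ is étale. Finally, the equivalence $\SZ\to\SH$ carries $\omega''$ to $\omega'_\SH$.
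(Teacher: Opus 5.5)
Your proposal is correct and follows essentially the same route as the paper. The paper also takes slices $S_{x_i}$ to the foliation integrating $P$ that together meet every orbit, pulls $\SH$ back along $q\co Z_0=\bigsqcup S_{x_i}\to H_0$, and gets étaleness by showing that $d^1_0\co (d^1_1)^{-1}(S_x)\to H_0$ is a local diffeomorphism. It does this with exactly your two ingredients: the $T_gG_1$-directions map isomorphically onto $P_y$ by the definition of $\sigma_0$, and $P_x$ maps onto $P_y$ because $\sigma_\SK$ is compatible with the face maps. It then obtains $\omega''$ from the identification $\msf T\SZ\cong q^\ast\SQ$.
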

\begin{proof} We first  construct $Z_0$ and a smooth map $q\co Z_0\rto H_0$.

Recall that $H_0=Y^0$. The integrable constant rank distribution $P$ over $H_0$ integrates into a regular foliation $\fk F_0$ on $H_0$, with leaf, denoted by $O_x$, passing through $x$. Now, take a local slice $S_x$ of this foliation, so that the following holds
\[
T_yO_y\oplus T_yS_x=P_y\oplus T_yS_x=T_yH_0,\qq \forall\qq y\in S_x,
\]
i.e., $S_x$ is a slice of $\fk F$ at each $y\in S_x$. We will show that the following map is a local diffeomorphism
\begin{align}\label{E map-d-1-0}
d^1_0\co (d^1_1)^{-1}(S_x)\rto F_0.
\end{align}

By the $\Kan!!(1,1)(\pi)$ of the \'etale Lie 2-groupoid Kan fibration $\pi\co\SH\rto\SG$, we have a local diffeomorphism
\[
\tau_{1,1}=(d_1^1,\pi_1)\co H_1\rto H_0\times G_1.
\]
Take a point $x_1\in (d^1_1)\inv(x)$ with $\tau_{1,1}(x_1)=(x,g)$ and $y=d^1_0(x_1)\in O_x=O_y$, we have the linear isomorphism
\[
d_{x_1}\tau_{1,1}\co T_{x_1}H_1\xrightarrow{\cong}T_xH_0\oplus T_gG_1=P_x\oplus T_xS_x\oplus T_gG_1
\]
and $T_{x_1}(d^1_1)\inv(S_x)\cong T_xS_x\oplus T_gG_1$. Then to show that $d^1_0$ in \eqref{E map-d-1-0} is a local diffeomorphism, we only need to show that the linear map $f:= d_{x_1}(d^1_0)\circ (d_{x_1}\tau_{1,1})\inv\co P_x\oplus T_xS_x\oplus T_gG_1\rto T_yH_0$ restricts to an isomorphism $T_xS_x\oplus T_gG_1\rto T_yH_0$.

By the definition of $\sigma_0(\xi,x)$ and Lemma \ref{L V-0-nonvanishing}, the $f$ restricts to
\[
f\co T_gG_1\xrightarrow{\cong} T_yO_x=T_yO_y=P_y.
\]
Since the $\sigma_\SK$ is compatible with the face maps, the $f$ also restricts to
\[
f\co P_x\xrightarrow{\cong} P_y.
\]
Since $d^1_0\co H_1\rto H_0$ is a submersion, the linear map $f$ is surjective. Therefore,
\[
f(T_xS_x)\oplus P_y=T_yH_0.
\]
This show that $d^1_0$ is a local diffeomorphism around each point in $(d^1_1)\inv(x)$.

For another $x_1'\in (d^1_1)\inv(S_x)$, suppose $d^1_1(x_1')=x'\in S_x$, then since $S_x$ is also a slice of $\fk F$ at $x'$, the above argument also shows that $d^1_0$ is a local diffeomorphism around $x_1'$. This finishes the proof that the map \eqref{E map-d-1-0} is a local diffeomorphism.

Now choose a collection of points $\{x_i\}_{i\in J}$ such that $\{|S_{x_i}|\}_{i\in J}$ covers the coarse space $|Y^0|$ of $\mc Y$. Then we take $Z_0$ to be the disjoint union of $S_{x_i}$ and $q\co Z_0\rto H_0=Y^0$ the inclusion on each component. Since $\{|S_{x_i}|\}_{i\in J}$ covers $|Y^0|$, the map $d^1_0\circ pr_1\co Z_0\times_{q,F_0,d^1_1}H_1\rto H_0$ is surjective; it is also a local diffeomorphism, since the map \eqref{E map-d-1-0} is a local diffeomorphism. It is clear that
\[
\psi\co Z_0\times_{q,H_0,d^1_1} H_1\xrightarrow{pr_2}H_1\xrightarrow{d^1_0}H_0
\]
is a surjective submersion. Now
the Lie 2-groupoid structure of $\SH$ induces a Lie 2-groupoid structure on $Z_0$ via $q$. Denote the Lie 2-groupoid by $\SZ$.
By dimension counting, the $\SZ$ satisfies $\Kan!!(n)$ for $n=1,2$, i.e., is \'etale.
Finally, since we take $S_x$ as the slice, the tangent bundle $\msf T\SZ$ is isomorphic to $q^\ast \msf Q$. Hence $\omega'$ induces a symplectic structure $\omega''$ on $\SZ$.
\end{proof}

\begin{coro}\label{C 1-grpd-action}
Under the assumption of Theorem \ref{T reduction}, if furthermore $\SG$ is an \'etale Lie $1$-group, then the symplectic reduction $(\SH,\omega_\SH')$ is a Lie $1$-groupoid, and the symplectic reduction $(\SZ,\omega_\rho'')$ is an \'etale Lie $1$-groupoid. \end{coro}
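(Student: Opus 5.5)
The plan is to run the argument of Theorem \ref{T etale-reduction} again, this time keeping track of the simplicial level at which Kan conditions become unique. Nothing new is needed for the symplectic structure. The only claim is that $\SH$, and therefore $\SZ$, satisfies $\Kan!(m)$ for all $m\geqslant 2$.

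\textbf{Step 1: $\SH$ is a Lie 1-groupoid.} Assume $\SG$ is an \'etale Lie 1-group, so $\SG=\SB G$ with $G$ a discrete group in the sense of the \'etale condition. Then $\SG$ satisfies $\Kan!(m)$ for every $m\geqslant 2$. The map $\pi\co\SH\rto\SG$ is a Kan fibration of Lie 2-groupoids, so $\tau_{2,j}$ and all higher $\tau_{m,j}$ are diffeomorphisms onto
\[
\Hom(\Lambda[m,j]\rto\Delta[m],\SH\rto\SG)=H_{m,j}\times_{G_{m,j}}G_m .
\]
Since $\lambda^\ast_{m,j}\co G_m\rto G_{m,j}$ is a diffeomorphism for $m\geqslant 2$, this fibered product is identified with $H_{m,j}$. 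Under that identification $\tau_{m,j}$ becomes $\lambda^\ast_{m,j}$ on $\SH$, so $\SH$ satisfies $\Kan!(m)$ for $m\geqslant2$. Together with $\Kan(1)$, which follows from the Kan fibration condition, this makes $\SH$ a Lie 1-groupoid. This is the same argument as the lemma in \S\ref{Sec-gp-actionon1} stating that $\SK$ is a Lie 1-groupoid when $\SG=\SB G$. By the correspondence in \S\ref{Sec-LL}, $\SH$ is then an honest Lie groupoid $H_1\rrto H_0=Y^0$.

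\textbf{Step 2: $\SZ$ is a Lie 1-groupoid.} In the proof of Theorem \ref{T etale-reduction}, $\SZ$ is obtained from $\SH$ by pulling back along $q\co Z_0\rto H_0$, where $q$ is the union of the slices $S_{x_i}$. So $Z_1=Z_0\times_{H_0}H_1\times_{H_0}Z_0$, and each higher $Z_m$ is obtained by restricting the vertices of $H_m$ to lie in $Z_0$. Horn-filling in $\SZ$ for $m\geqslant2$ is horn-filling in $\SH$ with the vertices unchanged, so it inherits the uniqueness $\Kan!(m)$ from Step 1. Hence $\SZ$ is a Lie 1-groupoid. The theorem already gives that $\SZ$ is \'etale, i.e.\ satisfies $\Kan!!(1)$, so $\SZ$ is an \'etale Lie groupoid, and $\omega''$ is the same symplectic structure as before.

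\textbf{Main obstacle.} The one point that needs care is the identification of the relative horn space with $H_{m,j}$ in Step 1. One must check that the Kan fibration diffeomorphism $\tau_{m,j}$ really composes with $(\lambda^\ast_{m,j})^{-1}$ on $G$ to give $\lambda^\ast_{m,j}$ on $H$, including smoothness. I expect this to follow directly from the definition of the fibered product in \S\ref{Sec-HK}.
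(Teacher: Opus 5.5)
Your proposal is correct and follows the paper's argument: $\Kan!(2)(\pi)$ for $\pi\co\SH\rto\SG$, together with $\Kan!(m)$ for $m\geqslant 2$ of the 1-group $\SG$, gives $\Kan!(2)$ of $\SH$. This uniqueness passes to the induced groupoid $\SZ$ because horns $\Lambda[m,j]$ with $m\geqslant 2$ contain all vertices.

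One harmless slip: by the paper's definition the \'etale condition is vacuous for a Lie $1$-group, so $G$ need not be discrete. Indeed, discreteness would force $\fk g=0$ and make the reduction trivial, but your argument never uses it.
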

\begin{proof}
When $\SG$ is a Lie $1$-groupoid, the $\Kan!(2)(\pi)$ for $\pi\co \SH\rto\SG$ and $\Kan!(2)$ of $\SG$ implies the $\Kan!(2)$ of $\SH$. Therefore $\SH$ is a Lie $1$-groupoid, and so is the induced groupoid $\SZ$.
\end{proof}

\begin{remark}\label{R_etalereduction}
Up to this point, we do not use any property of properness of $\CX$. We have completed the general construction of the symplectic reduction of Hamiltonian actions of \'etale Lie $2$-groups on symplectic orbifold groupoids, which are (\'etale) Lie $2$-groupoids in general.
\end{remark}

\begin{prop}\label{P proper-reduction}
Under the assumption of Theorem \ref{T reduction}, if furthermore the action, i.e., $\SK$, is proper, then the reduction $\SH$ and $\SZ$ are both proper.
\end{prop}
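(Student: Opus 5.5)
We need to show that $\SH$ and $\SZ$ are proper. By Definition \ref{D proper}, for a Lie 2-groupoid this means that $\partial_m^\ast\co H_m\rto H_{\partial m}$ is proper for $m=1,2$, and likewise for $\SZ$.

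\emph{Step 1: properness of $\SH$.} We use that $\SH=\mu_\SK^{-1}(0\times\SG)$ is a closed simplicial submanifold of $\SK$. It is a full sub-object: a simplex of $\SK$ lies in $\SH$ as soon as its vertices lie in $Y^0$. This holds because $\mu_\SK$ is a homomorphism to $\g^\ast_\SAd$, whose value on an $n$-simplex is determined by its value on a vertex through the coadjoint action. So if $\mu$ vanishes at one vertex, the whole simplex lies over $0\times G_n$. Consequently
\[
H_m=(\partial_m^\ast)^{-1}(H_{\partial m})\subseteq K_m ,
\]
with $H_{\partial m}$ closed in $K_{\partial m}$. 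The map $\partial_m^\ast|_{H_m}\co H_m\rto H_{\partial m}$ is then the base change of the proper map $\partial^\ast_m\co K_m\rto K_{\partial m}$ along a closed embedding. Such a base change is proper: a compact set in $H_{\partial m}$ is compact in $K_{\partial m}$, its preimage is compact in $K_m$, and it lies in the closed subset $H_m$. Doing this for $m=1,2$ gives properness of $\SH$.

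\emph{Step 2: properness of $\SZ$.} In the proof of Theorem \ref{T etale-reduction}, $\SZ$ is the Lie 2-groupoid induced from $\SH$ along $q\co Z_0\rto H_0$. Its simplices are simplices of $\SH$ together with lifts of their vertices to $Z_0$, so
\[
Z_m= H_m\times_{H_0^{m+1}} Z_0^{m+1}, \qquad Z_{\partial m}=H_{\partial m}\times_{H_0^{m+1}}Z_0^{m+1},
\]
where the vertex data are the same on both sides. Hence $\partial_m^\ast$ for $\SZ$ is the base change of $\partial_m^\ast$ for $\SH$ along $Z_{\partial m}\rto H_{\partial m}$, and base changes of proper maps between manifolds are proper. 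The one point to check is that the pullback spaces are Hausdorff manifolds and that compact sets pull back correctly. Since the slices $S_{x_i}$ can be taken with compact closure (shrinking if necessary), this is a routine closedness check.

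\emph{Main obstacle.} The delicate part is the fullness claim in Step 1, namely that $H_m$ equals the preimage of $H_{\partial m}$. This needs $\mu_\SK$ to be compatible with the face maps, so that vanishing of $\mu$ on vertices forces the simplex into $0\times G_m$. It follows because $\mu$ is a $\SG$-equivariant strict homomorphism, the coadjoint action is linear, and $\mu_\SK$ is a Kan homomorphism. I would verify this directly on $K_1$, where $\mu$ at $d^1_0$ equals $\Ad^\ast$ of $\mu$ at $d^1_1$, and then extend to $K_2$ using the face maps.
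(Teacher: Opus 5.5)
Your argument is correct and is essentially the paper's: $\SH$ is closed in $\SK$, so the proper maps $\partial_m^\ast$ stay proper when restricted to $H_m$, and $\SZ$, being induced from $\SH$ along $q$, inherits properness by base change. The fullness claim you flag as the main obstacle is true but not needed, since closedness of $H_m=\mu_{\SK}^{-1}(0\times G_m)$ in $K_m$ already suffices; nor do the slices need compact closure, because a base change of a proper map along any continuous map into a Hausdorff space is proper.
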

\begin{proof}
By definition, $\SK$ is a proper Lie $2$-groupoid. Since $\SH$ is closed in $\SK$, $\SH$ is also proper. Hence, $\SZ$ is also proper.
\end{proof}

Moreover, when $\SH$ is proper and $2$-isotropy free (equivalently, $\SG$-action on $\mc Y$ is 1-free), even if $\SG$ is a Lie 2-group, the reduction can still  be equivalent to a Lie 1-groupoid.

\begin{prop}\label{P 1-grpd-reduction}
Under the assumption of Theorem \ref{T reduction}, if furthermore $\SH$ is $2$-isotropy free and proper, then the symplectic reduction $(\SH,\omega'_\SH)$ and also $(\SZ,\omega'')$ are equivalent to a proper, \'etale Lie $1$-groupoid $\tilde\SZ$ with an induced symplectic structure $\tilde\omega''$.
\end{prop}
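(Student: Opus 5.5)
The plan is to pass from $\SH$ to the \'etale Lie 2-groupoid $\SZ$ of Theorem \ref{T etale-reduction}, check that $\SZ$ is still $2$-isotropy free and proper, and then apply Proposition \ref{P-2-to-1} to $\SZ$. That proposition produces a proper \'etale Lie $1$-groupoid $\tilde\SZ$ together with a hypercover $\f\co\SZ\rto\tilde\SZ$ whose components $f_n$ are all \'etale. Composing this with the equivalence $\SH\simeq\SZ$ from Theorem \ref{T etale-reduction} gives the required equivalence of $\SH$ with $\tilde\SZ$.

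First, the hypotheses transfer from $\SH$ to $\SZ$. Properness of $\SZ$ is already recorded in Proposition \ref{P proper-reduction}. For $2$-isotropy freeness, recall that $\SZ$ is obtained by pulling back the Lie 2-groupoid structure of $\SH$ along the map $q\co Z_0\rto H_0$, which is an inclusion on each slice $S_{x_i}$. Its higher simplices are therefore simplices of $\SH$ whose vertices are labelled by points of $Z_0$ lying over the given vertices. Concretely, $Z_2$ is the fibre product of $H_2$ with $Z_0^3$ over $H_0^3$. It follows that for $z\in Z_0$ the set $\Gamma_z[2]$ in $\SZ$ is in bijection with $\Gamma_{q(z)}[2]$ in $\SH$, which is a singleton. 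Hence $\SZ$ is $2$-isotropy free, and Proposition \ref{P-2-to-1} applies.

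Next comes the symplectic structure, which I expect to be the main obstacle. The form $\omega''=(\omega''_n)$ on $\SZ$ is compatible with all face maps, and we want it to descend to a form $\tilde\omega''$ on $\tilde\SZ$ with $\f^\ast\tilde\omega''=\omega''$. At level $0$ we have $\tilde Z_0=Z_0$, so we set $\tilde\omega''_0=\omega''_0$. At level $1$ we have $\tilde Z_1=Z_1/\sim$, where $z\sim z'$ is witnessed by an element of $\mc V\subseteq Z_2$. Since $\SZ$ is $1$-\'etale, the sets $U_{z_{01}}$ of Proposition \ref{P-2-to-1} are charts of $\tilde Z_1$ on which $d^1_0$ and $d^1_1$ are diffeomorphisms onto their images. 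On such a chart we set $\tilde\omega''_1:=(d^1_1)^\ast\omega''_0$, which equals $(d^1_0)^\ast\omega''_0$ because $\omega''$ is a form on the simplicial manifold.

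It remains to check that this is independent of the chosen representative, i.e.\ that it matches on overlapping charts. If $z\sim z'$, then the two local identifications are related through the \'etale maps $d^2_1,d^2_2\co\mc V\rto Z_1$. Both charts pull back the same form $\omega''_0$ via $d^1_1$, and $d^1_1\circ d^2_1=d^1_1\circ d^2_2$ on $\mc V$, so the two definitions agree. Thus $\tilde\omega''_1$ is well defined and satisfies $f_1^\ast\tilde\omega''_1=\omega''_1$. Since $\tilde\SZ$ is a Lie $1$-groupoid determined by levels $0$ and $1$, the pair $(\tilde\omega''_0,\tilde\omega''_1)$ is a $2$-form on the \'etale groupoid $\tilde\SZ$. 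It is symplectic in the sense of Definition \ref{D-symplectic}, because $\tilde\omega''_0=\omega''_0$ is symplectic.

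Finally, the equivalences respect the forms. The form $\omega''$ was built from $q^\ast\omega'_\SH$ via $\msf T\SZ\cong q^\ast\SQ$, and $\omega''=\f^\ast\tilde\omega''$ by the previous step. So both $(\SH,\omega'_\SH)$ and $(\SZ,\omega'')$ are equivalent to $(\tilde\SZ,\tilde\omega'')$, with $\tilde\SZ$ proper and \'etale by Proposition \ref{P-2-to-1}.
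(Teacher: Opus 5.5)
Your proposal is correct and follows the same route as the paper: you transfer properness and $2$-isotropy freeness from $\SH$ to the \'etale Lie $2$-groupoid $\SZ$, apply Proposition \ref{P-2-to-1}, and let $\omega''$ descend to $\tilde\SZ$ along the \'etale quotient map $f_1$. You also supply details the paper leaves implicit, namely the fibre-product description of $Z_2$ giving $\Gamma_z[2]\cong\Gamma_{q(z)}[2]$ and the chart-wise definition of $\tilde\omega''_1$ (which is simply $\tilde s^\ast\omega''_0$), and these details are sound; one small correction is that properness of $\SZ$ here follows from properness of $\SH$ by the last step of the proof of Proposition \ref{P proper-reduction}, not from that proposition's stated hypothesis that $\SK$ is proper.
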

\begin{proof}
By Proposition \ref{P proper-reduction},  properness of the action implies that the reductions $\SH$ and $\SZ$ are both proper Lie $2$-groupoids. Since $\SH$ is 2-isotropy free,
so is $\SZ$. By Proposition \ref{P-2-to-1}, we could construct a proper \'etale Lie $1$-groupoid $\tilde\SZ$ that is equivalent to $\SZ$. Moreover, by the construction of $\tilde \SZ$ in Proposition \ref{P-2-to-1}, and the fact that $\omega''$ is a symplectic form over $\SZ$, it induces a symplectic form $\tilde\omega''$ on $\tilde\SZ$.
\end{proof}

Summarizing all results and, with properness assumption, we have
\begin{theorem}\label{T-orbifold-reduction}
Let $(\CX,\omega)$ be a symplectic orbifold groupoid, $\SG$ be an \'etale Lie 2-group.
\begin{enumerate}
\item Let
$\Phi\co \CX\rightarrowtail\SK\twoheadrightarrow \SG$ be the Lie $2$-groupoid Kan fibration of a Hamiltonian $\SG$-action on $\CX$.
\item Let $\mu_\SK\co \hat \ST\SK\to \g^\ast_\SAd$ be the moment map.
\item Suppose the action is proper, i.e., $\SK$ is proper.
\end{enumerate}
Then the symplectic reduction $(\SZ,\omega')$ is a proper, \'etale symplectic Lie $2$-groupoid.

The pair
$(\SZ,\omega')$ is (equivalent to) a proper, \'etale symplectic Lie $1$-groupoid, i.e., a symplectic orbifold groupoid if one of the following holds:
\begin{enumerate}
\item[(1)] $\SG$ is a Lie 1-group;
\item[(2)] $\SH$ is 2-isotropy free.
\end{enumerate}
\end{theorem}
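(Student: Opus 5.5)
The theorem is a summary statement, so the plan is to assemble the results of \S\ref{Sec-sym-reduction} in the right order rather than to prove anything new. First, since $\Phi$ is a Hamiltonian action with moment map $\mu$ and $0\in\fk g^\ast$ is (implicitly) a regular value, the level set $\CY=\mu^{-1}(0)$ is an orbifold groupoid. The restricted Kan fibration $\SH=\mu_\SK^{-1}(0\times\SG)\rto\SG$ is an \'etale Kan fibration; here I use the lemma preceding Theorem \ref{T reduction} together with Lemma \ref{L etale-kan-fib}. Theorem \ref{T reduction} then produces the reduced form $\omega'_\SH$ on $\Lambda^2(\SQ^\ast)$. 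Theorem \ref{T etale-reduction} replaces $(\SH,\omega'_\SH)$ by an equivalent symplectic \'etale Lie 2-groupoid $(\SZ,\omega'')$, which is the $(\SZ,\omega')$ of the statement. Properness of $\SK$ enters only through Proposition \ref{P proper-reduction}: $\SH$ is closed in $\SK$, hence proper, and $\SZ$ inherits properness because it is built from slices $S_{x_i}$ mapping into $H_0$ with $\SZ$ pulled back from $\SH$. This gives the first assertion.

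For the two sufficient conditions I treat each case separately. In case (1), $\SG$ is an \'etale Lie 1-group, and Corollary \ref{C 1-grpd-action} applies: $\Kan!(2)(\pi)$ for $\SH\rto\SG$ combined with $\Kan!(2)$ of $\SG$ gives $\Kan!(2)$ of $\SH$. Thus $\SH$, and hence $\SZ$, is a Lie 1-groupoid, which is \'etale by Theorem \ref{T etale-reduction} and proper by the previous step. It is therefore a symplectic orbifold groupoid. In case (2), $\SH$ is 2-isotropy free, and Proposition \ref{P 1-grpd-reduction} applies directly. I first check that 2-isotropy freeness passes from $\SH$ to $\SZ$, since the 2-simplices of $\SZ$ over a point $z$ are those of $\SH$ over $q(z)$. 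Proposition \ref{P-2-to-1} then gives a hypercover $\SZ\rto\tilde\SZ$ with $\tilde\SZ$ proper and \'etale, and the symplectic form descends because each $f_n$ is \'etale.

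The step I expect to need the most care is the descent of the symplectic form along $\SZ\rto\tilde\SZ$ in case (2). There I would check that $\omega''_1$ is invariant under the equivalence relation on $Z_1$ defined by $\mc V$. This should follow from $\omega''$ being compatible with the face maps and the maps $f_n$ being local diffeomorphisms, so that $\omega''_1$ is pulled back from $\omega''_0$ via $d^1_1$, which is constant on equivalence classes. A secondary point is to make explicit that the regular value assumption of Theorem \ref{T reduction} is in force throughout.
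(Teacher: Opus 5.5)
Your proposal is correct and matches the paper's proof: the first assertion comes from Theorem~\ref{T etale-reduction} together with Proposition~\ref{P proper-reduction}, case (1) from Corollary~\ref{C 1-grpd-action}, and case (2) from Proposition~\ref{P 1-grpd-reduction}, which in turn uses Proposition~\ref{P-2-to-1}. Your extra checks are details the paper leaves implicit inside Proposition~\ref{P 1-grpd-reduction}, and they are sound: 2-isotropy freeness passes from $\SH$ to $\SZ$, and $\omega''$ descends to $\tilde\SZ$ because $\omega''_1=(d^1_1)^\ast\omega''_0$ with $d^1_1$ constant on equivalence classes.
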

\begin{proof}
By theorem \ref{T etale-reduction} and Proposition \ref{P proper-reduction}, the $(\SZ,\omega')$ is a proper, \'etale symplectic Lie $2$-groupoid. When $\SG$ is a Lie $1$-groupoid, by Corollary \ref{C 1-grpd-action}, $(\SZ,\omega')$ is a proper, \'etale symplectic Lie $1$-groupoid. When $\SH$ is 2-isotropy free, by Proposition \ref{P 1-grpd-reduction}, the $(\SZ,\omega')$ is equivalent to a proper, \'etale symplectic Lie $1$-groupoid.
\end{proof}

\bibliographystyle{abbrv}

\bibliography{chengyong}
\end{document}